\DeclareSymbolFont{bbold}{U}{bbold}{m}{n}
\DeclareSymbolFontAlphabet{\mathbbold}{bbold}
\newcommand{\bbDelta}{\boldsymbol{\Delta}}
\newcommand{\bbTheta}{\boldsymbol{\Theta}}
\newcommand{\bbOmega}{\boldsymbol{\Omega}}
\newcommand{\bbGamma}{\boldsymbol{\Gamma}}
\newcommand{\bbXi}{\boldsymbol{\Xi}}
\newtheorem{theorem}{Theorem}
\numberwithin{theorem}{section}
\newtheorem{thm}[theorem]{Theorem}
\newtheorem{proposition}[theorem]{Proposition}
\newtheorem{propn}[theorem]{Proposition}
\newtheorem{corollary}[theorem]{Corollary}
\newtheorem{cor}[theorem]{Corollary}
\newtheorem{lemma}[theorem]{Lemma}
\newtheorem{question}[theorem]{Question}
\theoremstyle{definition}
\newtheorem{definition}[theorem]{Definition}
\newtheorem{defn}[theorem]{Definition}
\newtheorem{example}[theorem]{Example}
\newtheorem{examples}[theorem]{Examples}
\newtheorem{notation}[theorem]{Notation}
\newtheorem{remark}[theorem]{Remark}
\newtheorem{warning}[theorem]{Warning}
\newtheorem{variant}[theorem]{Variant}
\providecommand{\op}{\mathrm{op}}
\providecommand{\xel}{\mathrm{el}}
\providecommand{\xext}{\mathrm{ext}}
\providecommand{\xact}{\mathrm{act}}
\providecommand{\xint}{\mathrm{int}}
\newcommand{\xFun}{\operatorname{Fun}}
\newcommand{\act}{\name{act}}
\newcommand{\Act}{\name{Act}}
\newcommand{\el}{\name{el}}
\DeclareMathOperator{\colimP}{colim}
\newcommand{\colim}{\mathop{\colimP}}
\newcommand{\Map}{\operatorname{Map}}
\newcommand{\xAlg}{\operatorname{Alg}}
\DeclareMathOperator{\xSeg}{Seg}
\newcommand{\xA}{\mathcal{A}}
\newcommand{\xE}{\mathcal{E}}
\newcommand{\xI}{\mathcal{I}}
\newcommand{\xxO}{\mathcal{O}}
\newcommand{\xxP}{\mathcal{P}}
\newcommand{\xcc}{\mathcal{C}}
\newcommand{\xS}{\mathcal{S}}
\newcommand{\xW}{\mathcal{W}}
\newcommand{\xU}{\mathcal{U}}
\newcommand{\id}{\operatorname{id}}
\newcommand{\xF}{\mathbb{F}}
\newcommand*\cocolon{%
\nobreak
\mskip6mu plus1mu
\mathpunct{}%
\nonscript
\mkern-\thinmuskip
{:}%
\mskip2mu
\relax
}
\newcommand{\icat}{$\infty$-category}
\newcommand{\icatl}{$\infty$-categorical}
\newcommand{\igpd}{$\infty$-groupoid}
\newcommand{\igpds}{$\infty$-groupoids}
\newcommand{\icats}{$\infty$-categories}
\newcommand{\iopd}{$\infty$-operad}
\newcommand{\iopds}{$\infty$-operads}
\newcommand{\isoto}{\xrightarrow{\sim}}
\newcommand{\xto}[1]{\xrightarrow{#1}}
\newcommand{\from}{\leftarrow}
\newcommand{\csquare}[8]{ %
\[ %
\begin{tikzpicture} %
\matrix (m) [matrix of math nodes,row sep=3em,column sep=2.5em,text height=1.5ex,text depth=0.25ex] %
{ #1 \pgfmatrixnextcell #2 \\ %
  #3 \pgfmatrixnextcell #4 \\ }; %
\path[->,font=\footnotesize] %
(m-1-1) edge node[auto] {$#5$} (m-1-2)%
(m-1-1) edge node[left] {$#6$} (m-2-1)%
(m-1-2) edge node[auto] {$#7$} (m-2-2)%
(m-2-1) edge node[below] {$#8$} (m-2-2);%
\end{tikzpicture}%
\]%
}
\newcommand{\nolabelcsquare}[4]{\csquare{#1}{#2}{#3}{#4}{}{}{}{}}
\def\@tocline#1#2#3#4#5#6#7{\relax
  \ifnum #1>\c@tocdepth % then omit
  \else
    \par \addpenalty\@secpenalty\addvspace{#2}%
    \begingroup \hyphenpenalty\@M
    \@ifempty{#4}{%
      \@tempdima\csname r@tocindent\number#1\endcsname\relax
    }{%
      \@tempdima#4\relax
    }%
    \parindent\z@ \leftskip#3\relax \advance\leftskip\@tempdima\relax
    \rightskip\@pnumwidth plus4em \parfillskip-\@pnumwidth
    #5\leavevmode\hskip-\@tempdima
      \ifcase #1
       \or \hskip -1em \or \hskip 1em \or \hskip 3em \else \hskip 5em \fi%
      #6\nobreak\relax
    \hfill\hbox to\@pnumwidth{\@tocpagenum{#7}}
      \par
    \nobreak
    \endgroup
  \fi}
\newcommand{\name}[1]{\ensuremath{\text{\textup{#1}}}}
\newcommand{\simp}{\bbDelta}
\newcommand{\Dop}{\simp^{\op}}
\newcommand{\DF}{\simp_{\mathbb{F}}}
\newcommand{\bbO}{\bbOmega}
\newcommand{\Seg}{\name{Seg}}
\newcommand{\Fun}{\name{Fun}}
\newcommand{\blank}{\text{\textendash}}
\newcommand{\Cat}{\name{Cat}}
\newcommand{\CatI}{\Cat_{\infty}}
\newcommand{\LCatI}{\widehat{\Cat}_{\infty}}
\newcommand{\IFF}{if and only if}
\newcommand{\Alg}{\name{Alg}}
\newcommand{\ie}{i.e.\@}
\newcommand{\AlgPatt}{\name{AlgPatt}}
\newcommand{\FlAlgPatt}{\name{FlAlgPatt}}
\newcommand{\AlgPattSS}{\AlgPatt^{\Seg}_{\name{sat}}}
\newcommand{\AlgPattSE}{\AlgPatt^{\Seg}_{\name{ext}}}
\newcommand{\AlgPattSES}{\AlgPatt^{\Seg}_{\name{slim,ext}}}
\newcommand{\PolyMnd}{\name{PolyMnd}}
\newcommand{\cPolyMnd}{\name{cPolyMnd}}
\newcommand{\angled}[1]{\langle #1 \rangle}
\newcommand{\actto}{\rightsquigarrow}
\newcommand{\intto}{\rightarrowtail}
\newcommand{\LO}{\Lambda_{\mathcal{O}}}
\newcommand{\LOi}{\Lambda_{\mathcal{O}}^{\xint}}
\newcommand{\LOpi}{\Lambda_{\mathcal{O}}^{(\xint)}}
\newcommand{\yO}{y_{\mathcal{O}}}
\newcommand{\yOpi}{y_{\mathcal{O}}^{(\xint)}}
\newcommand{\olO}{\overline{\mathcal{O}}}
\title{Homotopy-Coherent Algebra via Segal Conditions}
\author{Hongyi Chu}
\address{Max Planck Institute for Mathematics, Bonn, Germany}
\author{Rune Haugseng}
\address{Norwegian University of Science and Technology (NTNU), Trondheim, Norway}
\date{\today}
\begin{document}
\begin{abstract}
  Many homotopy-coherent algebraic structures can be described by
  Segal-type limit conditions determined by an ``algebraic pattern'',
  by which we mean an $\infty$-category equipped with a factorization
  system and a collection of ``elementary'' objects. Examples of
  structures that occur as such ``Segal $\mathcal{O}$-spaces'' for an
  algebraic pattern $\mathcal{O}$ include $\infty$-categories,
  $(\infty,n)$-categories, $\infty$-operads (including symmetric,
  non-symmetric, cyclic, and modular ones), $\infty$-properads, and
  algebras for a (symmetric) $\infty$-operad in spaces.

  In the first part of this paper we set up a general framework for
  algebraic patterns and their associated Segal objects, including
  conditions under which the latter are preserved by left and right
  Kan extensions. In particular, we obtain necessary and sufficent
  conditions on a pattern $\mathcal{O}$ for free Segal
  $\mathcal{O}$-spaces to be described by an explicit colimit formula,
  in which case we say that $\mathcal{O}$ is ``extendable''.

  In the second part of the paper we explore the relationship between
  extendable algebraic patterns and polynomial monads, by which we
  mean cartesian monads on presheaf $\infty$-categories that are
  accessible and preserve weakly contractible limits. We first show
  that the free Segal $\mathcal{O}$-space monad for an extendable
  pattern $\mathcal{O}$ is always polynomial. Next, we prove an
  $\infty$-categorical version of Weber's Nerve Theorem for polynomial
  monads, and use this to define a canonical extendable pattern from
  any polynomial monad, whose Segal spaces are equivalent to the
  algebras of the monad. These constructions yield functors between
  polynomial monads and extendable algebraic patterns, and we show
  that these exhibit full subcategories of ``saturated'' algebraic
  patterns and ``complete'' polynomial monads as localizations, and moreover
  restrict to an equivalence between the $\infty$-categories of
  saturated patterns and complete polynomial monads.
\end{abstract}

\maketitle
\tableofcontents

\section{Introduction}
Homotopy-coherent algebraic structures, where identities between
operations are replaced by an infinite hierarchy of compatible
coherence equivalences, have played an important role in algebraic
topology since the 1960s\footnote{More general frameworks for
  homotopy-coherent algebra, such as operads, arose out of work on
  infinite loop spaces by Boardman--Vogt~\cite{BoardmanVogt} and
  May~\cite{May} in the early 1970s.}, when they were first introduced
in the special case of $A_{\infty}$-spaces by
Stasheff~\cite{StasheffHAss}, and have since found a variety of
applications in many fields of mathematics. From a modern perspective,
homotopy-coherent algebraic structures can be considered as the
natural algebraic structures in the setting of \icats{} (which are
themselves the homotopy-coherent analogues of categories).

It turns out that many interesting homotopy-coherent algebraic
structures can be described by ``Segal conditions'', \ie{} they can be
described as functors satisfying a specific type of limit condition.
The canonical (and original) example is Segal's \cite{SegalCatCohlgy}
description of homotopy-coherently commutative monoids in spaces (or
\emph{$E_{\infty}$-spaces}) as ``special $\Gamma$-spaces''. In
\icatl{} language, these are functors
$F \colon \xF_{*} \to \mathcal{S}$, where $\xF_{*}$ is a skeleton of
the category of pointed finite sets, with objects
$\angled{n} := (\{0,1,\ldots,n\}, 0)$, and $\mathcal{S}$ is the
\icat{} of spaces (or $\infty$-groupoids), which are required to satisfy the following
condition:
\begin{quotation}
  For all $n$, the map
  \[ F(\angled{n}) \to \prod_{i = 1}^{n}F(\angled{1}), \]
  induced by the morphisms $\rho_{i} \colon \angled{n} \to \angled{1}$
  given by
  \[
    \rho_{i}(j) =
    \begin{cases}
      0, & j \neq i,\\
      1, & j = i,
    \end{cases}
  \]
  is an equivalence.
\end{quotation}
Other key examples of structures described by Segal conditions include:
\begin{itemize}
\item associative (or $A_{\infty}$- or $E_{1}$-)monoids, using the
  simplex category $\Dop$ (in unpublished work of Segal),
\item \icats{}, again using $\Dop$, in the form of Rezk's Segal spaces
  \cite{Rezk},
\item $(\infty,n)$-categories, using Joyal's categories
  $\bbTheta_{n}^{\op}$, also in work of Rezk \cite{RezkThetan},
\item $\infty$-operads, using the dendroidal category $\bbOmega^{\op}$
  of Moerdijk--Weiss~\cite{MoerdijkWeiss}, in work of
  Cisinski and Moerdijk~\cite{CisinskiMoerdijk2},
\item algebras for an \iopd{} $\mathcal{O}$ (in the sense of
  \cite{ha}) in $\mathcal{S}$, using the ``category of operators''
  $\mathcal{O}$ itself.
\end{itemize}
Given these and other examples (many of which we will discuss below in
\S\ref{sec:ex}), we might wonder why so many different algebraic
structures can be described by Segal conditions. Our main results in
this paper provide an explanation of this situation, by answering the
following question:
\begin{question}
  Which homotopy-coherent algebraic structures can be described (in a
  reasonable way) by Segal conditions, and how canonical is this
  description?
\end{question}

Before we describe our answer, we need to formulate a more precise
version of this question, by defining the terms that appear. First of
all, we will consider algebraic structures on (families of) spaces,
which we take to mean algebras for monads on functor \icats{}
$\Fun(\mathcal{I}, \mathcal{S})$ (where $\mathcal{I}$ is any small
\icat{}). Next, let us specify what precisely we mean by ``Segal
conditions''. Returning to the example of special $\Gamma$-spaces, the
category $\xF_{*}$ has the following features that we wish to
abstract:
\begin{itemize}
\item A morphism $\phi \colon \angled{n} \to \angled{m}$ is called
  \emph{inert} if $|\phi^{-1}(j)| = 1$ for $j \neq 0$, and
  \emph{active} if $\phi^{-1}(0) = \{0\}$. The inert and active
  morphisms form a factorization system on $\xF_{*}$: every morphism
  factors as an inert morphism followed by an active morphism, and
  this decomposition is unique up to isomorphism.
\item The morphisms $\rho_{i}$ are precisely the inert morphisms
  $\angled{n} \to \angled{1}$.
\item If $\xF_{*}^{\xint}$ denotes the subcategory of $\xF_{*}$ with
  only inert morphisms, then the special $\Gamma$-spaces are precisely
  the functors $F \colon \xF_{*} \to \mathcal{S}$ such that the
  restriction $F|_{\xF_{*}^{\xint}}$ is a right Kan extension of
  $F|_{\{\angled{1}\}}$.
\end{itemize}
These features recur in our other examples, which suggests that the
input data for a class of ``Segal conditions'' should consist of an
\icat{} $\mathcal{O}$ equipped with a factorization system (whereby
every morphism factors as an ``active'' morphism followed by an
``inert'' morphism) and a class of ``elementary'' objects (or
generators). From this data, which we will refer to as an
\emph{algebraic pattern}\footnote{This terminology is inspired by
  Lurie's \emph{categorical patterns} \cite[\S B]{ha}, the key
  examples of which all arise from algebraic patterns in our sense,
  and should not be confused with the notion of ``pattern'' considered
  by Getzler~\cite{GetzlerOpdRev}.}, we obtain the relevant Segal-type
limit condition on a functor $F \colon \mathcal{O} \to \mathcal{S}$ by
imposing the requirement that for every $O \in \mathcal{O}$ the object
$F(O)$ is the limit over all inert morphisms to elementary objects,
\[ F(O) \isoto \lim_{E \in \mathcal{O}^{\el}_{O/}} F(E);\] we say that
such a functor $F$ is a \emph{Segal
  $\mathcal{O}$-space}.\footnote{Here we write $\mathcal{O}^{\xint}$
  for the subcategory of $\mathcal{O}$ containing only the inert
  morphisms, $\mathcal{O}^{\el}$ for the full subcategory of
  $\mathcal{O}^{\xint}$ spanned by the elementary objects, and define
  $\mathcal{O}^{\el}_{O/}:= \mathcal{O}^{\el}
  \times_{\mathcal{O}^{\xint}} \mathcal{O}^{\xint}_{O/}$.}  If
$\mathcal{O}$ is any algebraic pattern, and
$\Seg_{\mathcal{O}}(\mathcal{S})$ denotes the full subcategory of
$\Fun(\mathcal{O}, \mathcal{S})$ on the Segal $\mathcal{O}$-spaces,
then the restriction functor
\[\Seg_{\mathcal{O}}(\mathcal{S}) \to \Fun(\mathcal{O}^{\el},
\mathcal{S})\] has a left adjoint. This adjunction is always monadic,
and we write $T_{\mathcal{O}}$ for the corresponding monad on
$\Fun(\mathcal{O}^{\el}, \mathcal{S})$. The monad $T_{\mathcal{O}}$ is
then ``described by'' the algebraic pattern $\mathcal{O}$. In general,
however, it is not possible to describe this monad explicitly, because the
left adjoint involves an abstract localization. We only want to
consider a pattern to be ``reasonable'' if this localization is
unnecessary, in which case $T_{\mathcal{O}}$ is given
by a concrete formula, namely as
\[ T_{\mathcal{O}}F(E) \simeq \colim_{X \in \Act_{\mathcal{O}}(E)}
  \lim_{E' \in \mathcal{O}^{\el}_{X/}} F(E'),\]
where $\Act_{\mathcal{O}}(E)$ is the space of active morphisms to $E$
in $\mathcal{O}$. We call such patterns $\mathcal{O}$
\emph{extendable}, and give explicit necessary and sufficient
conditions for a pattern to be extendable in
Proposition~\ref{prop:extendable}.

We can now state the precise version of the previous question that we
will address:
\begin{question}
  Which monads on presheaf \icats{} can be described as the free Segal
  $\mathcal{O}$-space monad for an extendable algebraic pattern
  $\mathcal{O}$, and how canonical is this description?
\end{question}

We will characterize these monads as a certain class of
\emph{polynomial}\footnote{The analogous monads on
    ordinary categories are sometimes called \emph{strongly cartesian}
  monads.} monads, by which we mean the monads on presheaf
\icats{} that are \emph{cartesian}\footnote{The cartesian monads are
  those whose multiplication and unit transformations are
  \emph{cartesian} natural transformations, which in turn means that
  their naturality squares are all cartesian, \ie{} are pullback squares.} and whose underlying
endofunctors are accessible and preserve weakly contractible
limits. Our first main result provides functors in both directions
between \icats{} of extendable patterns and of polynomial monads:
\begin{thm}\
  \begin{enumerate}[(i)]
  \item   If $\mathcal{O}$ is an extendable algebraic pattern then the free
  Segal $\mathcal{O}$-space monad $T_{\mathcal{O}}$ is
  polynomial. This determines a functor $\mathfrak{M}$ from extendable
  patterns to polynomial monads.
\item If $T$ is a polynomial monad on $\Fun(\mathcal{I}, \mathcal{S})$
  then there exists a canonical extendable algebraic pattern
  $\mathcal{W}(T)$ such that $\Seg_{\mathcal{W}(T)}(\mathcal{S})$ is
  equivalent to the \icat{} of $T$-algebras. This determines a functor
  $\mathfrak{P}$ from polynomial monads to extendable patterns.
  \end{enumerate}
\end{thm}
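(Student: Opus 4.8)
The plan is to prove the two parts from explicit descriptions: for (i), the colimit formula for $T_{\mathcal{O}}$ furnished by extendability; for (ii), an $\infty$-categorical nerve theorem presenting $T$-algebras as Segal objects on a category of operations extracted from $T$. For (i) I would start from the formula
\[ T_{\mathcal{O}}F(E) \simeq \colim_{X \in \Act_{\mathcal{O}}(E)} \lim_{E' \in \mathcal{O}^{\el}_{X/}} F(E'), \]
valid since $\mathcal{O}$ is extendable. This realizes $T_{\mathcal{O}}$ as a composite of a base change (evaluation at elementary objects), a pushforward (the limit over $\mathcal{O}^{\el}_{X/}$), and a dependent sum (the colimit over the space $\Act_{\mathcal{O}}(E)$), \ie{} as a polynomial functor associated to an explicit span of $\infty$-categories. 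Accessibility is then clear: the indexing $\infty$-categories are small, so the inner limits commute with sufficiently filtered colimits while the outer colimits commute with all of them. For preservation of weakly contractible limits I would argue pointwise (limits in the functor category being computed objectwise): the assignment $F \mapsto \lim_{E'} F(E')$ preserves all limits, and the outer colimit is indexed by the $\infty$-groupoid $\Act_{\mathcal{O}}(E)$, so it commutes with weakly contractible limits by the standard fact that $\infty$-groupoid-indexed colimits do. In particular $T_{\mathcal{O}}$ preserves pullbacks, the indexing cospan being weakly contractible.

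The remaining and principal task in (i) is to show that the unit $\eta\colon \id \to T_{\mathcal{O}}$ and the multiplication $\mu\colon T_{\mathcal{O}}^{2} \to T_{\mathcal{O}}$ are cartesian; I expect this to be the main obstacle of the first part. The strategy is to exploit that $\Fun(\mathcal{O}^{\el}, \mathcal{S})$ is an $\infty$-topos, so that its colimits are universal (stable under pullback). Over each elementary object $E$, the unit at $F$ selects the point $\id_{E} \in \Act_{\mathcal{O}}(E)$ of the defining colimit, at which the inner limit is $F(E)$; universality then identifies the pullback of the naturality square of $\eta$ along any $F \to G$ with this fiber, so the square is cartesian. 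For $\mu$ I would run the analogous argument after rewriting an active map into $E$ as a two-step composite, so that the $T_{\mathcal{O}}^{2}$-formula is reindexed by pairs of composable active maps and the multiplication is induced by composition; cartesianness again follows by descent once the comparison is exhibited as base change along maps of indexing $\infty$-categories with controlled fibers. Functoriality of $\mathfrak{M}$ then follows because a morphism of patterns preserves active and inert maps and elementary objects, hence induces base-change comparisons of the defining spans and a cartesian morphism of the associated monads in the $\infty$-category of polynomial monads.

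For (ii), given a polynomial monad $T$ on $\Fun(\mathcal{I}, \mathcal{S})$ I would first record that preservation of weakly contractible limits makes $T$ a parametric right adjoint, which is exactly the input a nerve theorem requires. I would define $\mathcal{W}(T)$ as the $\infty$-categorical analogue of Weber's category of generic operations --- concretely the full subcategory of $\Alg_{T}$ on free algebras generated by representables --- with elementary objects the free algebras $F_{T}(i)$ for $i \in \mathcal{I}$ and with active--inert factorization supplied by the generic--free factorization system coming from the parametric-right-adjoint structure. The heart of this part is the nerve theorem: the functor $\Alg_{T} \to \Fun(\mathcal{W}(T), \mathcal{S})$ sending an algebra $A$ to $\Map_{\Alg_{T}}(F_{T}(-), A)$ is fully faithful with essential image precisely the Segal $\mathcal{W}(T)$-spaces, yielding $\Seg_{\mathcal{W}(T)}(\mathcal{S}) \simeq \Alg_{T}$. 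I would prove this by establishing density of $\mathcal{W}(T)$ in $\Alg_{T}$ and then matching, via the parametric-right-adjoint factorization, the descent condition characterizing the essential image with the Segal condition imposed by the pattern.

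Extendability of $\mathcal{W}(T)$ I would verify against the criterion of Proposition~\ref{prop:extendable}, or deduce a posteriori from the fact that the resulting Segal-space monad must recover $T$; and functoriality of $\mathfrak{P}$ follows since a morphism of polynomial monads carries free algebras to free algebras and respects the generic--free factorizations. Between the two parts, I expect the nerve theorem --- establishing density of $\mathcal{W}(T)$ and identifying the abstract descent condition with the concrete Segal condition --- to be the deepest single ingredient, with the cartesianness of $\eta$ and $\mu$ in (i) the other genuinely technical point.
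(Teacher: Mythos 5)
Your overall strategy coincides with the paper's on both parts. For (i) the paper likewise factors $T_{\mathcal{O}}$ through restriction and left Kan extension along $\mathcal{O}^{\xint}\hookrightarrow\mathcal{O}$, reduces preservation of weakly contractible limits to the fact that colimits over the $\infty$-groupoids $\Act_{\mathcal{O}}(O)$ preserve them, and checks cartesianness of the unit and multiplication fibrewise, after reindexing $T_{\mathcal{O}}^{2}$ by the space of pairs of composable active morphisms --- exactly your plan, carried out by computing the squares of fibres over points of $\Act_{\mathcal{O}}(O)$ (which are products with a fixed space, hence cartesian). For (ii) the paper defines $\mathcal{U}(T)$ as the objects reachable by generic maps from representables, takes $\mathcal{W}(T)$ to be the free algebras on these, proves the nerve theorem by showing that the relevant comparison functor is a cofinal/initial-object argument on categories of factorizations, and deduces extendability from the nerve theorem in essentially your ``a posteriori'' manner: extendability is precisely the statement that $j_{!}$ carries Segal $\mathcal{W}(T)^{\xint}$-spaces into Segal $\mathcal{W}(T)$-spaces, which follows once the latter are identified with $T$-algebras. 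Your identification of the nerve theorem as the deepest ingredient is also the paper's view. (One small point: the paper only makes $\mathfrak{M}$ functorial on \emph{Segal} morphisms of extendable patterns, where it verifies that the mate transformation is cartesian by another fibrewise computation.)

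The one genuine gap is your treatment of the inert--active factorization on $\mathcal{W}(T)$ as simply ``supplied by'' the generic--free factorization of the parametric right adjoint. In the $\infty$-categorical setting this is not automatic, and the paper devotes an entire section to it. The difficulty is that a morphism $F_{T}X \to F_{T}Y$ in the Kleisli $\infty$-category has a canonical factorization $F_{T}X \to F_{T}L_{*}X \to F_{T}Y$ depending on the chosen presentation of its source and target as free algebras, while objects of $\mathcal{W}(T)$ may be equivalent via equivalences in $\Alg_{T}(\mathcal{S}^{\mathcal{I}})$ that do not come from morphisms in $\mathcal{S}^{\mathcal{I}}$. One must therefore first prove that the canonical factorization is invariant under such equivalences and functorial in commutative squares; this rests on a compatibility $L_{C}L_{B}\simeq L_{C}$ for composable data, which in turn uses cartesianness of the multiplication. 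Only then can one define the inert morphisms as the closure of the free ones under equivalence, identify the active morphisms with those adjoint to generic maps, and verify unique lifting and closure under retracts. Without this step the pattern structure on $\mathcal{W}(T)$ --- and hence the Segal condition you intend to match against the descent condition in the nerve theorem --- is not yet well-posed. The remainder of your outline is sound.
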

We prove part (i) in \S\ref{sec:poly} and part (ii) in
\S\ref{sec:poly1}. Part (ii) depends on an \icatl{} version of Weber's
\emph{nerve theorem} \cite{WeberFamilial}, which we prove in
\S\ref{sec:nerve} and use to construct a factorization system on the
Kleisli \icat{} of a polynomial monad in \S\ref{sec:Kleislifact}.

Our second main result characterizes the images of these functors:
\begin{thm}\ 
  \begin{enumerate}[(i)]
  \item Restricting to \emph{slim}\footnote{This is a mild technical
      hypothesis; it is satisfied in almost all examples, and the
      patterns $\mathcal{W}(T)$ are always slim. Moreover, any
      extendable pattern can be replaced by a full subcategory that is
      slim and determines the same monad.} extendable patterns, there
    is a natural transformation
    $\sigma \colon \id \to \mathfrak{P}\mathfrak{M}$, and the component
    $\sigma_{\mathcal{O}}$ is an equivalence
    \IFF{} the pattern $\mathcal{O}$ is \emph{saturated}, meaning
    that it is a slim extendable pattern such that the functors
    \[\Map_{\mathcal{O}}(O,\blank) \colon \mathcal{O} \to
      \mathcal{S}\] are Segal $\mathcal{O}$-spaces for
    $O \in \mathcal{O}$. The pattern $\mathcal{W}(T)$ for a polynomial
    monad $T$ is always saturated, and the transformation $\sigma$
    exhibits the full subcategory of saturated patterns as a
    localization of the \icat{} of slim extendable patterns.
  \item There is a natural transformation
    $\tau \colon \id \to \mathfrak{M}\mathfrak{P}$, and $\tau_{T}$ is
    an equivalence for a polynomial monad $T$ on
    $\Fun(\mathcal{I}, \mathcal{S})$ \IFF{} $T$ is \emph{complete},
    meaning that the essentially surjective functor
    $\mathcal{I} \to \mathcal{W}(T)^{\el}$ is an equivalence. The
    monad $T_{\mathcal{O}}$ for an extendable pattern $\mathcal{O}$ is
    always complete, and the transformation $\tau$ exhibits the full
    subcategory of complete polynomial monads as a localization of the
    \icat{} of polynomial monads.
  \item The functors $\mathfrak{P}$ and $\mathfrak{M}$ restrict to an
    equivalence between the \icats{} of saturated patterns and
    complete polynomial monads.
  \end{enumerate}
\end{thm}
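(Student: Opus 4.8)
The plan is to obtain statement (iii) as a formal consequence of the two localization results established in parts (i) and (ii): essentially all of the genuine work lies in those parts, and what remains is to assemble them correctly. First I would verify that the two functors restrict to the relevant full subcategories. By part (ii) the monad $\mathfrak{M}(\mathcal{O}) = T_{\mathcal{O}}$ is complete for every extendable pattern $\mathcal{O}$, so $\mathfrak{M}$ in particular carries saturated patterns to complete polynomial monads; dually, by part (i) the pattern $\mathfrak{P}(T) = \mathcal{W}(T)$ is saturated for every polynomial monad $T$, so $\mathfrak{P}$ carries complete monads to saturated patterns. Hence $\mathfrak{M}$ and $\mathfrak{P}$ restrict to a pair of functors between the full subcategory $\AlgPattSS$ of saturated patterns and the full subcategory $\cPolyMnd$ of complete polynomial monads.

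Next I would feed the already-constructed natural transformations into these restrictions. By part (i) the component $\sigma_{\mathcal{O}} \colon \mathcal{O} \to \mathfrak{P}\mathfrak{M}(\mathcal{O})$ is an equivalence precisely when $\mathcal{O}$ is saturated, so restricting $\sigma$ to $\AlgPattSS$ (viewed inside the slim extendable patterns, on which it is defined) yields a natural equivalence $\id_{\AlgPattSS} \isoto \mathfrak{P}\mathfrak{M}$. Symmetrically, by part (ii) the component $\tau_T \colon T \to \mathfrak{M}\mathfrak{P}(T)$ is an equivalence precisely when $T$ is complete, so restricting $\tau$ to $\cPolyMnd \subseteq \PolyMnd$ yields a natural equivalence $\id_{\cPolyMnd} \isoto \mathfrak{M}\mathfrak{P}$. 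This is consistent with the localization picture: parts (i) and (ii) present $\mathfrak{P}\mathfrak{M}$ and $\mathfrak{M}\mathfrak{P}$ as idempotent reflections onto these subcategories, and a reflection acts as the identity on its local objects.

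Finally, having natural equivalences $\mathfrak{P}\mathfrak{M} \simeq \id_{\AlgPattSS}$ and $\mathfrak{M}\mathfrak{P} \simeq \id_{\cPolyMnd}$ exhibits the restricted $\mathfrak{M}$ and $\mathfrak{P}$ as mutually inverse equivalences of \icats{}: essential surjectivity of $\mathfrak{M}$ is immediate from $T \simeq \mathfrak{M}\mathfrak{P}(T)$, and full faithfulness follows because for saturated $\mathcal{O}, \mathcal{O}'$ the map $\Map(\mathcal{O}, \mathcal{O}') \to \Map(\mathfrak{M}\mathcal{O}, \mathfrak{M}\mathcal{O}')$ is split injective via $\mathfrak{P}$ and $\sigma$, and split surjective via the analogous splitting of $\mathfrak{P}$ using $\tau$, hence an equivalence. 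I do not expect a serious obstacle here; the only points that genuinely demand care are confirming that the functors restrict — which is exactly the content of the ``always complete'' and ``always saturated'' assertions — and the standard \icatl{} passage from two one-sided natural equivalences to an honest equivalence of \icats{}. The real difficulty was discharged in proving parts (i) and (ii); if one wished to upgrade this to an explicit adjoint equivalence rather than a mere equivalence, one would additionally verify a triangle identity relating $\sigma$ and $\tau$, but this is not required for the statement as phrased.
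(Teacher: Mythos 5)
Your assembly of part (iii) from parts (i) and (ii) is correct and is essentially verbatim the paper's own proof of that part: the functors restrict because $\mathfrak{P}T$ is always saturated and $\mathfrak{M}\mathcal{O}$ is always complete, the restricted transformations $\sigma$ and $\tau$ become natural equivalences by the two ``\IFF{}'' characterizations, and this exhibits the restrictions as mutually inverse equivalences between $\AlgPattSS$ and $\cPolyMnd$. Your additional remarks on extracting essential surjectivity and full faithfulness from the two one-sided natural equivalences are sound (the paper simply asserts this standard fact), and you are right that no triangle identity is needed for the statement as phrased.

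The gap is one of scope: the statement is the whole three-part theorem, and parts (i) and (ii) carry essentially all of the content, yet your proposal takes them as given. What is actually missing is: the construction of $\sigma$ via the localized Yoneda embedding $\Lambda_{\mathcal{O}} \colon \mathcal{O}^{\op} \to \Seg_{\mathcal{O}}(\mathcal{S})$ and the identification of active morphisms in $\mathcal{O}$ with generic morphisms (Proposition~\ref{propn:Oactgeneric}); the proof that $\mathcal{W}(T)$ is always saturated, which rests on the Nerve Theorem; the criterion that $\sigma_{\mathcal{O}}$ is an equivalence \IFF{} $\mathcal{O}$ is saturated; the corresponding analysis for $\tau$, in particular the proof that $T_{\mathcal{O}}$ is complete, which requires showing $\mathcal{U}(T_{\mathcal{O}}) \to \mathcal{W}(T_{\mathcal{O}})$ is faithful and that every equivalence in $\mathcal{W}(T_{\mathcal{O}})$ lies in its image (Lemma~\ref{lem:LOacteq}); and the two localization claims, which the paper verifies via the criterion of \cite[Proposition 5.2.7.4]{ht} by checking that $\sigma_{L\mathcal{O}}$ and $L(\sigma_{\mathcal{O}})$ (respectively $\tau_{LT}$ and $L(\tau_{T})$) are equivalences. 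One small imprecision in your write-up: completeness of $T_{\mathcal{O}}$ is established for \emph{slim} extendable $\mathcal{O}$ (Proposition~\ref{propn:TOcomplete}), not for arbitrary extendable patterns; this is harmless for part (iii), since saturated patterns are slim by definition, but it matters if one quotes the ``always complete'' clause at the generality you state.
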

We will prove part (i) in \S\ref{sec:poly2} and parts (ii) and (iii)
in \S\ref{sec:poly3}.

The answer to our question above is thus that the monads of the form
$T_{\mathcal{O}}$ for an extendable pattern $\mathcal{O}$ are
precisely the \emph{complete} polynomial monads, and there is a
\emph{unique} extendable pattern describing this monad that is
\emph{saturated}, namely the canonical pattern
$\mathcal{W}(T_{\mathcal{O}})$. For example, returning to our initial
example of commutative monoids described by an algebraic pattern
structure on $\xF_{*}$, this pattern is extendable, with free
commutative monoids describe by the expected formula
\[ X \mapsto \coprod_{n=0}^{\infty} X^{\times n}_{h \Sigma_{n}},\] but
it is \emph{not} saturated. The corresponding saturated pattern is
instead the \icat{} of free commutative monoids on finite sets (\ie{}
the \emph{Lawvere theory} for commutative monoids), which by work of
Cranch~\cite{Cranch} can be identified with the $(2,1)$-category
$\name{Span}(\xF)$ of finite sets with spans (or correspondences) as
morphisms; see \ref{ex:commspan} for more details.

\subsection{Overview}
In the first part of the paper we set up a general categorical
framework for algebraic patterns and Segal objects. In
\S\ref{sec:pattern} we introduce these objects more formally and prove
some of their basic properties, before we look at examples of
algebraic patterns and their Segal objects in \S\ref{sec:ex}. We then
introduce morphisms of algebraic patterns in \S\ref{sec:pattmor} and
construct an \icat{} of algebraic patterns in \S\ref{sec:pattcat},
where we also prove that this has limits and filtered colimits. Next,
we provide conditions under which Segal objects are preserved by right
and left Kan extensions in \S\ref{sec:RKE} and \S\ref{sec:LKE},
respectively.

In \S\ref{sec:free} we apply our work on left Kan
extensions to analyze free Segal objects; in particular, we obtain
necessary and sufficient conditions for a pattern $\mathcal{O}$ to be
extendable, meaning that free Segal $\mathcal{O}$-spaces are described
by a colimit formula. In \S\ref{sec:WSF} we study (weak) Segal
fibrations, which generalize Lurie's definitions of symmetric monoidal
\icats{} and symmetric \iopds{}. We show that any weak Segal fibration
over an extendable base is again extendable, and moreover left Kan
extension along any morphism of weak Segal fibrations preserves
Segal objects; this recovers, for example, the formula of \cite{ha} for
operadic left Kan extensions of $\infty$-operad algebras in
cartesian monoidal \icats{}.

In \S\ref{sec:poly} we introduce polynomial monads, and prove that the
free Segal $\mathcal{O}$-space monad for any extendable pattern is
polynomial. We then prove an \icatl{} version of Weber's Nerve Theorem for
presheaf \icats{} in \S\ref{sec:nerve}, and apply this to define a
factorization system on the Kleisli \icat{} of a polynomial monad in
\S\ref{sec:Kleislifact}. This gives a canonical algebraic pattern for
every polynomial monad, which we study in \S\ref{sec:poly1}. Next, we
study the relationship between an extendable
pattern and the canonical pattern of its free Segal space monad;
under a mild hypothesis there is a functor between these, and we show
that this is an equivalence precisely when the pattern is
saturated. Finally, in \S\ref{sec:poly3} we study complete polynomial
monads, and prove that there is an equivalence between these and
saturated patterns.

\subsection{Related Work}
There is an extensive literature on using (finite) limit conditions to
describe algebraic structures in category theory, going back at least
to Lawvere's thesis~\cite{Lawvere}, where he introduced algebraic
theories. Our work is in particular closely related to the ``nerve
theorem'', one version of which \emph{almost} says that a strongly
cartesian monad on a presheaf category is described by Segal
conditions; this version was first proved in unpublished work of
Leinster (though his proof did not use the factorization system), and
later extended by Weber~\cite{WeberFamilial} to a description
of certain \emph{weakly} cartesian monads.\footnote{Weakly cartesian
  monads are of interest in the case of ordinary categories, as many
  ``algebraic'' monads that involve symmetries, such as the free
  commutative monoid monad, are not cartesian. This issue disappears
  if we replace sets by groupoids, and so weakly cartesian monads
  are not relevant in our \icatl{} setting.} We were particularly
inspired by the simpler proof given by Berger, Melli\`es, and
Weber~\cite{BergerMelliesWeber}. Their work has more recently been
extended by Bourke and Garner~\cite{BourkeGarner}, who study general
classes of monads that can be described by some notion of ``theories
with arities'', including in the enriched context.

\subsection{Acknowledgments}
H.C. thanks the Labex CEMPI (ANR-11-LABX-0007-01) and Max Planck
Institute for Mathematics for their support during the process of
writing this article. The first version of this paper was written
while R.H.\ was employed by the IBS Center for Geometry and Physics in
a position funded through the
grant IBS-R003-D1 of the Institute for Basic Science, Republic
of Korea. We thank Philip Hackney for helpful comments on the first
version of the paper, and Roman Kositsyn for pointing out that the
conclusion of Lemma~\ref{lem:stronglyext} follows from extendability.

\section{Algebraic Patterns and Segal Objects}\label{sec:pattern}
In this section we introduce the basic structures we will study in
this paper, namely algebraic patterns and their Segal objects.

\begin{definition}
  An \emph{algebraic pattern} $\mathfrak{O}$ is an \icat{}
  $\mathcal{O}$ equipped with:
\begin{itemize}
\item a factorization system
$(\mathcal{O}^{\xint}, \mathcal{O}^{\act})$, the morphisms in
which we refer to as the \emph{inert} and \emph{active} morphisms
in $\mathfrak{O}$,
\item a full subcategory
$\mathcal{O}^{\el} \subseteq \mathcal{O}^{\xint}$ whose objects we
call the \emph{elementary} objects of $\mathfrak{O}$.
\end{itemize}
Unless stated otherwise, we will assume by default that algebraic
patterns are essentially small.
\end{definition}

\begin{remark}
  Here a \emph{factorization system} on an \icat{} $\mathcal{C}$ means
  a pair of subcategories $(\mathcal{C}^{L}, \mathcal{C}^{R})$ such
  that both contain all objects of $\mathcal{C}$, and for every
  morphism $f \colon X \to X'$ in $\mathcal{C}$, the space of
  factorizations
\[ \left\{
\begin{tikzcd}
{} & Y \arrow{dr}{r} \\
X \arrow{ur}{l} \arrow{rr}{f} & & X'
\end{tikzcd}
: l \in \mathcal{C}^{L}, r \in \mathcal{C}^{R}
\right\}
\]
is contractible.
\end{remark}

\begin{remark}
  We will often abuse notation and conflate an algebraic pattern with
  its underlying \icat{} $\mathcal{O}$, \ie{} we will simply say that
  $\mathcal{O}$ is an algebraic pattern.
\end{remark}

\begin{notation}
  If $\mathcal{O}$ is an algebraic pattern, we will often indicate an inert
  map between objects $O,O'$ of $\mathcal{O}$ as $O \intto O'$ and an
  active map as $O \actto O'$. These symbols are not meant to suggest
  any intuition about the nature of inert and active maps.
\end{notation}

\begin{notation}
  If $\mathcal{O}$ is an algebraic pattern and $X$ is an object of
  $\mathcal{O}$, then we write $\mathcal{O}^{\el}_{X/}$ for the fibre
  product of \icats{}
  $\mathcal{O}^{\el} \times_{\mathcal{O}^{\xint}}
  \mathcal{O}^{\xint}_{X/}$. Thus the objects of
  $\mathcal{O}^{\el}_{X/}$ are inert morphisms $X \intto E$ where $E$ is
  elementary, and the morphisms are commutative triangles
\[
\begin{tikzcd}
{} & X \arrow[inert]{dr} \arrow[inert]{dl} \\
E \arrow[inert]{rr} & & E'
\end{tikzcd}
\]
where all morphisms are inert, and $E$ and $E'$ are elementary.
\end{notation}

\begin{defn}
Let $\mathcal{O}$ be an algebraic pattern. We say an \icat{} is
\emph{$\mathcal{O}$-complete} if it has limits of shape
$\mathcal{O}^{\el}_{X/}$ for all $X \in\mathcal{O}$.
\end{defn}

\begin{defn}
Let $\mathcal{O}$ be an algebraic pattern and $\mathcal{C}$ an
$\mathcal{O}$-complete \icat{}. A \emph{Segal $\mathcal{O}$-object}
in $\mathcal{C}$ is a functor $F \colon \mathcal{O} \to \mathcal{C}$
such that for every $X \in \mathcal{O}$ the canonical map
\[ F(X) \to \lim_{E \in \mathcal{O}^{\el}_{X/}} F(E)\]
is an equivalence. We write $\Seg_{\mathcal{O}}(\mathcal{C})$ for
the full subcategory of $\Fun(\mathcal{O}, \mathcal{C})$ spanned by
the Segal $\mathcal{O}$-objects. 
\end{defn}

\begin{notation}
We will often refer to Segal $\mathcal{O}$-objects in the \icat{}
$\mathcal{S}$ of spaces as \emph{Segal $\mathcal{O}$-spaces}, and to
Segal $\mathcal{O}$-objects in the \icat{} $\CatI$ of \icats{} as
\emph{Segal $\mathcal{O}$-\icats{}}.
\end{notation}

\begin{lemma}
Let $\mathcal{C}$ be an $\mathcal{O}$-complete \icat{}. Then $F \colon \mathcal{O} \to \mathcal{C}$ is a Segal
$\mathcal{O}$-object \IFF{} the restriction
$F|_{\mathcal{O}^{\xint}}$ is a right Kan extension of
$F|_{\mathcal{O}^{\el}}$.
\end{lemma}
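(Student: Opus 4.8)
The plan is to observe that, once the definitions are unwound, both conditions reduce to exactly the same pointwise equivalence, so the lemma is essentially a matter of matching two descriptions of a single comparison map. Write $j \colon \mathcal{O}^{\el} \hookrightarrow \mathcal{O}^{\xint}$ for the inclusion of the full subcategory on the elementary objects. By the pointwise formula for right Kan extensions (Lurie, \emph{Higher Topos Theory} \S4.3.2--4.3.3), the restriction $F|_{\mathcal{O}^{\xint}}$ is a right Kan extension of $F|_{\mathcal{O}^{\el}}$ along $j$ \IFF{} for every object $X$ the canonical comparison map
\[ F(X) \to \lim_{(E,\, X \intto E) \in (\mathcal{O}^{\el})_{X/}} F(E) \]
is an equivalence, where $(\mathcal{O}^{\el})_{X/}$ denotes the comma \icat{} of $j$ under $X$.

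First I would identify this comma \icat{} with $\mathcal{O}^{\el}_{X/}$. Since $j$ is the inclusion of a full subcategory of $\mathcal{O}^{\xint}$, its comma \icat{} under $X$ is by definition $\mathcal{O}^{\el} \times_{\mathcal{O}^{\xint}} \mathcal{O}^{\xint}_{X/}$, which is precisely $\mathcal{O}^{\el}_{X/}$ as introduced above: its objects are the inert morphisms $X \intto E$ with $E$ elementary, and its morphisms are the inert triangles. The relevant limits exist because $\mathcal{C}$ is $\mathcal{O}$-complete, which is exactly the hypothesis that makes the pointwise formula applicable. Moreover, since the factorization system requires $\mathcal{O}^{\xint}$ to contain all objects of $\mathcal{O}$, the quantifier ``for all $X \in \mathcal{O}^{\xint}$'' coincides with ``for all $X \in \mathcal{O}$''.

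With this identification the right Kan extension condition reads
\[ F(X) \isoto \lim_{E \in \mathcal{O}^{\el}_{X/}} F(E) \quad \text{for all } X \in \mathcal{O}, \]
which is verbatim the Segal condition. The only point requiring a moment's care --- and the nearest thing to an obstacle --- is to confirm that the two canonical maps genuinely agree: the comparison map produced by the right Kan extension is assembled from $F|_{\mathcal{O}^{\xint}}$, whereas the Segal map is assembled from the full functor $F$. These coincide because every leg $F(X) \to F(E)$ of the limit cone and every transition map $F(E) \to F(E')$ in the diagram is the image under $F$ of an inert morphism, so only the restriction $F|_{\mathcal{O}^{\xint}}$ enters; thus both maps are induced by the same cone. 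This establishes the equivalence of the two conditions and completes the argument.
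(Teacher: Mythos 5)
Your proof is correct and follows essentially the same route as the paper: both arguments simply invoke the pointwise formula for right Kan extensions along the full inclusion $\mathcal{O}^{\el} \hookrightarrow \mathcal{O}^{\xint}$, identify the relevant comma \icat{} with $\mathcal{O}^{\el}_{X/}$, and observe that the resulting condition is verbatim the Segal condition. Your additional care about the comparison maps agreeing is a reasonable elaboration of what the paper leaves implicit.
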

\begin{proof}
Since $\mathcal{C}$ is $\mathcal{O}$-complete,   $F|_{\mathcal{O}^{\xint}}$ is a right Kan extension of
$F|_{\mathcal{O}^{\el}}$ \IFF{} for all $X \in \mathcal{O}^{\xint}$,
the natural map
\[ F(X) \to \lim_{E \in \mathcal{O}^{\el}_{X/}} F(E) \]
is an equivalence.
\end{proof}

\begin{defn}
Let $\mathcal{O}$ be an algebraic pattern. For $O \in \mathcal{O}$
we write $y(O)_{\Seg}$ for the colimit
$\colim_{E \in (\mathcal{O}^{\el}_{O/})^{\op}} y(E)$ in
$\Fun(\mathcal{O}, \mathcal{S})$, where $y$ denotes the Yoneda
embedding $\mathcal{O}^{\op} \to \Fun(\mathcal{O}, \mathcal{S})$. If
$\mathcal{C}$ is a cocomplete \icat{}, and thus is tensored over
$\mathcal{S}$, then we can consider $C \otimes y(O)$ and
$C \otimes y(O)_{\Seg}$ in $\Fun(\mathcal{O}, \mathcal{C})$ for
$C \in \mathcal{C}$.
\end{defn}

\begin{lemma}\label{lem:locality}
Let $\mathcal{O}$ be an algebraic pattern and $\mathcal{C}$ a
cocomplete \icat{}.
\begin{enumerate}[(i)] 
\item $F \in \Fun(\mathcal{O}, \mathcal{C})$ is a Segal
$\mathcal{O}$-object \IFF{} $F$ is local with respect to the
canonical maps $C \otimes y(O)_{\Seg} \to C \otimes y(O)$ for all
$O \in \mathcal{O}$.
\item If $\mathcal{C}$ is $\kappa$-presentable, then $F$ is a Segal
$\mathcal{O}$-object \IFF{} $F$ is local with respect to these
maps where $C$ is $\kappa$-compact.
\item If $\mathcal{C}$ is presentable, then the full subcategory
$\Seg_{\mathcal{O}}(\mathcal{C})$ is an accessible localization of
$\Fun(\mathcal{O}, \mathcal{C})$.
\item If $\mathcal{C}$ is presentable, then so is the \icat{} $\Seg_{\mathcal{O}}(\mathcal{C})$.
\end{enumerate}
\end{lemma}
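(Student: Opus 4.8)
The plan is to establish (i) by directly computing the relevant mapping spaces, and then to obtain (ii)--(iv) as formal consequences of (i) together with the standard theory of localizations of presentable \icats{}.

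For (i), the key input is a corepresentability computation. Using that the tensoring $C \otimes \blank \colon \mathcal{S} \to \mathcal{C}$ is left adjoint to $\Map_{\mathcal{C}}(C, \blank)$, that this adjunction is inherited pointwise by the functor \icats{}, and the Yoneda lemma in $\Fun(\mathcal{O}, \mathcal{S})$, I would first prove a natural equivalence
\[ \Map_{\Fun(\mathcal{O}, \mathcal{C})}(C \otimes y(O), F) \simeq \Map_{\mathcal{C}}(C, F(O)). \]
Since $C \otimes \blank$ preserves colimits and $y(O)_{\Seg} = \colim_{E \in (\mathcal{O}^{\el}_{O/})^{\op}} y(E)$, mapping out of $C \otimes y(O)_{\Seg}$ converts this colimit into a limit, yielding
\[ \Map_{\Fun(\mathcal{O}, \mathcal{C})}(C \otimes y(O)_{\Seg}, F) \simeq \lim_{E \in \mathcal{O}^{\el}_{O/}} \Map_{\mathcal{C}}(C, F(E)) \simeq \Map_{\mathcal{C}}\bigl(C, \lim_{E \in \mathcal{O}^{\el}_{O/}} F(E)\bigr). \]
Under these two identifications, precomposition with the canonical map $C \otimes y(O)_{\Seg} \to C \otimes y(O)$ becomes the map corepresented by the Segal comparison $F(O) \to \lim_{E \in \mathcal{O}^{\el}_{O/}} F(E)$.

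It then follows that $F$ is local with respect to $C \otimes y(O)_{\Seg} \to C \otimes y(O)$ for all $C \in \mathcal{C}$ exactly when $\Map_{\mathcal{C}}(C, F(O)) \to \Map_{\mathcal{C}}(C, \lim_E F(E))$ is an equivalence for all $C$; since mapping spaces out of all objects jointly detect equivalences in $\mathcal{C}$, this holds for every $O$ precisely when each Segal comparison map is an equivalence, i.e.\ when $F$ is a Segal $\mathcal{O}$-object. This gives (i). For (ii), I would use that in a $\kappa$-presentable \icat{} the $\kappa$-compact objects strongly generate, so that a morphism is an equivalence as soon as $\Map_{\mathcal{C}}(C, \blank)$ inverts it for every $\kappa$-compact $C$; rerunning the argument for (i) with $C$ restricted to $\kappa$-compact objects then suffices.

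Parts (iii) and (iv) are then formal. If $\mathcal{C}$ is presentable it is $\kappa$-presentable for some regular $\kappa$, and by (ii) the subcategory $\Seg_{\mathcal{O}}(\mathcal{C})$ consists of the objects local with respect to the set $S$ of maps $C \otimes y(O)_{\Seg} \to C \otimes y(O)$ with $O \in \mathcal{O}$ and $C$ ranging over a set of representatives for the $\kappa$-compact objects of $\mathcal{C}$; this is a small set because $\mathcal{O}$ and $\mathcal{C}^{\kappa}$ are essentially small. Since $\Fun(\mathcal{O}, \mathcal{C})$ is presentable, the full subcategory of $S$-local objects is an accessible localization, giving (iii), and an accessible localization of a presentable \icat{} is again presentable, giving (iv). The only real obstacle is the bookkeeping in (i): getting the pointwise tensor--cotensor adjunction and the variance of the colimit defining $y(O)_{\Seg}$ exactly right, so that the comparison map corepresents the Segal map itself rather than a twist of it. Everything after (i) is a routine application of the presentable localization machinery.
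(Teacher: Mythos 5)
Your proposal is correct and follows essentially the same route as the paper: the corepresentability computation via the tensor--cotensor adjunction and the Yoneda lemma, the conversion of the defining colimit of $y(O)_{\Seg}$ into a limit, and the reduction to $\kappa$-compact $C$ for (ii), with (iii) and (iv) then following from the standard theory of localizations at a small set of morphisms (the paper cites \cite[Proposition 5.5.4.15]{ht} for this last step). The variance bookkeeping you flag as the only delicate point indeed works out exactly as you describe.
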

\begin{proof}
The object $F$ is local with respect to $C \otimes y(O)_{\Seg} \to C
\otimes y(O)$ precisely when the morphism of spaces
\[ \Map_{\Fun(\mathcal{O}, \mathcal{C})}(C \otimes y(O), F)
\to \Map_{\Fun(\mathcal{O}, \mathcal{C})}(C \otimes y(O)_{\Seg},
F)\]
is an equivalence. Here we have equivalences
\[ \Map_{\Fun(\mathcal{O}, \mathcal{C})}(C \otimes y(O), F) \simeq
\Map_{\Fun(\mathcal{O}, \mathcal{S})}(y(O), \Map_{\mathcal{C}}(C,
F)) \simeq \Map_{\mathcal{C}}(C, F(O)),\]
using the Yoneda Lemma, and similarly
\[
  \begin{split}
  \Map_{\Fun(\mathcal{O}, \mathcal{C})}(C \otimes y(O)_{\Seg}, F)
& \simeq \Map_{\Fun(\mathcal{O}, \mathcal{S})}(y(O)_{\Seg}, \Map_{\mathcal{C}}(C,
F))
\\ & \simeq \lim_{E \in
\mathcal{O}^{\el}_{O/}} \Map_{\Fun(\mathcal{O},
\mathcal{S})}(y(E), \Map_{\mathcal{C}}(C, F)) \\ & \simeq
\Map_{\mathcal{C}}(C, \lim_{E \in \mathcal{O}^{\el}_{O/}} F(E)).    
  \end{split}
\]
Thus $F$ is local with respect to this morphism for all $C$ and $O$
\IFF{} $F(O) \isoto \lim_{E \in \mathcal{O}^{\el}_{O/}} F(E)$ for
all $O$, \ie{} $F$ is a Segal object. This proves (i). If
$\mathcal{C}$ is $\kappa$-presentable, then to conclude that the
Segal map $F(O) \to \lim_{E \in \mathcal{O}^{\el}_{O/}}F(E)$ is an
equivalence it suffices to consider $C$ in $\mathcal{C}^{\kappa}$,
which proves (ii).

It follows that if $\mathcal{C}$ is presentable, then
$\Seg_{\mathcal{O}}(\mathcal{C})$ is the full subcategory of objects
in $\Fun(\mathcal{O}, \mathcal{C})$ that are local with respect to a
\emph{set} of morphisms. Parts (iii) and (iv) then follow from
\cite[Proposition 5.5.4.15]{ht}.   
\end{proof}

\section{Examples of Algebraic Patterns}\label{sec:ex}
In this section we will briefly describe some examples of algebraic
patterns and their associated Segal objects.

\begin{example}\label{ex:xF*flat}
  We write $\xF_{*}^{\flat}$ for the algebraic pattern structure on $\xF_{*}$
  given by the inert--active factorization system we discussed above in the
  introduction, with $\xF_{*}^{\flat,\el}$ containing the single
  object $\angled{1}$. Then a Segal $\xF_{*}^{\flat}$-space is
  precisely a commutative monoid, or equivalently a special $\Gamma$-space in the sense of
  \cite{SegalCatCohlgy}.
\end{example}

\begin{example}
  We can also consider another pattern structure on
  $\xF_{*}$: We define $\xF_{*}^{\natural}$ by the same factorization
  system, but now $\xF_{*}^{\natural,\el}$ contains the two objects
  $\angled{0}$ and $\angled{1}$, with the unique inert morphism
  $\angled{1} \to \angled{0}$. Segal $\xF_{*}^{\natural}$-objects are
  functors $F \colon \xF_{*}^{\natural} \to \mathcal{C}$ such that
  \[ F(\angled{n}) \simeq F(\angled{1})^{\times_{F(\angled{0})} n},\]
  where the right-hand side denotes an iterated fibre product over
  $F(\angled{0})$; 
  this is equivalently a commutative monoid in the slice
  $\mathcal{C}_{/F(\angled{0})}$.
\end{example}

\begin{example}\label{ex:simp}
  We write $\simp$ for the simplex category, \ie{} the category of
  non-empty finite ordered sets $[n] := \{0,\ldots,n\}$ and
  order-preserving maps between them. A morphism
  $f \colon [n] \to [m]$ is \emph{inert} if it is the inclusion of a
  sub-interval, \ie{} $f(i) = f(0)+i$ for all $i$, and \emph{active}
  if $f$ preserves the end-points, \ie{} $f(0) = 0$ and
  $f(n)=m$. Every morphism in $\simp$ factors uniquely as an active
  morphism followed by an inert one, so this determines an
  inert--active factorization system on $\simp^{\op}$. Using this
  factorization system we can define two interesting algebraic pattern
  structures on $\simp^{\op}$:
  \begin{itemize}
  \item $\simp^{\op,\natural}$ denotes the pattern where
    $\simp^{\op,\natural,\el}$ contains the two objects $[0]$ and
    $[1]$, and the two inert morphisms $[1] \rightrightarrows [0]$,
  \item $\simp^{\op,\flat}$ denotes the pattern where
  $\simp^{\op,\flat,\el}:= \{[1]\}$.
  \end{itemize}
  A Segal $\simp^{\op,\natural}$-object is a functor $F \colon \Dop
  \to \mathcal{C}$ such that
  \[ F([n]) \isoto F([1]) \times_{F([0])} \cdots \times_{F([0])}
    F([1]).\]
  In particular, a Segal $\simp^{\op,\natural}$-space is precisely a
  \emph{Segal space} in the sense
  of Rezk~\cite{Rezk}, which describes the algebraic structure of an
  \icat{}. On the other hand, a Segal $\simp^{\op,\flat}$-object $F$
  satisfies
  \[ F([n]) \simeq F([1])^{\times n},\] and describes an associative
  monoid.
\end{example}

\begin{example}
  For any integer $n$ the product $\simp^{n,\op} := (\Dop)^{\times n}$
  has a coordinate-wise factorization system (\ie{} a morphism is
  active or inert precisely when all of its components are). Using
  this we can define two algebraic pattern structures
  $\simp^{n,\op,\natural}$ and
  $\simp^{n,\op,\flat}$, where
  \[ \simp^{n,\op,\natural,\el} := (\simp^{\op,\natural,\el})^{n}\]
  consists of all objects $([i_{1}],\ldots,[i_{n}])$ with $i_{s}=0$ or
  $1$ for all $s$, while 
  \[ \simp^{n,\op,\flat,\el} := \{([1],\ldots,[1])\}.\] These are both
  special cases of products of algebraic patterns
  (Corollary~\ref{cor:PattLim}). Segal
  $\simp^{n,\op,\natural}$-spaces are \emph{$n$-uple Segal spaces},
  which describe internal \icats{} in internal \icats{} in \ldots{} in \icats{}. A special
  class of these was first introduced by Barwick~\cite{BarwickThesis} as a model for
  $(\infty,n)$-categories. On the other hand, the Dunn--Lurie
  additivity theorem \cite[Theorem 5.1.2.2]{ha} implies that Segal
  $\simp^{n,\op,\flat}$-objects are equivalent to
  $\mathbb{E}_{n}$-algebras, \ie{} algebras for the
  little $n$-disc operad.
\end{example}

\begin{example}\label{ex:Theta}
  Let $\bbTheta_{n}$ be defined inductively by $\bbTheta_{0}:= *$
  and $\bbTheta_{n} := \simp \wr \bbTheta_{n-1}$, where for any
  category $\mathbf{C}$ the wreath product $\simp \wr \mathbf{C}$ has
  objects $[n](C_{1},\ldots,C_{n})$ with $C_{i} \in \mathbf{C}$, and
  morphisms $[n](C_{1},\ldots,C_{n}) \to [m](C'_{1},\ldots,C'_{m})$
  given by morphisms $\phi \colon [n] \to [m]$ in $\simp$ together
  with maps $\psi_{ij} \colon C_{i} \to C_{j}$ in $\mathbf{C}$
  whenever $\phi(i-1)< j \leq \phi(i)$. (This category was first
  considered in unpublished work of Joyal; the ``wreath product''
  definition is due to Berger~\cite{Berger}.) Then $\bbTheta_{n}$ has
  an inductively defined factorization system (first defined in
  \cite[Lemma 1.11]{BergerNerve}): the morphism above is
  \emph{inert} (or \emph{active}) if $\phi$ is inert (active) and each
  $\psi_{ij}$ is inert (active). We can again use this to define two
  algebraic patterns. To do so we need some notation: We
  inductively define objects $C_{0},\ldots,C_{n}$ in $\bbTheta_{n}$ by
  $C_{0}:= [0]()$ and $C_{n} := [1](C_{n-1})$, starting with $C_{0}$
  being the unique object of $\bbTheta_{0}$. Then
  \begin{itemize}
  \item $\bbTheta_{n}^{\op,\natural}$ is defined by taking
    $\bbTheta_{n}^{\op,\natural,\el}$ to contain the objects
    $C_{0},\ldots,C_{n}$; we can depict this category as
    \[ C_{n} \rightrightarrows C_{n-1} \rightrightarrows  \cdots
      \rightrightarrows  C_{0}.\]
  \item $\bbTheta_{n}^{\op,\flat}$ is defined by taking
    $\bbTheta_{n}^{\op,\flat,\el}$ to contain the single
    object $C_{n}$.
  \end{itemize}
  Segal $\bbTheta_{n}^{\op,\natural}$-spaces are then precisely Rezk's
  \emph{$\bbTheta_{n}$-spaces} \cite{RezkThetan}, which describe the
  algebraic structure of
  $(\infty,n)$-categories. On the other hand, Segal
  $\bbTheta_{n}^{\op,\flat}$-objects are again equivalent to
  $\mathbb{E}_{n}$-algebras --- this follows from \cite[Theorem
  8.12]{bar} together with the Dunn--Lurie additivity theorem.
\end{example}

\begin{example}\label{ex Leinstercat}
  All the examples considered so far are special cases of the
  following construction, due to Barwick: Suppose $\Phi$ is a \emph{perfect operator
    category} in the sense of \cite{bar}, and let
  $\Lambda(\Phi)$ be its Leinster category, which is the Kleisli
  category of a certain monad on $\Phi$. This has an active-inert
  factorization system by \cite[Lemma 7.3]{bar}, where the active
  morphisms are the free morphisms on morphisms of $\Phi$. Using this
  factorization system we can define two natural algebraic patterns:
  \begin{itemize}
  \item $\Lambda(\Phi)^{\flat}$ is defined by taking
    $\Lambda(\Phi)^{\flat,\el}$ to consist only of the terminal object
    $* \in \Phi$,
  \item $\Lambda(\Phi)^{\natural}$ is defined by taking
    $\Lambda(\Phi)^{\natural,\el}$ to contain all objects $E$ such that
    there is an inert map $* \intto E$ in $\Lambda(\Phi)$.
  \end{itemize}
  If $\mathbb{O}$ denotes the category of (possibly empty) ordered
  finite sets then $\Lambda(\mathbb{O}) \simeq \Dop$, while if
  $\mathbb{F}$ denotes the category of finite sets then
  $\Lambda(\mathbb{F}) \simeq \mathbb{F}_{*}$, and these pattern
  structures agree with those defined above. The same holds for
  $\bbTheta_{n}^{\op}$, which can be described as the Leinster
  category of a wreath product $\mathbb{O}^{\wr n}$ of operator
  categories.
\end{example}

\begin{example}\label{ex Omega}
  Let $\bbO$ be the \emph{dendroidal category} of Moerdijk and Weiss
  \cite[\S 3]{MoerdijkWeiss}; this can be defined as the category of free
  operads on trees. This has a natural active-inert
  factorization system, described for example in \cite{Kock} (where
  the inert maps are called ``free'' and the active ones
  ``boundary-preserving''). Using this we can define an algebraic
  pattern $\bbO^{\op,\natural}$ where $\bbO^{\op,\natural,\el}$
  consists of the \emph{corollas} $C_{n}$ (i.e. trees with one vertex) and the plain edge
  $\eta$. Segal $\bbO^{\op,\natural}$-spaces are the dendroidal Segal
  spaces introduced by Cisinski and Moerdijk~\cite{CisinskiMoerdijk2},
  which describe the algebraic structure of $\infty$-operads. The
  Segal condition says that the value of a Segal object at a tree
  decomposes as a limit over the corollas and edges of the tree. (We
  can also consider a pattern $\bbO^{\op,\flat}$ where the elementary
  objects are just the corollas; then Segal $\bbO^{\op,\flat}$-spaces
  describe \iopds{} with a single object.)
\end{example}

\begin{example}\label{ex DPhi}
  If $\Phi$ is an operator category, let $\simp_{\Phi}$ be the
  category defined in \cite[Definition 2.4]{bar}. This has 
  pairs $([m], f \colon [m] \to \Phi)$ as objects, and morphisms
  $([m], f) \to ([n], g)$ are given by morphisms
  $\phi \colon [m] \to [n]$ in $\simp$ together with certain natural
  transformations $\eta \colon f \to g \circ \phi$. We define a
  morphism $(\phi,\eta)\colon ([m], f) \to ([n], g)$ in $\simp_\Phi$
  to be \emph{inert} if $\phi$ is inert in $\simp$, and \emph{active}
  if $\phi$ is active and $\eta_i\colon f(i)\to g(\phi(i))$ is an
  isomorphism for every $0\leq i\leq m$. This gives an inert--active
  factorization system on $\Dop_{\Phi}$, and we define an algebraic
  pattern $\simp_{\Phi}^{\op,\natural}$ by taking the elementary
  objects to be $([0], *)$ and $([1], I \to *)$ (where $*$ denotes the
  terminal object). Then Segal $\simp_{\Phi}^{\op,\natural}$-spaces
  are precisely the Segal $\Phi$-operads of \cite[\S 2]{bar}, which
  describe \emph{$\Phi$-\iopds{}}. (When $\Phi$ is $\mathbb{F}$ these
  agree with \iopds{} in the
  sense of Lurie by \cite[Theorem
  10.16]{bar}, and with dendroidal Segal spaces by \cite[Theorem
  1.1]{ChuHaugsengHeuts}.)
\end{example}

\begin{example}\label{ex Gamma}
  Let $\bbGamma$ be the category of acyclic connected finite directed graphs
  defined by Hackney, Robertson, and Yau in \cite{HRYProperad}. Then
  $\bbGamma^{\op}$ has an inert--active factorization system described
  in \cite[2.4.14]{Kock_Properads} (where the active maps are called
  ``refinements'' and the inert maps are called ``convex open
  inclusions''). Using this we can define an algebraic pattern
  structure $\bbGamma^{\op,\natural}$ by taking the elementary objects
  to be the elementary graphs with at most one vertex.
  Segal
  $\bbGamma^{\op,\natural}$-spaces are equivalent to the model
  of \emph{$\infty$-properads} as ``graphical spaces'' satisfying a
  Segal condition that is briefly discussed in \cite{PropLect}; this
  is presumably equivalent (after imposing a completeness condition)
  to the model of $\infty$-properads as certain presheaves of sets on
  $\bbGamma$ constructed in \cite{HRYProperad}. 
\end{example}

\begin{example}\label{ex Xi}
  Let $\bbXi$ denote the category of unrooted trees defined in
  \cite{HRYCyclic}. Then $\bbXi^{\op}$ has an inert--active
  factorization system, described in \cite[\S 4]{HRYCyclic}, and using
  this we can give $\bbXi^{\op}$ an algebraic pattern structure
  $\bbXi^{\op,\natural}$ where the elementary objects are the stars
  and the plain edge.  Segal $\bbXi^{\op,\natural}$-spaces are then
  precisely the model for cyclic \iopds{} considered by Hackney,
  Robertson, and Yau~\cite{HRYCyclic}.
\end{example}

\begin{example}
  Let $\mathbf{U}$ denote the category of connected graphs defined in
  \cite{HRYModular}. Then $\mathbf{U}^{\op}$ has an inert--active
  factorization system, described in \cite[\S 2.1]{HRYModular}, and we
  can use this to equip $\mathbf{U}^{\op}$ with an algebraic pattern
  structure $\mathbf{U}^{\op,\natural}$ where the elementary objects
  are the stars and the plain edge. We can also consider an algebraic
  pattern $\mathbf{U}^{\op,\flat}$ where the elementary objects are
  just the stars; Segal $\mathbf{U}^{\op,\flat}$-objects are then the
  \emph{Segal modular operads} defined by Hackney, Robertson, and
  Yau~\cite{HRYModular}.
\end{example}

\begin{remark}
  Below in \S\ref{sec:WSF} we will define (weak) Segal fibrations over
  an algebraic pattern, which give general classes of examples of
  algebraic patterns. As a special case, we will see that every
  \iopd{} $\mathcal{O}$ in the sense of Lurie~\cite{ha} has an
  algebraic pattern structure $\mathcal{O}^{\flat}$ such that a Segal
  $\mathcal{O}^{\flat}$-object in an \icat{} $\mathcal{C}$ with finite
  products is precisely an $\mathcal{O}$-monoid in $\mathcal{C}$.
\end{remark}

\section{Morphisms of Algebraic Patterns}\label{sec:pattmor}
In this section we define morphisms of algebraic patterns, and
consider when they are compatible with Segal objects. We then discuss
some examples of such morphisms.

\begin{defn}
Let $\mathcal{O}$ and $\mathcal{P}$ be algebraic patterns. A \emph{morphism of
algebraic patterns} from $\mathcal{O}$ to $\mathcal{P}$ is a functor $f
\colon \mathcal{O} \to \mathcal{P}$ such that $f$ preserves both
active and inert maps, and takes elementary object in $\mathcal{O}$ to
elementary objects in $\mathcal{P}$.
\end{defn}

In general, morphisms of algebraic patterns do not necessarily
interact well with Segal objects. We therefore isolate the class of
morphisms that preserve Segal objects under restriction:
\begin{defn}
A morphism of algebraic patterns $f \colon \mathcal{O} \to \mathcal{P}$
is called a \emph{Segal morphism} if it satisfies the following
condition:
\begin{itemize}
\item[($\ast$)] For all $X \in \mathcal{O}$ the induced functor
$\mathcal{O}^{\el}_{X/} \to \mathcal{P}^{\el}_{f(X)/}$ induces an
equivalence
\[ \lim_{\mathcal{P}^{\el}_{f(X)/}} F \isoto
\lim_{\mathcal{O}^{\el}_{X/}} F \circ f^{\el}\]
for every Segal $\mathcal{P}$-space $F \colon \mathcal{P} \to
\mathcal{S}$.
\end{itemize}
\end{defn}

\begin{remark}
  The condition depends only on the restriction of $F$ to
  $\mathcal{P}^{\el}$, so we could equivalently have  considered
  functors $\mathcal{P}^{\el} \to \mathcal{S}$ that occur as
  restrictions of Segal $\mathcal{P}$-spaces.
\end{remark}

\begin{remark}
  In practice, a morphism $f$ is a Segal morphism because the functor
  $\mathcal{O}^{\el}_{X/} \to \mathcal{P}^{\el}_{f(X)/}$ is coinitial,
  in which case we say that $f$ is a \emph{strong Segal
    morphism}. However, the more general definition allows for the
  following characterization:
\end{remark}

\begin{lemma}\label{lem:Segalmor}
  The following are equivalent for a morphism of algebraic patterns
  $f \colon \mathcal{O} \to \mathcal{P}$:
\begin{enumerate}[(1)]
\item $f$ is a Segal morphism.
\item The functor $f^{*} \colon \Fun(\mathcal{P},\mathcal{S}) \to
\Fun(\mathcal{O}, \mathcal{S})$ restricts to a functor
$\Seg_{\mathcal{P}}(\mathcal{S}) \to
\Seg_{\mathcal{O}}(\mathcal{S})$.
\item For every \icat{} $\mathcal{C}$, the functor
$f^{*} \colon \Fun(\mathcal{P},\mathcal{C}) \to \Fun(\mathcal{O},
\mathcal{C})$ restricts to a functor
$\Seg_{\mathcal{P}}(\mathcal{C}) \to
\Seg_{\mathcal{O}}(\mathcal{C})$.
\end{enumerate}
\end{lemma}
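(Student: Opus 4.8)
The plan is to prove the cycle $(3) \Rightarrow (2) \Rightarrow (1) \Rightarrow (3)$; here $(3) \Rightarrow (2)$ is just the special case $\mathcal{C} = \mathcal{S}$. The engine for the other two implications is a single factorization of the Segal map. Fix $X \in \mathcal{O}$, and let $g \colon \mathcal{O}^{\el}_{X/} \to \mathcal{P}^{\el}_{f(X)/}$ be the functor induced by $f$ (it lands in $\mathcal{P}^{\el}_{f(X)/}$ because $f$ preserves inert maps and elementary objects). For any functor $F \colon \mathcal{P} \to \mathcal{C}$ valued in a suitably complete \icat{} $\mathcal{C}$, I claim that the Segal map of $f^{*}F$ at $X$ factors as
\[ F(f(X)) \to \lim_{E' \in \mathcal{P}^{\el}_{f(X)/}} F(E') \to \lim_{E \in \mathcal{O}^{\el}_{X/}} F(f(E)),\]
where the first map is the Segal map of $F$ at $f(X)$ and the second is the comparison map of $(\ast)$ induced by $g$. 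I would establish this identification first: both composites are the cone over $E \mapsto F(f(E))$ with apex $F(f(X))$ assembled from the structure maps $F(f(X)) \to F(f(E))$, since the projection of the middle limit onto the component indexed by $f(X) \intto f(E)$ is exactly that structure map.

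For $(2) \Rightarrow (1)$, I take $\mathcal{C} = \mathcal{S}$ and let $F$ be an arbitrary Segal $\mathcal{P}$-space. The first map in the factorization is then an equivalence because $F$ is a Segal object, while the composite is an equivalence because $f^{*}F$ is a Segal $\mathcal{O}$-space by hypothesis $(2)$. By the 2-out-of-3 property, the second map --- which is precisely the map occurring in condition $(\ast)$ --- is an equivalence. Since $F$ and $X$ are arbitrary, $f$ is a Segal morphism.

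For $(1) \Rightarrow (3)$, I reduce the $\mathcal{C}$-valued assertion to the space-valued condition $(\ast)$ via the Yoneda lemma. Given a Segal $\mathcal{P}$-object $F \colon \mathcal{P} \to \mathcal{C}$ and any $C \in \mathcal{C}$, the functor $\Map_{\mathcal{C}}(C, F(-)) \colon \mathcal{P} \to \mathcal{S}$ is a Segal $\mathcal{P}$-space, as $\Map_{\mathcal{C}}(C, -)$ preserves limits. Applying $(\ast)$ to it and pulling $\Map_{\mathcal{C}}(C,-)$ out past the two limits gives
\[ \Map_{\mathcal{C}}\Bigl(C, \lim_{E' \in \mathcal{P}^{\el}_{f(X)/}} F(E')\Bigr) \isoto \Map_{\mathcal{C}}\Bigl(C, \lim_{E \in \mathcal{O}^{\el}_{X/}} F(f(E))\Bigr)\]
for every $C$, so the comparison map is an equivalence in $\mathcal{C}$ by Yoneda. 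Feeding this through the factorization above, together with the Segal equivalence $F(f(X)) \isoto \lim_{E' \in \mathcal{P}^{\el}_{f(X)/}} F(E')$, shows that $f^{*}F$ satisfies the Segal condition at every $X$, so $f^{*}F \in \Seg_{\mathcal{O}}(\mathcal{C})$.

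I expect the main obstacle to be purely in the bookkeeping of the first paragraph: pinning down that the comparison map of $(\ast)$ really is the second leg of the factorization triangle, so that one and the same map is controlled by the 2-out-of-3 argument in $(2) \Rightarrow (1)$ and by the Yoneda argument in $(1) \Rightarrow (3)$. Once this naturality is secured, both substantial implications become formal, and the passage from $\mathcal{S}$ to a general $\mathcal{C}$ is handled uniformly by the limit-preservation of corepresentable functors.
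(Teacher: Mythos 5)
Your proposal is correct and follows essentially the same route as the paper: the equivalence of (1) and (2) via the factorization of the Segal map of $f^{*}F$ through $\lim_{\mathcal{P}^{\el}_{f(X)/}} F$ (which the paper declares ``immediate from the definition'' and you rightly identify as a 2-out-of-3 argument), and the upgrade from $\mathcal{S}$ to a general $\mathcal{C}$ by testing against the corepresentable Segal $\mathcal{P}$-spaces $\Map_{\mathcal{C}}(C, F(\blank))$. The only difference is organizational (you prove the cycle $(3)\Rightarrow(2)\Rightarrow(1)\Rightarrow(3)$ rather than the paper's $(1)\Leftrightarrow(2)$, $(3)\Rightarrow(2)$, $(2)\Rightarrow(3)$), which changes nothing of substance.
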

\begin{proof}
  It is immediate from the definition that (1) is equivalent to (2)
  and that (3) implies (2). It remains to check that (2) implies
  (3). Suppose $F \colon \mathcal{P} \to \mathcal{C}$ is a Segal
  $\mathcal{P}$-object; we need to show that $f^{*}F$ is a Segal
  $\mathcal{O}$-object, \ie{} that for all $X \in \mathcal{O}$ the
  natural map
  \[ \lim_{\mathcal{P}^{\el}_{f(X)/}} F \to
    \lim_{\mathcal{O}^{\el}_{X/}} F \circ f^{\el}\]
  is an equivalence in $\mathcal{C}$. Equivalently, we must show that
  for any $C \in \mathcal{C}$, the map of spaces
  \[ \lim_{\mathcal{P}^{\el}_{f(X)/}} \Map(C, F) \to
    \lim_{\mathcal{O}^{\el}_{X/}} \Map(C, F) \circ f^{\el}\] is an
  equivalence, which is true since $\Map(C, F)$ is a Segal
  $\mathcal{P}$-space.
\end{proof}

\begin{remark}
  One might feel that the Segal property is sufficiently fundamental
  that it should be included as part of the notion of a morphism of
  algebraic patterns. However, more general morphisms also turn out to
  be occasionally useful. For example, the identity functor of
  $\xF_{*}$ viewed as a functor
  $\xF_{*}^{\flat} \to \xF_{*}^{\natural}$ is a morphism of patterns,
  but is not a Segal morphism, and we will see later in
  \S\ref{sec:RKE} that it induces a functor from Segal
  $\xF_{*}^{\flat}$-objects to Segal $\xF_{*}^{\natural}$-object that
  can be viewed as a right Kan extension along $\id_{\xF_{*}}$.
\end{remark}

\begin{propn}\label{propn:f!adj}
Suppose $f \colon \mathcal{O} \to \mathcal{P}$ is a Segal morphism of
algebraic patterns, and $\mathcal{C}$ is a presentable \icat{}. Then
there is an adjunction
\[ L_{\Seg}f_{!} \colon \Seg_{\mathcal{O}}(\mathcal{C}) \rightleftarrows
\Seg_{\mathcal{P}}(\mathcal{C}) \cocolon f^{*} \]
where $L_{\Seg}$ is the localization functor left adjoint to the
inclusion $\Seg_{\mathcal{O}}(\mathcal{C}) \hookrightarrow
\Fun(\mathcal{O}, \mathcal{C})$, and $f_{!}$ is the functor of left
Kan extension along $f$.
\end{propn}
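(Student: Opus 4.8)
The plan is to obtain the adjunction formally, by combining three ingredients that are already available: the usual left-Kan-extension adjunction $f_{!} \dashv f^{*}$ between the functor \icats{}, the fact that $f^{*}$ preserves Segal objects (Lemma~\ref{lem:Segalmor}), and the fact that $\Seg_{\mathcal{P}}(\mathcal{C})$ sits inside $\Fun(\mathcal{P}, \mathcal{C})$ as a reflective localization (Lemma~\ref{lem:locality}(iii)).

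First I would set up the ambient data. Since $\mathcal{C}$ is presentable it is cocomplete, so left Kan extension along $f$ produces a functor $f_{!} \colon \Fun(\mathcal{O}, \mathcal{C}) \to \Fun(\mathcal{P}, \mathcal{C})$ left adjoint to restriction $f^{*}$ (the standard theory of Kan extensions, \eg{} \cite[\S 4.3]{ht}). As $f$ is a Segal morphism, Lemma~\ref{lem:Segalmor} guarantees that $f^{*}$ carries $\Seg_{\mathcal{P}}(\mathcal{C})$ into $\Seg_{\mathcal{O}}(\mathcal{C})$, and we write $f^{*}$ also for this restriction. Finally Lemma~\ref{lem:locality}(iii) provides the localization functor $L_{\Seg} \colon \Fun(\mathcal{P}, \mathcal{C}) \to \Seg_{\mathcal{P}}(\mathcal{C})$ left adjoint to the inclusion; thus $L_{\Seg} f_{!}$, precomposed with $\Seg_{\mathcal{O}}(\mathcal{C}) \hookrightarrow \Fun(\mathcal{O}, \mathcal{C})$, is a functor $\Seg_{\mathcal{O}}(\mathcal{C}) \to \Seg_{\mathcal{P}}(\mathcal{C})$.

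The core step is then a direct computation of mapping spaces. For $F \in \Seg_{\mathcal{O}}(\mathcal{C})$ and $G \in \Seg_{\mathcal{P}}(\mathcal{C})$ I would exhibit natural equivalences
\[
\begin{aligned}
\Map_{\Seg_{\mathcal{P}}(\mathcal{C})}(L_{\Seg} f_{!} F, G)
&\simeq \Map_{\Fun(\mathcal{P}, \mathcal{C})}(f_{!} F, G) \\
&\simeq \Map_{\Fun(\mathcal{O}, \mathcal{C})}(F, f^{*} G) \\
&\simeq \Map_{\Seg_{\mathcal{O}}(\mathcal{C})}(F, f^{*} G).
\end{aligned}
\]
The first equivalence combines fullness of $\Seg_{\mathcal{P}}(\mathcal{C})$ with the universal property of $L_{\Seg}$: since $G$ is local, mapping into it does not distinguish $f_{!}F$ from its reflection $L_{\Seg} f_{!}F$. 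The middle equivalence is the adjunction $f_{!} \dashv f^{*}$, and the last uses fullness of $\Seg_{\mathcal{O}}(\mathcal{C})$ together with the fact that $f^{*}G$ already lies in this subcategory. Naturality in both variables then yields the claimed adjunction $L_{\Seg} f_{!} \dashv f^{*}$.

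I do not expect a genuine obstacle: the proposition is a formal consequence of the two preceding lemmas, and all the \icatl{} input (existence of $f_{!}$ as an honest left adjoint, reflectivity of the Segal subcategory) is standard. The only point demanding a little care is keeping track of which inclusion $L_{\Seg}$ reflects along --- it must be $\Seg_{\mathcal{P}}(\mathcal{C}) \hookrightarrow \Fun(\mathcal{P}, \mathcal{C})$, as forced by the domains of $f_{!}$ and $L_{\Seg}$ --- and verifying that the chain of equivalences above is natural, so that it assembles into an adjunction and not merely a pointwise equivalence of mapping spaces; the latter is automatic since every equivalence used is itself natural.
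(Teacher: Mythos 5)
Your proposal is correct and follows essentially the same route as the paper: both establish the adjunction via the chain of natural equivalences $\Map(L_{\Seg}f_{!}F, G) \simeq \Map(f_{!}F, G) \simeq \Map(F, f^{*}G) \simeq \Map(F, f^{*}G)$ using locality of $G$, the Kan-extension adjunction, and the fact that $f^{*}$ preserves Segal objects. Your write-up is if anything slightly more careful about the variance and the role of each full subcategory inclusion.
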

\begin{proof}
  Since $f^{*}$ restricts to a functor on Segal objects, for $F \in
  \Seg_{\mathcal{P}}(\mathcal{C})$ and $G \in
  \Seg_{\mathcal{O}}(\mathcal{C})$ we have
  a natural equivalence
  \[\Map_{\Seg_{\mathcal{O}}(\mathcal{C})}(L_{\Seg}f_{!}F, G) \simeq
    \Map_{\Fun(\mathcal{O}, \mathcal{C})}(f_{!}F, G) \simeq
    \Map_{\Fun(\mathcal{P}, \mathcal{C})}(F, f^{*}G) \simeq
    \Map_{\Seg_{\mathcal{P}}(\mathcal{C})}(F, f^{*}G),\]
  which implies the claim.
\end{proof}

\begin{remark}
  Below in \S\ref{sec:LKE} we will give conditions on a morphism $f$
  such that the left Kan extension functor $f_{!}$ preserves Segal
  objects, and so gives a left adjoint to $f^{*}$ without localizing.
\end{remark}

We now consider some examples of morphisms of patterns:
\begin{example}\label{ex:simptoF}
  There is a functor $|\blank| \colon \Dop \to \xF_{*}$ which takes an object $[n]$ to $|[n]|:= \angled{n}$ and a morphism $\alpha\colon [n]\to [m]$ in $\simp$ to $|\alpha|\colon |[m]|\to |[n]|$ given by 
\begin{equation*}
|\alpha|(i) = \begin{cases}
j &\text{if $\alpha(j-1)< j\leq \alpha(j)$}\\
0 &\text{otherwise}
\end{cases}
\end{equation*}
This functor gives a Segal morphism of algebraic patterns
$\simp^{\op,\natural} \to \xF_{*}^{\natural}$ as well as
$\simp^{\op,\flat} \to \xF_{*}^{\flat}$.
\end{example}

\begin{example}
  There is a functor $\tau_{n} \colon \simp^{n,\op} \to
  \bbTheta_{n}^{\op}$, defined inductively by setting $\tau_0:=\id$ and 
  \[\tau_{n}([i_{1}],\ldots,[i_{n}]) :=
    [i_{1}](\tau_{n-1}([i_{2}],\ldots,[i_{n}]), \ldots,
    \tau_{n-1}([i_{2}],\ldots,[i_{n}])).\] This functor gives a Segal
  morphism of algebraic patterns
  $\simp^{n,\op,\natural} \to \bbTheta_{n}^{\op,\natural}$ as well as
  $\simp^{n,\op,\flat} \to \bbTheta_{n}^{\op,\flat}$.
\end{example}

\begin{example}
  The previous examples are special cases of the following: Let $f
  \colon \Phi \to \Psi$ be an \emph{operator morphism} between perfect
  operator categories, as defined in \cite[Definition 1.10]{bar}.  
  As discussed in \cite[\S 7]{bar} this induces a functor $\Lambda(f)
  \colon \Lambda(\Phi) \to \Lambda(\Psi)$ between the corresponding
  Leinster categories, and it is easy to check that this preserves the
  inert and active morphisms. Since operator morphisms preserve terminal objects by definition, it follows from Example~\ref{ex Leinstercat} that $\Lambda(f)$ preserves elementary objects, and hence gives morphisms of algebraic
  patterns $\Lambda(\Phi)^{\natural} \to \Lambda(\Psi)^{\natural}$ and
  $\Lambda(\Phi)^{\flat} \to \Lambda(\Psi)^{\flat}$. The latter is
  evidently a Segal morphism, since
  \[ \Lambda(\Phi)^{\flat,\el}_{I/} \cong \{* \to I\} \cong \{* \to
    f(I)\} \cong \Lambda(\Psi)^{\flat,\el}_{f(I)/},\] where the seond
  isomorphism is part of the definition of an operator morphism.
\end{example}

\begin{example}
  Every operator category $\Phi$ has a unique operator morphism
  $|\blank| \colon \Phi \to \xF$, which gives a Segal morphism
  $\Lambda(\Phi)^{\flat} \to \xF_{*}^{\flat}$. This is also a Segal
  morphism $\Lambda(\Phi)^{\natural} \to \xF_{*}^{\natural}$ provided
  the category $\Lambda(\Phi)^{\natural,\el}_{I/}$ is weakly
  contractible for all $I \in \Phi$.
\end{example}

\begin{example}\label{ex:cyclicmor}
  By \cite[Definition 1.20]{HRYCyclic}, the category $\bbO$ of trees
  can be identified with a subcategory of the category $\bbXi$ of
  unrooted trees, and \cite[Definition 4.2]{HRYCyclic} and
  \cite[Remark 4.3]{HRYCyclic} show that this inclusion gives a
  morphism of algebraic patterns
  $\iota\colon \bbO^{\op,\natural}\to \bbXi^{\op,\natural}$. The
  description of morphisms in $\bbO^\op$ in \cite[Definition
  1.20]{HRYCyclic} implies that for every $X\in \bbO^{\op}$ and every
  $\alpha\in \bbXi_{\iota X/}^{\op, \el}$, the $\infty$-category
  $\bbO^{\op,\el}_{X/}\times_{\bbXi_{\iota X/}^{\op, \el}}
  (\bbXi_{\iota X/}^{\op, \el})_{/\alpha}$ has a terminal object. In
  particular, the functor $\bbO^{\op}_{X/}\to \bbXi^{\op}_{\iota X/}$
  is coinitial, and hence $\iota$ is a strong Segal morphism.  The resulting
  functor
  \[\iota^* \colon  \Seg_{\bbXi^{\op,\natural}}(\xS) \to \Seg_{\bbO^{\op,\natural}}(\xS)\]
  is the forgetful functor from
  cyclic $\infty$-operads to $\infty$-operads.
\end{example}

\section{The $\infty$-Category of Algebraic
  Patterns}\label{sec:pattcat}
In this section we construct the \icat{} of algebraic patterns, and
describe limits and filtered colimits in this \icat{}. As a first
step, we consider the \icat{} of \icats{} equipped with a factorization system:
\begin{defn}
We define $\name{Fact}$ to be the full subcategory of
$\Fun(\Lambda^{2}_{2}, \CatI)$ (where $\Lambda^{2}_{2}$ denotes the
category $0 \to 2 \from 1$) spanned by those cospans
\[ \mathcal{C}_{L} \to \mathcal{C} \from \mathcal{C}_{R} \] that
describe factorization systems, \ie{} those such that the functors
$\mathcal{C}_{L},\mathcal{C}_{R} \to \mathcal{C}$ are essentially
surjective subcategory inclusions, and
$\Fun_{L,R}(\Delta^{2},\mathcal{C}) \to \Fun(\Delta^{\{0,2\}},
\mathcal{C})$ is an equivalence, where the domain is defined as the
pullback
\nolabelcsquare{\Fun_{L,R}(\Delta^{2},\mathcal{C})}{\Fun(\Delta^{2},\mathcal{C})}{\Fun(\Delta^{1},
\mathcal{C}_{L}) \times \Fun(\Delta^{1},
\mathcal{C}_{R})}{\Fun(\Delta^{\{0,1\}}, \mathcal{C}) \times
\Fun(\Delta^{\{1,2\}}, \mathcal{C}).}
\end{defn}

\begin{propn}\label{propn:factlim}
  The \icat{} $\name{Fact}$ is closed under limits and filtered
  colimits in $\Fun(\Lambda^{2}_{2},\CatI)$. In particular, the
  \icat{} $\name{Fact}$ has limits and filtered colimits, and the
  forgetful functor to $\CatI$ preserves these.
\end{propn}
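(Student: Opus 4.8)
The plan is to exploit the fact that limits and filtered colimits in the functor \icat{} $\Fun(\Lambda^{2}_{2}, \CatI)$ are computed pointwise, so that for a diagram $i \mapsto (\mathcal{C}^{i}_{L} \to \mathcal{C}^{i} \from \mathcal{C}^{i}_{R})$ of factorization systems the candidate limit or filtered colimit cospan has the pointwise (co)limits as its three vertices. It then suffices to check that the two conditions defining $\name{Fact}$ survive these pointwise operations. Since $\name{Fact}$ is a full subcategory, this is exactly the assertion that it is closed under the relevant (co)limits, and the claim about the forgetful functor to $\CatI$ follows because this functor is evaluation at the object $2 \in \Lambda^{2}_{2}$, which preserves all (co)limits.

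The formal input I would assemble first is that each construction appearing in the definition is compatible with limits and filtered colimits: the cotensor functors $\Fun(\Delta^{n}, -)\colon \CatI \to \CatI$ preserve limits (being right adjoints) and filtered colimits (as $\Delta^{n}$ is a compact object, so that each $\Delta^{m} \times \Delta^{n}$ is compact and one checks the comparison map on mapping spaces out of the $\Delta^{m}$); pullbacks commute with arbitrary limits and, since filtered colimits in $\CatI$ are left exact, with filtered colimits as well; the core functor $(-)^{\simeq}$ preserves limits (being right adjoint to the inclusion $\xS \hookrightarrow \CatI$) and filtered colimits (an equivalence in a filtered colimit of \icats{} is detected at a finite stage); and monomorphisms of \icats{} are closed under limits and filtered colimits, by writing the defining condition $\mathcal{D} \isoto \mathcal{D} \times_{\mathcal{C}} \mathcal{D}$ and commuting it past the (co)limit. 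Recall here that a functor is a subcategory inclusion precisely when it is a monomorphism in $\CatI$.

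With these in hand I would verify the two conditions. For the factorization condition, both $\Fun_{L,R}(\Delta^{2}, \mathcal{C})$ and $\Fun(\Delta^{\{0,2\}}, \mathcal{C})$ are obtained from the cospan by applying cotensors and a pullback, hence their formation commutes with pointwise limits and filtered colimits; thus the comparison map for the (co)limit cospan is the (co)limit of the comparison maps of the terms, and a limit or filtered colimit of equivalences is an equivalence. For the condition that the legs are essentially surjective subcategory inclusions, the legs of the (co)limit cospan are monomorphisms by the stability of monomorphisms, so it remains to check essential surjectivity. For filtered colimits this is immediate, since every object of a filtered colimit comes from a finite stage, where the corresponding leg is essentially surjective.

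The one genuinely non-formal point — and the main obstacle — is essential surjectivity of the legs in the limit case, since essential surjectivity is not preserved by limits in general. Here I would invoke the standard fact that in any factorization system both classes contain all equivalences, so that each leg $\mathcal{C}^{i}_{L} \hookrightarrow \mathcal{C}^{i}$ (and likewise for $R$) induces an equivalence on cores $(\mathcal{C}^{i}_{L})^{\simeq} \isoto (\mathcal{C}^{i})^{\simeq}$. Since $(-)^{\simeq}$ preserves limits, the leg $\lim_{i} \mathcal{C}^{i}_{L} \to \lim_{i} \mathcal{C}^{i}$ of the limit cospan again induces an equivalence on cores and is therefore essentially surjective (indeed bijective on equivalence classes of objects). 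This completes the verification of both conditions, showing that the limit and filtered colimit cospans lie in $\name{Fact}$, and the remaining clauses follow as indicated above.
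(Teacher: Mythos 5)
Your proof is correct and follows essentially the same route as the paper: compute (co)limits pointwise, reduce the subcategory‑inclusion condition to closure of monomorphisms and equivalences (of cores and mapping spaces) under limits and filtered colimits, and handle the factorization condition by noting that $\Fun_{L,R}(\Delta^{2},\blank)$ and $\Fun(\Delta^{\{0,2\}},\blank)$ commute with limits and, by compactness of the relevant objects, with filtered colimits. The only cosmetic difference is at essential surjectivity in the limit case, where the paper directly characterizes essentially surjective subcategory inclusions as functors inducing equivalences on cores (using the repleteness convention), while you derive the same equivalence on cores from the fact that both classes of a factorization system contain all equivalences; these amount to the same limit‑stable condition.
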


This will follow from the following observation:
\begin{lemma}\label{lem:subcatlim}
  In the \icat{} $\Fun(\Delta^{1},\CatI)$, the full subcategories of
  subcategory inclusions\footnote{Note that we use ``subcategory
    inclusion'' in the equivalence-invariant sense --- in other words,
    a subcategory in our sense must include all equivalences between
    its objects.}, essentially surjective subcategory inclusions, and
  full subcategory inclusions, are all closed under limits and
  filtered colimits.
\end{lemma}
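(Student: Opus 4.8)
The plan is to reduce all three assertions to the stability of two classes of maps of spaces---monomorphisms (equivalently, $(-1)$-truncated maps) and equivalences---under limits and filtered colimits, by exhibiting the three full subcategories as preimages of these under limit- and filtered-colimit-preserving functors out of $\Fun(\Delta^1,\CatI)$. First I would record the relevant characterizations: a functor $F\colon\mathcal{C}\to\mathcal{D}$ is a subcategory inclusion \IFF{} it is \emph{faithful} (each $\Map_{\mathcal{C}}(x,y)\to\Map_{\mathcal{D}}(Fx,Fy)$ is $(-1)$-truncated) and \emph{core-injective} (the map on cores $\mathcal{C}^{\simeq}\to\mathcal{D}^{\simeq}$ is $(-1)$-truncated); it is a full subcategory inclusion \IFF{} it is fully faithful (the mapping-space maps are equivalences) and core-injective; and it is an essentially surjective subcategory inclusion \IFF{} it is faithful and induces an equivalence on cores. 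The equivalence between ``subcategory inclusion'' and ``faithful $+$ core-injective'' is the one piece of genuine bookkeeping here, and I would prove it by comparing with homotopy categories: faithfulness and core-injectivity make $h\mathcal{C}\to h\mathcal{D}$ a replete subcategory inclusion of $1$-categories and force the square relating $\mathcal{C}$ and $\mathcal{D}$ to their homotopy categories to be cartesian, which is exactly Lurie's description of subcategories.

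Next I would package these characterizations via corepresentable functors. For a compact \icat{} $K$ write $\nu_K:=\Map_{\CatI}(K,\blank)\colon\CatI\to\mathcal{S}$; being corepresentable it preserves limits, and since $K$ is compact it preserves filtered colimits. Taking $K=\Delta^0$ recovers the core, and using the inclusion $\Delta^0\sqcup\Delta^0\hookrightarrow\Delta^1$ I build two functors $\Psi,\Phi\colon\Fun(\Delta^1,\CatI)\to\Fun(\Delta^1,\mathcal{S})$: the functor $\Psi$ sends $F$ to the map on cores $\mathcal{C}^{\simeq}\to\mathcal{D}^{\simeq}$, and $\Phi$ sends $F$ to the comparison map
\[ \Fun(\Delta^1,\mathcal{C})^{\simeq}\longrightarrow \Fun(\Delta^1,\mathcal{D})^{\simeq}\times_{(\mathcal{D}^{\simeq})^{\times 2}}(\mathcal{C}^{\simeq})^{\times 2}, \]
whose fibre over $(x,y)$ is the map $\Map_{\mathcal{C}}(x,y)\to\Map_{\mathcal{D}}(Fx,Fy)$. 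Thus $F$ is faithful (resp.\ fully faithful) \IFF{} $\Phi(F)$ is $(-1)$-truncated (resp.\ an equivalence), and $F$ is core-injective (resp.\ induces an equivalence on cores) \IFF{} $\Psi(F)$ is $(-1)$-truncated (resp.\ an equivalence). Since evaluation at the two endpoints preserves all limits and colimits, limits and filtered colimits in $\Fun(\Delta^1,\CatI)$ are computed pointwise; combined with the fact that the $\nu_K$ preserve limits and filtered colimits, and that filtered colimits commute with the finite limit defining the pullback in the target of $\Phi$, both $\Psi$ and $\Phi$ preserve limits and filtered colimits.

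Finally I would invoke the elementary principle that if $G\colon\mathcal{A}\to\mathcal{B}$ preserves limits and filtered colimits and $\mathcal{B}'\subseteq\mathcal{B}$ is a full subcategory closed under them, then the full subcategory $G^{-1}(\mathcal{B}')$ is closed under limits and filtered colimits, and that an intersection of finitely many such preimages is as well. The three full subcategories of interest then become, respectively, $\Phi^{-1}(\mathrm{Mono})\cap\Psi^{-1}(\mathrm{Mono})$, $\Phi^{-1}(\mathrm{Equiv})\cap\Psi^{-1}(\mathrm{Mono})$, and $\Phi^{-1}(\mathrm{Mono})\cap\Psi^{-1}(\mathrm{Equiv})$, where $\mathrm{Mono},\mathrm{Equiv}\subseteq\Fun(\Delta^1,\mathcal{S})$ denote the $(-1)$-truncated maps and the equivalences. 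The result thus follows once I check that $\mathrm{Mono}$ and $\mathrm{Equiv}$ are closed under limits and filtered colimits in $\Fun(\Delta^1,\mathcal{S})$: for $\mathrm{Equiv}$ this is immediate, since these (co)limits are pointwise and a pointwise (co)limit of equivalences is an equivalence; for $\mathrm{Mono}$ it follows from the criterion that $f$ is $(-1)$-truncated \IFF{} its diagonal $X\to X\times_Y X$ is an equivalence, together with the facts that limits commute with limits and that filtered colimits commute with finite limits in $\mathcal{S}$. I expect this last point---stability of monomorphisms of spaces under filtered colimits---to be the main technical nub, since it is the only place where the filtered hypothesis is genuinely used.
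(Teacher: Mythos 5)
Your proof is correct and takes essentially the same route as the paper's: characterize the three classes by whether the induced maps on cores and on mapping spaces are monomorphisms or equivalences, observe that these maps are computed by limit- and filtered-colimit-preserving functors to $\mathcal{S}$, and reduce to the stability of monomorphisms (via the diagonal criterion and commutation of filtered colimits with finite limits) and of equivalences. Your functors $\Psi$ and $\Phi$ merely make explicit what the paper asserts in a single sentence about mapping spaces and underlying spaces being computed pointwise.
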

\begin{proof}
  A functor $F \colon \mathcal{C} \to \mathcal{D}$ is a subcategory
  inclusion precisely when
  $\mathcal{C}^{\simeq} \to \mathcal{D}^{\simeq}$ is a monomorphism of
  spaces, and $\Map_{\mathcal{C}}(x,y) \to \Map_{\mathcal{D}}(Fx,Fy)$
  is a monomorphism of spaces for all $x,y \in \mathcal{C}$. A
  subcategory inclusion $F$ is essentially surjective if the map
  $\mathcal{C}^{\simeq} \to \mathcal{D}^{\simeq}$ is an equivalence,
  and a full subcategory inclusion if the maps
  $\Map_{\mathcal{C}}(x,y) \to \Map_{\mathcal{D}}(Fx,Fy)$ are
  equivalences for all $x,y \in \mathcal{C}$. Since mapping spaces and
  the underlying space of a limit (or filtered colimit) in $\CatI$ are
  computed as limits (or filtered colimits) of spaces, it suffices to
  observe that equivalences and monomorphisms are closed under limits
  and filtered colimits in $\mathcal{S}$. This is obvious for
  equivalences, and for monomorphisms it follows from the
  characterization of these by \cite[Lemma 5.5.6.15]{ht} as the
  morphisms $f \colon X \to Y$ such that the diagonal
  $X \to X \times_{Y}X$ is an equivalence, since filtered colimits
  commute with finite limits and limits commute.
\end{proof}

\begin{proof}[Proof of Proposition~\ref{propn:factlim}]
  It follows from Lemma~\ref{lem:subcatlim} that cospans of
  subcategory inclusions are closed under limits and filtered colimits
  in
  $\Fun(\Lambda^{2}_{2},\CatI)$. Since limits commute, the \icat{}
  $\Fun_{L,R}(\Delta^{2},\blank)$, viewed as a functor
  $\Fun(\Lambda^{2}_{2},\CatI) \to \CatI$, preserves limits, which
  implies that objects such that the natural map
  $\Fun_{L,R}(\Delta^{2},\blank) \to \Fun(\Delta^{\{0,2\}}, \blank)$ is an
  equivalence are also closed under limits. The same holds for
  filtered colimits, since the objects mapped out of in the definition
  of $\Fun_{L,R}(\Delta^{2},\blank)$ are compact, and filtered
  colimits commute with finite limits in $\CatI$.
\end{proof}

\begin{defn}\label{def AlgPatt}
  We now define the \icat{} $\name{AlgPatt}$ of algebraic patterns as
  the full subcategory of the fibre product
  $\name{Fact} \times_{\CatI} \Fun(\Delta^{1}, \CatI)$ (where the
  pullback is over $\name{ev}_{0} \colon \name{Fact} \to \CatI$ and
  $\name{ev}_{1} \colon \Fun(\Delta^{1}, \CatI) \to \CatI$) containing
  the objects
\[ \mathcal{C}' \to \mathcal{C}_{L} \to \mathcal{C} \from
\mathcal{C}_{R} \]
where $\mathcal{C}' \to \mathcal{C}_{L}$ is a full subcategory
inclusion.
\end{defn}

Applying Lemma~\ref{lem:subcatlim} again, now in the case of full
subcategory inclusions, we get:
\begin{cor}\label{cor:PattLim}
The full subcategory $\AlgPatt$ is closed under limits and
filtered colimits in
\[\Fun(\Lambda^{2}_{2}, \CatI) \times_{\CatI} \Fun(\Delta^{1},
\CatI).\] In particular, $\AlgPatt$ has limits and filtered
colimits, and
the forgetful functor to $\CatI$ preserves these. \qed
\end{cor}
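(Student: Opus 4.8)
The plan is to deduce this directly from the two closure results already in hand, by checking the two conditions that cut out $\AlgPatt$ separately on the two factors of the ambient \icat{}. Write $\xE := \Fun(\Lambda^{2}_{2}, \CatI) \times_{\CatI} \Fun(\Delta^{1}, \CatI)$, with projections $\pi_{1} \colon \xE \to \Fun(\Lambda^{2}_{2}, \CatI)$ and $\pi_{2} \colon \xE \to \Fun(\Delta^{1}, \CatI)$. By Definition~\ref{def AlgPatt}, an object of $\xE$ lies in $\AlgPatt$ precisely when (i) its image under $\pi_{1}$ lies in $\name{Fact}$ and (ii) its image under $\pi_{2}$ is a full subcategory inclusion $\mathcal{C}' \to \mathcal{C}_{L}$. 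So it suffices to show that each of these conditions is stable under limits and filtered colimits formed in $\xE$.

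The technical heart of the argument, and the step I would be most careful about, is that limits and filtered colimits in the pullback $\xE$ are computed componentwise, so that $\pi_{1}$ and $\pi_{2}$ preserve them. This rests on the general fact that a pullback of \icats{} along two functors each preserving $K$-indexed limits (respectively $K$-indexed filtered colimits), with all three \icats{} admitting such (co)limits, again admits them and they are computed in the two factors; here the two functors are the evaluation maps $\name{ev}_{0}$ and $\name{ev}_{1}$, which preserve all limits and colimits since these are computed pointwise in functor \icats{}. Everything else is then formal bookkeeping: a limit or filtered colimit of a diagram valued in $\AlgPatt$, formed in $\xE$, projects under $\pi_{1}$ to the corresponding (co)limit of a diagram landing in $\name{Fact}$, which lies in $\name{Fact}$ by Proposition~\ref{propn:factlim}; and it projects under $\pi_{2}$ to the corresponding (co)limit of a diagram of full subcategory inclusions, which is again a full subcategory inclusion by the relevant case of Lemma~\ref{lem:subcatlim}. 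Hence both (i) and (ii) persist, so the (co)limit lies in $\AlgPatt$.

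Finally, the ``in particular'' clauses are immediate once closure is known: since $\xE$ itself has limits and filtered colimits (again by componentwise computation, using that $\name{Fact}$ and $\Fun(\Delta^{1},\CatI)$ do) and $\AlgPatt$ is closed under them, $\AlgPatt$ inherits these (co)limits and the inclusion preserves them. Post-composing with the forgetful functor $\AlgPatt \to \CatI$ sending a pattern to its underlying \icat{}---which is the evaluation of $\pi_{1}$ at the cone point of $\Lambda^{2}_{2}$, and hence preserves limits and filtered colimits---then yields the last assertion.
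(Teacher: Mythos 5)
Your argument is correct and is essentially the paper's own: the paper simply notes that Corollary~\ref{cor:PattLim} follows by applying Lemma~\ref{lem:subcatlim} once more, now for full subcategory inclusions, on top of Proposition~\ref{propn:factlim}, with the componentwise computation of limits and filtered colimits in the pullback left implicit. Your write-up just makes that last point (that $\name{ev}_0$ and $\name{ev}_1$ preserve the relevant (co)limits, so the pullback inherits them componentwise) explicit, which is a reasonable thing to spell out.
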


\begin{remark}
  The \icat{} $\AlgPatt$ contains \emph{all} morphisms of algebraic
  patterns; restricting these to Segal morphisms gives a (wide)
  subcategory $\AlgPatt^{\Seg}$. However, note that Segal morphisms do
  not seem to be closed under filtered colimits or general pullbacks,
  though by Lemma~\ref{lem:Segalmor} and the next example they
  \emph{are} closed under finite products.
\end{remark}

\begin{example}
  For any pair of algebraic patterns $\mathcal{O}$, $\mathcal{P}$ we
  have a cartesian product pattern $\mathcal{O} \times
  \mathcal{P}$. For this we have an equivalence
  \[ \Seg_{\mathcal{O} \times \mathcal{P}}(\mathcal{C}) \simeq
    \Seg_{\mathcal{O}}(\Seg_{\mathcal{P}}(\mathcal{C}))\] for any
  $\mathcal{O} \times \mathcal{P}$-complete \icat{} $\mathcal{C}$. To
  see this, observe that a right Kan extension along
  $\mathcal{O}^{\el} \times \mathcal{P}^{\el}\to \mathcal{O}^{\xint} \times
  \mathcal{P}^{\xint}$ can be computed in two stages in two ways, by first
  doing the right Kan extension to either
  $\mathcal{O}^{\el} \times \mathcal{P}^{\xint}$ or $\mathcal{O}^{\xint} \times
  \mathcal{P}^{\el}$; this shows that
  $F \colon \mathcal{O} \times \mathcal{P} \to \mathcal{C}$ is a Segal
  object \IFF{} $F(O,\blank)$ is a $\mathcal{P}$-Segal object for all
  $O \in \mathcal{O}$ and $F(\blank,P)$ is an $\mathcal{O}$-Segal
  object for all $P \in \mathcal{P}$.
\end{example}

\begin{example}
  The pattern $\simp^{\op,\flat}$ can be described as the pullback
  $\simp^{\op,\natural} \times_{\xF_{*}^{\natural}}
  \xF_{*}^{\flat}$ using the map $\simp^{\op,\natural} \to
  \xF_{*}^{\natural}$ from Example~\ref{ex:simptoF} and the identity
  of $\xF_{*}$ viewed as a morphism of patterns $\xF_{*}^{\flat} \to
  \xF_{*}^{\natural}$. (Similarly, for the other pairs of patterns
  $\mathcal{O}^{\flat}, \mathcal{O}^{\natural}$ mentioned in
  \S\ref{sec:ex} the pattern $\mathcal{O}^{\flat}$ is the pullback
  $\mathcal{O}^{\natural}\times_{\xF_{*}^{\natural}} \xF_{*}^{\flat}$
  for a morphism of patterns $\mathcal{O}^{\natural} \to \xF_{*}^{\natural}$.)
\end{example}

\begin{example}
  Let $\bbTheta^{\op,\natural}$ be the colimit
  $\colim_{n\geq 0} \bbTheta_{n}^{\op,\natural}$ induced by the
  sequence of natural inclusions
  $\bbTheta_{n}^{\op,\natural} \hookrightarrow
  \bbTheta_{n+1}^{\op,\natural}$, $n\geq 0$, where
  $\bbTheta_{n}^{\op,\natural}$ is the algebraic pattern defined in
  Example~\ref{ex:Theta}. The underlying category $\bbTheta$ is
  equivalent to that introduced by Joyal~\cite{JoyalTheta} to give a
  definition of weak higher categories.  It is easy to see that in
  this case we have an equivalence
  \[\Seg_{\bbTheta^{\op,\natural}}(\xS)\simeq \lim_{n\geq 0}
    \Seg_{\bbTheta^{\op,\natural}_{n}}(\xS),\] so that Segal
  $\bbTheta^{\op,\natural}$-spaces model $(\infty,\infty)$-categories (in
  the inductive sense). In
  particular, the canonical functor
  $\Seg_{\bbTheta^{\op,\natural}}(\xS)\to
  \Seg_{\bbTheta_{n}^{\op,\natural}}(\xS)$ gives the underlying
  $(\infty, n)$-category of an $(\infty,\infty)$-category.
\end{example}

\section{Right Kan Extensions and Segal Objects}\label{sec:RKE}
Our goal in this section is to give a sufficient criterion on a
morphism of algebraic patterns $f \colon \mathcal{O} \to \mathcal{P}$
such that right Kan extension along $f$ preserves Segal objects.

\begin{defn}
We say that a morphism $f \colon \mathcal{O} \to \mathcal{P}$ of
algebraic patterns has \emph{unique lifting of active morphisms} if
for every active morphism $\phi \colon P \to f(O)$ in $\mathcal{P}$,
the \igpd{} of lifts of $\phi$ to an active morphism $O' \to O$ in
$\mathcal{O}$ is contractible. More precisely, the fibre
$(\mathcal{O}^{\act}_{/O})^{\simeq}_{\phi}$ of the morphism
\[ (\mathcal{O}^{\act}_{/O})^{\simeq} \to
  (\mathcal{P}^{\act}_{/f(O)})^{ \simeq}\] at $\phi$ is contractible.
Equivalently, $f$ has unique lifting of active morphisms if this
morphism of \igpds{} is an equivalence for all $O \in \mathcal{O}$.
\end{defn}

\begin{lemma}\label{lem:uniqactcoinit}
A morphism of algebraic patterns $f \colon \mathcal{O} \to \mathcal{P}$
has unique lifting of active morphisms \IFF{} it satisfies the following
condition:
\begin{itemize}
\item[($\ast$)] For all $P \in \mathcal{P}$ the functor
\[ \mathcal{O}^{\xint}_{P/} \to \mathcal{O}_{P/}\]
is coinitial.
\end{itemize}
\end{lemma}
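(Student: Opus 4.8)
The plan is to establish both implications simultaneously by computing the weak homotopy types of the comma $\infty$-categories governing coinitiality. Recall that $u \colon \mathcal{O}^{\xint}_{P/} \to \mathcal{O}_{P/}$ is coinitial \IFF{} for every object $b = (O, \phi \colon P \to f(O))$ of $\mathcal{O}_{P/}$ the comma $\infty$-category $\mathcal{E}_{b} := \mathcal{O}^{\xint}_{P/} \times_{\mathcal{O}_{P/}} (\mathcal{O}_{P/})_{/b}$ is weakly contractible. Unwinding definitions, an object of $\mathcal{E}_{b}$ is a triple $(O', \psi \colon P \intto f(O'), g \colon O' \to O)$ together with an equivalence $f(g) \circ \psi \simeq \phi$, while a morphism is an \emph{inert} map $h \colon O_{1}' \to O_{2}'$ with $f(h) \circ \psi_{1} \simeq \psi_{2}$ and $g_{2} \circ h \simeq g_{1}$. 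The first step is thus to reduce condition $(\ast)$ to the weak contractibility of each $\mathcal{E}_{b}$.

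Next I would feed in the factorization systems. Factor $\phi$ in $\mathcal{P}$ as $\phi \simeq \alpha \circ \beta$ with $\beta \colon P \intto Q$ inert and $\alpha \colon Q \actto f(O)$ active, and let $\mathcal{D}_{b} \subseteq \mathcal{E}_{b}$ be the full subcategory on the objects whose third component $g$ is active. For such an object $f(g) \circ \psi$ is an inert--active factorization of $\phi$, so essential uniqueness of factorizations in $\mathcal{P}$ forces $\psi \simeq \beta$, $f(g) \simeq \alpha$ and $f(O') \simeq Q$; hence $\mathcal{D}_{b}$ is exactly the \igpd{} of active lifts of $\alpha$, i.e.\ the fibre over $\alpha$ of the map $G_{O} \colon (\mathcal{O}^{\act}_{/O})^{\simeq} \to (\mathcal{P}^{\act}_{/f(O)})^{\simeq}$ from the definition of unique lifting. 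It is genuinely an \igpd{}: a morphism $h$ of $\mathcal{D}_{b}$ is inert, and $g_{2} \circ h \simeq g_{1}$ with $g_{1}, g_{2}$ active forces $h$ to be active by right cancellation for the active class, hence an equivalence, since inert and active maps intersect in the equivalences.

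The crux of the argument, and the step I expect to be the main obstacle, is to show that the inclusion $\mathcal{D}_{b} \hookrightarrow \mathcal{E}_{b}$ is cofinal, so that $|\mathcal{E}_{b}| \simeq |\mathcal{D}_{b}| \simeq G_{O}^{-1}(\alpha)$. For this I would check that for each $x = (O', \psi, g) \in \mathcal{E}_{b}$ the comma category $\mathcal{D}_{b} \times_{\mathcal{E}_{b}} (\mathcal{E}_{b})_{x/}$ has an initial object: namely the one obtained from the inert--active factorization $g \simeq g_{\act} \circ g_{\xint}$ of $g$ in $\mathcal{O}$, whose inert part $g_{\xint}$ exhibits a morphism from $x$ into $\mathcal{D}_{b}$. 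The objects of this comma category are the factorizations of $g$ with inert left factor and active right factor, and the orthogonality of the factorization system on $\mathcal{O}$ provides a unique comparison from the canonical factorization to any other (one also checks, via left and right cancellation, that this comparison is both inert and active, hence an equivalence). This is precisely where both halves of the factorization system --- uniqueness of factorizations and the orthogonal lifting property --- are used.

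Finally I would assemble the equivalence. Combining the steps, $(\ast)$ holds \IFF{} $G_{O}^{-1}(\alpha)$ is contractible for every $O$ and every active $\alpha \colon Q \actto f(O)$; here every such $\alpha$ occurs as the active part of some $\phi$ (take $\phi = \alpha$), and $\mathcal{D}_{b}$ depends only on this active part. Since a map of spaces is an equivalence \IFF{} all of its fibres are contractible, this is equivalent to $G_{O}$ being an equivalence for every $O$, which is exactly the unique lifting of active morphisms.
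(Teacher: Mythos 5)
Your proposal is correct and follows essentially the same route as the paper's proof: reduce coinitiality to weak contractibility of the comma \icats{} $(\mathcal{O}^{\xint}_{P/})_{/\phi}$, show the full subcategory of objects whose structure map to $O$ is active is cofinal and is an \igpd{} identified with the fibre of $(\mathcal{O}^{\act}_{/O})^{\simeq} \to (\mathcal{P}^{\act}_{/f(O)})^{\simeq}$ over the active part of $\phi$. The only difference is that you spell out the cofinality step (via initial objects of the relevant comma categories coming from inert--active factorizations), which the paper asserts in one line.
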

\begin{proof}
By \cite[Theorem 4.1.3.1]{ht}, the functor $\mathcal{O}^{\xint}_{P/}
\to \mathcal{O}_{P/}$ is coinitial \IFF{} for every morphism $\phi
\colon P \to
f(O)$ in $\mathcal{P}$, the \icat{}
$(\mathcal{O}^{\xint}_{P/})_{/\phi}$ is weakly contractible. This
\icat{} has objects pairs
\[ \left(O' \xto{\alpha} O,
\begin{tikzcd}
{} & P \arrow[inert]{dl}[above left]{\iota} \arrow{dr}{\phi} \\
f(O') \arrow{rr}{f(\alpha)} & & f(O)
\end{tikzcd}
\right),
\]
where $\iota$ is inert. The morphism $\alpha$ has an essentially
unique inert--active factorization, and since $f$ is compatible with
this factorization we see that the full subcategory of objects where
$\alpha$ is active is cofinal. By uniqueness of factorizations a
morphism in this subcategory is required to be an equivalence, hence
this is an \igpd{}, and so $(\ast)$ is equivalent to this
$\infty$-groupoid being contractible. But an object in this
subcategory gives an inert--active factorization of $\phi$, and we see
that it is equivalent to the \igpd{} of lifts of the active part of
$\phi$ to an active morphism in $\mathcal{O}$.
\end{proof}

\begin{propn}\label{propn:RKESeg}
Suppose $f \colon \mathcal{O} \to \mathcal{P}$ is a morphism of
algebraic patterns that has unique lifting of
active morphisms and $\mathcal{C}$ is an $\mathcal{O}$- and
$\mathcal{P}$-complete \icat{} such that the pointwise right Kan
extension
\[ f_{*} \colon \Fun(\mathcal{O}, \mathcal{C}) \to \Fun(\mathcal{P},
\mathcal{C}) \]
exists. Then $f_{*}$ restricts to a functor
\[ f_{*} \colon \Seg_{\mathcal{O}}(\mathcal{C}) \to
  \Seg_{\mathcal{P}}(\mathcal{C}).\]
\end{propn}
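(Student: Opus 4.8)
The plan is to reduce everything to the characterization of Segal objects by right Kan extensions: by the lemma stating that a functor $G$ is a Segal $\mathcal{P}$-object precisely when $G|_{\mathcal{P}^{\xint}}$ is a right Kan extension of $G|_{\mathcal{P}^{\el}}$ (valid since $\mathcal{C}$ is $\mathcal{P}$-complete), it suffices to fix a Segal $\mathcal{O}$-object $F$ and show that $(f_{*}F)|_{\mathcal{P}^{\xint}}$ is a right Kan extension of $(f_{*}F)|_{\mathcal{P}^{\el}}$ along the inclusion $k \colon \mathcal{P}^{\el} \hookrightarrow \mathcal{P}^{\xint}$. Since $f$ preserves inert morphisms and elementary objects, it restricts to functors $f^{\xint} \colon \mathcal{O}^{\xint} \to \mathcal{P}^{\xint}$ and $f^{\el} \colon \mathcal{O}^{\el} \to \mathcal{P}^{\el}$, which together with $j \colon \mathcal{O}^{\el} \hookrightarrow \mathcal{O}^{\xint}$ fit into a commuting square $f^{\xint}\circ j = k \circ f^{\el}$; this square is what lets the two Kan extensions talk to each other.

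The first and main step is to identify $(f_{*}F)|_{\mathcal{P}^{\xint}}$ with a right Kan extension computed entirely on the inert subcategories, namely $(f^{\xint})_{*}(F|_{\mathcal{O}^{\xint}})$. For $P \in \mathcal{P}^{\xint}$ the pointwise formula gives $f_{*}F(P) \simeq \lim_{\mathcal{O}_{P/}} F$, and this is exactly where the hypothesis enters: by Lemma~\ref{lem:uniqactcoinit} the unique lifting of active morphisms makes the inclusion $\mathcal{O}^{\xint}_{P/} \hookrightarrow \mathcal{O}_{P/}$ coinitial, so the limit may be restricted to the inert part, $f_{*}F(P) \simeq \lim_{\mathcal{O}^{\xint}_{P/}} F|_{\mathcal{O}^{\xint}}$ (in particular the latter limit exists). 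Promoting this objectwise equivalence to a natural one is the Beck--Chevalley comparison $(i_{\mathcal{P}})^{*}f_{*} \to (f^{\xint})_{*}i^{*}$ for the square relating $f$ with $f^{\xint}$ (where $i, i_{\mathcal{P}}$ denote the inert inclusions), which is objectwise an equivalence by the coinitiality and hence an equivalence; this yields $(f_{*}F)|_{\mathcal{P}^{\xint}} \simeq (f^{\xint})_{*}(F|_{\mathcal{O}^{\xint}})$.

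The remaining steps are formal. Because $F$ is a Segal $\mathcal{O}$-object (and $\mathcal{C}$ is $\mathcal{O}$-complete), the same characterization gives $F|_{\mathcal{O}^{\xint}} \simeq j_{*}(F|_{\mathcal{O}^{\el}})$, so by transitivity of right Kan extensions and the commuting square,
\[ (f^{\xint})_{*}(F|_{\mathcal{O}^{\xint}}) \simeq (f^{\xint})_{*}j_{*}(F|_{\mathcal{O}^{\el}}) \simeq (f^{\xint}\circ j)_{*}(F|_{\mathcal{O}^{\el}}) = (k \circ f^{\el})_{*}(F|_{\mathcal{O}^{\el}}). \]
I would then invoke the general fact that, for a fully faithful $k$, any right Kan extension along a composite $k \circ h$ that happens to exist is automatically a right Kan extension along $k$ of its own restriction to $\mathcal{P}^{\el}$. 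Since the displayed right Kan extension exists (it is $(f_{*}F)|_{\mathcal{P}^{\xint}}$ by the first step) and $k$ is a full subcategory inclusion, this applies and shows that $(f_{*}F)|_{\mathcal{P}^{\xint}}$ is a right Kan extension of $(f_{*}F)|_{\mathcal{P}^{\el}}$ along $k$; by the characterization, $f_{*}F$ is a Segal $\mathcal{P}$-object.

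I expect the genuine content to sit in the first step: extracting the coinitiality of $\mathcal{O}^{\xint}_{P/} \hookrightarrow \mathcal{O}_{P/}$ from the hypothesis via Lemma~\ref{lem:uniqactcoinit}, and checking that the Beck--Chevalley comparison really is the desired natural equivalence. The one formal point that requires care is the fact used at the end, that a right Kan extension along $k \circ h$ (when it exists) is a right Kan extension along a fully faithful $k$; this can be verified by Quillen's Theorem~A \cite[Theorem 4.1.3.1]{ht}, checking that the canonical comparison functor between the two relevant comma categories is coinitial, which holds because the comparison slices all have a terminal object. Everything else reduces to transitivity of right Kan extensions together with the commuting square $f^{\xint}\circ j = k\circ f^{\el}$, so no computation beyond this bookkeeping is needed.
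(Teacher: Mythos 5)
Your proof is correct and follows essentially the same route as the paper's: the Beck--Chevalley equivalence $(f_{*}F)|_{\mathcal{P}^{\xint}} \simeq f^{\xint}_{*}(F|_{\mathcal{O}^{\xint}})$ via Lemma~\ref{lem:uniqactcoinit}, then the Segal property of $F$ as a right Kan extension from $\mathcal{O}^{\el}$, transitivity of right Kan extensions through the commuting square of inert/elementary inclusions, and full faithfulness of $\mathcal{P}^{\el} \hookrightarrow \mathcal{P}^{\xint}$ to conclude. The only cosmetic difference is that you package the last step as the general fact about Kan extension along a composite $k \circ h$ with $k$ fully faithful, where the paper instead identifies $f^{\el}_{*}F|_{\mathcal{O}^{\el}}$ with $(f_{*}F)|_{\mathcal{P}^{\el}}$ directly; these are the same maneuver.
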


\begin{remark}
We emphasize that the condition of unique lifting of active morphisms is
far from a \emph{necessary} one. Indeed, the functor $f_{*}$ will
preserve Segal objects \IFF{} its left adjoint $f^{*}$ preserves
Segal equivalences. In \cite{ChuHaugsengHeuts} the latter condition
was checked for a certain morphism
$\tau \colon \DF^{1,\op} \to \bbOmega^{\op}$, which clearly does
\emph{not} have unique lifting of active morphisms.
\end{remark}

\begin{proof}[Proof of Proposition~\ref{propn:RKESeg}]
By Lemma~\ref{lem:uniqactcoinit}, the condition that $f$ has unique
lifting of active morphisms implies that for any functor
$F \colon \mathcal{O} \to \mathcal{C}$, the Beck--Chevalley
transformation
\[ (f_{*}F)|_{\mathcal{P}^{\xint}} \to
f^{\xint}_{*}(F|_{\mathcal{O}^{\xint}})\]
is an equivalence. If $F$ is a Segal $\mathcal{O}$-object, then
$F|_{\mathcal{O}^{\xint}} \simeq
i_{\mathcal{O},*}F|_{\mathcal{O}^{\el}}$, where $i_{\mathcal{O}}$ is
the inclusion $\mathcal{O}^{\el} \hookrightarrow
\mathcal{O}^{\xint}$, so in this case we have
$(f_{*}F)|_{\mathcal{P}^{\xint}} \simeq
f^{\xint}_{*}i_{\mathcal{O},*}F|_{\mathcal{O}^{\el}}$. By 
naturality of right Kan extensions in
the commutative square
\[
\begin{tikzcd}
\mathcal{O}^{\el} \arrow[d, swap, "i_{\mathcal{O}}"] \arrow{r}{f^{\el}} &
\mathcal{P}^{\el} \arrow{d}{i_{\mathcal{P}}} \\
\mathcal{O}^{\xint} \arrow{r}{f^{\xint}} & \mathcal{P}^{\xint}
\end{tikzcd}
\]
this can in turn be identified with
$i_{\mathcal{P},*}f^{\el}_{*}F|_{\mathcal{O}^{\el}}$. Moreover,
since $\mathcal{P}^{\el}$ is a full subcategory of
$\mathcal{P}^{\xint}$, we have
\[ f^{\el}_{*}F|_{\mathcal{O}^{\el}} \simeq
i_{\mathcal{P}}^{*}i_{\mathcal{P},*}f^{\el}_{*}F|_{\mathcal{O}^{\el}} \simeq
i_{\mathcal{P}}^{*}f^{\xint}_{*}i_{\mathcal{O},*}F|_{\mathcal{O}^{\el}}
\simeq i_{\mathcal{P}}^{*}f^{\xint}_{*}
F|_{\mathcal{O}^{\xint}}.\]
Combining these equivalences, we see that
$(f_{*}F)|_{\mathcal{P}^{\xint}} \simeq i_{\mathcal{P},*}(i_{\mathcal{P}}^* f^{\xint}_*F|_{\xxO^\xint})\simeq 
i_{\mathcal{P},*}(f_{*}F)|_{\mathcal{P}^{\xel}}$, where the second
equivalence is given by $i_\xxP^* f_*^{\xint}(F|_{\xxO^{\xint}})\simeq
i^*_\xxP (f_*F)|_{\xxP^{\xint}}\simeq (f_*F)|_{\xxP^\el}$. Hence $f_{*}F$ is
a Segal $\mathcal{P}$-object.
\end{proof}

\begin{remark}\label{rmk:RKESegadj}
  If $f$ in Proposition~\ref{propn:RKESeg} is moreover a Segal
  morphism, we get an adjunction
\[ f^{*} \colon \Seg_{\mathcal{P}}(\mathcal{C}) \rightleftarrows
  \Seg_{\mathcal{O}}(\mathcal{C}) \cocolon f_{*}\]
by restricting the adjunction $f^{*}\dashv f_{*}$ on functor \icats{}.
\end{remark}

\begin{example}\label{ex:twopattsamefact}
Suppose we have two categorical patterns $\mathfrak{O}_{1}$ and
$\mathfrak{O}_{2}$ with the same underlying \icat{} $\mathcal{O}$ and
the same inert--active factorization system, and
$\mathfrak{O}_{1}^{\el}$ is a full subcategory of
$\mathfrak{O}_{2}^{\el}$. Then the identity functor of $\mathcal{O}$
gives a morphism of algebraic patterns
$\mathfrak{O}_{1} \to \mathfrak{O}_{2}$ for which unique
lifting of active morphisms holds trivially. In this case, this just
means that the Segal condition for $\mathfrak{O}_{1}$ is stronger
than that for $\mathfrak{O}_{2}$. For example, this holds for the
identity morphism of $\xF_{*}$ viewed as a morphism $\xF_{*}^{\flat}
\to \xF_{*}^{\natural}$. On the other hand, the identity
functor would typically \emph{not} be a
Segal morphism.
\end{example}

\begin{example}
  The inclusion $i \colon \{[0]\} \to \simp^{\op,\natural}$ clearly
  has unique lifting of active morphisms, since the only active
  morphism to $[0]$ in $\simp^{\op}$ is the identity. In this case,
  the right Kan extension functor
\[ i_{*} \colon \mathcal{C} \simeq \Fun(\{[0]\}, \mathcal{C}) \to
\Fun(\simp^{\op}, \mathcal{C})\] takes an object
$C \in \mathcal{C}$ to the simplicial object $i_{*}C$ given by
$(i_{*}C)_{n} \simeq \prod_{i=0}^{n} C$, with face maps corresponding
to projections and degeneracies given by diagonal maps. This clearly
satisfies the Segal condition. More generally, the inclusion
$\bbTheta_{n-1}^{\op,\natural} \hookrightarrow
\bbTheta_{n}^{\op,\natural}$ has unique lifting of active
morphisms for all $n \geq 1$.
\end{example}

\begin{example}
  Let $\iota \colon \bbO^{\op,\natural} \to \bbXi^{\op,\natural}$ be
  the Segal morphism of Example~\ref{ex:cyclicmor}. Since the active morphisms in $\bbXi^{\op}$
  are the boundary-preserving ones, it is easy to see that $\iota$ has unique
  lifting of active morphisms. Then 
  Proposition~\ref{propn:RKESeg} and Remark~\ref{rmk:RKESegadj} give an adjunction
\[ \iota^* \colon \Seg_{\bbXi^{\op}}(\xS)\rightleftarrows
  \Seg_{\bbO^{\op,\natural}}(\xS) \cocolon \iota_*,\]
where $\iota_{*}$ is a right adjoint to the forgetful functor $\iota^{*}$ from
cyclic $\infty$-operads to $\infty$-operads. According to
\cite[\S 2.15]{DCHRightInd} the analogue of this right adjoint for
ordinary cyclic operads was first considered in the unpublished thesis
of J.~Templeton.
\end{example}

\section{Left Kan Extensions and Segal Objects}\label{sec:LKE}
In this section we will give conditions under which left Kan extension
along a morphism $f$ preserves Segal objects in $\mathcal{C}$. In
contrast to the case of right Kan extensions, this requires strong
assumptions on both $f$ and the target \icat{} $\mathcal{C}$. Part of
the condition is a uniqueness requirement on lifts of inert morphisms,
which we consider first:
\begin{defn}
A morphism of algebraic patterns $f \colon \mathcal{O} \to
\mathcal{P}$ is said to have \emph{unique lifting of inert
morphisms} if for every inert morphism $f(O) \to P$ the \igpd{} of
lifts to inert morphisms $O \to O'$ is contractible. More precisely, the fibre
$(\mathcal{O}^{\xint}_{O/})^{\simeq}_{\phi}$ of the morphism
\[ (\mathcal{O}^{\xint}_{O/})^{\simeq} \to
(\mathcal{P}^{\xint}_{f(O)/})^{ \simeq}\] at $\phi$ is
contractible. Equivalently, $f$ has unique lifting of inert morphisms if this morphism of \igpds{} is an
equivalence for all $O \in \mathcal{O}$.
\end{defn}

\begin{lemma}\label{lem:uniqintcoinit}
A morphism of algebraic patterns
$f \colon \mathcal{O} \to \mathcal{P}$ has unique lifting of inert
morphisms \IFF{} it satisfies the following condition:
\begin{itemize}
\item[($\ast$)] For all $P \in \mathcal{P}$ the functor
\[ \mathcal{O}^{\act}_{/P} \to \mathcal{O}_{/P}\] is cofinal.
\end{itemize}
\end{lemma}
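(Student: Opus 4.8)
The plan is to prove this as the exact dual of Lemma~\ref{lem:uniqactcoinit}, interchanging under-categories with over-categories, coinitiality with cofinality, and the roles of inert and active morphisms throughout. First I would apply the cofinality criterion \cite[Theorem 4.1.3.1]{ht}: the functor $\mathcal{O}^{\act}_{/P} \to \mathcal{O}_{/P}$ is cofinal \IFF{} for every object $\phi \colon f(O) \to P$ of $\mathcal{O}_{/P}$ the comma \icat{} $(\mathcal{O}^{\act}_{/P})_{\phi/}$ is weakly contractible, so that condition $(\ast)$ becomes a statement about weak contractibility of all these categories as $P$ ranges over $\mathcal{P}$. Unwinding the definitions, an object of $(\mathcal{O}^{\act}_{/P})_{\phi/}$ consists of a morphism $\alpha \colon O \to O'$ in $\mathcal{O}$ and an active morphism $\pi \colon f(O') \actto P$ fitting into a commuting triangle
\[
\begin{tikzcd}
& P & \\
f(O) \arrow{ur}{\phi} \arrow{rr}{f(\alpha)} & & f(O') \arrow[active]{ul}{\pi}
\end{tikzcd}
\]
while a morphism to $(O \to O'', \pi')$ is an active map $O' \to O''$ commuting under $O$ and over $P$.

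The heart of the argument is to trim this comma category using the inert--active factorization. Given such an object I would factor $\alpha$ as $O \intto O'' \actto O'$; since $f$ preserves active morphisms and active morphisms are closed under composition, $\pi \circ f(\alpha_{\act}) \colon f(O'') \actto P$ is again active, so $(O \intto O'', \pi f(\alpha_{\act}))$ is an object whose underlying morphism in $\mathcal{O}$ is inert, and $\alpha_{\act}$ furnishes a morphism from it back to the original object. This assignment is a coreflector, exhibiting the full subcategory $A$ spanned by the objects with $\alpha$ inert as coinitial; hence its inclusion is a weak homotopy equivalence and it suffices to prove that $A$ is weakly contractible. A morphism in $A$ is an active map $\gamma$ with $\gamma$ precomposed with an inert map equal to an inert map, and comparing inert--active factorizations forces $\gamma$ to be an equivalence, so $A$ is in fact an \igpd{}.

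Finally I would identify this \igpd{}. Factoring the fixed morphism $\phi$ as $f(O) \intto Q \actto P$, for any object of $A$ the composite $\pi \circ f(\alpha)$ is an inert--active factorization of $\phi$ (as $f(\alpha)$ is inert and $\pi$ active), so uniqueness of factorizations pins down $\pi$ as the active part and identifies the remaining data with an inert lift of the inert part $f(O) \intto Q$ to an inert morphism out of $O$ in $\mathcal{O}$. Thus $A$ is the fibre of $(\mathcal{O}^{\xint}_{O/})^{\simeq} \to (\mathcal{P}^{\xint}_{f(O)/})^{\simeq}$ over this inert part of $\phi$. As $P$ and $\phi$ vary, these inert parts exhaust all inert morphisms out of every object $f(O)$, so weak contractibility of every comma category is equivalent to contractibility of all these fibres, which is exactly the condition that $f$ has unique lifting of inert morphisms.

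I expect the factorization-trimming step to be the only real obstacle: one must verify carefully that the inert-$\alpha$ subcategory $A$ is genuinely coinitial (via the coreflection above) and that all its morphisms are invertible, and it is precisely here that the hypotheses that $f$ preserves the factorization system and that inert--active factorizations are unique get used. The remaining steps are formal dualizations of the proof of Lemma~\ref{lem:uniqactcoinit}.
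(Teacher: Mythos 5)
Your proposal is correct and takes essentially the same approach as the paper: the paper's entire proof of this lemma is the single remark that it follows by the argument of Lemma~\ref{lem:uniqactcoinit} with the roles of active and inert morphisms reversed, and your write-up is precisely that dualization carried out in detail (cofinality criterion, trimming the comma category via the inert--active factorization, and identifying the resulting $\infty$-groupoid with the fibre of $(\mathcal{O}^{\xint}_{O/})^{\simeq} \to (\mathcal{P}^{\xint}_{f(O)/})^{\simeq}$).
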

\begin{proof}
This follows by the same argument as for Lemma~\ref{lem:uniqactcoinit}, with the roles
of active and inert morphisms reversed.
\end{proof}

Unique lifting of inert morphisms allows us to functorially transport
active morphisms along inert morphisms, in the following sense:
\begin{propn}\label{propn:stronginertlift}
Suppose $f \colon \mathcal{O} \to \mathcal{P}$ has unique
lifting of inert morphisms. Let
\[\mathcal{X} \subseteq \mathcal{O} \times_{\mathcal{P}}
  \mathcal{P}^{\Delta^{1}}\]
be the full subcategory of the fibre product over evaluation at $0$,
with objects those pairs \[(O, f(O) \xto{\phi} P)\]
where $\phi$ is active. Then the projection
$\mathcal{X} \to \mathcal{P}$ given by evaluation at $1 \in
\Delta^{1}$ is a cocartesian fibration,
and a morphism
\[ \left(
    \begin{tikzcd}
      O \arrow[d, swap, "\omega"] \\
      O'
    \end{tikzcd},
\begin{tikzcd}
f(O) \arrow[active]{r} \arrow[d, swap, "f(\omega)"] & P \arrow{d} \\
f(O') \arrow[active]{r} & P'
\end{tikzcd}
\right) \]
is cocartesian \IFF{} $\omega$ is inert.
\end{propn}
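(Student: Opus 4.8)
The plan is to produce explicit candidate cocartesian lifts and then verify the universal property by a mapping-space computation, feeding the inert--active factorization and unique lifting of inert morphisms into each step. Write $\mathcal{Y} := \mathcal{O} \times_{\mathcal{P}} \mathcal{P}^{\Delta^1}$ and $q \colon \mathcal{X} \to \mathcal{P}$ for the projection (evaluation at $1$); since $\mathcal{X}$ is a full subcategory of $\mathcal{Y}$, mapping spaces in $\mathcal{X}$ agree with those in $\mathcal{Y}$, and the pullback description of $\mathcal{Y}$ gives, for $x = (O, \phi \colon f(O) \to P)$ and $z = (O'', \chi \colon f(O'') \actto P'')$,
\[ \Map_{\mathcal{X}}(x, z) \simeq \Map_{\mathcal{O}}(O, O'') \times_{\Map_{\mathcal{P}}(f(O), P'')} \Map_{\mathcal{P}}(P, P''), \]
with left leg $\theta \mapsto \chi \circ f(\theta)$ and right leg $u \mapsto u \circ \phi$. (For intuition: $\mathcal{Y} \to \mathcal{P}$ is the free cocartesian fibration on $f$, whose cocartesian morphisms are those with invertible $\mathcal{O}$-component; restricting to active $\phi$ is exactly what forces the new class of lifts to be the inert ones.)

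First I construct the lift. Given $x = (O, \phi \colon f(O) \actto P)$ in $\mathcal{X}$ and $g \colon P \to P'$, factor the composite as $g\phi \simeq \phi' \circ \iota$ with $\iota \colon f(O) \intto Q$ inert and $\phi' \colon Q \actto P'$ active, and use unique lifting of inert morphisms to lift $\iota$ to an inert $\omega \colon O \intto O'$ with $f(\omega) \simeq \iota$ (so $f(O') \simeq Q$). Then $y := (O', \phi') \in \mathcal{X}$, and the square with top $\phi$, bottom $\phi'$, left $f(\omega)$, right $g$ commutes (it encodes $g\phi \simeq \phi'\iota$), giving a morphism $e \colon x \to y$ over $g$ whose $\mathcal{O}$-component is the inert map $\omega$.

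The heart of the argument is the following reduction, proved with the factorization system on $\mathcal{O}$. For any $O$, any active $\chi \colon f(O'') \actto P''$, and any $\beta \colon f(O) \to P''$ with inert--active factorization $\beta \simeq b \circ k$ ($k \colon f(O) \intto R$ inert, lifted to $\kappa \colon O \intto \hat{O}$), precomposition with $\kappa$ gives an equivalence
\[ \{\psi \colon \hat{O} \actto O'' \text{ active} : \chi f(\psi) \simeq b\} \isoto \{\theta \colon O \to O'' : \chi f(\theta) \simeq \beta\}. \]
Indeed, factoring $\theta = \theta_a \theta_i$ and using that $f$ preserves the factorization system, $\chi f(\theta) \simeq (\chi f(\theta_a)) \circ f(\theta_i)$ is an inert--active factorization; matching it against $b \circ k$ forces $f(\theta_i) \simeq k$ and $\chi f(\theta_a) \simeq b$, and unique lifting of inert morphisms then forces $\theta_i \simeq \kappa$, leaving exactly the active datum $\psi = \theta_a$. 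To see $e$ is cocartesian I check the defining condition fibrewise over $\Map_{\mathcal{P}}(P', P'')$: the fibre of $\Map_{\mathcal{X}}(y, z)$ over $v \colon P' \to P''$ is $\{\theta : \chi f(\theta) \simeq v\phi'\}$, the matching fibre on the target side is $\{\theta' : \chi f(\theta') \simeq v\phi'\iota\}$, and the comparison is $-\circ\omega$. Applying the reduction to $\beta = v\phi'$ and to $\beta = v\phi'\iota$, and noting that the inert part of $v\phi'\iota$ is the inert part of $v\phi'$ precomposed with $\iota$ (whose inert lift is the corresponding lift precomposed with $\omega$), both fibres are identified with the same space $\{\psi \colon \hat{O} \actto O'' : \chi f(\psi) \simeq a\}$ of active maps, and under these identifications $-\circ\omega$ becomes the identity. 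Hence $e$ is $q$-cocartesian; as $\mathcal{X}$ is full in $\mathcal{Y}$ (so $q$ is an inner fibration) and such lifts exist over every morphism at every object, $q$ is a cocartesian fibration by the standard criterion \cite{ht}.

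For the characterization, fix $e' \colon x \to y'$ over $g$ with $\mathcal{O}$-component $\omega'$, and factor it through the cocartesian lift $e$ as $e' \simeq h \circ e$, where $h \colon y \to y'$ lies in the fibre $\mathcal{X}_{P'}$, say with $\mathcal{O}$-component $\mu \colon O' \to \tilde{O}$, so $\omega' \simeq \mu\omega$ and $\tilde{\phi} f(\mu) \simeq \phi'$ (writing $y' = (\tilde{O}, \tilde{\phi})$). If $\omega'$ is inert, then since $\phi', \tilde{\phi}$ are active the relation $\tilde{\phi} f(\mu) \simeq \phi'$ forces $\mu$ to be active (factor $\mu$ and match inert--active factorizations, using unique inert lifting); then $\mu\omega \simeq \omega'$ is an inert--active factorization of the inert map $\omega'$, forcing $\mu$ to be an equivalence, so $h$ is an equivalence and $e'$ is cocartesian. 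Conversely, if $e'$ is cocartesian, uniqueness of cocartesian lifts makes $h$ an equivalence, whence $\mu$ is invertible and $\omega' \simeq \mu\omega$ is inert. The step I expect to be the main obstacle is the general (non-active $v$) case of the fibre equivalence above: when $v\phi'$ is not active, the two sources $\hat{O}$ obtained by reducing $v\phi'$ and $v\phi'\iota$ to active maps must be canonically identified, and this is exactly where the essential uniqueness of inert lifts of the composite inert morphism is used.
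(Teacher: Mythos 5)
Your proof is correct and rests on the same two inputs as the paper's own argument --- the inert--active factorization system (preserved by $f$) together with unique lifting of inert morphisms --- deployed in the same way to construct the candidate lift from the inert--active factorization of $g\phi$ and to match factorizations when verifying the universal property. The only difference is packaging: you check cocartesianness as a fibrewise mapping-space equivalence (your ``reduction'' to active morphisms via precomposition with the inert lift) and obtain the characterization by factoring an arbitrary morphism through the constructed lift, whereas the paper verifies unique diagram fillers directly for every morphism with inert $\mathcal{O}$-component; these are routine reformulations of the same argument.
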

\begin{proof}
We first show that such a morphism with $\omega$ inert is
cocartesian. This means that given a morphism $O \to X$ in
$\mathcal{O}$ and a commutative diagram
\[
\begin{tikzcd}
f(O) \arrow{rr} \arrow[active]{dd} \arrow[inert]{dr}[below left]{f(\omega)} & & f(X) \arrow[active]{dd} \\
& f(O') \arrow[dashed]{ur}  \\
P \arrow{rr} \arrow{dr} & & Q, \\
& P' \arrow{ur} \arrow[lactive,crossing over]{uu}
\end{tikzcd}
\]
there exists a unique lift $O' \to X$ making the diagram
commute. 

The morphism $O \to X$ has a unique inert--active factorization as $O
\intto O'' \actto X$. Since $f$ is compatible with the factorization
system, we see that the unique inert--active factorization of $f(O) \to
Q$ is $f(O) \intto f(O'') \actto f(X) \actto Q$.

On the other hand, the inert--active factorization of $f(O') \to Q$
gives another factorization
$f(O) \intto f(O') \intto Q' \actto Q$, where by uniqueness we must
have $Q' \simeq f(O'')$. Since $f$ has unique lifts of inert
morphisms, the map $f(O') \intto f(O'')$ lifts to a unique morphism $O'
\intto O''$, and moreover by uniqueness the composite $O \intto O' \intto O''$ must be the
inert map $O \intto O''$ arising from the factorization of $O \to X$.

Thus, there are unique diagrams
\[
\begin{tikzcd}
O \arrow[inert]{r} \arrow[bend left=30]{rrr} & O' \arrow[inert]{r} & O'' \arrow[active]{r} & X,
\end{tikzcd}
\]
\[
\begin{tikzcd}
f(O) \arrow[bend left=30]{rrr} \arrow[active]{d} \arrow[inert]{r} & f(O')\arrow[active]{d} \arrow[inert]{r}  & f(O'')
\arrow[active]{d} \arrow[active]{r}  & f(X) \arrow[active]{dl}\\
P \arrow{r} & P' \arrow{r} & Q,
\end{tikzcd}
\]
which give the required unique factorization (since any other
factorization through $(O', f(O') \actto P')$ must induce these by
uniqueness of inert--active factorizations).

We next check that $\mathcal{X} \to \mathcal{P}$ is a cocartesian
fibration. This amounts to showing that cocartesian morphisms exist,
and by the first part of the proof it suffices to check that
given $(O, f(O) \overset{\phi}{\actto} P)$ with $\phi$ active and a
morphism $P \to P'$, there exists a morphism
\[ \left(
    \begin{tikzcd}
      O \arrow[d, swap, tail, "\omega"] \\
      O'
    \end{tikzcd}
    ,
\begin{tikzcd}
f(O) \arrow[active]{r} \arrow[d, swap, tail, "f(\omega)"] & P \arrow{d} \\
f(O') \arrow[active]{r} & P'
\end{tikzcd}
\right) \]
with $\omega$ inert. This again follows from unique lifting
of inert morphisms, which ensures that the inert--active factorization
of $f(O) \actto P \to P'$ gives such a diagram.

It remains to show that $\omega$ must be inert for any cocartesian
morphism. Since cocartesian morphisms are unique when they exist, this
follows from the existence of the cocartesian morphisms we just described.
\end{proof}

Straightening this cocartesian fibration, we get:
\begin{cor}\label{cor:Oactftr}
Suppose $f \colon \mathcal{O} \to \mathcal{P}$ has unique
lifting of inert morphisms. Then there is a functor
$\mathcal{P} \to \CatI$ that takes $P$ to
$\mathcal{O}^{\act}_{/P}$. The functor
$\mathcal{O}^{\act}_{/P} \to \mathcal{O}^{\act}_{/P'}$ assigned to
a morphism $P \to P'$ takes a pair
$(O, f(O) \actto P)$ to $(O', f(O') \actto P')$ where $f(O) \intto
f(O') \actto P'$ is the inert--active factorization of $f(O) \actto P \to
P'$. \qed  
\end{cor}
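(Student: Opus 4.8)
The plan is to apply the straightening equivalence \cite{ht} to the cocartesian fibration $p \colon \mathcal{X} \to \mathcal{P}$ produced by Proposition~\ref{propn:stronginertlift}, where $\mathcal{X} \subseteq \mathcal{O} \times_{\mathcal{P}} \mathcal{P}^{\Delta^{1}}$ is the full subcategory of pairs $(O, f(O) \actto P)$ with active structure morphism and $p$ is evaluation at $1 \in \Delta^{1}$. Straightening immediately yields a functor $\mathcal{P} \to \CatI$, and all that remains is to identify its values on objects and on morphisms with the description in the statement.

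First I would compute the fibre $p^{-1}(P)$ over an object $P \in \mathcal{P}$. Its objects are pairs $(O, f(O) \actto P)$, and a morphism lying over $\id_{P}$ consists of a map $\omega \colon O \to O'$ in $\mathcal{O}$ together with a commuting triangle $f(O) \to f(O') \actto P$ refining the two active structure morphisms. This is precisely the comma \icat{} $\mathcal{O}^{\act}_{/P}$, so straightening sends $P$ to $\mathcal{O}^{\act}_{/P}$, as asserted.

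Next I would read off the transition functors. For a morphism $P \to P'$ in $\mathcal{P}$, the induced functor $\mathcal{O}^{\act}_{/P} \to \mathcal{O}^{\act}_{/P'}$ sends an object to the target of any $p$-cocartesian lift of $P \to P'$ with that source. By the characterization of cocartesian morphisms in Proposition~\ref{propn:stronginertlift}, such a lift is witnessed by an inert morphism $\omega \colon O \intto O'$, and the explicit construction of cocartesian lifts in that proof produces $O'$ from the inert--active factorization $f(O) \intto f(O') \actto P'$ of the composite $f(O) \actto P \to P'$. This is exactly the rule $(O, f(O) \actto P) \mapsto (O', f(O') \actto P')$ in the statement. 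The only real bookkeeping --- matching the abstract transport of straightening against this explicit factorization --- is immediate, since cocartesian lifts are unique up to contractible choice and Proposition~\ref{propn:stronginertlift} exhibits them concretely; hence there is nothing further to prove.
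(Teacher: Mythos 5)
Your proposal is correct and follows exactly the paper's route: the paper derives this corollary by straightening the cocartesian fibration $\mathcal{X} \to \mathcal{P}$ from Proposition~\ref{propn:stronginertlift}, identifying the fibre over $P$ with $\mathcal{O}^{\act}_{/P}$ and reading off the transition functors from the explicit cocartesian lifts given by inert--active factorization together with unique lifting of inert morphisms. Your additional bookkeeping (checking the fibre and matching the abstract pushforward against the concrete lifts) is precisely the verification the paper leaves implicit.
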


\begin{remark}\label{rmk:actcocart}
  Let $\mathcal{O}$ be an algebraic pattern, and write
  $\Fun(\Delta^{1}, \mathcal{O})_{\act}$ for the full subcategory of
  $\Fun(\Delta^{1}, \mathcal{O})$ spanned by the active morphisms.
  As a simple special case of the previous result (taking $f$ to be
  $\id_{\mathcal{O}}$) we see that
  \[ \name{ev}_{1} \colon \Fun(\Delta^{1}, \mathcal{O})_{\act} \to
    \mathcal{O}\]
  is a cocartesian fibration. This corresponds to a functor $\mathcal{O}
  \to \CatI$ that takes $O$ to $\mathcal{O}^{\act}_{/O}$ and a
  morphism $\phi \colon O \to O'$ to a functor
  $\mathcal{O}^{\act}_{/O} \to \mathcal{O}^{\act}_{/O'}$ that takes $X
  \actto O$ to $X' \actto O'$, where $X \intto X' \actto O'$ is the
  inert--active factorization of the composite $X \actto O \to O'$.
\end{remark}

\begin{remark}
Suppose $f \colon \mathcal{O} \to \mathcal{P}$ has unique lifting of
inert morphisms, and let $\mathcal{X}^{\xint} \to \mathcal{P}^{\xint}$
be the pullback of the cocartesian fibration $\mathcal{X} \to
\mathcal{P}$ of Proposition~\ref{propn:stronginertlift} to the subcategory $\mathcal{P}^{\xint}$. Then for every active morphism $\phi \colon f(O) \actto P$
in $\mathcal{P}$ we can define a functor
$\mathcal{P}^{\xint}_{P/} \to \mathcal{O}^{\xint}_{O/}$ as the
composite
\[ \mathcal{P}^{\xint}_{P/} \to \mathcal{X}^{\xint}_{(O,\phi)/} \to
  \mathcal{O}^{\xint}_{O/}\] where the first functor takes
$\alpha \colon P \intto P'$ to the cocartesian morphism
$(O,\phi) \to (\alpha_{!}O,\alpha_{!}\phi)$ for the cocartesian
fibration $\mathcal{X}$ (where $\alpha_{!}\phi$ is the active part of
the map $\alpha \circ \phi$), and the second is induced by the
forgetful functor $\mathcal{X} \to \mathcal{O}$. In particular, we can
restrict to $\mathcal{P}^{\el}_{P/}$ and compose with the functor
$\mathcal{O}^{\xint,\op}_{O/} \to \mathcal{O}^{\op}
\xto{\mathcal{O}^{\el}_{\blank/}} \CatI$ to get a functor
$\mathcal{P}^{\el,\op}_{P/} \intto \CatI$ that takes
$\alpha \colon P \to E$ to $\mathcal{O}^{\el}_{\alpha_{!}O/}$. We
write $\mathcal{O}^{\el}(\phi) \to \mathcal{P}^{\el}_{P/}$ for the
corresponding cartesian fibration.
\end{remark}

Using this functoriality we can now state the conditions we require of
a morphism of algebraic patterns:
\begin{defn}\label{defn:ext}
A morphism of algebraic patterns $f \colon \mathcal{O} \to
\mathcal{P}$ is \emph{extendable} if the following
conditions are satisfied:
\begin{enumerate}[(1)]
\item The morphism $f$ has unique lifting of inert morphisms.
\item For $P \in \mathcal{P}$, let $\mathcal{L}_{P}$ denote the 
limit of the composite functor
$\epsilon_{P} \colon \mathcal{P}^{\el}_{P/} \to \mathcal{P}^{\xint} \to \CatI$ taking $E$
to $\mathcal{O}^{\act}_{/E}$ (where the second functor is that of
Corollary~\ref{cor:Oactftr}).
Then the canonical functor
\[\mathcal{O}^{\act}_{/P} \to \mathcal{L}_{P}\]
is cofinal.
\item For every active morphism $\phi \colon f(O) \actto P$, the
canonical functor \[\mathcal{O}^{\el}(\phi) \to
\mathcal{O}^{\el}_{O/}\]
induces an equivalence
\[ \lim_{\mathcal{O}^{\el}_{O/}}F \isoto
\lim_{\mathcal{O}^{\el}(\phi)} F\]
for every functor $F \colon \mathcal{O}^{\el} \to \mathcal{S}$.
\end{enumerate}
\end{defn}

\begin{remark}
We have used the limit in condition (2) as this seems the most
natural choice in Definition~\ref{defn:limdistrcolim}; we could also
have used the lax limit instead, provided the same change is made
in Definition~\ref{defn:limdistrcolim}. In the cases of interest the
lax limit actually agrees with the usual limit, as it will either be
a finite product or a limit of $\infty$-groupoids, so the distinction turns
out not to matter in practice.
\end{remark}

\begin{remark}
In practice, condition (3) holds because the map
$\mathcal{O}^{\el}(\phi) \to \mathcal{O}^{\el}_{O/}$ is coinitial.
\end{remark}

\begin{remark}\label{rem genSegalcond}
Condition (3) implies that for a functor $\Phi \colon
\mathcal{O}^{\el} \to \mathcal{C}$, we have an equivalence
\[
\lim_{E \in
\mathcal{O}^{\el}_{O/}} \Phi(E) \simeq \lim_{\alpha \in
\mathcal{P}^{\el}_{P/}} \lim_{E \in
\mathcal{O}^{\el}_{\alpha_{!}O/}} \Phi(E)\]
whenever either limit exists in $\mathcal{C}$.
If $\Phi$ is a Segal $\mathcal{O}$-object, this implies that the
following ``relative Segal condition'' holds:
\[ \Phi(O) \simeq \lim_{\alpha \in
\mathcal{P}^{\el}_{P/}} \Phi(\alpha_{!}O).\]
\end{remark}

We now turn to the requirements we must make of our target category,
for which we need the following notion:
\begin{defn}\label{defn:limdistrcolim}
Consider a functor $K \colon \mathcal{I} \to \CatI$ with
corresponding cocartesian fibration
$\pi \colon \mathcal{K} \to \mathcal{I}$.  Let $\mathcal{L}$ be the
limit of $K$, which we can identify with the \icat{} of cocartesian sections
$\Fun_{\mathcal{I}}^{\name{cocart}}(\mathcal{I}, \mathcal{K})$. We then have a
functor $p \colon \mathcal{I} \times \mathcal{L} \to \mathcal{K}$
adjoint to the forgetful functor
$\Fun^{\name{cocart}}_{\mathcal{I}}(\mathcal{I}, \mathcal{K}) \to \Fun(\mathcal{I},
\mathcal{K})$;
the composite $\pi \circ p$ is moreover the projection
$\mathcal{L} \times \mathcal{I} \to \mathcal{I}$. This gives a
commutative diagram
\[
\begin{tikzcd}
\mathcal{L} \times \mathcal{I} \arrow{r}{p} \arrow[d, swap,
"\text{pr}_{1}"]  \arrow[rr,bend left, "\text{pr}_{2}"] & \mathcal{K}
\arrow{r}{\pi} & \mathcal{I} \arrow{d}{\iota} \\
\mathcal{L} \arrow{rr}{\lambda} & & *,
\end{tikzcd}
\]
which for any \icat{} $\mathcal{C}$ (with appropriate limits and
colimits)
determines an equivalence of functors between functor \icats{}
\[ p^{*}\pi^{*}\iota^{*} \simeq \text{pr}_{1}^{*} \lambda^{*}.\]
This induces a mate transformation
\[ \lambda^{*}\iota_{*} \to \text{pr}_{1,*}\text{pr}_{2}^{*} \simeq
  \text{pr}_{1,*} p^{*}\pi^{*},\]
and this is an equivalence: for $\Phi \colon \mathcal{I} \to
\mathcal{C}$, $\lambda^{*}\iota_{*}\Phi$ is the constant functor with
value $\lim_{\mathcal{I}} \Phi$ while the right Kan extension
$\text{pr}_{1,*}$ takes limits over $\mathcal{I}$ fibrewise so that 
$\text{pr}_{1,*}\text{pr}_{2}^{*}\Phi$ is also the constant functor
with value $\lim_{\mathcal{I}}\Phi$. From this equivalence we in turn obtain, by moving adjoints around, a natural
transformation
\[ \lambda_{!} \text{pr}_{1,*} p^{*} \to \lambda_{!} \text{pr}_{1,*}
  p^{*}\pi^{*}\pi_{!} \simeq \lambda_{!}\lambda^{*}\iota_{*}\pi_{!}
  \to \iota_{*} \pi_{!}.\]
For a functor $F \colon \mathcal{K} \to \mathcal{C}$ we can
interpret this as a natural morphism
\[ \colim_{\mathcal{L}} \lim_{\mathcal{I}} p^{*}F \to
\lim_{i \in \mathcal{I}} \colim_{\mathcal{K}_{i}}
F|_{\mathcal{K}_{i}}.\]
We say that \emph{$\mathcal{I}$-limits distribute over $K$-colimits}
in $\mathcal{C}$
if this morphism is an equivalence for any functor $F$.
\end{defn}

\begin{defn}\label{defn:adm}
Let $f \colon \mathcal{O} \to \mathcal{P}$ be an extendable morphism
of algebraic patterns.  We say that an \icat{} $\mathcal{C}$ is
\emph{$f$-admissible} if $\mathcal{C}$ is $\mathcal{O}$- and
$\mathcal{P}$-complete, the pointwise left Kan extension
$f_{!}  \colon \Fun(\mathcal{O},\mathcal{C}) \to
\Fun(\mathcal{P},\mathcal{C})$ exists, and
$\mathcal{P}^{\el}_{P/}$-limits distribute
over $\epsilon_{P}$-colimits for all $P \in \mathcal{P}$, where
$\epsilon_{P}$ is the functor from Definition~\ref{defn:ext}(2). In
other words, if $\mathcal{C}$ is $f$-admissible then for every
$P \in \mathcal{P}$ and every functor $\Phi$, the natural map
\[ \colim_{(O_{E})_{E \in \mathcal{P}^{\el}_{P/}} \in \mathcal{L}_{P}}  
\lim_{E \in \mathcal{P}^{\el}_{P/}} \Phi(O_{E}) \to \lim_{E \in
\mathcal{P}^{\el}_{P/}} \colim_{O_{E} \in \mathcal{O}^{\act}_{/E}} \Phi(O_{E})\]
is an equivalence.
\end{defn}

Having made these definitions, we can now state our result on left Kan
extensions:
\begin{propn}\label{propn:LKESeg}
Suppose $f \colon \mathcal{O} \to \mathcal{P}$ is an extendable
morphism of algebraic patterns, and $\mathcal{C}$ is an
$f$-admissible \icat{}. Then left Kan extension along $f$ restricts
to a functor
\[ f_{!} \colon \Seg_{\mathcal{O}}(\mathcal{C}) \to
\Seg_{\mathcal{P}}(\mathcal{C}),\]
given by $f_{!}\Phi(P) \simeq \colim_{O \in \mathcal{O}^{\act}_{/P}}
\Phi(O)$.
\end{propn}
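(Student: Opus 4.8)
The plan is to first establish the displayed colimit formula for an arbitrary $\Phi$, and then to verify the Segal condition for a Segal $\mathcal{O}$-object by feeding the relative Segal condition of Remark~\ref{rem genSegalcond} into the distributivity hypothesis built into $f$-admissibility. First I would prove that $f_{!}\Phi(P) \simeq \colim_{O \in \mathcal{O}^{\act}_{/P}} \Phi(O)$ for every $\Phi$. The pointwise left Kan extension computes $f_{!}\Phi(P)$ as the colimit of $\Phi$ over the comma \icat{} $\mathcal{O}_{/P} = \mathcal{O} \times_{\mathcal{P}} \mathcal{P}_{/P}$, whose objects are pairs $(O, f(O) \to P)$. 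Since $f$ is extendable it has unique lifting of inert morphisms (condition (1)), so by Lemma~\ref{lem:uniqintcoinit} the inclusion $\mathcal{O}^{\act}_{/P} \hookrightarrow \mathcal{O}_{/P}$ is cofinal, and the colimit may be computed over $\mathcal{O}^{\act}_{/P}$. Moreover, by Corollary~\ref{cor:Oactftr} this formula is natural in $P$: the map $f_{!}\Phi(P) \to f_{!}\Phi(E)$ attached to an inert $P \intto E$ is induced by the transition functor $\mathcal{O}^{\act}_{/P} \to \mathcal{O}^{\act}_{/E}$.

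Now assume $\Phi$ is a Segal $\mathcal{O}$-object; I must show the canonical map $f_{!}\Phi(P) \to \lim_{E \in \mathcal{P}^{\el}_{P/}} f_{!}\Phi(E)$ is an equivalence. Substituting the colimit formula at each $E$ identifies the target with $\lim_{E \in \mathcal{P}^{\el}_{P/}} \colim_{O_{E} \in \mathcal{O}^{\act}_{/E}} \Phi(O_{E})$, which is precisely the right-hand side of the distributivity map in Definition~\ref{defn:adm}.

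For the source, consider the functor $\Psi \colon \mathcal{L}_{P} \to \mathcal{C}$ sending a section $(O_{E})_{E}$ to $\lim_{E} \Phi(O_{E})$, and the canonical functor $g \colon \mathcal{O}^{\act}_{/P} \to \mathcal{L}_{P}$, which by condition (2) of extendability is cofinal. Composing, $\Psi \circ g$ sends $(O, f(O) \actto P)$ to $\lim_{E \in \mathcal{P}^{\el}_{P/}} \Phi(\alpha_{!}O)$, and since $\Phi$ is Segal and $f$ satisfies condition (3), the relative Segal condition of Remark~\ref{rem genSegalcond} gives a natural equivalence $\Phi(O) \isoto \lim_{E} \Phi(\alpha_{!}O)$, i.e. $\Phi|_{\mathcal{O}^{\act}_{/P}} \simeq \Psi \circ g$ as functors on $\mathcal{O}^{\act}_{/P}$. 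Cofinality of $g$ then yields $f_{!}\Phi(P) \simeq \colim_{\mathcal{O}^{\act}_{/P}} \Phi \simeq \colim_{(O_{E})_{E} \in \mathcal{L}_{P}} \lim_{E} \Phi(O_{E})$, which is the left-hand side of the distributivity map. Invoking $f$-admissibility identifies source and target.

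The step I expect to be the main obstacle is checking that this chain of equivalences is the \emph{canonical} Segal comparison map, not just an abstract equivalence of its two ends. To settle this I would test against each projection of $\lim_{E \in \mathcal{P}^{\el}_{P/}}$ onto its value at a fixed $E_{0}$: by the construction in Definition~\ref{defn:limdistrcolim} the $E_{0}$-component of the distributivity map is induced by the evaluation $\mathcal{L}_{P} \to \mathcal{O}^{\act}_{/E_{0}}$ together with the projection $\lim_{E} \Phi(O_{E}) \to \Phi(O_{E_{0}})$; precomposing with $g$ recovers exactly the transition functor $\mathcal{O}^{\act}_{/P} \to \mathcal{O}^{\act}_{/E_{0}}$ of Corollary~\ref{cor:Oactftr} and the structural map $\Phi(O) \to \Phi(\alpha_{!}O)$. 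These match the legs of the Segal cone described in the first paragraph, so the composite equivalence is the Segal map, and $f_{!}\Phi$ is a Segal $\mathcal{P}$-object.
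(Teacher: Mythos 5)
Your proposal is correct and follows essentially the same route as the paper's proof: the same chain of equivalences $f_{!}\Phi(P) \simeq \colim_{\mathcal{O}^{\act}_{/P}}\Phi \simeq \colim_{\mathcal{O}^{\act}_{/P}}\lim_{E}\Phi(O_{E}) \simeq \colim_{\mathcal{L}_{P}}\lim_{E}\Phi(O_{E}) \simeq \lim_{E}\colim_{\mathcal{O}^{\act}_{/E}}\Phi(O_{E})$, justified in the same order by Lemma~\ref{lem:uniqintcoinit}, condition (3) of Definition~\ref{defn:ext} via Remark~\ref{rem genSegalcond}, the cofinality in condition (2), and the distributivity in Definition~\ref{defn:adm}. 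Your closing paragraph verifying that the composite is the canonical Segal comparison map is a detail the paper leaves implicit, but it does not change the argument.
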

\begin{proof}
Given $\Phi \in \Seg_{\mathcal{O}}(\mathcal{C})$, we must show that
$f_{!}\Phi$ is a Segal object, \ie{} that the natural map
\[ (f_{!}\Phi)(P) \to \lim_{E \in \mathcal{P}^{\el}_{P/}} (f_{!}\Phi)(E) \]
is an equivalence. We have a sequence of equivalences
\[
\begin{array}{r@{\ }>{\displaystyle}ll}
f_{!}\Phi(P) & \simeq \colim_{O \in \mathcal{O}_{/P}} \Phi(O) \\
& \simeq \colim_{O \in \mathcal{O}^{\act}_{/P}} \Phi(O) &
\text{(by \ref{lem:uniqintcoinit})}\\
& \simeq \colim_{O \in \mathcal{O}^{\act}_{/P}} \lim_{E \in \mathcal{P}^{\el}_{P/}}\Phi(O_{E}) &
\text{(by \ref{defn:ext}(3))}\\
& \simeq \colim_{(O_{E})_{E} \in \mathcal{L}_{p}} \lim_{E \in \mathcal{P}^{\el}_{P/}}\Phi(O_{E}) &
\text{(by \ref{defn:ext}(2))}\\
& \simeq \lim_{E \in \mathcal{P}^{\el}_{P/}} \colim_{O_{E} \in
\mathcal{O}^{\act}_{/E}} \Phi(O_{E}) &
\text{(by \ref{defn:adm})} \\
& \simeq \lim_{E \in \mathcal{P}^{\el}_{P/}} (f_{!}\Phi)(E), & 
\end{array}
\]
which completes the proof.
\end{proof}

Having identified conditions under which $f_{!}$ preserves Segal
objects, we now turn to the question of when these conditions
hold. For extendability, we will see some general classes of examples
below in \S\ref{sec:WSF}; here, we will discuss two classes of
examples where $f$-admissibility holds. The starting point is the
following examples of distributivity of limits over colimits:

\begin{defn}
  We say an \icat{} $\mathcal{C}$ is $\times$-admissible if it has
  finite products and the cartesian product preserves colimits in each variable.
\end{defn}

\begin{lemma}
  Suppose $\mathcal{C}$ is $\times$-admissible. Then finite products
  distribute over all colimits in $\mathcal{C}$.
\end{lemma}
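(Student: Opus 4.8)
The plan is to unwind Definition~\ref{defn:limdistrcolim} in the case where $\mathcal{I}$ is a finite set, so that the $\mathcal{I}$-indexed limits appearing there are precisely finite products. For such $\mathcal{I}$ a functor $K \colon \mathcal{I} \to \CatI$ is a finite family of \icats{} $(K_{i})_{i \in \mathcal{I}}$, the associated cocartesian fibration $\mathcal{K} \to \mathcal{I}$ is the coproduct $\coprod_{i} K_{i}$, the limit $\mathcal{L}$ is the product $\prod_{i} K_{i}$, and a functor $F \colon \mathcal{K} \to \mathcal{C}$ is a family $(F_{i} \colon K_{i} \to \mathcal{C})_{i}$ with $\lim_{\mathcal{I}} p^{*}F$ evaluating at a section $(x_{i})_{i}$ to $\prod_{i} F_{i}(x_{i})$. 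I would first check that under these identifications the distributivity map of Definition~\ref{defn:limdistrcolim} becomes the canonical comparison map
\[ \colim_{(x_{i}) \in \prod_{i} K_{i}} \prod_{i} F_{i}(x_{i}) \longrightarrow \prod_{i} \colim_{x_{i} \in K_{i}} F_{i}(x_{i}), \]
so that the assertion is exactly that this map is an equivalence when $\mathcal{C}$ is $\times$-admissible.

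I would prove this by induction on $n = |\mathcal{I}|$, the cases $n = 0$ (both sides the terminal object) and $n = 1$ (both sides $\colim_{K_{1}} F_{1}$) being immediate. The heart of the argument is the binary case $n = 2$. Here I would use that a colimit over a product is computed as an iterated colimit to write
\[ \colim_{(x_{1},x_{2})} F_{1}(x_{1}) \times F_{2}(x_{2}) \simeq \colim_{x_{1}} \colim_{x_{2}} \bigl( F_{1}(x_{1}) \times F_{2}(x_{2}) \bigr), \]
and then pull the colimits into the two factors one at a time: holding $F_{1}(x_{1})$ fixed and using that $\blank \times \blank$ preserves colimits in the second variable gives $\colim_{x_{2}}\bigl(F_{1}(x_{1}) \times F_{2}(x_{2})\bigr) \simeq F_{1}(x_{1}) \times \colim_{x_{2}} F_{2}(x_{2})$, after which holding $\colim_{x_{2}} F_{2}(x_{2})$ fixed and using preservation in the first variable yields $\bigl(\colim_{x_{1}} F_{1}(x_{1})\bigr) \times \bigl(\colim_{x_{2}} F_{2}(x_{2})\bigr)$. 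For the inductive step with $n \geq 2$ I would split $\mathcal{I} = \{i_{0}\} \sqcup \mathcal{I}'$, regard the functor $(x_{i})_{i} \mapsto \prod_{i} F_{i}(x_{i})$ on $\prod_{i} K_{i} \simeq K_{i_{0}} \times \prod_{\mathcal{I}'} K_{i}$ as the external product of $F_{i_{0}}$ with the single functor $(x_{i})_{i \in \mathcal{I}'} \mapsto \prod_{i \in \mathcal{I}'} F_{i}(x_{i})$, apply the binary case to these two blocks, and conclude by the inductive hypothesis applied to $\mathcal{I}'$.

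The main obstacle is not the equivalence itself --- which is forced by $\times$-admissibility together with the commutation of colimits with colimits --- but the bookkeeping needed to identify the abstractly defined mate transformation of Definition~\ref{defn:limdistrcolim} with the concrete comparison map above, and to verify that the chain of equivalences in the binary case assembles into exactly this comparison map rather than merely \emph{some} equivalence between the same two objects. I would handle this by checking compatibility at each step with the projections $\prod_{i} K_{i} \to K_{j}$ and $\prod_{i} F_{i} \to F_{j}$ that define the target product and the canonical map, so that every equivalence in the computation is manifestly a map over these structure maps; the remaining coherence is then routine, and I would spell it out only to the extent needed to convince the reader that the constructed equivalence is indeed the canonical one.
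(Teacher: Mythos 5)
Your proof is correct and follows essentially the same route as the paper: the paper's argument is precisely the chain of equivalences $\colim_{\mathcal{I}_{1}\times\cdots\times\mathcal{I}_{n}} F_{1}\times\cdots\times F_{n} \simeq \colim_{\mathcal{I}_{1}}\cdots\colim_{\mathcal{I}_{n}} F_{1}\times\cdots\times F_{n} \simeq \colim_{\mathcal{I}_{1}}F_{1}\times\cdots\times\colim_{\mathcal{I}_{n}}F_{n}$, i.e.\ rewriting the colimit over a product as an iterated colimit and pulling the colimits into the factors one variable at a time, which is exactly your binary case iterated. Your additional care in identifying the abstract mate transformation of Definition~\ref{defn:limdistrcolim} with this concrete comparison map is a reasonable point that the paper passes over silently, but it does not change the substance of the argument.
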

\begin{proof}
  For any functors $F_{i} \colon \mathcal{I}_{i} \to \mathcal{C}$
  ($i = 1,\ldots,n$) whose colimits exist we have
\[ \colim_{\mathcal{I}_{1} \times \cdots \times \mathcal{I}_{n}} F_{1}
\times \cdots \times F_{n} \simeq \colim_{\mathcal{I}_{1}} \cdots
\colim_{\mathcal{I}_{n}} F_{1} \times \cdots \times F_{n} \simeq
\colim_{\mathcal{I}_{1}} F_{1} \times \cdots \times
\colim_{\mathcal{I}_{n}} F_{n}.\qedhere\]
\end{proof}

\begin{propn}\label{propn:spccolimdist}
Let $\mathcal{C}$ be a presentable \icat{} and write
$t \colon \mathcal{S} \to \mathcal{C}$ for the unique
colimit-preserving functor taking $*$ to the terminal object $*_{\mathcal{C}}$ of
$\mathcal{C}$. Consider a functor
$K \colon \mathcal{I} \to \mathcal{S}$ and suppose the following
conditions hold:
\begin{enumerate}[(1)]
\item $t$ preserves $\mathcal{I}$-limits.
\item The functor $\mathcal{C}_{/t(S)} \to \lim_{S}
\mathcal{C} \simeq \Fun(S, \mathcal{C})$ induced by taking pullbacks
along $*_{\mathcal{C}} \simeq t(*) \to t(S)$, is an equivalence for $S = \lim_{\mathcal{I}} K(i)$ and $S =
K_{i}$ for all $i \in \mathcal{I}$.
\end{enumerate}
Then $\mathcal{I}$-limits distribute over $K$-colimits in $\mathcal{C}$.
\end{propn}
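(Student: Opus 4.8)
The plan is to use condition (2) to trade the colimits over the spaces $K(i)$ and $\mathcal{L}$ for objects of slice categories, in which the comparison map of Definition~\ref{defn:limdistrcolim} becomes an equivalence that can be checked fibrewise. The key preliminary observation I would record is a consequence of condition (2). For a space $S$ for which the fibre functor $\theta_S \colon \mathcal{C}_{/t(S)} \to \Fun(S,\mathcal{C})$ of condition (2) is available, consider the colimit functor $c_S \colon \Fun(S,\mathcal{C}) \to \mathcal{C}_{/t(S)}$ sending $G$ to the structure map $\colim_S G \to \colim_S \mathrm{const}_{*_{\mathcal{C}}} \isoto t(S)$ (using that $t$ preserves colimits). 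I claim $c_S$ is left adjoint to $\theta_S$: for $(X \to t(S)) \in \mathcal{C}_{/t(S)}$ the composite $G(s) \to \colim_S G \to t(S)$ factors through $t(s)\colon *_{\mathcal{C}} \to t(S)$, so a map $c_S G \to X$ over $t(S)$ is the same as a compatible family $G(s) \to X_s$ into the fibres $X_s := *_{\mathcal{C}} \times_{t(S)} X = (\theta_S X)(s)$, giving $\Map_{\mathcal{C}_{/t(S)}}(c_S G, X) \simeq \lim_{s \in S}\Map_{\mathcal{C}}(G(s), X_s) \simeq \Map_{\Fun(S,\mathcal{C})}(G, \theta_S X)$. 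Since condition (2) makes $\theta_S$ an equivalence, its left adjoint $c_S$ is the inverse equivalence; in particular the fibre of $\colim_S G$ over $s$ is canonically $G(s)$. I would use this for $S = \mathcal{L}$ and for each $S = K(i)$, exactly as provided by the hypotheses.

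Next I would check that both sides of the comparison are naturally augmented over $t(\mathcal{L})$. By condition (1), $t(\mathcal{L}) = t(\lim_{\mathcal{I}} K) \isoto \lim_{\mathcal{I}} t(K(i))$. The terminal natural transformation $F \to \mathrm{const}_{*_{\mathcal{C}}}$ on $\mathcal{K}$ induces, by naturality of the construction in Definition~\ref{defn:limdistrcolim}, a comparison of the corresponding distributivity maps; evaluated on $\mathrm{const}_{*_{\mathcal{C}}}$ both sides become $t(\mathcal{L})$ (the left by $\colim_{\mathcal{L}}\lim_{\mathcal{I}} *_{\mathcal{C}} \simeq t(\mathcal{L})$, the right by $\lim_i \colim_{K(i)} *_{\mathcal{C}} \simeq \lim_i t(K(i)) \simeq t(\mathcal{L})$ via condition (1)). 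Hence the distributivity morphism $\colim_{\mathcal{L}} \lim_{\mathcal{I}} p^*F \to \lim_i \colim_{K(i)} F_i$ is a morphism of $\mathcal{C}_{/t(\mathcal{L})}$. By condition (2) for $S=\mathcal{L}$ it then suffices to prove it is an equivalence after applying $\theta_{\mathcal{L}}$, that is, to compute fibres over each point $\ell \in \mathcal{L}$.

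Finally I would compute the two fibres. Writing $\ell_i := p(\ell,i) \in K(i)$ and $G(\ell) := \lim_i (p^*F)(\ell,i) = \lim_i F(\ell_i)$, the left-hand side is $\colim_{\mathcal{L}} G = c_{\mathcal{L}}(G)$, whose fibre over $\ell$ is $G(\ell) = \lim_i F(\ell_i)$ by the first step. For the right-hand side, the point $*_{\mathcal{C}} \xrightarrow{t(\ell)} t(\mathcal{L})$ is the $\mathcal{I}$-limit of the points $*_{\mathcal{C}} \xrightarrow{t(\ell_i)} t(K(i))$, so, since pullback commutes with the limit over $\mathcal{I}$,
\[ *_{\mathcal{C}} \times_{t(\mathcal{L})} \lim_i \colim_{K(i)} F_i \;\simeq\; \lim_i \bigl( *_{\mathcal{C}} \times_{t(K(i))} \colim_{K(i)} F_i \bigr); \]
by the first step applied to $S = K(i)$ the $i$th factor is the fibre of $\colim_{K(i)} F_i$ over $\ell_i$, namely $F_i(\ell_i) = F(\ell_i)$, so the right-hand fibre is again $\lim_i F(\ell_i)$. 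Both fibres are canonically $\lim_i F(\ell_i)$ and the comparison induces the identity, so the map is an equivalence fibrewise, hence an equivalence in $\mathcal{C}_{/t(\mathcal{L})}$, as required.

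The conceptual heart, and the step I expect to require the most care, is the first one: extracting from condition (2) that the fibre functor $\theta_S$ is genuinely \emph{inverse} to the colimit functor $c_S$, so that colimits over spaces are recovered by their fibres. Once this ``local-system'' picture is in place, everything else is the formal commutation of pullbacks and limits together with condition (1); the only genuine bookkeeping is verifying that the abstractly-defined morphism of Definition~\ref{defn:limdistrcolim} really is augmented over $t(\mathcal{L})$ and restricts to the canonical identity on each fibre, which follows from naturality in $F$ applied to the terminal transformation $F \to \mathrm{const}_{*_{\mathcal{C}}}$.
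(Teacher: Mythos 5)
Your proposal is correct and follows essentially the same route as the paper's proof: use condition (2) to identify $\Fun(S,\mathcal{C})$ with $\mathcal{C}_{/t(S)}$ so that colimits over spaces become total objects with fibres $F(s)$, observe via condition (1) that the distributivity map lives over the equivalence $t(\mathcal{L}) \isoto \lim_i t(K_i)$, and check it is the identity on fibres. The only cosmetic difference is that you verify the adjunction $c_S \dashv \theta_S$ directly by a mapping-space computation, whereas the paper obtains the same identification by passing to left adjoints in the triangle of right adjoints $\blank \times t(S)$ and $\name{const}$.
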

\begin{proof}
Condition (2) implies that we have a commutative diagram of right
adjoints
\[
\begin{tikzcd}
\mathcal{C}_{/t(S)} \arrow{rr}{\sim} & &  \Fun(S,
\mathcal{C})\\
& \mathcal{C} \arrow{ul}{\blank \times t(S)} \arrow{ur}[below right]{\text{const}}
\end{tikzcd}
\]
Passing to left adjoints, we get the commutative triangle
\[
\begin{tikzcd}
\Fun(S,\mathcal{C}) \arrow{rr}{\sim} \arrow{dr}[below left]{\text{colim}} & &
\mathcal{C}_{/t(S)} \arrow{dl}{\text{src}}\\
& \mathcal{C},
\end{tikzcd}
\]
from which we see that under the equivalence of (2) the
colimit of a diagram $S \to \mathcal{C}$ is given by the source of
the corresponding morphism to $t(S)$. Given $F \colon S \to
\mathcal{C}$, it follows that we have pullback squares
\[
\begin{tikzcd}
F(s) \arrow{r} \arrow{d} & \colim_{S} F \arrow{d} \\
*_{\mathcal{C}} \arrow{r}{t(s)} & t(S)
\end{tikzcd}
\]
for $s \in S$.

Now consider a functor $F \colon \mathcal{K} \to \mathcal{C}$, where
$\mathcal{K} \to \mathcal{I}$ is the left fibration corresponding to
$K$. We have a commutative square
\[
\begin{tikzcd}
\colim_{L} \lim_{\mathcal{I}} F \arrow{r} \arrow{d} &
\lim_{i \in \mathcal{I}} \colim_{K_{i}} F \arrow{d} \\
\colim_{L} \lim_{\mathcal{I}} *_{\mathcal{C}} \arrow{r} & \lim_{i \in \mathcal{I}}
\colim_{K_{i}} *_{\mathcal{C}},
\end{tikzcd}
\]
where $L := \lim_{\mathcal{I}} K(i)$.
Here the bottom horizontal map can be identified with the natural map
\[ t(L) \simeq \colim_{L} *_{\mathcal{C}} \to \lim_{i \in \mathcal{I}}
\colim_{K_{i}} *_{\mathcal{C}} \simeq \lim_{i \in \mathcal{I}} t(K_{i}).\]
This is an equivalence by assumption (1). The equivalence of
assumption (2) then implies that the top horizontal map is an
equivalence \IFF{} it induces an equivalence on fibres over each map
$t(l) \colon *_{\mathcal{C}} \to t(L)$ for $l \in L$. Using the pullback squares
above and the fact that limits commute, we see that the map on fibres
at $(k_{i})_{i} \in L$ is the identity 
\[ \lim_{\mathcal{I}} F(k_{i}) \to \lim_{\mathcal{I}} F(k_{i}). \qedhere\]
\end{proof}

This argument applies to $\mathcal{C}$ being $\mathcal{S}$, or more
generally any $\infty$-topos, giving:
\begin{cor}\label{cor:Sadm}
Given any functor $K \colon \mathcal{I} \to \mathcal{S}$ we have
that:
\begin{enumerate}[(i)]
\item $\mathcal{I}$-limits distribute over $K$-colimits in
$\mathcal{S}$,
\item $\mathcal{I}$-limits distribute over $K$-colimits in any
$\infty$-topos provided $\mathcal{I}$ is a finite \icat{}.
\end{enumerate}
\end{cor}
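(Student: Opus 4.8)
The plan is to deduce both parts directly from Proposition~\ref{propn:spccolimdist}, by verifying its two hypotheses in each case. Thus for the given functor $K \colon \mathcal{I} \to \mathcal{S}$ I would take $\mathcal{C} = \mathcal{S}$ for part (i) and $\mathcal{C}$ an arbitrary $\infty$-topos for part (ii), and in each case check conditions (1) and (2) of the proposition for the canonical colimit-preserving functor $t \colon \mathcal{S} \to \mathcal{C}$ sending $*$ to $*_{\mathcal{C}}$.

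For part (i), the functor $t \colon \mathcal{S} \to \mathcal{S}$ is colimit-preserving and sends $*$ to the terminal object $*$, so by the universal property of $\mathcal{S}$ as the free presentable \icat{} on one object it is (equivalent to) the identity. Condition (1) is then automatic, as the identity preserves all limits, and in particular all $\mathcal{I}$-limits with no restriction on $\mathcal{I}$; this is exactly why part (i) needs no finiteness hypothesis. Condition (2) asks that for a space $S$ the fibre functor $\mathcal{S}_{/S} \to \Fun(S, \mathcal{S})$, sending $X \to S$ to $s \mapsto X \times_{S} \{s\}$, be an equivalence, which is precisely straightening/unstraightening for left fibrations over the \igpd{} $S$ (see \cite{ht}). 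Both conditions hold for every $S$, so Proposition~\ref{propn:spccolimdist} yields (i).

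For part (ii), I would first identify $t$ with the inverse image functor $a^{*}$ of the essentially unique geometric morphism $a \colon \mathcal{C} \to \mathcal{S}$: being a left adjoint it preserves colimits, and being left exact it preserves the terminal object, so $a^{*}(*) \simeq *_{\mathcal{C}}$, whence $t \simeq a^{*}$ by the same universal property. Since $a^{*}$ is left exact it preserves finite limits, so condition (1) holds because $\mathcal{I}$ is assumed finite; this is the single place the finiteness hypothesis is used. For condition (2), I would write each relevant space as $S \simeq \colim_{s \in S} *$ in $\mathcal{S}$ and apply $t$ to obtain $t(S) \simeq \colim_{s \in S} *_{\mathcal{C}}$; descent in the $\infty$-topos $\mathcal{C}$ then shows that the pullback functor $\mathcal{C}_{/t(S)} \to \lim_{s \in S} \mathcal{C}_{/*_{\mathcal{C}}} \simeq \Fun(S, \mathcal{C})$ is an equivalence, which is exactly the equivalence demanded in condition (2). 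Proposition~\ref{propn:spccolimdist} then gives (ii).

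The only genuinely nontrivial input is the verification of condition (2) in the topos case, which rests on the descent property of $\infty$-topoi; everything else is a bookkeeping identification of $t$ together with an appeal to straightening. I expect the main point to watch is to confirm that the equivalence produced by descent is implemented by the very pullback-along-$*_{\mathcal{C}} \to t(S)$ functor named in the proposition, rather than merely being abstractly equivalent to it, so that it is genuinely the mate transformation of Definition~\ref{defn:limdistrcolim} that is inverted; this is immediate once one recalls that the descent equivalence is assembled from the fibrewise pullback functors.
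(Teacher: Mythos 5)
Your proposal is correct and follows essentially the same route as the paper: both deduce the corollary from Proposition~\ref{propn:spccolimdist}, checking condition (2) by descent (your straightening/unstraightening argument for $\mathcal{S}$ is just the special case of descent for the $\infty$-topos $\mathcal{S}$) and condition (1) by left exactness of the geometric morphism's left adjoint, with the finiteness hypothesis dropped for $\mathcal{S}$ because $t$ is an equivalence there. Your closing remark about checking that the descent equivalence is implemented by the named pullback functor is a sensible precaution, but it is the same verification the paper implicitly relies on.
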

\begin{proof}
  Condition (2) of Proposition~\ref{propn:spccolimdist} holds in $\infty$-topoi by descent, \cite[Theorem
  6.1.3.9]{ht}, while condition (1) holds for finite limits since $t$
  is the left adjoint of a geometric morphism by \cite[Proposition
  6.3.4.1]{ht} and so preserves finite limits. In the case of
  $\mathcal{S}$ the finiteness condition is unnecessary since $t$ is
  an equivalence and so preserves all limits.
\end{proof}

\begin{cor}\label{cor:lccadm}
  Let $f \colon \mathcal{O} \to \mathcal{P}$ be an extendable morphism
  of algebraic patterns such that $\mathcal{P}^{\el}_{P/}$ is a finite
  set for all $P \in \mathcal{P}$. Suppose $\mathcal{C}$ is a $\times$-admissible \icat{},
  and assume the pointwise left Kan extension
  $f_{!} \colon \Fun(\mathcal{O}, \mathcal{C}) \to \Fun(\mathcal{P},
  \mathcal{C})$ exists. Then $\mathcal{C}$ is
  $f$-admissible, and the left Kan extension along $f$ restricts to a
  functor
  \[ \pushQED{\qed}
    f_{!} \colon \Seg_{\mathcal{O}}(\mathcal{C}) \to
    \Seg_{\mathcal{P}}(\mathcal{C}).\qedhere
  \popQED\]
\end{cor}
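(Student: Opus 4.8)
The plan is to reduce everything to Proposition~\ref{propn:LKESeg}: once we know that $\mathcal{C}$ is $f$-admissible, that proposition immediately supplies the desired restriction of $f_{!}$ to Segal objects together with the colimit formula. So the entire task is to verify the four clauses of Definition~\ref{defn:adm}, namely that $\mathcal{C}$ is $\mathcal{O}$- and $\mathcal{P}$-complete, that the pointwise left Kan extension $f_{!}$ exists (which is part of the hypothesis), and that $\mathcal{P}^{\el}_{P/}$-limits distribute over $\epsilon_{P}$-colimits for every $P \in \mathcal{P}$.

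The guiding observation is that the finiteness hypothesis turns every index category in sight into a finite discrete one, so that all the limits involved are finite products. For $\mathcal{P}$-completeness this is immediate: $\mathcal{P}^{\el}_{P/}$ is a finite set by assumption, so limits of that shape are finite products, which exist because $\mathcal{C}$ is $\times$-admissible. For the distributivity clause I would invoke the lemma that finite products distribute over all colimits in a $\times$-admissible \icat{}: since $\mathcal{P}^{\el}_{P/}$ is finite discrete, the limit $\mathcal{L}_{P}$ of $\epsilon_{P}$ is simply the finite product $\prod_{E \in \mathcal{P}^{\el}_{P/}} \mathcal{O}^{\act}_{/E}$, and the comparison map of Definitions~\ref{defn:limdistrcolim} and~\ref{defn:adm} becomes exactly the equivalence $\colim_{(O_{E})_{E}} \prod_{E} \Phi(O_{E}) \simeq \prod_{E} \colim_{O_{E}} \Phi(O_{E})$ furnished by that lemma. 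This is the $\times$-admissible analogue of how Corollary~\ref{cor:Sadm} was deduced from Proposition~\ref{propn:spccolimdist}.

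The one clause needing a genuine argument is $\mathcal{O}$-completeness, since a priori nothing forces the source slices $\mathcal{O}^{\el}_{X/}$ to be finite. Here I would use that $f$, being extendable, has unique lifting of inert morphisms (Definition~\ref{defn:ext}(1)), which says precisely that $(\mathcal{O}^{\xint}_{Y/})^{\simeq} \isoto (\mathcal{P}^{\xint}_{f(Y)/})^{\simeq}$ for every $Y \in \mathcal{O}$. Because $f$ preserves inert morphisms and elementary objects, restricting this equivalence to the objects with elementary target gives a monomorphism $(\mathcal{O}^{\el}_{X/})^{\simeq} \hookrightarrow (\mathcal{P}^{\el}_{f(X)/})^{\simeq}$; as the target is a finite set, so is the source. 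Moreover, any morphism $\beta \colon (X \intto E) \to (X \intto E')$ of $\mathcal{O}^{\el}_{X/}$ is sent by $f$ into the discrete category $\mathcal{P}^{\el}_{f(X)/}$, hence to an identity; viewing $\beta$ as an object of $\mathcal{O}^{\xint}_{E/}$ and applying unique lifting of inert morphisms at $E$, the fact that $f(\beta)$ is an equivalence forces $\beta$ itself to be an equivalence. Thus $\mathcal{O}^{\el}_{X/}$ is an \igpd{} with a finite set of components, i.e.\ a finite set, and limits of that shape are again finite products existing in $\mathcal{C}$.

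Having verified all four clauses, $\mathcal{C}$ is $f$-admissible, and Proposition~\ref{propn:LKESeg} then yields the stated functor $f_{!} \colon \Seg_{\mathcal{O}}(\mathcal{C}) \to \Seg_{\mathcal{P}}(\mathcal{C})$. I expect the only real subtlety to be the $\mathcal{O}$-completeness step: the point to get right is that unique lifting of inert morphisms transfers \emph{both} the finiteness of the set of components \emph{and} the discreteness (triviality of the morphisms) from the target slices $\mathcal{P}^{\el}_{f(X)/}$ back to the source slices $\mathcal{O}^{\el}_{X/}$, so that $\times$-admissibility of $\mathcal{C}$ already guarantees the needed limits.
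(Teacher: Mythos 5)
Your proof is correct, and its main line --- verify the clauses of Definition~\ref{defn:adm} and then invoke Proposition~\ref{propn:LKESeg}, with the distributivity clause supplied by the lemma that finite products distribute over all colimits in a $\times$-admissible \icat{} --- is exactly the intended one; the paper states this corollary with no written proof precisely because that reduction is immediate. The one place where you go beyond what the paper records is the verification of $\mathcal{O}$-completeness, and your argument there is sound: unique lifting of inert morphisms gives equivalences $(\mathcal{O}^{\xint}_{X/})^{\simeq} \isoto (\mathcal{P}^{\xint}_{f(X)/})^{\simeq}$, whose restriction to elementary targets is a monomorphism into the finite set $\mathcal{P}^{\el}_{f(X)/}$, and applying the same lifting property at each elementary $E$ (together with conservativity of the projection $\mathcal{O}^{\el}_{X/} \to \mathcal{O}^{\el}$) shows every morphism of $\mathcal{O}^{\el}_{X/}$ is an equivalence; hence these slices are finite sets and the required limits are finite products, which $\times$-admissibility provides. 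This is a detail the paper glosses over --- in its main application, to $j_{\mathcal{O}} \colon \mathcal{O}^{\xint} \to \mathcal{O}$, the source and target patterns have the same elementary slices, so the two completeness conditions coincide and the issue is invisible --- and you were right to notice that in the general case it genuinely requires an argument.
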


\begin{remark}
  The assumption of $\times$-admissibility can be slightly weakened:
  It is enough to assume that the cartesian product in $\mathcal{C}$
  preserves colimits of shape $\mathcal{O}^{\act}_{/E}$ in each
  variable for all $E \in \mathcal{P}^{\el}$.
\end{remark}

\begin{cor}\label{cor:adm}
  Suppose $\mathcal{X}$ is an $\infty$-topos, and
  $f \colon \mathcal{O} \to \mathcal{P}$ is an extendable morphism of
  algebraic patterns such that
  \begin{enumerate}[(1)]
  \item $\mathcal{O}^{\act}_{/E}$ is an \igpd{} for all
    $E \in \mathcal{P}^{\el}$,
  \item the \icat{} $\mathcal{P}^{\el}_{P/}$ is finite for all
    $P \in \mathcal{P}$ (or arbitrary if $\mathcal{X}$ is the
    $\infty$-topos $\mathcal{S}$).
  \end{enumerate}
  Then $\mathcal{X}$ is $f$-admissible, and the
  left Kan extension restricts to a functor
  \[ \pushQED{\qed}
    f_{!} \colon \Seg_{\mathcal{O}}(\mathcal{X}) \to
    \Seg_{\mathcal{P}}(\mathcal{X}).\qedhere
  \popQED\]
\end{cor}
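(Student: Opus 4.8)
The plan is to reduce everything to verifying that $\mathcal{X}$ is $f$-admissible in the sense of Definition~\ref{defn:adm}; granting this, the final statement follows immediately from Proposition~\ref{propn:LKESeg}, which also supplies the pointwise formula $f_!\Phi(P) \simeq \colim_{O \in \mathcal{O}^{\act}_{/P}} \Phi(O)$. Since an $\infty$-topos is presentable, hence complete and cocomplete, I would first observe that $\mathcal{X}$ is automatically $\mathcal{O}$- and $\mathcal{P}$-complete and that the pointwise left Kan extension $f_! \colon \Fun(\mathcal{O}, \mathcal{X}) \to \Fun(\mathcal{P}, \mathcal{X})$ exists, as $\mathcal{X}$ has all small colimits. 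The only remaining part of $f$-admissibility to check is the distributivity condition: that for every $P \in \mathcal{P}$, $\mathcal{P}^{\el}_{P/}$-limits distribute over $\epsilon_P$-colimits, where $\epsilon_P \colon \mathcal{P}^{\el}_{P/} \to \CatI$ sends $E$ to $\mathcal{O}^{\act}_{/E}$.

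The crux is to bring this into the form of Corollary~\ref{cor:Sadm}. Here I would use condition (1): every object of $\mathcal{P}^{\el}_{P/}$ is an inert map $P \intto E$ with $E$ elementary, so by hypothesis $\mathcal{O}^{\act}_{/E}$ is an $\infty$-groupoid, and hence $\epsilon_P$ factors through the full subcategory $\mathcal{S} \subseteq \CatI$. For a functor valued in $\infty$-groupoids the cocartesian fibration of Definition~\ref{defn:limdistrcolim} is a left fibration, its cocartesian sections are simply all sections, and so $\mathcal{L}_P$ is the limit $\lim_{\mathcal{P}^{\el}_{P/}} \epsilon_P$ formed in $\mathcal{S}$. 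With this identification, the distributivity appearing in Definition~\ref{defn:adm} for the pair $(\mathcal{I}, K) = (\mathcal{P}^{\el}_{P/}, \epsilon_P)$ is exactly the distributivity of $\mathcal{I}$-limits over $K$-colimits treated in Proposition~\ref{propn:spccolimdist} and Corollary~\ref{cor:Sadm}.

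I would then simply invoke Corollary~\ref{cor:Sadm}. When $\mathcal{X} = \mathcal{S}$, part (i) gives the distributivity for arbitrary $\mathcal{P}^{\el}_{P/}$, matching the parenthetical in condition (2); for a general $\infty$-topos, part (ii) gives it once $\mathcal{I} = \mathcal{P}^{\el}_{P/}$ is a finite $\infty$-category, which is condition (2). In both cases the required distributivity holds for all $P$, so $\mathcal{X}$ is $f$-admissible, and Proposition~\ref{propn:LKESeg} finishes the argument.

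The step I expect to be the main obstacle --- or at least the only point requiring genuine care rather than bookkeeping --- is the identification in the second paragraph of the two a priori distinct distributivity conditions: the one phrased via cocartesian fibrations and cocartesian sections in Definitions~\ref{defn:limdistrcolim} and~\ref{defn:adm}, versus the one phrased via left fibrations in Corollary~\ref{cor:Sadm}. Once condition (1) is used to force $\epsilon_P$ into $\mathcal{S}$, this reconciliation is clean, and the rest is a direct appeal to presentability of $\infty$-topoi and to Corollary~\ref{cor:Sadm}.
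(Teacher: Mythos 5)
Your proposal is correct and matches the paper's (implicit) argument: the corollary is stated with no separate proof precisely because it is the combination of Corollary~\ref{cor:Sadm} with Proposition~\ref{propn:LKESeg}, using condition (1) to ensure that $\epsilon_P$ lands in $\infty$-groupoids so that the distributivity of Definition~\ref{defn:adm} is the one treated in Corollary~\ref{cor:Sadm}, and condition (2) to select between its two cases. Your care in reconciling the cocartesian-fibration formulation of Definition~\ref{defn:limdistrcolim} with the left-fibration setting of Proposition~\ref{propn:spccolimdist} is exactly the right point to flag, and it goes through as you describe.
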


\section{Free Segal Objects}\label{sec:free}
Suppose $\mathcal{O}$ is an algebraic pattern, and $\mathcal{C}$ an
$\mathcal{O}$-complete \icat{}. Restricting Segal objects to the subcategory
$\mathcal{O}^{\el}$ gives a functor
\[U_{\mathcal{O}}\colon \Seg_{\mathcal{O}}(\mathcal{C}) \to
\Fun(\mathcal{O}^{\el},\mathcal{C}).\]
We think of \emph{free} Segal $\mathcal{O}$-objects as being given by
a left adjoint $F_{\mathcal{O}}$ to this functor, when this exists.

The subcategory $\mathcal{O}^{\xint}$ has a canonical pattern
structure restricted from $\mathcal{O}$ (so only equivalences are
active morphisms and the elementary objects are still those of
$\mathcal{O}^{\el}$), and using this the inclusion
$j_{\mathcal{O}} \colon \mathcal{O}^{\xint} \to \mathcal{O}$ is a
Segal morphism. The \icat{} $\Seg_{\mathcal{O}^{\xint}}(\mathcal{C})$
is by definition the full subcategory of
$\Fun(\mathcal{O}^{\xint}, \mathcal{C})$ spanned by the functors that are
right Kan extensions along the fully faithful inclusion
$i_{\mathcal{O}} \colon \mathcal{O}^{\el} \to \mathcal{O}^{\xint}$,
which means that the restriction functor
$\Seg_{\mathcal{O}^{\xint}}(\mathcal{C}) \to \Fun(\mathcal{O}^{\el},
\mathcal{C})$ is an equivalence. 
The functor
$U_{\mathcal{O}}$ thus factors as the composite
\[ \Seg_{\mathcal{O}}(\mathcal{C}) \xto{j_{\mathcal{O}}^{*}}
  \Seg_{\mathcal{O}^{\xint}}(\mathcal{C}) \xto{i_{\mathcal{O}}^{*}}
  \Fun(\mathcal{O}^{\el}, \mathcal{C}),\] where the second functor is
an equivalence with inverse the right Kan extension functor
$i_{\mathcal{O},*}$. If $\mathcal{C}$ is presentable, using
Proposition~\ref{propn:f!adj} this means the left adjoint
$F_{\mathcal{O}}$ is given by
\[ \Fun(\mathcal{O}^{\el}, \mathcal{C}) \xto{i_{\mathcal{O},*}}
  \Seg_{\mathcal{O}^{\xint}}(\mathcal{C}) \xto{L_{\Seg}f_{!}}
  \Seg_{\mathcal{O}}(\mathcal{C}).\] In this section we will first
show that this adjunction is monadic, and then specialize the results
of the previous section to $j_{\mathcal{O}}$ to get conditions under
which the free Segal objects are described by a formula in terms of
limits and colimits.

Monadicity is a special case of the following observation:
\begin{propn}\label{propn:esssurjSeg}
Suppose $f \colon \mathcal{O} \to \mathcal{P}$ is an essentially
surjective Segal morphism and $\mathcal{C}$ is a presentable
\icat{}. Then:
\begin{enumerate}[(i)]
\item A functor $F \colon \mathcal{P} \to \mathcal{C}$ is a Segal
object \IFF{} $f^{*}F$ is a Segal $\mathcal{O}$-object.
\item The adjunction
\[ Lf_{!} \colon \Seg_{\mathcal{O}}(\mathcal{C}) \rightleftarrows
\Seg_{\mathcal{P}}(\mathcal{C}) \cocolon f^{*} \]
is monadic.
\end{enumerate}
\end{propn}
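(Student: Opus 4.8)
The plan is to treat (i) as the substantive input and to deduce (ii) from it, together with essential surjectivity, via the $\infty$-categorical monadicity theorem \cite[Theorem 4.7.3.5]{ha}. At the outset I would reduce (i) to the case $\mathcal{C} = \mathcal{S}$: for any $C \in \mathcal{C}$ the functor $\Map_{\mathcal{C}}(C, F(-))$ is a Segal $\mathcal{P}$-space exactly when $F(O) \to \lim_{E} F(E)$ is sent to an equivalence by $\Map_{\mathcal{C}}(C,-)$, and $f^{*}$ commutes with $\Map_{\mathcal{C}}(C,-)$; since equivalences and limits in $\mathcal{C}$ are detected by these mapping spaces, (i) for presentable $\mathcal{C}$ follows from (i) for $\mathcal{S}$ applied to the functors $\Map_{\mathcal{C}}(C, F(-))$.

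For (i) the forward implication is immediate from Lemma~\ref{lem:Segalmor}. For the converse, assume $f^{*}F$ is Segal. By essential surjectivity every object of $\mathcal{P}$ is of the form $f(X)$, so it suffices to verify the Segal condition of $F$ at $P = f(X)$. There one has the comparison triangle
\[ F(P) \xrightarrow{s_{P}} \lim_{E' \in \mathcal{P}^{\el}_{P/}} F(E') \xrightarrow{r_{X}} \lim_{E \in \mathcal{O}^{\el}_{X/}} F(f(E)), \]
in which the outer composite is the Segal map of $f^{*}F$ at $X$ (an equivalence by hypothesis) and $r_{X}$ is the comparison induced by $f^{\el}_{X} \colon \mathcal{O}^{\el}_{X/} \to \mathcal{P}^{\el}_{f(X)/}$. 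Thus $s_{P}$ is an equivalence as soon as $r_{X}$ is, and $r_{X}$ being an equivalence is precisely the content of the Segal-morphism condition $(\ast)$ applied to $F|_{\mathcal{P}^{\el}}$.

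I expect this last point to be the main obstacle: condition $(\ast)$ is a priori only guaranteed for (restrictions of) genuine Segal $\mathcal{P}$-spaces, whereas $F$ is known only to have $f^{*}F$ Segal. To exploit the hypothesis I would pass to the localization picture of Lemma~\ref{lem:locality}. Writing $S_{\mathcal{P}}$ for the strongly saturated class generated by the maps $y(P)_{\Seg} \to y(P)$, ``$F$ is Segal'' means $F$ is $S_{\mathcal{P}}$-local, while ``$f^{*}F$ is Segal'' means $F$ is local for the class $\overline{f_{!}S_{\mathcal{O}}}$ generated by $f_{!}$ of the $\mathcal{O}$-Segal equivalences (using $f_{!} \dashv f^{*}$ and $f_{!}y(X) \simeq y(f(X))$); Lemma~\ref{lem:Segalmor} already gives $\overline{f_{!}S_{\mathcal{O}}} \subseteq S_{\mathcal{P}}$, and the goal is equality. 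Each generator $f_{!}(y(X)_{\Seg} \to y(X))$ factors as $f_{!}y(X)_{\Seg} \xrightarrow{a_{X}} y(f(X))_{\Seg} \xrightarrow{b_{f(X)}} y(f(X))$ with $a_{X}$ induced by $f^{\el}_{X}$; condition $(\ast)$ says $a_{X} \in S_{\mathcal{P}}$, and by essential surjectivity the maps $b_{f(X)}$ exhaust the generators of $S_{\mathcal{P}}$, so a two-out-of-three argument reduces everything to showing $a_{X} \in \overline{f_{!}S_{\mathcal{O}}}$. The delicate step is to prove this: I would use essential surjectivity to resolve each representable $y(E')$ appearing in the colimit $y(f(X))_{\Seg} = \colim_{E' \in (\mathcal{P}^{\el}_{f(X)/})^{\op}} y(E')$ (writing $E' \simeq f(W)$) by the $\overline{f_{!}S_{\mathcal{O}}}$-equivalent object $f_{!}y(W)_{\Seg}$, thereby rewriting $a_{X}$, up to $\overline{f_{!}S_{\mathcal{O}}}$-equivalence, as a colimit of maps that are manifestly $f_{!}$ of $\mathcal{O}$-Segal equivalences; making the choices $E' \simeq f(W)$ functorial (e.g.\ through the cocartesian-fibration description of the active slices in Corollary~\ref{cor:Oactftr}) is where the real work lies.

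Granting (i), part (ii) follows from monadicity. The functor $f^{*} \colon \Seg_{\mathcal{P}}(\mathcal{C}) \to \Seg_{\mathcal{O}}(\mathcal{C})$ has the left adjoint $L_{\Seg}f_{!}$ of Proposition~\ref{propn:f!adj}, and it is conservative because $f$ is essentially surjective: a natural transformation of functors on $\mathcal{P}$ is an equivalence once it is one after restriction along an essentially surjective functor. Both \icats{} are presentable by Lemma~\ref{lem:locality}, hence cocomplete, so it remains to check that $f^{*}$ preserves colimits of $f^{*}$-split simplicial objects. Given such $F_{\bullet}$ in $\Seg_{\mathcal{P}}(\mathcal{C})$, the simplicial object $f^{*}F_{\bullet}$ is split, so its colimit is absolute; in particular it is computed by the colimit taken pointwise in $\Fun(\mathcal{O}, \mathcal{C})$ and already lies in $\Seg_{\mathcal{O}}(\mathcal{C})$. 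Since $f^{*}$ on functor categories preserves pointwise colimits, the colimit of $F_{\bullet}$ computed in $\Fun(\mathcal{P}, \mathcal{C})$ satisfies $f^{*}(\colim F_{\bullet}) \simeq \colim f^{*}F_{\bullet}$, which is Segal; by part (i) the pointwise colimit $\colim F_{\bullet}$ is therefore itself a Segal $\mathcal{P}$-object, so it already computes the colimit in $\Seg_{\mathcal{P}}(\mathcal{C})$ with no further localization, and $f^{*}$ preserves it. The monadicity theorem \cite[Theorem 4.7.3.5]{ha} then applies, proving (ii).
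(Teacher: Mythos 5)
Your part (ii) is correct and is essentially the paper's own argument: conservativity of $f^{*}$ from essential surjectivity, computing the colimit of an $f^{*}$-split simplicial object pointwise in $\Fun(\mathcal{P},\mathcal{C})$, noting its restriction along $f$ is an absolute colimit of Segal objects, and invoking part (i) to see the colimit is already local. Likewise, the skeleton of your part (i) --- reduction to $\mathcal{C}=\mathcal{S}$ via mapping spaces, the factorization of the Segal map of $F$ at $P\simeq f(X)$ as $s_{P}$ followed by $r_{X}$, and two-out-of-three plus essential surjectivity --- is exactly the paper's.

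The gap is that you never prove $r_{X}$ is an equivalence, and you say so yourself. At this point the paper simply applies the Segal-morphism condition $(\ast)$ to $\Phi$: since $(\ast)$ depends only on $\Phi|_{\mathcal{P}^{\el}}$, the second map in the triangle is declared an equivalence ``since $f$ is a Segal morphism,'' and the proof ends there. Your hesitation --- that $(\ast)$ as stated quantifies over Segal $\mathcal{P}$-spaces, while $\Phi$ is only known to have $f^{*}\Phi$ Segal --- is a fair reading of the definition (and is moot for strong Segal morphisms, where coinitiality gives $(\ast)$ for arbitrary functors); but the substitute you offer does not close it. Proving $a_{X}\in\overline{f_{!}S_{\mathcal{O}}}$ for all $X$ is, by two-out-of-three, equivalent to the statement $\overline{f_{!}S_{\mathcal{O}}}=S_{\mathcal{P}}$ you are trying to establish, so the reformulation by itself is no progress; and the concrete mechanism --- resolving each $y(E')$ in $y(f(X))_{\Seg}$ by $f_{!}y(W)_{\Seg}$ with $f(W)\simeq E'$ --- founders twice. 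First, essential surjectivity gives objectwise lifts but no functor $\mathcal{P}^{\el}_{f(X)/}\to\mathcal{O}$ making these choices coherent; Corollary~\ref{cor:Oactftr} cannot supply this, as it presupposes unique lifting of inert morphisms, which is not among your hypotheses. Second, even granting coherent choices, one must still compare the resulting iterated colimit over $\mathcal{P}^{\el}_{f(X)/}$ of the categories $\mathcal{O}^{\el}_{W/}$ with the single colimit over $\mathcal{O}^{\el}_{X/}$ defining $f_{!}y(X)_{\Seg}$, and no such comparison is available from the hypotheses --- this is again precisely the assertion $a_{X}\in\overline{f_{!}S_{\mathcal{O}}}$. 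As written, part (i) is therefore incomplete; the intended argument is the direct application of $(\ast)$ to $\Phi|_{\mathcal{P}^{\el}}$ as in the paper (or, if you insist on the literal quantification in the definition, a restriction to strong Segal morphisms, which covers every application made of this proposition, in particular $j_{\mathcal{O}}\colon\mathcal{O}^{\xint}\to\mathcal{O}$).
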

\begin{proof}
We first prove (i). One direction amounts to $f$ being a Segal
morphism, which is true by assumption. To prove the non-trivial direction, observe that for
$\Phi \colon \mathcal{O} \to \mathcal{C}$ we have for every
$O \in \mathcal{O}$ canonical morphisms
\[ \Phi(f(O)) \to \lim_{E \in \mathcal{P}^{\el}_{f(O)/}} \Phi(E) \to \lim_{E'
\in \mathcal{O}^{\el}_{O/}} \Phi(f(E')).\]
Here the second morphism is an equivalence since $f$ is a Segal
morphism, and if $f^{*}\Phi$ is a Segal $\mathcal{O}$-object then the
composite morphism is an equivalence. Thus the first morphism is an
equivalence, and so $\Phi$ satisfies the Segal condition at every object
of $\mathcal{P}$ in the image of $f$; since $f$ is essentially
surjective this completes the proof.

Using the monadicity theorem for \icats{} \cite[Theorem
4.7.3.5]{ha}, to prove (ii) it suffices to show that $f^{*}$ detects
equivalences, that $\Seg_{\mathcal{P}}(\mathcal{C})$ has colimits of
$f^{*}$-split simplicial objects, and these colimits are preserved
by $f^{*}$. Since $f$ is essentially surjective it is immediate that
$f^{*}$ detects equivalences. Consider an $f^{*}$-split simplicial
object $p \colon \Dop \to \Seg_{\mathcal{P}}(\mathcal{C})$. Let
$\overline{p} \colon (\Dop)^{\triangleright} \to \Fun(\mathcal{P},
\mathcal{C})$
denote the colimit of $p$ in $\Fun(\mathcal{P}, \mathcal{C})$. Since
$f^{*}$, viewed as a functor
$\Fun(\mathcal{P}, \mathcal{C}) \to \Fun(\mathcal{O}, \mathcal{C})$,
is a left adjoint, $f^{*}\overline{p}$ is the colimit of $f^{*}p$ in
$\Fun(\mathcal{O}, \mathcal{C})$. On the other hand, since $f^{*}p$
extends to a split simplicial diagram, and all functors preserve
colimits of split simplicial diagrams, we see that the colimit of
$f^{*}p$ in $\Seg_{\mathcal{O}}(\mathcal{C})$ is also the colimit in
$\Fun(\mathcal{O}, \mathcal{C})$. In particular,
$f^{*}\overline{p}(\infty)$ lies in $\Seg_{\mathcal{O}}(\mathcal{C})$. By
(i) this implies that $\overline{p}(\infty)$ is in
$\Seg_{\mathcal{P}}(\mathcal{C})$. This completes the proof, since
the colimit of $p$ in $\Seg_{\mathcal{P}}(\mathcal{C})$ is the
localization of $\overline{p}(\infty)$, which is already local.
\end{proof}

Applying this to $j_{\mathcal{O}}$, we get:
\begin{cor}\label{cor:freemonadic}
  Let $\mathcal{O}$ be an algebraic pattern and $\mathcal{C}$ a
  presentable \icat{}. Then the free-forgetful adjunction
  \[ F_{\mathcal{O}} \colon \Fun(\mathcal{O}^{\el}, \mathcal{C}) \simeq
    \Seg_{\mathcal{O}^{\xint}}(\mathcal{C}) \rightleftarrows
    \Seg_{\mathcal{O}}(\mathcal{C}) \cocolon U_{\mathcal{O}} \] is
  monadic. \qed
\end{cor}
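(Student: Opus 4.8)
The plan is to deduce this immediately from Proposition~\ref{propn:esssurjSeg}, applied to the inclusion $j_{\mathcal{O}} \colon \mathcal{O}^{\xint} \to \mathcal{O}$ equipped with the pattern structure described just before the statement. First I would record the two hypotheses of that proposition. The morphism $j_{\mathcal{O}}$ is essentially surjective, since the subcategory $\mathcal{O}^{\xint}$ contains every object of $\mathcal{O}$; and it is a Segal morphism, as already noted in the preamble. Hence Proposition~\ref{propn:esssurjSeg}(ii), with $f = j_{\mathcal{O}}$ and $\mathcal{P} = \mathcal{O}$, applies and shows that the adjunction
\[ L j_{\mathcal{O},!} \colon \Seg_{\mathcal{O}^{\xint}}(\mathcal{C}) \rightleftarrows \Seg_{\mathcal{O}}(\mathcal{C}) \cocolon j_{\mathcal{O}}^{*} \]
is monadic.

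The remaining step is to rewrite this adjunction in the form stated in the corollary, by transporting it across the equivalence $i_{\mathcal{O}}^{*} \colon \Seg_{\mathcal{O}^{\xint}}(\mathcal{C}) \isoto \Fun(\mathcal{O}^{\el}, \mathcal{C})$, whose inverse is the right Kan extension functor $i_{\mathcal{O},*}$ (as recalled above). Under this equivalence the right adjoint $j_{\mathcal{O}}^{*}$ becomes $i_{\mathcal{O}}^{*} \circ j_{\mathcal{O}}^{*}$, which is precisely the forgetful functor $U_{\mathcal{O}}$ by the factorization of $U_{\mathcal{O}}$ given before the corollary, while the left adjoint becomes $L j_{\mathcal{O},!} \circ i_{\mathcal{O},*} = F_{\mathcal{O}}$. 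Thus the adjunction $F_{\mathcal{O}} \dashv U_{\mathcal{O}}$ is obtained from the monadic adjunction above simply by composing the forgetful direction with the equivalence $i_{\mathcal{O}}^{*}$.

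The only point requiring any care is the invariance of monadicity when the base \icat{} (the target of the forgetful functor) is replaced by an equivalent one along an equivalence $E$: if $R$ is monadic then so is $E \circ R$, with left adjoint $L \circ E^{-1}$, since the Barr--Beck--Lurie criteria of \cite[Theorem 4.7.3.5]{ha}---conservativity and the preservation of colimits of $U_{\mathcal{O}}$-split simplicial objects---are manifestly inherited through composition with an equivalence. Beyond this routine bookkeeping there is no real obstacle, as the two substantive inputs (that $j_{\mathcal{O}}$ is a Segal morphism, and that restriction along $i_{\mathcal{O}}$ is an equivalence onto $\Fun(\mathcal{O}^{\el}, \mathcal{C})$) were already established, and essential surjectivity of $j_{\mathcal{O}}$ is immediate.
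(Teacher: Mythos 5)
Your proposal is correct and matches the paper's own argument exactly: the corollary is stated as an immediate application of Proposition~\ref{propn:esssurjSeg} to the essentially surjective Segal morphism $j_{\mathcal{O}}$, followed by the identification $\Seg_{\mathcal{O}^{\xint}}(\mathcal{C}) \simeq \Fun(\mathcal{O}^{\el}, \mathcal{C})$ already set up in the preamble. The extra remark on transporting monadicity across an equivalence is a harmless bit of bookkeeping the paper leaves implicit.
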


Now we apply the results of the previous section to $j_{\mathcal{O}}$
to understand when the free algebras are simply given by the left Kan
extension $j_{\mathcal{O},!}$. It is convenient to first introduce
some notation:
\begin{notation}
  Let $\mathcal{O}$ be an algebraic pattern. For $O \in \mathcal{O}$
  we write $\name{Act}_{\mathcal{O}}(O)$ for the $\infty$-groupoid of
  active morphisms to $O$ in $\mathcal{O}$; this is equivalent to
  $(\mathcal{O}^{\xint})^{\act}_{/O}$ since the only active morphisms
  in $\mathcal{O}^{\xint}$ are the equivalences.
\end{notation}

\begin{remark}
  By Remark~\ref{rmk:actcocart} the \icats{} $\mathcal{O}^{\act}_{/O}$
  are functorial in $O \in \mathcal{O}$. Passing to the underlying
  \igpds{} this means the $\infty$-groupoids $\Act_{\mathcal{O}}(O)$ are functorial in $O \in
  \mathcal{O}$, via the factorization system.
\end{remark}

\begin{defn}\label{defn:pattext}
  We say an algebraic pattern $\mathcal{O}$ is \emph{extendable} if
  the inclusion
  $j_{\mathcal{O}}\colon \mathcal{O}^{\xint} \to \mathcal{O}$ is
  extendable in the sense of Definition~\ref{defn:ext}. This is
  equivalent to the following pair of conditions:
  \begin{enumerate}[(1)]
  \item The morphism
    \[ \Act_{\mathcal{O}}(O) \to \lim_{E \in \mathcal{O}^{\el}_{O/}}
      \Act_{\mathcal{O}}(E) \]
    is an equivalence for all $O \in \mathcal{O}$. In other words,
    $\Act_{\mathcal{O}}$ is a Segal $\mathcal{O}$-space.
  \item For every active map $O \xto{\phi} O'$ in $\mathcal{O}$, the
    canonical functor $\mathcal{O}^{\el}(\phi) \to
    \mathcal{O}^{\el}_{O/}$ induces an equivalence on limits
    \[ \lim_{\mathcal{O}^{\el}_{O/}} F \to
      \lim_{\mathcal{O}^{\el}(\phi)} F \]
    for every functor $F \colon \mathcal{O}^{\el} \to \mathcal{S}$.
  \end{enumerate}
\end{defn}

\begin{remark}
Condition (2) implies that
\[ \lim_{E \in \mathcal{O}^{\el}_{O/}} \Phi(E) \to \lim_{\alpha \in
\mathcal{O}^{\el}_{O'/}} \lim_{E \in
\mathcal{O}^{\el}_{\alpha_{!}O/}} \Phi(E)\]
is an equivalence for any functor $\Phi \colon \mathcal{O}^{\el} \to
\mathcal{C}$, provided either limit exists, and $O \to \alpha_{!}O \to E$ is the
inert--active factorization of $O \to O' \xto{\alpha} E$. This in particular
implies the following ``generalized Segal condition'': If $\Phi$ is
a Segal object, then for any active morphism $\phi \colon O \to O'$,
we have
\[\Phi(O) \simeq
\lim_{\alpha \in \mathcal{O}^{\el}_{O'/}} \Phi(\alpha_{!}O).\]
\end{remark}

\begin{remark}
In practice, condition (2) holds because the map
$\mathcal{O}^{\el}(\phi) \to \mathcal{O}^{\el}_{O/}$ is
coinitial. However, with the more general formulation we get the
following characterization of the extendable patterns:
\end{remark}

\begin{propn}\label{prop:extendable}
The following are equivalent for an algebraic pattern $\mathcal{O}$:
\begin{enumerate}[(1)]
\item $\mathcal{O}$ is extendable.
\item $j_{\mathcal{O},!} \colon \Fun(\mathcal{O}^{\xint},
\mathcal{S}) \to \Fun(\mathcal{O}, \mathcal{S})$ restricts to a
functor $\Seg_{\mathcal{O}^{\xint}}(\mathcal{S}) \to \Seg_{\mathcal{O}}(\mathcal{S})$.
\end{enumerate}
\end{propn}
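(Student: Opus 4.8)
The plan is to recognize Proposition~\ref{prop:extendable} as the special case of the left Kan extension machinery of \S\ref{sec:LKE} applied to the inclusion $j_{\mathcal{O}}\colon \mathcal{O}^{\xint}\to\mathcal{O}$, and to exploit the fact that for this particular morphism all the relevant active slices are $\infty$-groupoids. I first record three facts about $j_{\mathcal{O}}$ that hold unconditionally. Since an inert map out of $O\in\mathcal{O}^{\xint}$ is the same datum in $\mathcal{O}^{\xint}$ and in $\mathcal{O}$, the morphism $j_{\mathcal{O}}$ automatically has unique lifting of inert morphisms, so condition (1) of Definition~\ref{defn:ext} is vacuous and ``$\mathcal{O}$ is an extendable pattern'' is \emph{precisely} the statement that $j_{\mathcal{O}}$ is an extendable morphism. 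Next, because the only active maps in $\mathcal{O}^{\xint}$ are equivalences, the fibre over $E$ of the cocartesian fibration of Corollary~\ref{cor:Oactftr} is the $\infty$-groupoid $\Act_{\mathcal{O}}(E)$; hence the functor $\epsilon_{P}$ of Definition~\ref{defn:ext}(2) is $\mathcal{S}$-valued, with $\mathcal{L}_{P}\simeq\lim_{E\in\mathcal{O}^{\el}_{P/}}\Act_{\mathcal{O}}(E)$. Finally, Lemma~\ref{lem:uniqintcoinit} makes $\Act_{\mathcal{O}}(P)=(\mathcal{O}^{\xint})^{\act}_{/P}\hookrightarrow\mathcal{O}^{\xint}\times_{\mathcal{O}}\mathcal{O}_{/P}$ cofinal, so for $F\colon\mathcal{O}^{\el}\to\mathcal{S}$ with associated Segal $\mathcal{O}^{\xint}$-object $i_{\mathcal{O},*}F$ the pointwise left Kan extension is
\[ j_{\mathcal{O},!}(i_{\mathcal{O},*}F)(P)\;\simeq\;\colim_{\phi\in\Act_{\mathcal{O}}(P)}\ \lim_{E\in\mathcal{O}^{\el}_{\mathrm{dom}(\phi)/}}F. \]

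For the implication $(1)\Rightarrow(2)$ I would simply invoke Proposition~\ref{propn:LKESeg} for $j_{\mathcal{O}}$ with $\mathcal{C}=\mathcal{S}$. The only hypothesis needing verification is that $\mathcal{S}$ is $j_{\mathcal{O}}$-admissible: it is complete and cocomplete, and the required distributivity of $\mathcal{O}^{\el}_{P/}$-limits over $\epsilon_{P}$-colimits holds by Corollary~\ref{cor:Sadm}(i), precisely because $\epsilon_{P}$ is $\mathcal{S}$-valued by the remark above. Proposition~\ref{propn:LKESeg} then gives that $j_{\mathcal{O},!}$ carries $\Seg_{\mathcal{O}^{\xint}}(\mathcal{S})$ into $\Seg_{\mathcal{O}}(\mathcal{S})$.

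For $(2)\Rightarrow(1)$ I would extract the two conditions of Definition~\ref{defn:pattext} in turn. Applying the hypothesis to the terminal Segal $\mathcal{O}^{\xint}$-object gives $j_{\mathcal{O},!}(\ast)\simeq\Act_{\mathcal{O}}$ by the displayed formula, so $\Act_{\mathcal{O}}$ must be a Segal $\mathcal{O}$-space, which is condition (1). Now fix $F$ and $P$ and compare the two sides of the Segal condition for $j_{\mathcal{O},!}(i_{\mathcal{O},*}F)$: rewriting $\lim_{E\in\mathcal{O}^{\el}_{P/}}\colim_{\Act_{\mathcal{O}}(E)}$ as a colimit over $\mathcal{L}_{P}$ via distributivity (Corollary~\ref{cor:Sadm}(i)), and then using the now-established equivalence $\Act_{\mathcal{O}}(P)\isoto\mathcal{L}_{P}$, the Segal map is identified with
\[ \colim_{\phi\in\Act_{\mathcal{O}}(P)}\nu_{F}(\phi),\qquad \nu_{F}(\phi)\colon\lim_{E\in\mathcal{O}^{\el}_{\mathrm{dom}(\phi)/}}F\ \to\ \lim_{\alpha\in\mathcal{O}^{\el}_{P/}}\ \lim_{E\in\mathcal{O}^{\el}_{\alpha_{!}\mathrm{dom}(\phi)/}}F, \]
where $\nu_{F}(\phi)$ is exactly the comparison map whose invertibility is condition (2) of Definition~\ref{defn:pattext} for the active morphism $\phi$.

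The step I expect to be the main obstacle is the passage from this colimit-level equivalence back to the \emph{pointwise} statement over individual active morphisms that condition (2) requires. Here the groupoidal nature of the active slices is decisive: since $\Act_{\mathcal{O}}(P)$ is an $\infty$-groupoid, the functor $\colim_{\Act_{\mathcal{O}}(P)}\colon\Fun(\Act_{\mathcal{O}}(P),\mathcal{S})\simeq\mathcal{S}_{/\Act_{\mathcal{O}}(P)}\to\mathcal{S}$ is the forgetful functor, which is conservative; hence $\colim_{\phi}\nu_{F}(\phi)$ being an equivalence forces $\nu_{F}(\phi)$ to be an equivalence at every point $\phi$. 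As every active morphism of $\mathcal{O}$ arises as such a $\phi$ (taking $P$ to be its codomain) and $F$ was arbitrary, this yields condition (2), completing the argument. I would take care to extract condition (1) \emph{before} performing the rewriting, since the identification of the Segal map with $\colim_{\phi}\nu_{F}(\phi)$ relies on the equivalence $\Act_{\mathcal{O}}(P)\simeq\mathcal{L}_{P}$.
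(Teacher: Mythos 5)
Your proposal is correct and follows essentially the same route as the paper: $(1)\Rightarrow(2)$ by verifying $j_{\mathcal{O}}$-admissibility of $\mathcal{S}$ and invoking Proposition~\ref{propn:LKESeg}, and $(2)\Rightarrow(1)$ by first extracting condition (1) from the terminal Segal $\mathcal{O}^{\xint}$-space and then deducing condition (2) from the Segal condition for $j_{\mathcal{O},!}i_{\mathcal{O},*}F$ pointwise over $\Act_{\mathcal{O}}(P)$. Your appeal to conservativity of $\colim_{\Act_{\mathcal{O}}(P)}\colon \Fun(\Act_{\mathcal{O}}(P),\mathcal{S})\simeq \mathcal{S}_{/\Act_{\mathcal{O}}(P)}\to\mathcal{S}$ is just a repackaging of the paper's argument, which takes fibres of the commutative square over $\Act_{\mathcal{O}}(O)\isoto\lim_{E}\Act_{\mathcal{O}}(E)$; both rest on the same descent identification.
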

\begin{proof}
  Suppose (1) holds. Since $\Act_{\mathcal{O}}(O)$ is an
  $\infty$-groupoid for all $O$,
  the \icat{} $\mathcal{S}$ is $j_{\mathcal{O}}$-admissible by
  Corollary~\ref{cor:adm}, and so (2) follows from
  Proposition~\ref{propn:LKESeg}.

  We now show that
  (2) implies the two conditions in Definition~\ref{defn:pattext}. To
  prove condition (1), consider the terminal object $* \in
  \Fun(\mathcal{O}^{\xint},\mathcal{S})$. For this we have
  \[ j_{\mathcal{O},!}*(O) \simeq \colim_{\Act_{\mathcal{O}}(O)} *
    \simeq \Act_{\mathcal{O}}(O),\]
  so since $*$ is a Segal $\mathcal{O}^{\xint}$-space, assumption (2)
  implies that $\Act_{\mathcal{O}}(\blank)$ is a Segal
  $\mathcal{O}$-space. To prove condition (2), consider $F \colon
  \mathcal{O}^{\el} \to \mathcal{S}$ and its right Kan extension $F'
  := i_{\mathcal{O},*}F$, which is a Segal
  $\mathcal{O}^{\xint}$-space. Then $j_{\mathcal{O},!}F'$ is a Segal
  $\mathcal{O}$-space, which means that in the commutative square
  \[
    \begin{tikzcd}
      \colim_{X \in \Act_{\mathcal{O}}(O)} F'(X) \arrow{r} \arrow{d} &
      \lim_{E \in \mathcal{O}^{\el}_{O/}} \colim_{Y \in
        \Act_{\mathcal{O}}(E)} F'(Y) \arrow{d} \\
      \Act_{\mathcal{O}}(O) \arrow{r}{\sim} & \lim_{E \in
        \mathcal{O}^{\el}_{O/}} \Act_{\mathcal{O}}(E),
    \end{tikzcd}
    \]
 the top horizontal morphism is an equivalence. Hence we get an
 equivalence on fibres at each active morphism $(\phi \colon X \to O)
 \in \Act_{\mathcal{O}}(O)$, which we can identify with the natural
 map
 \[ F'(X) \isoto \lim_{\alpha \in \mathcal{O}^{\el}_{O/}}
   F'(\alpha_{!}X).\]
 Using the description of $F'$ as a right Kan extension we get
 \[ \lim_{\mathcal{O}^{\el}_{X/}} F \isoto \lim_{\alpha \in
     \mathcal{O}^{\el}_{O/}} \lim_{\mathcal{O}^{\el}_{\alpha_{!}X/}}F
   \simeq \lim_{\mathcal{O}^{\el}(\phi)} F,\]
 as required. 
\end{proof}

\begin{defn}
We say an \icat{} $\mathcal{C}$ is \emph{$\mathcal{O}$-admissible} if
$\mathcal{O}^{\el}_{O/}$-limits distribute over colimits indexed by
the functor $\mathcal{O}^{\el}_{O/} \to \mathcal{S}$ taking $E$ to
$\Act_{\mathcal{O}}(E)$ for all $O \in \mathcal{O}$.
\end{defn}

From Corollaries~\ref{cor:lccadm} and \ref{cor:adm} we get:
\begin{example}\label{ex:Oadm}
Let $\mathcal{O}$ be an extendable algebraic pattern. Then:
\begin{enumerate}[(i)]
\item $\mathcal{S}$ is $\mathcal{O}$-admissible.
\item Any $\infty$-topos is $\mathcal{O}$-admissible if the \icats{}
$\mathcal{O}^{\el}_{O/}$ are all finite.
\item Any $\times$-admissible \icat{} is $\mathcal{O}$-admissible if
  the \icats{} $\mathcal{O}^{\el}_{O/}$ are all finite sets.
\end{enumerate}
\end{example}

\begin{cor}\label{cor:freealg}
Let $\mathcal{O}$ be an extendable algebraic pattern and
$\mathcal{C}$ an $\mathcal{O}$-admissible \icat{}. Then left Kan
extension along $j_{\mathcal{O}} \colon \mathcal{O}^{\xint} \to \mathcal{O}$
restricts to a functor
\[ j_{\mathcal{O},!} \colon \Seg_{\mathcal{O}^{\xint}}(\mathcal{C}) \to
\Seg_{\mathcal{O}}(\mathcal{C}),\]
left adjoint to the restriction $j_{\mathcal{O}}^{*} \colon
\Seg_{\mathcal{O}}(\mathcal{C}) \to
\Seg_{\mathcal{O}^{\xint}}(\mathcal{C})$. This functor is given by 
\[ j_{\mathcal{O},!}\Phi(O) \simeq \colim_{O' \in
    \Act_{\mathcal{O}}(O)} \Phi(O').\]
\end{cor}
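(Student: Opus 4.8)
The plan is to recognize the corollary as the special case of Proposition~\ref{propn:LKESeg} applied to the inclusion $j_{\mathcal{O}} \colon \mathcal{O}^{\xint} \to \mathcal{O}$, together with the observation that the adjunction follows formally once both $j_{\mathcal{O},!}$ and $j_{\mathcal{O}}^{*}$ are known to preserve Segal objects. First I would record that $\mathcal{O}$ being extendable means exactly that $j_{\mathcal{O}}$ is an extendable morphism, by Definition~\ref{defn:pattext}, and that $j_{\mathcal{O}}$ is a Segal morphism (as noted at the start of this section, since $\mathcal{O}^{\xint}$ carries the restricted pattern structure in which only equivalences are active).

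Next I would check that $\mathcal{O}$-admissibility of $\mathcal{C}$ is precisely $j_{\mathcal{O}}$-admissibility in the sense of Definition~\ref{defn:adm}. The key point is that for the pattern $\mathcal{O}^{\xint}$ the only active morphisms are equivalences, so that $(\mathcal{O}^{\xint})^{\act}_{/E} \simeq \Act_{\mathcal{O}}(E)$ for every $E$; hence the functor $\epsilon_{O} \colon \mathcal{O}^{\el}_{O/} \to \CatI$ of Definition~\ref{defn:ext}(2) for the morphism $j_{\mathcal{O}}$ is exactly $E \mapsto \Act_{\mathcal{O}}(E)$. The distributivity of $\mathcal{O}^{\el}_{O/}$-limits over $\epsilon_{O}$-colimits required for $j_{\mathcal{O}}$-admissibility therefore coincides with the distributivity condition in the definition of $\mathcal{O}$-admissibility; moreover the $\mathcal{O}$- and $\mathcal{O}^{\xint}$-completeness of $\mathcal{C}$ (which amount to the same thing, namely the existence of $\mathcal{O}^{\el}_{O/}$-limits) and the existence of the colimits $\colim_{\Act_{\mathcal{O}}(O)}$ computing the pointwise left Kan extension are implicit in this condition.

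With these identifications Proposition~\ref{propn:LKESeg} applies directly and shows that $j_{\mathcal{O},!}$ restricts to a functor $\Seg_{\mathcal{O}^{\xint}}(\mathcal{C}) \to \Seg_{\mathcal{O}}(\mathcal{C})$ with value $j_{\mathcal{O},!}\Phi(O) \simeq \colim_{O' \in (\mathcal{O}^{\xint})^{\act}_{/O}} \Phi(O') \simeq \colim_{O' \in \Act_{\mathcal{O}}(O)} \Phi(O')$, where I use Lemma~\ref{lem:uniqintcoinit} to pass to active morphisms and then the identification of active morphisms in $\mathcal{O}^{\xint}$ with equivalences to rewrite the indexing \igpd{}. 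For the adjunction, I would note that since $j_{\mathcal{O}}$ is a Segal morphism, $j_{\mathcal{O}}^{*}$ likewise restricts to a functor on Segal objects by Lemma~\ref{lem:Segalmor}. As $\Seg_{\mathcal{O}^{\xint}}(\mathcal{C})$ and $\Seg_{\mathcal{O}}(\mathcal{C})$ are full subcategories of the respective functor \icats{}, the adjunction $j_{\mathcal{O},!} \dashv j_{\mathcal{O}}^{*}$ on functor \icats{} restricts to one between these full subcategories: for $\Phi \in \Seg_{\mathcal{O}^{\xint}}(\mathcal{C})$ and $\Psi \in \Seg_{\mathcal{O}}(\mathcal{C})$ the equivalence $\Map(j_{\mathcal{O},!}\Phi, \Psi) \simeq \Map(\Phi, j_{\mathcal{O}}^{*}\Psi)$ computed in the functor \icats{} agrees with the mapping spaces in the subcategories of Segal objects by fullness. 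The only genuine work is the bookkeeping in the second paragraph matching the two admissibility conditions; everything else is a direct appeal to the results already established.
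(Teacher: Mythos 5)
Your proposal is correct and matches the paper's (implicit) argument exactly: the corollary is stated without proof precisely because it is the specialization of Proposition~\ref{propn:LKESeg} to $j_{\mathcal{O}}$, using Definition~\ref{defn:pattext} to identify extendability of $\mathcal{O}$ with extendability of $j_{\mathcal{O}}$, the equivalence $(\mathcal{O}^{\xint})^{\act}_{/E} \simeq \Act_{\mathcal{O}}(E)$ to match the two admissibility conditions, and the fact that $j_{\mathcal{O}}$ is a Segal morphism to restrict the adjunction to the full subcategories of Segal objects. Your bookkeeping in the second paragraph is exactly the verification the paper leaves to the reader.
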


Combining this with the equivalence
$\Seg_{\mathcal{O}^{\xint}}(\mathcal{C}) \simeq
\Fun(\mathcal{O}^{\el},\mathcal{C})$ given by right Kan extension
along $i_{\mathcal{O}}$, we can reformulate this as:
\begin{cor}
Let $\mathcal{O}$ be an extendable algebraic pattern and
$\mathcal{C}$ an $\mathcal{O}$-admissible \icat{}. Then the
restriction
\[ U_{\mathcal{O}}\colon \Seg_{\mathcal{O}}(\mathcal{C}) \to \Fun(\mathcal{O}^{\el},
\mathcal{C})\]
has a left adjoint $F_{\mathcal{O}}$, which is given by
\[ F_{\mathcal{O}}(\Phi)(O) \simeq
  j_{\mathcal{O},!}i_{\mathcal{O},*}\Phi(O) \simeq \colim_{O' \in
    \name{Act}_{\mathcal{O}}(O)} \lim_{E \in \mathcal{O}^{\el}_{O'/}}
  \Phi(E).\]
\end{cor}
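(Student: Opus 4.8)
The plan is to deduce the statement formally from Corollary~\ref{cor:freealg} together with the factorization of $U_{\mathcal{O}}$ recalled at the start of this section. Specifically, $U_{\mathcal{O}}$ decomposes as the composite
\[ \Seg_{\mathcal{O}}(\mathcal{C}) \xto{j_{\mathcal{O}}^{*}} \Seg_{\mathcal{O}^{\xint}}(\mathcal{C}) \xto{i_{\mathcal{O}}^{*}} \Fun(\mathcal{O}^{\el}, \mathcal{C}), \]
where $i_{\mathcal{O}}^{*}$ is an equivalence whose inverse is the right Kan extension functor $i_{\mathcal{O},*}$.

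First I would invoke Corollary~\ref{cor:freealg}, which under the standing hypotheses (that $\mathcal{O}$ is extendable and $\mathcal{C}$ is $\mathcal{O}$-admissible) produces a left adjoint $j_{\mathcal{O},!}$ to $j_{\mathcal{O}}^{*}$. Since a left adjoint of a composite is the composite of the left adjoints in the opposite order, and the left adjoint of the equivalence $i_{\mathcal{O}}^{*}$ is its inverse $i_{\mathcal{O},*}$, it follows that $U_{\mathcal{O}}$ admits the left adjoint $F_{\mathcal{O}} := j_{\mathcal{O},!} \circ i_{\mathcal{O},*}$.

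It then remains only to unwind the colimit formula. Applying the formula from Corollary~\ref{cor:freealg} to the Segal $\mathcal{O}^{\xint}$-object $i_{\mathcal{O},*}\Phi$ gives
\[ F_{\mathcal{O}}(\Phi)(O) \simeq j_{\mathcal{O},!}(i_{\mathcal{O},*}\Phi)(O) \simeq \colim_{O' \in \Act_{\mathcal{O}}(O)} (i_{\mathcal{O},*}\Phi)(O'), \]
and substituting the pointwise formula for the right Kan extension along the full subcategory inclusion $i_{\mathcal{O}}$, namely $(i_{\mathcal{O},*}\Phi)(O') \simeq \lim_{E \in \mathcal{O}^{\el}_{O'/}} \Phi(E)$, yields the claimed expression.

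Since every substantive point—the preservation of Segal objects by $j_{\mathcal{O},!}$, the adjunction itself, and the colimit formula for $j_{\mathcal{O},!}$—has already been established in Corollary~\ref{cor:freealg}, I expect no real obstacle here: the statement is a pure reformulation obtained by transporting along the equivalence $\Seg_{\mathcal{O}^{\xint}}(\mathcal{C}) \simeq \Fun(\mathcal{O}^{\el}, \mathcal{C})$. The only point requiring any care is bookkeeping of the indexing diagrams, \ie{} confirming that the active-morphism \igpd{} $\Act_{\mathcal{O}}(O)$ and the slice categories $\mathcal{O}^{\el}_{O'/}$ are exactly those appearing in the stated formula.
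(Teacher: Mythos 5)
Your proposal is correct and is exactly the argument the paper intends: the corollary is introduced as a reformulation of Corollary~\ref{cor:freealg} obtained by transporting along the equivalence $i_{\mathcal{O}}^{*}\colon \Seg_{\mathcal{O}^{\xint}}(\mathcal{C}) \simeq \Fun(\mathcal{O}^{\el},\mathcal{C})$ with inverse $i_{\mathcal{O},*}$, using the factorization of $U_{\mathcal{O}}$ recalled at the start of the section. The composition of left adjoints and the substitution of the pointwise right Kan extension formula are precisely the bookkeeping the paper leaves implicit.
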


We end this section with some examples of extendable patterns:
\begin{example}
The algebraic patterns $\xF_{*}^{\flat}$ and $\xF_{*}^{\natural}$
are extendable. In the former case, we recover the familiar formula for
free commutative monoids:
\[ U_{\xF_{*}^{\flat}}F_{\xF_{*}^{\flat}}(X) \simeq \coprod_{n = 0}^{\infty} X^{\times
n}_{h\Sigma_{n}}.\]
In the latter case, we get
\[ U_{\xF_{*}^{\natural}}F_{\xF_{*}^{\natural}}(X \to Y) \simeq
  \coprod_{n = 0}^{\infty} X^{\times_{Y} n}_{h\Sigma_{n}} \to Y,\]
which describes a free commutative monoid on $X \to Y$ in the slice
over $Y$.
\end{example}

\begin{example}
The algebraic patterns $\simp^{\op,\flat}$ and $\simp^{\op,\natural}$
are extendable. In the former case, we get the expected formula for
free associative monoids:
\[ U_{\simp^{\op,\flat}}F_{\simp^{\op,\flat}}(X) \simeq
\coprod_{n=0}^{\infty} X^{\times n},\]
while in the latter case we get the formula for free \icats{}:
\[ U_{\simp^{\op,\natural}}F_{\simp^{\op,\natural}}\left(
\begin{tikzcd}
{} & X \arrow{dl} \arrow{dr} \\
Y & & Y
\end{tikzcd} \right)
\simeq
\left(
\begin{tikzcd}
{} & \coprod_{n=0}^{\infty} X\times_{Y}\cdots \times_{Y}X \arrow{dl} \arrow{dr} \\
Y & & Y
\end{tikzcd} \right).
\]
\end{example}

\begin{example}
More generally, the algebraic pattern $\bbTheta_{n}^{\op,\natural}$
is extendable for every $n$; the
conditions are checked in \cite{thetan}, giving a formula for free
$(\infty,n)$-categories. (On the other hand, the pattern
$\bbTheta_{n}^{\op,\flat}$ is \emph{not} extendable for $n > 1$.)
\end{example}

\begin{example}
The algebraic pattern $\bbOmega^{\op,\natural}$ is extendable; the
conditions are checked in \cite[\S 5.3]{AnalMnd}, giving a formula
for free $\infty$-operads. (On the other hand, the pattern
$\simp_{\xF}^{\op,\natural}$ is \emph{not} extendable.)
\end{example}

\section{Segal Fibrations and Weak Segal  Fibrations}\label{sec:WSF}
In this section we first consider \emph{Segal fibrations} over an
algebraic pattern, which are the cocartesian fibrations corresponding
to Segal objects in $\CatI$, and then generalize these to the class of
\emph{weak Segal fibrations}; for the pattern $\xF_{*}^{\flat}$, these
objects are respectively symmetric monoidal \icats{} and symmetric
\iopds{} in the sense of \cite{ha}. Our main goal is to show that
extendability can be lifted from a base pattern to morphisms between
(weak) Segal fibrations. Combined with our previous results this
allows us, for example, to reproduce (in the cartesian setting) Lurie's formula for operadic Kan
extensions  along morphisms of symmetric
\iopds{}.

\begin{defn}
  Let $\mathcal{O}$ be an algebraic pattern. A \emph{Segal
    $\mathcal{O}$-fibration} is a cocartesian fibration
  $\mathcal{E} \to \mathcal{O}$ whose corresponding functor
  $\mathcal{O} \to \CatI$ is a Segal $\mathcal{O}$-\icat{}.
\end{defn}

\begin{examples}\ 
  \begin{enumerate}[(i)]
  \item A Segal $\xF_{*}^{\flat}$-fibration is a symmetric monoidal
    \icat{}.
  \item A Segal $\simp^{\op,\flat}$-fibration is a monoidal \icat{},
    and a Segal $\simp^{\op,\natural}$-fibration is a double \icat{}.
  \item Segal $\simp^{n,\op,\flat}$-fibrations and Segal
    $\bbTheta_{n}^{\op,\flat}$-fibrations both describe
    $\mathbb{E}_{n}$-monoidal \icats{}.
  \end{enumerate}
\end{examples}

\begin{defn}\label{defn:SFpattern}
  Suppose $\mathcal{O}$ is an algebraic pattern, and
  $\pi \colon \mathcal{E} \to \mathcal{O}$ is a Segal
  $\mathcal{O}$-fibration. We say a morphism in $\mathcal{E}$ is
  \emph{inert} if it is cocartesian and lies over an inert morphism in
  $\mathcal{O}$, and \emph{active} if it lies over an active morphism
  in $\mathcal{O}$; moreover, we say an object of $\mathcal{E}$ is
  \emph{elementary} if it lies over an elementary object of
  $\mathcal{O}$.
\end{defn}

\begin{lemma}\label{lem:SFSegmor}
  Equipped with this data, $\mathcal{E}$ is an algebraic pattern, and
  $\pi \colon \mathcal{E} \to \mathcal{O}$ is a Segal morphism.
\end{lemma}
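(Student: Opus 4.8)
The statement has two parts, and I would establish them in turn. For the pattern structure on $\mathcal{E}$, the bookkeeping conditions are immediate: the inert and active morphisms each contain all objects and all equivalences and are closed under composition (equivalences in $\mathcal{E}$ are cocartesian and lie over equivalences, which are both inert and active in $\mathcal{O}$; cocartesian morphisms compose, and lying over an active morphism is a compositional condition), and $\mathcal{E}^{\el}$ is a full subcategory of $\mathcal{E}^{\xint}$ by definition. The real content is that $(\mathcal{E}^{\xint},\mathcal{E}^{\act})$ is a factorization system. Existence of factorizations is straightforward: given $g \colon e \to e'$ over $\phi \colon \pi(e)\to\pi(e')$, I factor $\phi = \alpha\iota$ with $\iota$ inert and $\alpha$ active in $\mathcal{O}$, choose a cocartesian lift $\bar{\iota}\colon e \to \iota_{!}e$ of the inert part, and use its universal property to factor $g$ as $e \intto \iota_{!}e \actto e'$, where the second map lies over $\alpha$ and is therefore active.

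The crux is essential uniqueness, i.e.\ contractibility of the space of (inert, active)-factorizations of an arbitrary $g$. My plan is to reduce this to the base: the space of such factorizations in $\mathcal{E}$ maps to the space of (inert, active)-factorizations of $\pi(g)$ in $\mathcal{O}$, which is contractible since $\mathcal{O}$ is a pattern, so it suffices to show the fibres of this map are contractible. A point in the fibre over the (unique) factorization $\iota,\alpha$ of $\pi(g)$ consists of a cocartesian lift of $\iota$ starting at $e$ together with a filler over $\alpha$ for the remaining triangle; the former is essentially unique because cocartesian lifts are, and the latter is essentially unique by the cocartesian universal property of that lift, so the fibre is contractible. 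Making this reduction precise — exhibiting the factorization space as built from cocartesian lifting data over the base and invoking essential uniqueness of cocartesian morphisms — is the main obstacle and where most of the work lies. (An equivalent route is to verify directly that inert morphisms are left orthogonal to active morphisms, by pasting the defining pullback squares of cocartesian morphisms onto the orthogonality squares of the factorization system on $\mathcal{O}$.)

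For the second part, $\pi$ is a morphism of patterns because it preserves inert and active morphisms and elementary objects by construction. To see it is a Segal morphism I would first observe that the restriction $\pi^{\xint}\colon \mathcal{E}^{\xint}\to\mathcal{O}^{\xint}$ is a left fibration: it is a cocartesian fibration, since the cocartesian lift of an inert morphism is inert and stays cocartesian after passing to the inert subcategories, and its fibres are $\infty$-groupoids, since an inert morphism lying over an identity is cocartesian over an equivalence and hence invertible. For a left fibration the coslice comparison $\mathcal{E}^{\xint}_{X/}\to\mathcal{O}^{\xint}_{\pi(X)/}$ is an equivalence. Since elementary objects of $\mathcal{E}$ are exactly those lying over elementary objects of $\mathcal{O}$, both $\mathcal{E}^{\el}_{X/}$ and $\mathcal{O}^{\el}_{\pi(X)/}$ arise from these coslices by the same base change along $\mathcal{O}^{\el}\hookrightarrow\mathcal{O}^{\xint}$, so the induced functor $\mathcal{E}^{\el}_{X/}\to\mathcal{O}^{\el}_{\pi(X)/}$ is again an equivalence. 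In particular it is coinitial, whence $\pi$ is a strong Segal morphism, and so in particular a Segal morphism.
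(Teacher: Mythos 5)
Your argument is correct and takes essentially the same route as the paper: the factorization-system claim is precisely \cite[Proposition 2.1.2.5]{ha}, which the paper cites rather than reproving as you sketch, and your left-fibration argument for the Segal condition is a repackaging of the paper's observation that $\mathcal{E}^{\el}_{\overline{X}/} \to \mathcal{O}^{\el}_{X/}$ is an equivalence because each inert $X \intto E$ has an essentially unique cocartesian lift with source $\overline{X}$. The only stylistic difference is that you prove the cited input by hand (correctly, modulo the point-set care you flag) instead of quoting it.
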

\begin{proof}
  The inert and active morphisms form a factorization system by
  \cite[Proposition 2.1.2.5]{ha}, so we have defined an algebraic
  pattern structure on $\mathcal{E}$. To see that $\pi$ is a Segal
  morphism it suffices to show that for $\overline{X} \in \mathcal{E}_{X}$ the induced
  functor
  \[ \mathcal{E}^{\el}_{\overline{X}/} \to \mathcal{O}^{\el}_{X/} \] is
  coinitial. But this functor is an equivalence since for
  each inert morphism $X \to E$ with $E$ elementary there is a unique
  cocartesian morphism with source $\overline{X}$ lying over it.
\end{proof}

We now show that we can lift extendability along Segal fibrations:
\begin{propn}\label{propn:SegFibExt}
Consider a commutative square
\[
\begin{tikzcd}
\mathcal{E} \arrow{r}{F} \arrow[d, swap, "p"] & \mathcal{F} \arrow{d}{q}
\\
\mathcal{O} \arrow{r}{f} & \mathcal{P},
\end{tikzcd}
\]
where $f$ is an extendable morphism
of algebraic patterns,  $p \colon \mathcal{E} \to
\mathcal{O}$ and $q \colon \mathcal{F} \to \mathcal{P}$ are Segal
fibrations, and $F$ preserves cocartesian morphisms. Then $F$ is
extendable. Moreover, if $\mathcal{C}$ is $f$-admissible and either
\begin{enumerate}[(i)]
\item $\mathcal{P}^{\el}_{P/}$-limits distribute over
$\eta$-colimits in $\mathcal{C}$ for all functors $\eta \colon
\mathcal{P}^{\el}_{P/} \to \CatI$ and all $P \in \mathcal{P}$, or
\item $p$ and $q$ are left fibrations, and $\mathcal{P}^{\el}_{/P}$-limits distribute over
$\eta$-colimits in $\mathcal{C}$ for all functors $\eta \colon
\mathcal{P}^{\el}_{/P} \to \mathcal{S}$ and all $P \in \mathcal{P}$,
\end{enumerate}
then $\mathcal{C}$ is $F$-admissible.
\end{propn}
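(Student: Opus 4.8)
The plan is to reduce both assertions to the corresponding facts for $f$, exploiting that $p,q$ are cocartesian fibrations and that $F$ preserves cocartesian edges. Two preliminary identifications do most of the work. First, by the computation in Lemma~\ref{lem:SFSegmor}, for every $\overline{X}\in\mathcal{E}$ and $\overline{Y}\in\mathcal{F}$ the projections $\mathcal{E}^{\el}_{\overline{X}/}\isoto\mathcal{O}^{\el}_{p\overline{X}/}$ and $\mathcal{F}^{\el}_{\overline{Y}/}\isoto\mathcal{P}^{\el}_{q\overline{Y}/}$ are equivalences. Second, since $F$ preserves cocartesian edges, the inert cocartesian transport of Proposition~\ref{propn:stronginertlift} in $\mathcal{E}$ lies over that in $\mathcal{O}$, so $p(\alpha_{!}\overline{O})\simeq(q\alpha)_{!}p\overline{O}$. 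I would first treat condition~(1) of Definition~\ref{defn:ext}: inert maps out of a fixed object are cocartesian lifts of inert maps downstairs, so the forgetful maps $(\mathcal{E}^{\xint}_{\overline{O}/})^{\simeq}\to(\mathcal{O}^{\xint}_{p\overline{O}/})^{\simeq}$ and $(\mathcal{F}^{\xint}_{F\overline{O}/})^{\simeq}\to(\mathcal{P}^{\xint}_{qF\overline{O}/})^{\simeq}$ are equivalences (a cocartesian lift with given source is unique up to contractible choice). In the resulting commutative square these identify $(\mathcal{E}^{\xint}_{\overline{O}/})^{\simeq}\to(\mathcal{F}^{\xint}_{F\overline{O}/})^{\simeq}$ with the base map, which is an equivalence because $f$ has unique lifting of inert morphisms; hence so does $F$, and the transport for $F$ is then available.

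For condition~(3), fix an active $\phi\colon F\overline{O}\actto\overline{P}$ lying over $q\phi\colon fp\overline{O}\actto q\overline{P}$. Using the two identifications and the cocartesian functoriality preceding Definition~\ref{defn:ext}, I would check that the cartesian fibration $\mathcal{E}^{\el}(\phi)\to\mathcal{F}^{\el}_{\overline{P}/}$ is carried to $\mathcal{O}^{\el}(q\phi)\to\mathcal{P}^{\el}_{q\overline{P}/}$, compatibly with the comparison functors into $\mathcal{E}^{\el}_{\overline{O}/}\simeq\mathcal{O}^{\el}_{p\overline{O}/}$. The genuine difficulty, which I expect to be the main obstacle, is that condition~(3) for $F$ must hold for arbitrary $G\colon\mathcal{E}^{\el}\to\mathcal{S}$, whereas condition~(3) for $f$ only controls functors pulled back from $\mathcal{O}^{\el}$; since $\mathcal{E}^{\el}\to\mathcal{O}^{\el}$ is a left fibration (all its morphisms are inert, hence cocartesian) with non-trivial groupoid fibres, the restriction of $G$ to $\mathcal{E}^{\el}_{\overline{O}/}\simeq\mathcal{O}^{\el}_{p\overline{O}/}$ is not of this form. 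To circumvent this I would pass to weights: writing $\lim_{\mathcal{E}^{\el}_{\overline{O}/}}G\simeq\Map_{\Fun(\mathcal{E}^{\el},\mathcal{S})}(j_{\overline{O}},G)$ and $\lim_{\mathcal{E}^{\el}(\phi)}G\simeq\Map_{\Fun(\mathcal{E}^{\el},\mathcal{S})}(k_{\phi},G)$ for the corepresenting functors $j_{\overline{O}},k_{\phi}\colon\mathcal{E}^{\el}\to\mathcal{S}$, condition~(3) for $F$ becomes the single equivalence $k_{\phi}\isoto j_{\overline{O}}$ in $\Fun(\mathcal{E}^{\el},\mathcal{S})$. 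Both $j_{\overline{O}}$ and $k_{\phi}$ are the canonical lifts along the left fibration $\mathcal{E}^{\el}\to\mathcal{O}^{\el}$, through the cocartesian section $\beta\mapsto\beta_{!}\overline{O}$, of the corresponding base weights $j_{p\overline{O}}$ and $k_{q\phi}$; since this lift is functorial and condition~(3) for $f$ supplies $k_{q\phi}\isoto j_{p\overline{O}}$, the required equivalence follows.

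Condition~(2) is cofinality of $\mathcal{E}^{\act}_{/\overline{P}}\to\mathcal{L}_{\overline{P}}$, a test-functor-free statement, which I would obtain from its base analogue. Projecting along $p,q$ gives a commutative square built from $\mathcal{E}^{\act}_{/\overline{P}}\to\mathcal{O}^{\act}_{/q\overline{P}}$ and, after applying $\lim$ over $\mathcal{F}^{\el}_{\overline{P}/}\simeq\mathcal{P}^{\el}_{q\overline{P}/}$ to the pointwise projections $\mathcal{E}^{\act}_{/\overline{E}}\to\mathcal{O}^{\act}_{/q\overline{E}}$, a map $\mathcal{L}_{\overline{P}}\to\mathcal{L}_{q\overline{P}}$, whose bottom edge $\mathcal{O}^{\act}_{/q\overline{P}}\to\mathcal{L}_{q\overline{P}}$ is cofinal by condition~(2) for $f$. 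Here the vertical maps are cocartesian fibrations (via the transport of Corollary~\ref{cor:Oactftr} lifted to $\mathcal{E}$) whose fibres are matched by the two identifications, so I would conclude cofinality of the top map by checking it fibrewise over the cofinal base map. This completes the proof that $F$ is extendable.

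For the admissibility statement I must establish the three clauses defining $F$-admissibility. $\mathcal{E}$- and $\mathcal{F}$-completeness of $\mathcal{C}$ follow from the $\mathcal{O}$- and $\mathcal{P}$-completeness contained in $f$-admissibility, since by the first identification every shape $\mathcal{E}^{\el}_{\overline{X}/}$ (resp.\ $\mathcal{F}^{\el}_{\overline{Y}/}$) is one of the $\mathcal{O}^{\el}_{-/}$ (resp.\ $\mathcal{P}^{\el}_{-/}$). For the distributivity clause I transport the index along $\mathcal{F}^{\el}_{\overline{P}/}\simeq\mathcal{P}^{\el}_{q\overline{P}/}$, so that $\epsilon_{\overline{P}}\colon\overline{E}\mapsto\mathcal{E}^{\act}_{/\overline{E}}$ becomes a $\CatI$-valued functor on $\mathcal{P}^{\el}_{q\overline{P}/}$: in case~(i) this is precisely an $\eta$ to which the hypothesis applies, giving distributivity directly; in case~(ii), where $p,q$ are left fibrations, I would first reduce $\epsilon_{\overline{P}}$-colimits of Segal objects to $\mathcal{S}$-valued, $\infty$-groupoid-indexed ones (just as $\mathcal{O}^{\act}_{/P}$ is replaced by the groupoid $\Act$ in Proposition~\ref{propn:LKESeg} and in the definition of admissibility) and then invoke the $\mathcal{S}$-valued hypothesis of~(ii). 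Finally, existence of the pointwise $F_{!}$ follows as in the proof of Proposition~\ref{propn:LKESeg}: by the now-established condition~(1) and Lemma~\ref{lem:uniqintcoinit} the relevant colimits reduce to $\colim_{\mathcal{E}^{\act}_{/\overline{P}}}$, whose existence is guaranteed by the colimits appearing in the distributivity hypotheses together with completeness.
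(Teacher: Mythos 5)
Your overall strategy---reducing each clause of extendability for $F$ to the corresponding clause for $f$ via the fibration structure---is the same as the paper's, and your treatment of unique lifting of inert morphisms is correct. For condition (3) you have actually spotted a subtlety that the paper's own one-line justification passes over: the condition must hold for arbitrary $G \colon \mathcal{E}^{\el} \to \mathcal{S}$, and the restriction of such a $G$ along $\mathcal{E}^{\el}_{\overline{O}/} \simeq \mathcal{O}^{\el}_{O/}$ need not be pulled back from $\mathcal{O}^{\el}$. Your weight-theoretic reformulation is the right move, but ``since this lift is functorial'' is not yet an argument: there is no functor $\Fun(\mathcal{O}^{\el},\mathcal{S}) \to \Fun(\mathcal{E}^{\el},\mathcal{S})$ sending $j_{p\overline{O}}$ to $j_{\overline{O}}$, since the latter depends on the section $t \colon \beta \mapsto \beta_{!}\overline{O}$ and not just on the base weight. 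One correct repair: writing $\sigma$ for the straightening of the left fibration $\pi \colon \mathcal{E}^{\el} \to \mathcal{O}^{\el}$, one has $G \circ t \simeq (\pi_{!}G \circ b) \times_{\sigma \circ b} *$, where $b = \pi t$ and the point is the one classifying $t$; since $\pi_{!}G$ and $\sigma$ are both functors on $\mathcal{O}^{\el}$, condition (3) for $f$ applies to each, and passing to fibres (limits commute with pullbacks) gives condition (3) for $F$. So this part is completable, but as written it is incomplete.

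The genuine gap is in condition (2). You assert that the fibres of $\mathcal{E}^{\act}_{/\overline{P}} \to \mathcal{O}^{\act}_{/P}$ and of $\mathcal{L}_{\overline{P}} \to \mathcal{L}_{P}$ ``are matched by the two identifications'', but the two identifications you set up (the equivalences $\mathcal{E}^{\el}_{\overline{X}/} \simeq \mathcal{O}^{\el}_{X/}$ and the compatibility of inert transport with $p$ and $q$) do not give this: they are consequences of $p,q$ being cocartesian fibrations alone. The fibre of $\mathcal{E}^{\act}_{/\overline{P}} \to \mathcal{O}^{\act}_{/P}$ over $(O, \psi \colon f(O) \actto P)$ is $\mathcal{E}_{O} \times_{\mathcal{F}_{P}} \mathcal{F}_{P/\overline{P}}$ (after cocartesian pushforward along $\psi$), while the fibre of $\mathcal{L}_{\overline{P}} \to \mathcal{L}_{P}$ is the limit over $\alpha \in \mathcal{P}^{\el}_{P/}$ of the corresponding expressions for $\alpha_{!}\overline{P}$; identifying the two requires the relative Segal condition $\mathcal{E}_{O} \isoto \lim_{\alpha \in \mathcal{P}^{\el}_{P/}} \mathcal{E}_{\alpha_{!}O}$ of Remark~\ref{rem genSegalcond} (which uses both the extendability of $f$ and the Segal condition on $p$) together with the analogous decompositions of $\mathcal{F}_{P}$ and $\mathcal{F}_{P/\overline{P}}$ (which use the Segal condition on $q$). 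Nowhere in your proposal does the defining property of a Segal fibration, $\mathcal{E}_{X} \isoto \lim_{E \in \mathcal{O}^{\el}_{X/}} \mathcal{E}_{E}$, enter; an argument that never uses it cannot succeed, since for a general cocartesian fibration over $\mathcal{O}$ these fibres do not agree and the conclusion fails. (Also, ``checking cofinality fibrewise'' should be replaced by the precise mechanism: the fibre identification shows the square is cartesian, and pullbacks of cofinal functors along cocartesian fibrations are cofinal by \cite[Proposition 4.1.2.15]{ht}.) Your admissibility discussion is essentially correct for case (i); for case (ii) you will still need the two-stage colimit decomposition over the cocartesian fibration $\mathcal{L}_{\overline{P}} \to \mathcal{L}_{P}$ to reduce the $\CatI$-indexed colimits to the $\mathcal{S}$-valued ones covered by that hypothesis.
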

\begin{proof}
  It is immediate from the definitions that $F$ preserves inert and
  active morphisms. We now observe that $F$ has unique lifting of inert morphisms.
  Given $\overline{O} \in \mathcal{E}$ lying over $O \in \mathcal{O}$, and an
  inert morphism $\overline{\epsilon} \colon F(\overline{O}) \intto \overline{P}$ in $\mathcal{F}$,
  lying over $\epsilon \colon f(O) \intto P$ in $\mathcal{P}$, there
  exists a unique inert morphism $\gamma \colon O \intto O'$ such that
  $f(\gamma) \simeq \epsilon$, since $f$ is extendable. Since inert
  morphisms in $\mathcal{E}$ are cocartesian, there exists a unique
  inert morphism $\overline{\gamma} \colon \overline{O} \intto \overline{O}'$ lying over
  $\gamma$. Moreover, as $F$ preserves cocartesian morphisms, the
  morphism $F(\overline{\gamma})$ is the unique inert
  morphism over $\epsilon$ with source $F(\overline{O})$, \ie{}
  $F(\overline{\gamma}) \simeq \overline{\epsilon}$, and since cocartesian
  morphisms are unique, $\overline{\gamma}$ is the unique inert morphism
  that maps to $\overline{\epsilon}$.

  For every active morphism $\overline \phi\colon F(\overline O)\actto \overline P$ lying over $\phi\colon f(O)\actto P$, equivalences of the type
  $\mathcal{E}^{\el}_{\overline{X}/} \simeq \mathcal{O}^{\el}_{X/}$ imply that $\xE^\el(\overline \phi)\to \xE^\el_{\overline O/}$ is equivalent to $\xxO^\el(\phi)\to \xxO^\el_{O/}$, hence condition (3) in
  Definition~\ref{defn:ext} follows immediately from $f$ being
  extendable.
  It remains to prove condition (2). For $\overline{P} \in \mathcal{F}$ lying
over $P \in \mathcal{P}$ and $\overline{\epsilon} \colon \overline{P} \intto \overline{P}'$ an
inert morphism in $\mathcal{F}$ lying over
$\epsilon \colon P \intto P'$, we have a functor
\[ \mathcal{E}^{\act}_{/\overline{P}} \to \mathcal{E}^{\act}_{/\overline{P}'},\]
which fits in a commutative square
\[
\begin{tikzcd}
\mathcal{E}^{\act}_{/\overline{P}} \arrow{r} \arrow{d} &
\mathcal{E}^{\act}_{/\overline{P}'} \arrow{d} \\
\mathcal{O}^{\act}_{/P} \arrow{r} & \mathcal{O}^{\act}_{/P'}.      
\end{tikzcd}
\]
We claim that here the vertical functors are cocartesian fibrations,
and the top horizontal functor preserves cocartesian morphisms.
The functor
\[ \mathcal{E}_{/\overline{P}} := \mathcal{E} \times_{\mathcal{F}}
\mathcal{F}_{/\overline{P}} \to \mathcal{O} \times_{\mathcal{P}}
\mathcal{P}_{/P} =: \mathcal{O}_{/P} \] is a fibre product of
cocartesian fibrations along functors that preserve cocartesian
morphisms, hence it is again a cocartesian fibration. We can write
$\mathcal{E}^{\act}_{/\overline{P}}$ as a pullback
$\mathcal{E}_{/\overline{P}} \times_{\mathcal{O}_{/P}}
\mathcal{O}^{\act}_{/P}$, hence
$\mathcal{E}^{\act}_{/\overline{P}} \to \mathcal{O}^{\act}_{/P}$ is a pullback
of a cocartesian fibration and so is itself cocartesian. Moreover, a
morphism in $\mathcal{E}^{\act}_{/\overline{P}}$ is cocartesian \IFF{} its
image in $\mathcal{E}$ is cocartesian (since the functor
$\mathcal{F}_{/\overline{P}} \to \mathcal{F}$ detects cocartesian morphisms, by
\cite[Proposition 2.4.3.2]{ht}). Since inert morphisms are
cocartesian, this implies that the top horizontal functor preserves cocartesian morphisms
by the 3-for-2 property of cocartesian morphisms (\cite[Proposition
2.4.1.7]{ht}).

For $\overline{P} \in \mathcal{F}$ we therefore have a commutative square
\[
\begin{tikzcd}
\mathcal{E}^{\act}_{/\overline{P}} \arrow{r} \arrow{d} & \lim_{\alpha
\colon P \intto E \in
\mathcal{P}^{\el}_{P/}} \mathcal{E}^{\act}_{/\alpha_{!}\overline{P}}
\arrow{d} \\
\mathcal{O}^{\act}_{/P} \arrow{r} & \lim_{\alpha \colon P \intto E\in
\mathcal{P}^{\el}_{P/}} \mathcal{O}^{\act}_{/E}.
\end{tikzcd}
\]
where the vertical functors are cocartesian fibrations, the top
horizontal functor preserves cocartesian morphisms, and the bottom
horizontal functor is cofinal, since $f$ is extendable. Our goal is
to show that the top horizontal functor is cofinal. Since pullbacks
of cofinal functors along cocartesian fibrations are cofinal by
\cite[Proposition 4.1.2.15]{ht}, it suffices to show that the
square is cartesian, which in this situation is equivalent to the
functor on fibres being an equivalence.

Since the fibration $\mathcal{E}^\act_{/\overline{P}} \to \mathcal{O}^\act_{/P}$
is a fibre product, its fibre at $(O, \psi \colon f(O) \actto P)$ is the
fibre product
$\mathcal{E}_{O} \times_{\mathcal{F}_{f(O)}}
(\mathcal{F}^\act_{/\overline{P}})_{\psi}$; since $\mathcal{F}$ is
cocartesian over $\mathcal{P}$, we can use the cocartesian pushforward
over $\psi$ to identify this with a fibre product
$\mathcal{E}_{O} \times_{\mathcal{F}_{P}}
\mathcal{F}_{P/\overline{P}}$ over the composite functor
$\mathcal{E}_{O} \to \mathcal{F}_{f(O)} \xto{\psi_{!}}
\mathcal{F}_{P}$.

If $\psi$ is active, then as $f$ is extendable and $\xE\to \xxO$ is a Segal fibration
we have an equivalence
\[ \mathcal{E}_{O} \isoto \lim_{\alpha \in \mathcal{P}^{\el}_{P/}}
\mathcal{E}_{\alpha_{!}O}\]
by Remark~\ref{rem genSegalcond}.
Putting this together with the equivalence $\mathcal{F}_{P/\overline{P}} \isoto
\lim_{\alpha \in \mathcal{P}^{\el}_{P/}}
\mathcal{F}_{E/\alpha_{!}\overline{P}}$ (and similarly for $\mathcal{F}_{P}$) we get
\[ (\mathcal{E}^\act_{/\overline{P}})_{(O, \psi)} \isoto \lim_{\alpha \in
\mathcal{P}^{\el}_{P/}}
(\mathcal{E}^\act_{/\alpha_!\overline{P}})_{(E,\psi_{\alpha})},\]
\ie{} the functor  we get on fibres is indeed an equivalence, which
completes the proof that $F$ is extendable.

For admissibility, observe that since $\lim_{\alpha \in
\mathcal{P}^{\el}_{P/}} \mathcal{E}^{\act}_{/\alpha_{!}\overline{P}} \to
\lim_{\alpha \in \mathcal{P}^{\el}_{P/}} \mathcal{O}^{\act}_{/E}$ is
a cocartesian fibration, if we compute the colimit of a functor $\Phi$ over its source in
two stages using the left Kan extension along this functor, we get
\[ \colim_{\lim_{\alpha \in
\mathcal{P}^{\el}_{P/}} \mathcal{E}^{\act}_{/\alpha_{!}\overline{P}}} \Phi
\simeq \colim_{(\omega_{\alpha}) \in \lim_{\alpha \in \mathcal{P}^{\el}_{P/}}
\mathcal{O}^{\act}_{/E}} 
\colim_{\lim_{\alpha \in
\mathcal{P}^{\el}_{P/}}
(\mathcal{E}^{\act}_{/\alpha_{!}\overline{P}})_{\omega_{\alpha}}} \Phi,\]
from which we see that $F$-admissibility follows from
$f$-admissibility plus either (i) or (ii).
\end{proof}

\begin{defn}\label{defn:WSF}
  Let $\mathcal{O}$ be an algebraic pattern. A \emph{weak Segal
    $\mathcal{O}$-fibration} is a functor $p \colon \mathcal{E} \to
  \mathcal{O}$ such that:
  \begin{enumerate}
  \item For every object $\overline{X}$ in $\mathcal{E}$ lying over
    $X \in \mathcal{O}$ and every inert morphism $i \colon X \to Y$ in
    $\mathcal{O}$ there exists a $p$-cocartesian morphism
    $\overline{\imath} \colon \overline{X} \to \overline{Y}$ lying over $i$.
  \item For every object $X \in \mathcal{O}$, the functor
    \[ \mathcal{E}_{X} \to \lim_{E \in \mathcal{O}^{\el}_{X/}}
      \mathcal{E}_{E}, \]
    induced by the cocartesian morphisms over inert maps, is an
    equivalence.
  \item Given $\overline{X}$ in $\mathcal{E}_{X}$, choose a cocartesian lift
    $\xi \colon (\mathcal{O}^{\el}_{X/})^{\triangleleft}
    \to \mathcal{E}$ of the diagram of inert morphisms from $X$ in
    $\mathcal{O}$, taking $-\infty$ to $\overline{X}$. Then for any $Y \in
    \mathcal{O}$ and $\overline{Y} \in \mathcal{E}_{Y}$, the
    commutative square
    \[
      \begin{tikzcd}
        \Map_{\mathcal{E}}(\overline{Y}, \overline{X}) \arrow{r} \arrow{d} & \lim_{E
          \in \mathcal{O}^{\el}_{X/}} \Map_{\mathcal{E}}(\overline{Y},
        \xi(E)) \arrow{d} \\
        \Map_{\mathcal{O}}(Y, X) \arrow{r} & \lim_{E
          \in \mathcal{O}^{\el}_{X/}} \Map_{\mathcal{O}}(Y, E)
      \end{tikzcd}
    \]
    is cartesian.
  \end{enumerate}
\end{defn}

\begin{remark}\label{rmk:WSFMapeq} 
  Condition (3) in the definition can be rephrased as: For every map
  $\phi \colon Y \to X$ in $\mathcal{O}$, the natural map
  \[ \Map_{\mathcal{E}}^{\phi}(\overline{Y},\overline{X}) \to \lim_{\alpha
\colon X \intto E \in
\mathcal{O}^{\el}_{X/}}\Map_{\mathcal{E}}^{\alpha\phi}(\overline{Y},
\alpha_{!}\overline{X})\]
is an equivalence, where
$\Map_{\mathcal{E}}^{\phi}(\overline{Y},\overline{X})$ denotes the fibre at
$\phi$ of $\Map_{\mathcal{E}}(\overline{Y}, \overline{X}) \to
\Map_{\mathcal{O}}(Y,X)$. If $\phi$ is active, let $Y
\overset{\alpha_{Y}}{\intto} Y_{\alpha} \overset{\phi_{\alpha}}{\actto} E$
denote the inert--active factorization of $Y \overset{\phi}{\actto} X
\overset{\alpha}{\intto} E$, then combining this equivalence with the cocartesian morphisms $\overline{Y} \intto
\alpha_{Y,!}\overline{Y}$ over $\alpha_{Y}$ we obtain an equivalence
\[ \Map_{\mathcal{E}}^{\phi}(\overline{Y},\overline{X}) \simeq \lim_{\alpha
\colon X \intto E \in \mathcal{O}^{\el}_{X/}}
\Map_{\mathcal{E}}^{\phi_{\alpha}}(\alpha_{Y,!}\overline{Y},
\alpha_{!}\overline{X}).\]
\end{remark}

\begin{examples}\label{ex:weakSegal}\ 
\begin{enumerate}[(i)]
\item A weak Segal $\xF_{*}^{\flat}$-fibration is a symmetric \iopd{},
and a weak Segal $\xF_{*}^{\natural}$-fibration is a generalized \iopd{}, in
the sense of \cite{ha}.
\item A weak Segal $\simp^{\op,\flat}$-fibration is a non-symmetric
\iopd{}, and a weak Segal $\simp^{\op,\natural}$-fibration is a
generalized non-symmetric
\iopd{}, as considered in \cite{enriched}.
\item If $\Phi$ is a perfect operator category and
$\Lambda(\Phi)$ is its Leinster category, then a weak Segal
$\Lambda(\Phi)^{\flat}$-fibration is a $\Phi$-\iopd{}, in the
sense of \cite{bar}, and
weak Segal $\Lambda(\Phi)^{\natural}$-fibrations are the
natural extension of generalized \iopds{} to \emph{generalized
$\Phi$-\iopds{}}.
\item Weak Segal $\bbTheta_{n}^{\op,\natural}$-fibrations can be
  viewed as an \icatl{} analogue of the \emph{$n$-operads}
  of Batanin~\cite{BataninNcat}.
\end{enumerate}
\end{examples}

\begin{defn}\label{defn:WSFpattern}
Suppose $\mathcal{O}$ is an algebraic pattern, and
$\pi \colon \mathcal{E} \to \mathcal{O}$ is a weak Segal
$\mathcal{O}$-fibration. We say a morphism in $\mathcal{E}$ is
\emph{inert} if it is cocartesian and lies over an inert morphism in
$\mathcal{O}$, and \emph{active} if it lies over an active morphism
in $\mathcal{O}$; moreover, we say an object of $\mathcal{E}$ is
\emph{elementary} if it lies over an elementary object of
$\mathcal{O}$.
\end{defn}

\begin{lemma}\label{lem:WSFSegmor}
Equipped with this data, $\mathcal{E}$ is an algebraic pattern, and
$\pi \colon \mathcal{E} \to \mathcal{O}$ is a Segal morphism.
\end{lemma}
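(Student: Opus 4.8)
The plan is to follow the proof of Lemma~\ref{lem:SFSegmor} for Segal fibrations almost verbatim. The one genuine difference is that $\pi$ is now merely a \emph{weak} Segal fibration, so it is not a cocartesian fibration and we cannot invoke the cited factorization result for cocartesian fibrations directly; instead I would build the inert--active factorization system by hand, using only that inert morphisms admit cocartesian lifts, i.e.\ condition (1) of Definition~\ref{defn:WSF}, exactly as in Lurie's treatment of the inert--active factorization on an $\infty$-operad. The Segal conditions (2) and (3) of Definition~\ref{defn:WSF} will play no role.

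First I would check that the inert and active morphisms each form a wide subcategory of $\mathcal{E}$. Both contain all equivalences, since an equivalence is cocartesian and lies over an equivalence, which is simultaneously inert and active in $\mathcal{O}$. The active morphisms are closed under composition because the active morphisms in $\mathcal{O}$ are, and the inert morphisms are closed under composition because a composite of cocartesian morphisms is cocartesian by \cite[Proposition 2.4.1.7]{ht}, lying over the (inert) composite in $\mathcal{O}$.

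The heart of the argument is that the space of inert--active factorizations of any $\overline{f}\colon \overline{X} \to \overline{Y}$ lying over $f$ in $\mathcal{O}$ is contractible. Applying $\pi$ sends this space to the space of inert--active factorizations of $f$, which is contractible since $\mathcal{O}$ is a pattern, so it suffices to show the fibre over the essentially unique factorization $X \intto Z \actto Y$ is contractible. An inert lift $\overline{X} \intto \overline{Z}$ of $X \intto Z$ is a cocartesian lift with fixed source, so by condition (1) together with the essential uniqueness of cocartesian lifts the space of such is contractible; and given one, the remaining data---an active map $\overline{Z} \actto \overline{Y}$ over $Z \actto Y$ together with a homotopy exhibiting $\overline{f}$ as the composite---is precisely the fibre over $\overline{f}$ of the equivalence
\[ \Map_{\mathcal{E}}(\overline{Z}, \overline{Y}) \isoto \Map_{\mathcal{E}}(\overline{X}, \overline{Y}) \times_{\Map_{\mathcal{O}}(X, Y)} \Map_{\mathcal{O}}(Z, Y) \]
furnished by the cocartesian property of $\overline{X} \intto \overline{Z}$, hence is contractible. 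This establishes the factorization system $(\mathcal{E}^{\xint}, \mathcal{E}^{\act})$; since the elementary objects form a full subcategory of $\mathcal{E}^{\xint}$ by construction, $\mathcal{E}$ is an algebraic pattern, and $\pi$ preserves inert, active, and elementary data by definition, so it is a morphism of patterns.

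Finally, to see that $\pi$ is a Segal morphism I would argue exactly as in Lemma~\ref{lem:SFSegmor}: it is enough to show that the functor $\mathcal{E}^{\el}_{\overline{X}/} \to \mathcal{O}^{\el}_{X/}$ is coinitial, and in fact it is an equivalence. Essential surjectivity is condition (1): every inert $X \intto E$ to an elementary object lifts to a cocartesian (hence inert) morphism $\overline{X} \intto \overline{E}$ with $\overline{E}$ elementary. Full faithfulness follows from the cocartesian cancellation property (\cite[Proposition 2.4.1.7]{ht}): given two such lifts, a commutative triangle of inert morphisms downstairs lifts uniquely to a commutative triangle of inert morphisms under $\overline{X}$. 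I expect the contractibility of the factorization space to be the only real obstacle; the key point making it routine is that the whole argument rests solely on cocartesian lifting of inert morphisms.
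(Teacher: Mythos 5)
Your proposal is correct and follows essentially the same route as the paper: the paper's proof is literally ``as Lemma~\ref{lem:SFSegmor}'', which obtains the factorization system by citing \cite[Proposition 2.1.2.5]{ha} (whose proof is exactly the cocartesian-lift argument you reconstruct, and which applies verbatim in the weak case since it only uses cocartesian lifts of inert morphisms) and then checks that $\mathcal{E}^{\el}_{\overline{X}/} \to \mathcal{O}^{\el}_{X/}$ is an equivalence just as you do. The only difference is that you inline the proof of the cited factorization result rather than quoting it.
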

\begin{proof}
  As Lemma~\ref{lem:SFSegmor}.
\end{proof}

\begin{remark}
A cocartesian fibration $\mathcal{E} \to \mathcal{O}$ is a Segal
fibration \IFF{} it is a weak Segal fibration.
\end{remark}

\begin{remark}
Suppose $\mathcal{E} \to \mathcal{O}$ and $\mathcal{F} \to
\mathcal{O}$ are weak Segal fibrations. Then a morphism $\mathcal{E}
\to \mathcal{F}$ over $\mathcal{O}$ is a Segal morphism \IFF{} it
preserves inert morphisms.
\end{remark}

\begin{remark}
  Let $\Cat_{\infty/\mathcal{O}}^{\name{WSF}}$ denote the subcategory
  of $\Cat_{\infty/\mathcal{O}}$ whose objects are the weak Segal
  fibrations and whose morphisms are those that preserve inert
  morphisms. This \icat{} is described by a \emph{categorical pattern}
  in the sense of \cite[\S B]{ha}, and so arises from a combinatorial
  model category by \cite[Theorem B.0.20]{ha}. It follows that
  $\Cat_{\infty/\mathcal{O}}^{\name{WSF}}$ is a presentable \icat{}.
\end{remark}

For weak Segal fibrations we can prove a weaker version of
Proposition~\ref{propn:SegFibExt}; for this we need the following
consequence of extendability, which we learned from Roman
Kositsyn:
\begin{lemma}\label{lem:stronglyext}
  Let $\mathcal{O}$ be an extendable pattern. Then
  the functor
  $\mathcal{O} \to \CatI$ taking $O$ to $\mathcal{O}^{\act}_{/O}$ from
  Remark~\ref{rmk:actcocart} is a Segal $\mathcal{O}$-\icat{}. In
  particular, for any active maps $\phi \colon X \actto O$, $\psi
  \colon Y \actto O$ in $\mathcal{O}$, the morphism of
  mapping spaces
  \[ \Map_{\mathcal{O}^{\act}_{/O}}(\phi, \psi) \to \lim_{\alpha
      \colon O \intto E \in
      \mathcal{O}^{\el}_{O/}}
    \Map_{\mathcal{O}^{\act}_{/E}}(\phi_{\alpha}, \psi_{\alpha}) \]
  is an equivalence.
\end{lemma}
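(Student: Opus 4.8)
The plan is to verify directly that the functor $G\colon \mathcal{O}\to\CatI$, $O\mapsto \mathcal{O}^{\act}_{/O}$, of Remark~\ref{rmk:actcocart} is a Segal $\mathcal{O}$-\icat{}, i.e.\ that for each $O$ the comparison functor $c\colon \mathcal{O}^{\act}_{/O}\to \lim_{E\in\mathcal{O}^{\el}_{O/}}\mathcal{O}^{\act}_{/E}$ is an equivalence; I would prove this by checking essential surjectivity and full faithfulness. For the former, I would pass to cores: the core functor $(\blank)^{\simeq}\colon\CatI\to\mathcal{S}$ is a right adjoint and hence preserves limits, while $(\mathcal{O}^{\act}_{/O})^{\simeq}\simeq\Act_{\mathcal{O}}(O)$ by definition, so on cores $c$ becomes the map $\Act_{\mathcal{O}}(O)\to\lim_{E}\Act_{\mathcal{O}}(E)$, which is an equivalence by Definition~\ref{defn:pattext}(1). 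Thus $c$ is an equivalence on cores, in particular essentially surjective. Since mapping spaces in a limit of \icats{} are the corresponding limit of mapping spaces, full faithfulness of $c$ is exactly the ``in particular'' clause, which is what remains.

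The key first step for the mapping spaces is to recognise $\Map_{\mathcal{O}^{\act}_{/O}}(\phi,\psi)$ as a fibre of a map between $\Act$-spaces. A point of this mapping space is a map $g\colon X\to Y$ with $\psi g\simeq\phi$; as $\phi=\psi g$ and $\psi$ are active, uniqueness of inert--active factorizations forces the inert part of $g$ to be invertible, so $g$ is active. Consequently the functoriality map $\psi_{*}\colon\Act_{\mathcal{O}}(Y)\to\Act_{\mathcal{O}}(O)$ is just postcomposition with $\psi$, and reading off source objects identifies its fibre over $\phi$ with $\Map_{\mathcal{O}^{\act}_{/O}}(\phi,\psi)$. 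The same identification over an elementary $E$ gives $\Map_{\mathcal{O}^{\act}_{/E}}(\phi_{\alpha},\psi_{\alpha})\simeq \mathrm{fib}_{\phi_{\alpha}}\big((\psi_{\alpha})_{*}\colon \Act_{\mathcal{O}}(Y_{\alpha})\to\Act_{\mathcal{O}}(E)\big)$, where $Y\intto Y_{\alpha}\actto E$ is the inert--active factorization of $\alpha\psi$ and $\psi_{\alpha}=\alpha_{!}\psi$ (and likewise for $X_{\alpha},\phi_{\alpha}$).

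I would then assemble these fibres over $\mathcal{O}^{\el}_{O/}$. Because $\Act_{\mathcal{O}}$ is a Segal object (Definition~\ref{defn:pattext}(1)), the ordinary Segal condition gives $\Act_{\mathcal{O}}(O)\isoto\lim_{\alpha}\Act_{\mathcal{O}}(E)$, and the generalized Segal condition for the active map $\psi$ (the consequence of Definition~\ref{defn:pattext}(2) recorded in the remark following it) gives $\Act_{\mathcal{O}}(Y)\isoto\lim_{\alpha}\Act_{\mathcal{O}}(Y_{\alpha})$; both limits are indexed by $\alpha\colon O\intto E$ in $\mathcal{O}^{\el}_{O/}$. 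These decompositions are compatible with $\psi_{*}$ and the maps $(\psi_{\alpha})_{*}$: for each $\alpha$ the relevant square commutes because both composites $\Act_{\mathcal{O}}(Y)\to\Act_{\mathcal{O}}(E)$ equal the functoriality of $\Act_{\mathcal{O}}$ along $\alpha\psi$, once more by uniqueness of factorizations. Taking fibres over $\phi$, whose image in $\lim_{\alpha}\Act_{\mathcal{O}}(E)$ has $\alpha$-component $\phi_{\alpha}=\alpha_{!}\phi$, and using that fibres commute with the limit over $\mathcal{O}^{\el}_{O/}$, yields $\Map_{\mathcal{O}^{\act}_{/O}}(\phi,\psi)\simeq\lim_{\alpha}\Map_{\mathcal{O}^{\act}_{/E}}(\phi_{\alpha},\psi_{\alpha})$, as wanted.

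The step I expect to require the most care is the final coherence: to take fibres I must upgrade the pointwise commuting squares to a genuine map of $\mathcal{O}^{\el}_{O/}$-indexed diagrams $(\alpha\mapsto\Act_{\mathcal{O}}(Y_{\alpha}))\to(\alpha\mapsto\Act_{\mathcal{O}}(E))$ that realises $\psi_{*}$ after passing to limits. I would obtain this from the functoriality of Corollary~\ref{cor:Oactftr} --- equivalently, by working with the cocartesian fibration $\Fun(\Delta^{1},\mathcal{O})_{\act}\to\mathcal{O}$ of Remark~\ref{rmk:actcocart} --- which produces the diagram $\alpha\mapsto\Act_{\mathcal{O}}(Y_{\alpha})$ and its transformation to $\alpha\mapsto\Act_{\mathcal{O}}(E)$ by transporting $\psi$ along the inert maps out of $O$; the two columnwise equivalences are then precisely conditions~(1) and~(2) of extendability. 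With this diagram-level comparison in hand, the fibre computation above applies and completes the proof.
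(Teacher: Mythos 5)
Your proof is correct and follows essentially the same route as the paper's: both reduce the claim to an equivalence on underlying $\infty$-groupoids (which is exactly condition~(1) of extendability, since cores commute with limits) plus full faithfulness, and both establish the latter by exhibiting $\Map_{\mathcal{O}^{\act}_{/O}}(\phi,\psi)$ as the fibre over $\phi$ of postcomposition with $\psi$ on $\Act_{\mathcal{O}}(\blank)$ and then applying the Segal and generalized Segal conditions for $\Act_{\mathcal{O}}$ to a compatible square (the paper packages this as a commutative cube with cartesian back and front faces). Your explicit observation that every morphism $\phi\to\psi$ over $O$ is automatically active, and your attention to the diagram-level coherence via Corollary~\ref{cor:Oactftr}, are exactly the points the paper leaves implicit.
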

\begin{proof}
  We must show that for any $O \in \mathcal{O}$, the functor
  \[ \mathcal{O}^{\act}_{/O} \to \lim_{E \in \mathcal{O}^{\el}_{O/}}
    \mathcal{O}^{\act}_{/E} \] is an equivalence; to see this it
  suffices to check that it is an equivalence on underlying \igpds{}
  and is fully faithful. The map on underlying \igpds{} is the map
  $\Act_{\mathcal{O}}(O) \to \lim_{E \in \mathcal{O}^{\el}_{O/}}
  \Act_{\mathcal{O}}(E)$, which is an equivalence by assumption since
  $\mathcal{O}$ is extendable. Given active maps
  $\phi \colon X \actto O$, $\psi \colon Y \actto O$, the morphism of
  mapping spaces
  \[ \Map_{\mathcal{O}^{\act}_{/O}}(\phi, \psi) \to \lim_{\alpha
      \colon O \intto E \in
      \mathcal{O}^{\el}_{O/}}
    \Map_{\mathcal{O}^{\act}_{/E}}(\phi_{\alpha}, \psi_{\alpha}) \]
 fits in a commutative cube
  \[
    \begin{tikzcd}[column sep=tiny,row sep=small]
      \Map_{\mathcal{O}^{\act}_{/O}}(\phi, \psi)\arrow{rr} \arrow{dd}
      \arrow{dr} & &
      \Act_{\mathcal{O}}(Y) \arrow{dd} \arrow{dr} \\
      & \lim_{\alpha
      \colon \! O \intto E \in
      \mathcal{O}^{\el}_{O/}}
    \Map_{\mathcal{O}^{\act}_{/E}}(\phi_{\alpha}, \psi_{\alpha})
    \arrow[crossing over]{rr}  & & \lim_{\alpha
      \colon \! O \intto E \in
      \mathcal{O}^{\el}_{O/}} \Act_{\mathcal{O}}(\alpha_{!}Y)
    \arrow{dd} \\
      \{\phi\} \arrow{rr} \arrow{dr} & & \Act_{\mathcal{O}}(X)
      \arrow{dr} \\
       & \lim_{\alpha
      \colon \! O \intto E \in
      \mathcal{O}^{\el}_{O/}} \{\phi_{\alpha}\} \arrow[crossing
    over,leftarrow]{uu} \arrow{rr} & & \lim_{\alpha
      \colon \! O \intto E \in
      \mathcal{O}^{\el}_{O/}} \Act_{\mathcal{O}}(\alpha_{!}X),
    \end{tikzcd}
  \]
  where the back and front faces are cartesian. Since $\mathcal{O}$ is
  extendable, we can apply the ``extended Segal condition'' of
  Remark~\ref{rem genSegalcond} to $\Act_{\mathcal{O}}(\blank)$ and
  conclude the horizontal morphisms in the right-hand square are
  equivalences. It follows that the map on fibres in the left square
  is also an equivalence, as required.
\end{proof}

Using this we can prove the following key observation:
\begin{propn}\label{propn:WSFlim}
Suppose $\mathcal{O}$ is an extendable algebraic pattern. Consider a
commutative triangle
\[
\begin{tikzcd}
\mathcal{E} \arrow{rr}{f} \arrow{dr}[below left]{p} & & \mathcal{F}
\arrow{dl}{q} \\
& \mathcal{O},
\end{tikzcd}
\]
where $p$ and $q$ are weak Segal fibrations and $f$ preserves inert
morphisms. Then for any $F \in \mathcal{F}$ the functor
\[ \mathcal{E}^{\act}_{/F} \to \lim_{\alpha \in \mathcal{O}^{\el}_{q(F)/}}
\mathcal{E}^{\act}_{/\alpha_{!}F} \]
is an equivalence.
\end{propn}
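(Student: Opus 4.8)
The plan is to set $P:=q(F)$ and to show the comparison functor $\Theta$ is an equivalence by verifying, as in the proof of Lemma~\ref{lem:stronglyext}, that it is an equivalence on maximal subgroupoids and is fully faithful. First I would make $\Theta$ explicit: for $\alpha\colon P\intto E$ in $\mathcal{O}^{\el}_{P/}$ and an object $(\overline O,\phi\colon f(\overline O)\actto F)$ of $\mathcal{E}^{\act}_{/F}$, factor $f(\overline O)\actto F\intto\alpha_!F$ as an inert map followed by an active one; since $p$ admits cocartesian lifts of inert maps and $f$ preserves inert morphisms together with their cocartesian pushforwards, the inert part is $f(\overline O)\intto f(\overline O_\alpha)$ for a canonical inert $\overline O\intto\overline O_\alpha$ in $\mathcal{E}$, and the active remainder gives $\phi_\alpha\colon f(\overline O_\alpha)\actto\alpha_!F$. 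Writing $O=p(\overline O)$ and $\psi=q(\phi)$, both $\mathcal{E}^{\act}_{/F}$ and $\lim_\alpha\mathcal{E}^{\act}_{/\alpha_!F}$ lie over $\mathcal{O}^{\act}_{/P}$ via $r\colon(\overline O,\phi)\mapsto(O,\psi)$, and Lemma~\ref{lem:stronglyext} identifies $\mathcal{O}^{\act}_{/P}\simeq\lim_\alpha\mathcal{O}^{\act}_{/E}$ compatibly (including on mapping spaces); this lets me argue fibrewise, with $O\overset{\alpha_O}{\intto}O_\alpha\overset{\psi_\alpha}{\actto}E$ the inert--active factorization of $\alpha\psi$.

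On maximal subgroupoids, $(\mathcal{E}^{\act}_{/F})^{\simeq}\to\Act_{\mathcal{O}}(P)$ is a map of spaces, and $\Act_{\mathcal{O}}\simeq\lim_\alpha\Act_{\mathcal{O}}(E)$ by condition~(1) of Definition~\ref{defn:pattext}, so it is enough to identify the fibre $r^{-1}(\psi)$ with $\lim_\alpha r_\alpha^{-1}(\psi_\alpha)$. I would recognize $r^{-1}(\psi)\to\mathcal{E}_O$ as the right fibration classified by $\overline O\mapsto\Map^{\psi}_{\mathcal{F}}(f(\overline O),F)$, and the target fibre as the right fibration over $\lim_\alpha\mathcal{E}_{O_\alpha}$ classified by $(\overline O_\alpha)_\alpha\mapsto\lim_\alpha\Map^{\psi_\alpha}_{\mathcal{F}}(f(\overline O_\alpha),\alpha_!F)$. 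The bases agree by $\mathcal{E}_O\simeq\lim_\alpha\mathcal{E}_{O_\alpha}$ — which follows from the weak Segal condition Definition~\ref{defn:WSF}(2) for the Segal $\mathcal{O}^{\xint}$-object $O\mapsto\mathcal{E}_O$ and the generalized Segal condition of Remark~\ref{rem genSegalcond} — and the classifying functors agree by Remark~\ref{rmk:WSFMapeq} for the active map $\psi$ in $\mathcal{F}$, using $f(\overline O_\alpha)\simeq\alpha_{O,!}f(\overline O)$. Hence the fibres match and $\Theta^{\simeq}$ is an equivalence.

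For full faithfulness I would write $\Map_{\mathcal{E}^{\act}_{/F}}((\overline O_1,\phi_1),(\overline O_2,\phi_2))$ as the fibre over $\phi_1$ of $\Map_{\mathcal{E}}(\overline O_1,\overline O_2)\xrightarrow{g\mapsto\phi_2 f(g)}\Map_{\mathcal{F}}(f(\overline O_1),F)$, and similarly for each $\alpha$. Both sides map to $\Map_{\mathcal{O}^{\act}_{/P}}(\psi_1,\psi_2)$, whose decomposition $\simeq\lim_\alpha\Map_{\mathcal{O}^{\act}_{/E}}(\psi_{1,\alpha},\psi_{2,\alpha})$ is the mapping-space statement of Lemma~\ref{lem:stronglyext}; so it suffices to treat the fibre over a fixed morphism $h\colon O_1\to O_2$, which by cancellation in the factorization system is necessarily active. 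Since limits commute with the finite limits defining these fibres, I then reduce to comparing $\Map^{\psi_1}_{\mathcal{F}}(f(\overline O_1),F)$ and $\Map^{h}_{\mathcal{E}}(\overline O_1,\overline O_2)$ with their $\alpha$-indexed analogues: the former matches by Remark~\ref{rmk:WSFMapeq} for $\mathcal{F}$, while the latter requires decomposing $\Map^{h}_{\mathcal{E}}$ via Remark~\ref{rmk:WSFMapeq} for $\mathcal{E}$ and reconciling the resulting $\mathcal{O}^{\el}_{O_2/}$-indexed limit with the $\mathcal{O}^{\el}_{P/}$-indexed target using extendability of $\mathcal{O}$ (Definition~\ref{defn:pattext}(2) together with Lemma~\ref{lem:stronglyext} applied to the \icats{} $\mathcal{O}^{\act}_{/\blank}$).

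The main obstacle is precisely this last reconciliation: the weak Segal condition produces a decomposition of mapping spaces in $\mathcal{E}$ indexed by $\mathcal{O}^{\el}_{O_2/}$, whereas the target of $\Theta$ is organized over $\mathcal{O}^{\el}_{P/}$, and bridging the two requires the full strength of extendability of the base — in particular the upgrade from the Segal property of $\Act_{\mathcal{O}}$ to the Segal property of the \icats{} $\mathcal{O}^{\act}_{/\blank}$ recorded in Lemma~\ref{lem:stronglyext}, which is why that lemma is isolated immediately before this proposition. The objectwise comparison of the second paragraph is a groupoid-level shadow of the same mechanism, so once the mapping-space reconciliation is carried out both halves of the criterion follow, completing the proof that $\Theta$ is an equivalence.
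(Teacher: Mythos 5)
Your proposal is correct and follows essentially the same route as the paper's proof: both verify fully faithfulness via the decomposition of $\Map^{\phi}_{\mathcal{E}}$ from Remark~\ref{rmk:WSFMapeq} re-indexed over $\mathcal{O}^{\el}_{q(F)/}$ by the generalized Segal condition of Remark~\ref{rem genSegalcond}, with Lemma~\ref{lem:stronglyext} handling the base mapping spaces, and both verify essential surjectivity fibrewise over $\Act_{\mathcal{O}}(q(F))$ using conditions (2) and (3) of Definition~\ref{defn:WSF}. The only differences are the order of the two halves and your packaging of the fibre comparison via classified right fibrations rather than the paper's explicit fibre products, and the ``main obstacle'' you flag is resolved exactly as you predict.
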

\begin{proof}
For any active morphisms
$\phi \colon Y \actto X$, $\psi \colon X \actto q(F)$ in $\mathcal{O}$ and $\alpha\in \xxO^\el_{q(F)/}$ the inert-active factorization gives a commutative diagram
\[
\begin{tikzcd}
Y\arrow[r, rightsquigarrow, "\phi"] \arrow[d, swap, tail, "\alpha_Y"]& X \arrow[d, tail, "\alpha_X"] \arrow[r, rightsquigarrow] & q(F)\arrow[d, tail, "\alpha"]\\
Y_\alpha \arrow[r, rightsquigarrow, "\phi_\alpha"] & X_\alpha \arrow[r, rightsquigarrow] & E.
\end{tikzcd}
\]
By combining Remark~\ref{rem genSegalcond} (the
``generalized Segal condition'') with the argument of
Remark~\ref{rmk:WSFMapeq} we then get an equivalence
\[ \Map^{\phi}_{\mathcal{E}}(\overline{Y}, \overline{X}) \isoto \lim_{\alpha
 \in \mathcal{O}^{\el}_{q(F)/}}
\Map^{\phi_{\alpha}}_{\mathcal{E}}(\alpha_{Y,!}\overline{Y},
\alpha_{X,!}\overline{X}).\]
Thus in the commutative square
\[
  \begin{tikzcd}
    \Map_{\mathcal{E}^{\act}_{/F}}(\overline{Y}, \overline{X}) \arrow{r} \arrow{d} & \lim_{\alpha
 \in \mathcal{O}^{\el}_{q(F)/}} \Map_{\mathcal{E}^{\act}_{/\alpha_{!}F}}(\alpha_{Y,!}\overline{Y},
\alpha_{X,!}\overline{X}) \arrow{d} \\
\Map_{\mathcal{O}^{\act}_{/qF}}(Y, X) \arrow{r} & \lim_{\alpha
 \in \mathcal{O}^{\el}_{q(F)/}}
\Map_{\mathcal{O}^{\act}_{/E}}(Y_\alpha, X_\alpha),
  \end{tikzcd}
\]
the map on fibres is an equivalence for all $\phi \colon Y \actto X$,
which means the square is cartesian. The bottom horizontal morphism is
an equivalence by Lemma~\ref{lem:stronglyext} since $\mathcal{O}$ is  extendable. Hence we see that the functor $\mathcal{E}^{\act}_{/F} \to \lim_{\alpha \in \mathcal{O}^{\el}_{q(F)/}}
\mathcal{E}^{\act}_{/\alpha_{!}F}$ induces equivalences on
mapping spaces, and so is fully faithful.
To see that this functor is also
essentially surjective, consider the commutative square of \igpds{}
\[
\begin{tikzcd}
(\mathcal{E}^{\act}_{/F})^{\simeq}  \arrow{r} \arrow{d} &  \lim_{\alpha \in \mathcal{O}^{\el}_{q(F)/}}
(\mathcal{E}^{\act}_{/\alpha_{!}F})^{\simeq} \arrow{d} \\
(\mathcal{O}^{\act}_{/q(F)})^{\simeq} \arrow{r} & \lim_{\alpha
\in \mathcal{O}^{\el}_{q(F)/}} (\mathcal{O}^{\act}_{/E})^{\simeq};      
\end{tikzcd}
\]
we want to show that the top horizontal morphism is an equivalence.
The bottom horizontal morphism is an equivalence by assumption, since
$\mathcal{O}$ is extendable; it therefore suffices to show the map
on fibres over $\phi \colon O \to q(F)$ is an equivalence. The fibre
$(\mathcal{E}^{\act}_{/F})^{\simeq}_{\phi}$ we can identify with
$\mathcal{E}_{O}^{\simeq} \times_{\mathcal{F}_{O}^{\simeq}}
(\mathcal{F}_{/F})_{\phi}^{\simeq}$. By condition (2) in
Definition~\ref{defn:WSF} we have an equivalence
$\mathcal{E}_{O}^{\simeq} \simeq \lim_{\alpha \in
\mathcal{O}^{\el}_{O/}} \mathcal{E}_{E}^{\simeq}$, and similarly
for $\mathcal{F}$. Moreover, condition (3) implies that in the
commutative square
\[
\begin{tikzcd}
(\mathcal{F}_{/F})^{\simeq}_{\phi} \arrow{r} \arrow{d} &
\lim_{\alpha \in \mathcal{O}^{\el}_{O/}}
(\mathcal{F}_{/\alpha_{!}F})^{\simeq}_{\phi_{\alpha}} \arrow{d} \\
\mathcal{F}_{O}^{\simeq} \arrow{r}{\sim} &   \lim_{\alpha
\in \mathcal{O}^{\el}_{O/}} \mathcal{F}_{E}^{\simeq},
\end{tikzcd}
\]
the map on fibres over each object of
$\mathcal{F}_{O}^{\simeq}$ is an equivalence, hence the
top horizontal morphism is an equivalence. Since limits commute, it
follows that we have an equivalence
\[ (\mathcal{E}^{\act}_{/F})^{\simeq}_{\phi} \to \lim_{\alpha
\in \mathcal{O}^{\el}_{O/}}
(\mathcal{E}^{\act}_{/\alpha_{!}F})^{\simeq}_{\phi_{\alpha}},\]
which completes the proof.
\end{proof}

\begin{cor}\label{cor:WSFext}
Suppose $\mathcal{O}$ is an extendable algebraic pattern. Then any
morphism between weak Segal fibrations over $\mathcal{O}$ that
preserves inert morphisms is
extendable.
\end{cor}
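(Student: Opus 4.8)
The plan is to verify the three conditions of Definition~\ref{defn:ext} for the morphism $f \colon \mathcal{E} \to \mathcal{F}$, mimicking the proof of Proposition~\ref{propn:SegFibExt} with the base morphism now taken to be $\id_{\mathcal{O}}$, and substituting Proposition~\ref{propn:WSFlim} for the limit computation carried out there. First I would note that $f$ is a morphism of algebraic patterns: it preserves inert morphisms by hypothesis, and since it lies over $\mathcal{O}$ it sends active morphisms to active morphisms and elementary objects to elementary objects, as both notions are detected on $\mathcal{O}$. For condition (1) I would repeat the unique-lifting argument from the proof of Proposition~\ref{propn:SegFibExt}: given $\overline{O} \in \mathcal{E}$ and an inert morphism $\overline{\epsilon} \colon f(\overline{O}) \intto \overline{P}$ lying over $\epsilon \colon p(\overline{O}) \intto q(\overline{P})$ in $\mathcal{O}$, the weak Segal fibration $\mathcal{E} \to \mathcal{O}$ provides a unique cocartesian (hence inert) lift $\overline{\gamma} \colon \overline{O} \intto \overline{O}'$ of $\epsilon$; as $f$ preserves inert morphisms, $f(\overline{\gamma})$ is inert over $\epsilon$ with source $f(\overline{O})$, so by uniqueness of cocartesian lifts $f(\overline{\gamma}) \simeq \overline{\epsilon}$, and the same uniqueness makes $\overline{\gamma}$ the unique inert lift. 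This uses only cocartesian lifts over inert maps, which weak Segal fibrations supply.

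For condition (3) I would invoke the equivalences $\mathcal{E}^{\el}_{\overline{X}/} \simeq \mathcal{O}^{\el}_{p(\overline{X})/}$ (and likewise for $\mathcal{F}$) that make $p$ and $q$ Segal morphisms in Lemma~\ref{lem:WSFSegmor}. Exactly as in Proposition~\ref{propn:SegFibExt}, these identify, for an active morphism $\phi \colon f(\overline{O}) \actto \overline{P}$ lying over $\phi_{0} \colon p(\overline{O}) \actto q(\overline{P})$ in $\mathcal{O}$, the comparison functor $\mathcal{E}^{\el}(\phi) \to \mathcal{E}^{\el}_{\overline{O}/}$ with the downstairs functor $\mathcal{O}^{\el}(\phi_{0}) \to \mathcal{O}^{\el}_{p(\overline{O})/}$. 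Since $\mathcal{O}$ is extendable, condition (2) of Definition~\ref{defn:pattext} says precisely that this downstairs functor induces an equivalence on limits of every functor to $\mathcal{S}$, whence condition (3) holds for $f$.

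Condition (2) is where Proposition~\ref{propn:WSFlim} carries the weight. For $\overline{P} \in \mathcal{F}$ with $P := q(\overline{P})$, the equivalence $\mathcal{F}^{\el}_{\overline{P}/} \simeq \mathcal{O}^{\el}_{P/}$ sends an elementary object to a cocartesian pushforward $\alpha_{!}\overline{P}$ along $\alpha \colon P \intto E$, and under it the diagram $\epsilon_{\overline{P}} \colon \overline{E} \mapsto \mathcal{E}^{\act}_{/\overline{E}}$ becomes $\alpha \mapsto \mathcal{E}^{\act}_{/\alpha_{!}\overline{P}}$; thus $\mathcal{L}_{\overline{P}} \simeq \lim_{\alpha \in \mathcal{O}^{\el}_{P/}} \mathcal{E}^{\act}_{/\alpha_{!}\overline{P}}$. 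Applying Proposition~\ref{propn:WSFlim} with $F = \overline{P}$ shows the canonical functor $\mathcal{E}^{\act}_{/\overline{P}} \to \mathcal{L}_{\overline{P}}$ is an equivalence, hence cofinal, which is condition (2). Because Proposition~\ref{propn:WSFlim} is already in hand, the only real work is this bookkeeping; I expect the sole point requiring care to be the identification of $\mathcal{L}_{\overline{P}}$ from Definition~\ref{defn:ext}(2) with the limit appearing in Proposition~\ref{propn:WSFlim}, via the elementary-slice equivalences, rather than any genuinely new obstacle.
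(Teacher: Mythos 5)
Your proposal is correct and follows essentially the same route as the paper: unique lifting of inert morphisms from the cocartesian lifts supplied by the weak Segal fibrations, condition (2) from Proposition~\ref{propn:WSFlim} after identifying $\mathcal{L}_{\overline{P}}$ with the limit appearing there via the equivalences $\mathcal{F}^{\el}_{\overline{P}/} \simeq \mathcal{O}^{\el}_{P/}$, and condition (3) reduced to the extendability of $\mathcal{O}$ through the elementary-slice equivalences. The paper's own proof is just a terser version of this same argument.
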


\begin{proof}
Suppose $\mathcal{E}$ and $\mathcal{F}$ are weak Segal fibrations
over $\mathcal{O}$. Then any morphism of algebraic patterns
$f \colon \mathcal{E} \to \mathcal{F}$ over $\mathcal{O}$ has unique
lifting of inert morphisms, as an inert morphism is uniquely
determined by its source and its image in $\mathcal{O}$. Moreover,
$f$ satisfies condition (2) in Definition~\ref{defn:ext} by
Proposition~\ref{propn:WSFlim}, and condition (3) reduces to the
extendability of $\mathcal{O}$.
\end{proof}

\begin{cor}
  Suppose $\mathcal{O}$ is an extendable algebraic pattern, and
  $\mathcal{E} \to \mathcal{O}$ is a weak Segal fibration. Then
  $\mathcal{E}$ is extendable.
\end{cor}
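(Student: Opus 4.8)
The plan is to unwind the definition. By Definition~\ref{defn:pattext}, the pattern $\mathcal{E}$ is extendable precisely when the inclusion $j_{\mathcal{E}} \colon \mathcal{E}^{\xint} \to \mathcal{E}$ is an extendable morphism, equivalently when the two conditions listed in Definition~\ref{defn:pattext} hold for $\mathcal{E}$. I would first record, via Lemma~\ref{lem:WSFSegmor}, that the weak Segal fibration $\pi \colon \mathcal{E} \to \mathcal{O}$ makes $\mathcal{E}$ an algebraic pattern for which $\pi$ is a Segal morphism, so that for each $\overline{O} \in \mathcal{E}$ lying over $O := \pi(\overline{O})$ the induced functor $\mathcal{E}^{\el}_{\overline{O}/} \to \mathcal{O}^{\el}_{O/}$ is an equivalence. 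These equivalences are the workhorse for both conditions.

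Condition (2) I expect to be routine. For an active morphism $\overline{\phi} \colon \overline{O} \actto \overline{O}'$ in $\mathcal{E}$ lying over $\phi := \pi(\overline{\phi})$, the equivalences $\mathcal{E}^{\el}_{\overline{O}/} \simeq \mathcal{O}^{\el}_{O/}$ are compatible with cocartesian transport along inert maps, and therefore identify the comparison functor $\mathcal{E}^{\el}(\overline{\phi}) \to \mathcal{E}^{\el}_{\overline{O}/}$ with $\mathcal{O}^{\el}(\phi) \to \mathcal{O}^{\el}_{O/}$ (this is exactly the reduction already carried out in the proof of Corollary~\ref{cor:WSFext}). Since $\mathcal{O}$ is extendable, the latter induces an equivalence on limits of every functor $\mathcal{O}^{\el} \to \mathcal{S}$, and hence so does the former.

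The main content is condition (1), namely that $\Act_{\mathcal{E}}$ is a Segal $\mathcal{E}$-space, and this is where the weak Segal structure must genuinely be used. My plan is to apply Proposition~\ref{propn:WSFlim} in the degenerate case $\mathcal{F} = \mathcal{E}$, $q = p = \pi$, $f = \id_{\mathcal{E}}$, which yields that $\mathcal{E}^{\act}_{/\overline{O}} \to \lim_{\alpha \in \mathcal{O}^{\el}_{\pi(\overline{O})/}} \mathcal{E}^{\act}_{/\alpha_{!}\overline{O}}$ is an equivalence for every $\overline{O}$. Rewriting the indexing category via $\mathcal{O}^{\el}_{\pi(\overline{O})/} \simeq \mathcal{E}^{\el}_{\overline{O}/}$, which sends $\alpha$ to the corresponding elementary $\overline{E} = \alpha_{!}\overline{O}$, this says precisely that the functor $\overline{O} \mapsto \mathcal{E}^{\act}_{/\overline{O}}$ of Remark~\ref{rmk:actcocart} is a Segal $\mathcal{E}$-\icat{}. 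Passing to maximal subgroupoids, which preserves limits, then gives condition (1). In effect this reproves the conclusion of Lemma~\ref{lem:stronglyext} for $\mathcal{E}$ without presupposing that $\mathcal{E}$ is extendable.

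I expect the only real subtlety to be confirming that Proposition~\ref{propn:WSFlim} is legitimately applicable with $f = \id_{\mathcal{E}}$ (which counts as a morphism of weak Segal fibrations over $\mathcal{O}$ preserving inert morphisms), and in matching up the indexing under the Segal-morphism equivalences so that $\alpha_{!}\overline{O}$ genuinely ranges over $\mathcal{E}^{\el}_{\overline{O}/}$. A slightly slicker packaging would instead exhibit $\mathcal{E}^{\xint}$ as a weak Segal fibration over $\mathcal{O}$ and apply Corollary~\ref{cor:WSFext} directly to $j_{\mathcal{E}}$; I would avoid this route, however, since verifying all three clauses of Definition~\ref{defn:WSF} for $\mathcal{E}^{\xint} \to \mathcal{O}$ (in particular the mapping-space condition (3)) is more work than the direct verification above.
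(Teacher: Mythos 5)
Your proof is correct, but it takes a different route from the paper's. The paper's entire proof is the ``slicker packaging'' you explicitly set aside: it observes that the restriction $\mathcal{E}^{\xint} \to \mathcal{O}$ is again a weak Segal fibration and applies Corollary~\ref{cor:WSFext} to the inclusion $j_{\mathcal{E}} \colon \mathcal{E}^{\xint} \to \mathcal{E}$. Your direct verification of the two conditions of Definition~\ref{defn:pattext} is sound: the specialization of Proposition~\ref{propn:WSFlim} to $f = \id_{\mathcal{E}}$ is legitimate, and after reindexing along $\mathcal{E}^{\el}_{\overline{O}/} \simeq \mathcal{O}^{\el}_{\pi(\overline{O})/}$ and passing to maximal sub-$\infty$-groupoids (which commutes with limits) it does give that $\Act_{\mathcal{E}}$ is a Segal $\mathcal{E}$-space; and your treatment of condition (2) is the same reduction the paper performs (carried out in detail in the proof of Proposition~\ref{propn:SegFibExt}, rather than in Corollary~\ref{cor:WSFext} as you cite). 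The trade-off is essentially as you describe it: the paper's route is a one-line deduction but leaves implicit the verification that $\mathcal{E}^{\xint} \to \mathcal{O}$ satisfies all three clauses of Definition~\ref{defn:WSF} --- note that the fibres of $\mathcal{E}^{\xint}$ over $\mathcal{O}$ are the maximal sub-$\infty$-groupoids $\mathcal{E}_{X}^{\simeq}$, so clauses (2) and (3) do require a short argument there --- whereas your route avoids that check at the cost of redoing, for $j_{\mathcal{E}}$ specifically, the bookkeeping that Corollary~\ref{cor:WSFext} packages once and for all. Both arguments ultimately rest on the same input, namely Proposition~\ref{propn:WSFlim} applied over the extendable base $\mathcal{O}$.
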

\begin{proof}
  The restriction $\mathcal{E}^{\xint} \to \mathcal{O}$ is also
  a weak Segal fibration, hence we can apply
  Corollary~\ref{cor:WSFext} to the inclusion $\mathcal{E}^{\xint} \to
  \mathcal{E}$.
\end{proof}

\begin{example}\label{ex:symopdext}
  The pattern $\xF_{*}^{\flat}$ is extendable. 
  Our previous
  results therefore specialize to tell us that any morphism
  $f \colon \mathcal{O} \to \mathcal{P}$ of symmetric \iopds{} is
  extendable. If $\mathcal{C}$ is a cocomplete $\times$-admissible
  \icat{}, we conclude that left Kan extension along $f$ restricts to
  a functor
  $f_{!} \colon \Seg_{\mathcal{O}}(\mathcal{C}) \to
  \Seg_{\mathcal{P}}(\mathcal{C})$, given by the formula
  \[ (f_{!}F)(P) \simeq \colim_{O \in \mathcal{O}^{\act}_{/P}} F(O).\]
  Note that this agrees with the formula for operadic left Kan
  extensions from \cite[\S 3.1.2]{ha}, though in our case the target
  must be a cartesian symmetric monoidal \icat{}.
\end{example}

\begin{example}\label{ex:symopdext2}
  Let us spell out the description of free Segal $\mathcal{O}$-objects
  for a symmetric \iopd{} $\mathcal{O} \to \xF_{*}$ in a bit more
  detail. We can identify $\mathcal{O}^{\el}$ with the
  $\infty$-groupoid $\mathcal{O}_{\angled{1}}^{\simeq}$, and for
  $X \in \mathcal{O}_{\angled{1}}$ the space $\Act_{\mathcal{O}}(X)$
  decomposes as $\coprod_{n = 0}^{\infty} \Act_{\mathcal{O}}(X)_{n}$,
  where $\Act_{\mathcal{O}}(X)_{n}$ is the space of morphisms to $X$
  in $\mathcal{O}$ lying over the unique active morphism
  $\angled{n} \to \angled{1}$ in $\xF_{*}$. If $\mathcal{C}$ is a
  cocomplete $\times$-admissible \icat{}, then for $F\in \xFun(\xxO^\simeq_{\angled{1}}, \xcc)$ our formula for the
  free Segal $\mathcal{O}$-object monad $T_{\mathcal{O}}$ gives:
  \[ (T_{\mathcal{O}}F)(X) \simeq \coprod_{n = 0}^{\infty}
    \colim_{(Y_{1},\ldots,Y_{n}) \in \Act_{\mathcal{O}}(X)_{n}}
    F(Y_{1}) \times \cdots \times F(Y_{n}).\]
  If
  $\mathcal{O}_{\angled{1}}^{\simeq}$ is contractible, we
  can identify the space $\mathcal{O}(n)$ of $n$-ary operations with
  the fibre of $\Act_{\mathcal{O}}(X) \to \Act_{\xF_{*}}(\angled{1})
  \simeq B\Sigma_{n}$, and so rewrite this as the familiar formula
  \[ T_{\mathcal{O}}C \simeq \coprod_{n = 0}^{\infty}
    \colim_{B\Sigma_{n}} \colim_{\mathcal{O}(n)} C \times \cdots
    \times C \simeq \coprod_{n = 0}^{\infty}
    \left(\mathcal{O}(n) \times C^{\times n}\right)_{h\Sigma_{n}}\]
  for $C \in \mathcal{C} \simeq \Fun(\mathcal{O}_{\angled{1}}^{\simeq},
  \mathcal{C})$.
\end{example}

\begin{remark}
  Our description of free algebras differs from what Lurie calls
  ``free algebras'' in \cite[Section 3.1.3]{ha}, because Lurie defines
  these to be given by operadic Kan extension along the inclusion
  $\mathcal{O} \times_{\xF_{*}} \xF_{*}^{\xint} \to \mathcal{O}$ where
  the source is the subcategory containing \emph{all} morphisms in
  $\mathcal{O}$ lying over inert morphisms in $\xF_{*}$, not just the
  cocartesian ones. Lurie's construction amounts to specifying the
  unary operations in advance and freely adding the $n$-ary operations
  for $n>1$, while our version adds all the operations freely.
\end{remark}

\begin{example}
  The pattern $\simp^{\op,\flat}$ is also extendable.
  The analogues of Examples~\ref{ex:symopdext} and \ref{ex:symopdext2}
  hence also hold for non-symmetric \iopds{}.
\end{example}

\begin{example}\label{ex:genopdext}
  The patterns $\xF_{*}^{\natural}$ and $\simp^{\op,\natural}$ are
  also extendable.  Hence any morphism of \emph{generalized} symmetric
  or non-symmetric \iopds{} is extendable.
\end{example}

\begin{remark}\label{rmk:geniopdext}
Suppose
\[
\begin{tikzcd}
\mathcal{O} \arrow{rr}{f} \arrow{dr} & & \mathcal{P} \arrow{dl}
\\
& \xF_{*}
\end{tikzcd}
\]
is a morphism of generalized symmetric \iopd{}s. Then the previous
example does \emph{not} say that we can compute free Segal
$\mathcal{P}^{\flat}$-objects on Segal
$\mathcal{O}^{\flat}$-objects, as $f_{!}$ generally will not
restrict to a functor between these. In the definition of
extendability, condition (1) is still automatic (as the inert
morphisms in $\xF_{*}^{\natural}$ and $\xF_{*}^{\flat}$ are the
same), while condition (3) reduces to $\xF_{*}^{\flat}$ being
extendable. Thus the morphism $f^{\flat} \colon \mathcal{O}^{\flat}
\to \mathcal{P}^{\flat}$ is extendable \IFF{} for all $P$ over
$\angled{n}$ in $\xF_{*}$ the functor
\[ \mathcal{O}^{\act}_{/P} \to \prod_{i = 1}^{n}
\mathcal{O}^{\act}_{/\rho_{i,!}P} \]
is cofinal, where $\rho_{i} \colon \angled{n} \to \angled{1}$ is as
in the introduction.
\end{remark}

\section{Polynomial Monads from Patterns}
\label{sec:poly}
In this section we introduce the notion of \emph{polynomial monad} on
an \icat{} of presheaves, and prove that the free Segal
$\mathcal{O}$-space monad for an extendable pattern $\mathcal{O}$ is
polynomial. Moreover, we show that this is compatible with Segal
morphisms of algebraic patterns, yielding a functor
\[ \mathfrak{M} \colon \AlgPatt_{\name{ext}}^{\Seg} \to \name{PolyMnd} \] between the
subcategory of $\AlgPatt$ consisting of extendable patterns and Segal
morphisms, and an \icat{} of polynomial monads. We start by
introducing some terminology:

\begin{defn}
  A natural transformation $\phi \colon F \to G$ is \emph{cartesian}
  if the naturality squares
  \csquare{F(x)}{F(y)}{G(x)}{G(y)}{F(f)}{\phi_x}{\phi_y}{G(f)}
  are all cartesian.
\end{defn}

\begin{defn}
  A functor $F \colon \mathcal{C} \to \mathcal{D}$ is a \emph{local
    right adjoint} if for every $c \in \mathcal{C}$ the induced
  functor $\mathcal{C}_{/c} \to \mathcal{D}_{/Fc}$ is a right
  adjoint.
\end{defn}

\begin{lemma}
  If $\mathcal{C}$ and $\mathcal{D}$ are presentable \icats{}, then
  the following are equivalent for a functor $F \colon \mathcal{C} \to
  \mathcal{D}$:
  \begin{enumerate}[(1)]
  \item $F$ is accessible and preserves weakly contractible limits.
  \item $F$ is a local right adjoint.
  \item The functor $F_{/*} \colon \mathcal{C} \to \mathcal{D}_{/F(*)}$ has a left
    adjoint.
  \end{enumerate}
\end{lemma}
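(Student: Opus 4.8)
The plan is to prove the cyclic chain of implications $(1) \Rightarrow (2) \Rightarrow (3) \Rightarrow (1)$. Throughout I would use four standard facts about slices of a presentable \icat{} $\mathcal{C}$: for any object $c$ the slice $\mathcal{C}_{/c}$ is again presentable; the projection $\pi_{c} \colon \mathcal{C}_{/c} \to \mathcal{C}$ is conservative and preserves all colimits as well as all \emph{weakly contractible} limits; the terminal object of $\mathcal{C}_{/c}$ is $\id_{c}$; and (the adjoint functor theorem) a functor between presentable \icats{} is a right adjoint \IFF{} it is accessible and preserves all limits. I would also invoke the standard fact that a functor preserves all small limits \IFF{} it preserves small products and pullbacks.

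The heart of the argument is $(1) \Rightarrow (2)$, which rests on comparing limits in a slice with weakly contractible limits downstairs. For each $c$ the induced functor $F_{c} \colon \mathcal{C}_{/c} \to \mathcal{D}_{/Fc}$ sits in a square $\pi_{Fc} \circ F_{c} \simeq F \circ \pi_{c}$, and I would argue that $F_{c}$ preserves all limits as follows. It automatically preserves the terminal object, since $\id_{c} \mapsto \id_{Fc}$. Every nonempty product in $\mathcal{C}_{/c}$ is the fibre product of its factors over $c$, and every pullback is computed as in $\mathcal{C}$; both are weakly contractible limits created by $\pi_{c}$. Hence, assuming $F$ preserves weakly contractible limits, one transports these limits across the square and, using conservativity of $\pi_{Fc}$, concludes that $F_{c}$ preserves products and pullbacks, so all small limits. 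Accessibility of $F_{c}$ would follow from accessibility of $F$ together with the fact that $\pi_{c},\pi_{Fc}$ preserve filtered colimits and $\pi_{Fc}$ is conservative. The adjoint functor theorem then exhibits each $F_{c}$ as a right adjoint, which is exactly $(2)$.

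The implication $(2) \Rightarrow (3)$ is immediate: specializing the local right adjoint condition to $c = *$ and using $\mathcal{C}_{/*} \simeq \mathcal{C}$ makes $F_{/*}$ a right adjoint, so it has a left adjoint. For $(3) \Rightarrow (1)$ I would run the slice comparison in reverse. If $F_{/*}$ has a left adjoint it preserves all limits and is accessible; factoring $F \simeq \pi_{F(*)} \circ F_{/*}$ and using that $\pi_{F(*)}$ preserves weakly contractible limits and filtered colimits shows that $F$ preserves weakly contractible limits and is accessible, which is $(1)$.

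I expect the main obstacle to be the careful bookkeeping in $(1) \Rightarrow (2)$: verifying that nonempty products and pullbacks in $\mathcal{C}_{/c}$ really are weakly contractible limits computed by $\pi_{c}$, so that $F$'s hypothesis applies; checking that the canonical comparison maps for $F_{c}$ are identified with the ones produced by transporting along the square; and confirming that accessibility descends to $F_{c}$. The reduction of all limits to products and pullbacks is the only purely formal ingredient, which I would simply cite from the standard theory.
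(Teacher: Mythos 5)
Your proposal is correct, and its one genuinely substantive divergence from the paper is where the work is located. The paper simply cites the equivalence of (1) and (2) from an external reference (the ``analytic monads'' paper, Proposition 2.2.8), notes that (3) is a special case of (2), and then proves only $(3)\Rightarrow(1)$ --- by exactly the argument you give: the adjoint functor theorem makes $F_{/*}$ accessible and limit-preserving, and the projection $\mathcal{D}_{/F(*)}\to\mathcal{D}$ preserves and creates colimits and weakly contractible limits. You instead close a cycle $(1)\Rightarrow(2)\Rightarrow(3)\Rightarrow(1)$, which requires you to supply a direct proof of $(1)\Rightarrow(2)$; your argument for it (reduce limits in $\mathcal{C}_{/c}$ to the terminal object, nonempty products, and pullbacks; observe that the latter two are weakly contractible limits created by the slice projection, since the relevant wide-cospan shapes have a terminal vertex; transport across $\pi_{Fc}\circ F_{c}\simeq F\circ\pi_{c}$; apply the adjoint functor theorem to each $F_{c}$) is sound and is essentially the content of the result the paper cites, so you have made the lemma self-contained at the cost of redoing that reference. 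Two small points of care: where you invoke ``conservativity of $\pi_{Fc}$'' you really want that $\pi_{Fc}$ both preserves and \emph{reflects} the weakly contractible limits in question (preservation plus conservativity does yield reflection via the comparison map, but it is worth saying so explicitly); and note that your cycle never proves $(2)\Rightarrow(1)$ directly --- that implication is only recovered by passing through (3), which is fine logically but means your $(1)\Rightarrow(2)$ argument and the paper's citation are not interchangeable piece by piece.
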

\begin{proof}
  The equivalence of (1) and (2) was proved as \cite[Proposition
  2.2.8]{AnalMnd}.  Since (3) is a special case of (2), it remains to
  prove that (3) implies (1). By the adjoint functor theorem
  \cite[Corollary 5.5.2.9]{ht}, it follows from (3) that $F_{/*}$ is
  accessible and preserves limits. The forgetful functor
  $\mathcal{D}_{/F(*)} \to \mathcal{D}$ preserves and creates all
  colimits, as well as weakly contractible limits, by \cite[Lemma
  2.2.7]{AnalMnd}, so this implies that $F$ itself is accessible and
  preserves weakly contractible limits.
\end{proof}

\begin{defn}
  A monad $T$ is \emph{cartesian} if its multiplication and unit are
  cartesian natural transformations, and  is \emph{polynomial}
  if it is cartesian and the underlying endofunctor is a
  local right adjoint.
\end{defn}

\begin{remark}
  For ordinary categories, our notion of polynomial monads is the same
  as the \emph{strongly cartesian} monads considered in
  \cite{BergerMelliesWeber}. For monads on \icats{} of the form
  $\mathcal{S}_{/X}$ for $X \in \mathcal{S}$, we recover the
  polynomial monads studied in \cite{AnalMnd} (see Theorem 2.2.3
  there), which is our reason for adopting this terminology.
\end{remark}

\begin{propn}\label{prop:polymonad}
  If $\mathcal{O}$ is an extendable algebraic pattern, then the free
  Segal $\mathcal{O}$-space monad $T_{\mathcal{O}}$ on
  $\Fun(\mathcal{O}^{\el}, \mathcal{S})$ is a polynomial monad.
\end{propn}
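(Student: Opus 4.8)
The plan is to apply the preceding lemma, which identifies the polynomial monads as those that are both cartesian and have a local right adjoint as underlying endofunctor. By Corollary~\ref{cor:freealg} the endofunctor of $T_{\mathcal{O}}$ is given on $F \in \Fun(\mathcal{O}^{\el}, \mathcal{S})$ by the explicit colimit formula
\[ T_{\mathcal{O}}F(E) \simeq \colim_{O \in \Act_{\mathcal{O}}(E)} \lim_{E' \in \mathcal{O}^{\el}_{O/}} F(E'), \qquad E \in \mathcal{O}^{\el}, \]
and I would verify the two required properties directly from this description.

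First I would show $T_{\mathcal{O}}$ is a local right adjoint by checking condition (1) of that lemma, i.e.\ that it is accessible and preserves weakly contractible limits. Accessibility is immediate: $T_{\mathcal{O}} = U_{\mathcal{O}}F_{\mathcal{O}}$ is a composite of adjoints between presentable \icats{}, using that $\Seg_{\mathcal{O}}(\mathcal{S})$ is presentable by Lemma~\ref{lem:locality}. Since limits and colimits in $\Fun(\mathcal{O}^{\el},\mathcal{S})$ are computed pointwise, it suffices to treat weakly contractible limits at a fixed $E$. In the formula such a limit is computed pointwise on $F$ and commutes past the inner limit over $\mathcal{O}^{\el}_{O/}$, so the only real point is that the colimit over the \igpd{} $\Act_{\mathcal{O}}(E)$ commutes with weakly contractible limits. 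This holds because, by descent in $\mathcal{S}$, that colimit is identified with the forgetful functor $\mathcal{S}_{/\Act_{\mathcal{O}}(E)} \simeq \Fun(\Act_{\mathcal{O}}(E), \mathcal{S}) \to \mathcal{S}$, which preserves weakly contractible limits by \cite[Lemma 2.2.7]{AnalMnd}.

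For cartesianness I would use the general observation that a natural transformation between functors into an \icat{} with a terminal object is cartesian \IFF{} its naturality squares over the maps $x \to \ast$ to the terminal object are cartesian, the general case following by pasting. Applied to both $\eta \colon \id \to T_{\mathcal{O}}$ and $\mu \colon T_{\mathcal{O}}^{2} \to T_{\mathcal{O}}$, this lets me work over the terminal object $\ast$. The colimit over the \igpd{} $\Act_{\mathcal{O}}(E)$ exhibits $T_{\mathcal{O}}F(E)$ as the total space of a left fibration over $\Act_{\mathcal{O}}(E) \simeq T_{\mathcal{O}}\ast(E)$, with fibre over an active map $O \to E$ equal to $\lim_{E' \in \mathcal{O}^{\el}_{O/}} F(E')$, and with associated projection $T_{\mathcal{O}}(!) \colon T_{\mathcal{O}}F \to T_{\mathcal{O}}\ast$. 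For the unit, cartesianness reduces to identifying $F$ with the fibre of this projection over the section $\eta_{\ast}$, which picks out the identity active map $\id_{E} \in \Act_{\mathcal{O}}(E)$; this fibre is $\lim_{E' \in \mathcal{O}^{\el}_{E/}} F(E') \simeq F(E)$, since $\id_{E}$ is an initial object of $\mathcal{O}^{\el}_{E/}$.

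For the multiplication the same reduction identifies cartesianness with the pullback square $T_{\mathcal{O}}^{2}F \simeq T_{\mathcal{O}}F \times_{T_{\mathcal{O}}\ast} T_{\mathcal{O}}^{2}\ast$ (formed over $\mu_{\ast}$ and the projection). Unwinding the double colimit, $\mu$ is the substitution map composing a pair of nested active morphisms into a single one, and computing fibres over a point of $T_{\mathcal{O}}^{2}\ast$ — that is, over a two-stage decomposition of an active map $\phi \colon O \to O'$ — the required fibre equivalence becomes exactly the iterated-limit identity
\[ \lim_{E \in \mathcal{O}^{\el}_{O/}} F(E) \simeq \lim_{\alpha \in \mathcal{O}^{\el}_{O'/}} \lim_{E \in \mathcal{O}^{\el}_{\alpha_{!}O/}} F(E) \]
provided by extendability, namely condition (2) of Definition~\ref{defn:pattext} in the form of the generalized Segal condition recorded in the remark following it. I expect this multiplication step to be the main obstacle: it requires making the substitution map precise — e.g.\ through the functoriality of $\Act_{\mathcal{O}}$ via the factorization system, or via a two-sided bar resolution computing $T_{\mathcal{O}}^{2}$ — and then matching the resulting fibres with condition (2). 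An alternative that avoids an explicit multiplication is to present $T_{\mathcal{O}}$ as the polynomial functor attached to the span $\mathcal{O}^{\el} \leftarrow \{(O \actto E,\, O \intto E')\} \to \{O \actto E\} \to \mathcal{O}^{\el}$ together with its substitution monad structure, for which cartesianness is formal; but that route needs the corresponding theory of polynomials indexed by \icats{}.
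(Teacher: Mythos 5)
Your proposal is correct and follows the paper's strategy in all essentials: accessibility plus preservation of weakly contractible limits, the latter reduced to the colimit over the \igpd{} $\Act_{\mathcal{O}}(E)$ via the equivalence $\Fun(\Act_{\mathcal{O}}(E),\mathcal{S}) \simeq \mathcal{S}_{/\Act_{\mathcal{O}}(E)}$ and \cite[Lemma 2.2.7]{AnalMnd}, and cartesianness of the unit and multiplication checked on fibres over spaces of active morphisms. (Your reduction to the naturality squares over $F \to *$ is a clean streamlining; the paper checks general squares $F \to G$ fibrewise, with the same content, and your treatment of the unit via the initial object $\id_{E} \in \mathcal{O}^{\el}_{E/}$ is fine.) The one genuine divergence is in the step you flag as the main obstacle. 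The paper does not unfold the $\colim$--$\lim$ formula for $T_{\mathcal{O}}^{2}$ on $\Fun(\mathcal{O}^{\el},\mathcal{S})$, so it never needs to re-invoke distributivity or condition (2) of Definition~\ref{defn:pattext} at this point. Instead, since extendability identifies $T_{\mathcal{O}}$ with $j_{\mathcal{O}}^{*}j_{\mathcal{O},!}$ as an endofunctor of $\Seg_{\mathcal{O}^{\xint}}(\mathcal{S})$, and $j_{\mathcal{O},!}$ is a pointwise left Kan extension with value $\colim_{\Act_{\mathcal{O}}(X)}(\blank)$ at \emph{every} $X \in \mathcal{O}$, iterating gives directly
\[ (T_{\mathcal{O}}^{2}F)(O) \simeq \colim_{(Y \actto X \actto O) \in \Act^{2}_{\mathcal{O}}(O)} F(Y), \]
where $\Act^{2}_{\mathcal{O}}(O) \to \Act_{\mathcal{O}}(O)$ is the left fibration for $X \mapsto \Act_{\mathcal{O}}(X)$ and $\mu$ is induced by the composition map $\Act^{2}_{\mathcal{O}}(O) \to \Act_{\mathcal{O}}(O)$. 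The integrand $F(Y)$ is constant on the fibres of that composition map, so the naturality squares for $\mu$ are cartesian by a direct fibre computation (the fibres are products $\Act^{2}_{\mathcal{O}}(O)_{f} \times F(Y)$). Your route via the generalized Segal condition $\lim_{\mathcal{O}^{\el}_{O/}}F \simeq \lim_{\alpha \in \mathcal{O}^{\el}_{O'/}}\lim_{\mathcal{O}^{\el}_{\alpha_{!}O/}}F$ together with distributivity would reproduce exactly this double-colimit description, but at the cost of constructing the substitution map by hand; the left-Kan-extension packaging does that bookkeeping for you.
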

\begin{proof}
  Since $\Seg_{\mathcal{O}^{(\xint)}}(\mathcal{S})$ is an accessible
  localization of $\Fun(\mathcal{O}^{(\xint)}, \mathcal{S})$, the
  inclusions $\Seg_{\mathcal{O}^{(\xint)}}(\mathcal{S})
  \hookrightarrow \Fun(\mathcal{O}^{(\xint)}, \mathcal{S})$ are
  accessible and preserve limits. The endofunctor $T_{\mathcal{O}}$ of
  $\Seg_{\mathcal{O}^{\xint}}(\mathcal{S})$ factors as a composite
  \[ \Seg_{\mathcal{O}^{\xint}}(\mathcal{S}) \hookrightarrow
    \Fun(\mathcal{O}^{\xint}, \mathcal{S}) \xto{j_{\mathcal{O},!}}
    \Fun(\mathcal{O}, \mathcal{S}) \xto{j_{\mathcal{O}}^{*}}
    \Fun(\mathcal{O}^{\xint}, \mathcal{S}),\] where the composite
  lands in the subcategory
  $\Seg_{\mathcal{O}^{\xint}}(\mathcal{S})$. To see that
  $T_{\mathcal{O}}$ is a local right adjoint it suffices to show that
  the three functors in this composition are accessible and
  preserve weakly contractible limits. All three functors 
  are clearly accessible and except for $j_{\mathcal{O},!}$ they preserve
  limits. It therefore remains to show that $j_{\mathcal{O},!}$
  preserves weakly contractible limits. By
  Lemma~\ref{lem:uniqintcoinit} for $O \in \mathcal{O}$ and
  $F \in \Fun(\mathcal{O}^{\xint}, \mathcal{S})$, the value of
  $j_{\mathcal{O},!}F$ at $O$ is
  $\colim_{X \in \Act_{\mathcal{O}}(O)} F(X)$. Since
  $\Act_{\mathcal{O}}(O)=(\xxO^\xint)^{\xact}_{/O}$ is an
  $\infty$-groupoid, this factors through the forgetful functor
  $\mathcal{S}_{/\Act_{\mathcal{O}}(O)} \to \mathcal{S}$, which
  detects weakly contractible limits by \cite[Lemma
  2.2.7]{AnalMnd}. It therefore suffices to show that the functor
  $\Fun(\mathcal{O}^{\xint}, \mathcal{S}) \to
  \mathcal{S}_{/\Act_{\mathcal{O}}(O)}$ taking $F$ to
  $\colim_{X \in \Act_{\mathcal{O}}(O)} F(X) \to
  \Act_{\mathcal{O}}(O)$ preserves weakly contractible limits. But
  this factors as restriction along
  $\Act_{\mathcal{O}}(O) \to \mathcal{O}^{\xint}$, which certainly
  preserves limits, followed by the colimit functor
  $\Fun(\Act_{\mathcal{O}}(O), \mathcal{S}) \to
  \mathcal{S}_{/\Act_{\mathcal{O}}(O)}$, which is an equivalence.

  Next, we show that the multiplication transformation
  $T_{\mathcal{O}}^{2} \to T_{\mathcal{O}}$ is cartesian. For $O \in
  \mathcal{O}$, we have an equivalence
  \[ (T_{\mathcal{O}}^{2}F)(O) \simeq \colim_{X \in
      \Act_{\mathcal{O}}(O)} (T_{\mathcal{O}}F)(O) \simeq \colim_{X \in
      \Act_{\mathcal{O}}(O)} \colim_{Y \in \Act_{\mathcal{O}}(X)}
    F(Y) \simeq \colim_{(Y \actto X \actto O) \in \Act^{2}_{\mathcal{O}}(O)} F(Y),\]
  where $\Act^{2}_{\mathcal{O}}(O) \to \Act_{\mathcal{O}}(O)$ is the
  left fibration for the functor taking $X \actto O$ to
  $\Act_{\mathcal{O}}(X)$. We then have an identification
  \[ \Act^{2}_{\mathcal{O}}(O) \simeq \{Y \overset{g}{\actto} X \overset{f}{\actto} O :
    \text{$f,g$ active}\}\]
  under which the multiplication transformation 
  $T_{\mathcal{O}}^{2}F(X) \to T_{\mathcal{O}}F(X)$ is the morphism induced on
  colimits by the map $\Act^{2}_{\mathcal{O}}(O) \to
  \Act_{\mathcal{O}}(O)$ given by composition of active
  morphisms. Given $F \to G$, we want to show that the square
  \nolabelcsquare{\colim_{(Y \actto X \actto O) \in \Act^2_{\mathcal{O}}(O)}
    F(Y)}{\colim_{(Y \actto X \actto O) \in \Act^2_{\mathcal{O}}(O)}
    G(Y)}{\colim_{(Y \actto O) \in \Act_{\mathcal{O}}(O)}
    F(Y)}{\colim_{(Y \actto O) \in \Act_{\mathcal{O}}(O)} G(Y)}
  is cartesian. To see this it suffices to show that the square on
  fibres over $(Y \overset{f}{\actto} O) \in \Act_{\mathcal{O}}(O)$ is
  cartesian. The fibre $(T_{\mathcal{O}}^{2}F(X))_{f}$ we can identify
  with the colimit over the fibre
  \[ \Act_{\mathcal{O}}^{2}(O)_{f} \simeq \left\{
      \begin{tikzcd}
        {} & X \arrow{dr} \\
        Y \arrow{ur} \arrow{rr}{f} & & O
      \end{tikzcd}
    \right \}\]
  of the \emph{constant} functor with value $F(Y)$. The square of
  fibres is therefore
  \nolabelcsquare{\Act_{\mathcal{O}}^{2}(O)_{f} \times
    F(Y)}{\Act_{\mathcal{O}}^{2}(O)_{f} \times G(Y)}{F(Y)}{G(Y),}
  which is indeed cartesian.

  The value of the unit transformation $F(O) \to T_{\mathcal{O}}F(O)$
  is similarly induced by the map $\{\id_{O}\} \to
  \Act_{\mathcal{O}}(O)$. To see that the unit transformation is
  cartesian we must show that for $F \to G$ the square
  \nolabelcsquare{F(O)}{G(O)}{\colim_{(Y \actto O) \in \Act_{\mathcal{O}}(O)}
    F(Y)}{\colim_{(Y \actto O) \in \Act_{\mathcal{O}}(O)} G(Y)}
  is cartesian.
  It again suffices to consider the square of fibres over $(X \overset{f}{\actto} O) \in \Act_{\mathcal{O}}(O)$. The fibre of $\{\id_{O}\} \to
  \Act_{\mathcal{O}}(O)$ at $f$ is the space
 \[P_{f}:= \Map_{\Act_{\mathcal{O}}(O)}(\id_{O},f)\] of paths from
  $\id_{\mathcal{O}}$ to $f$  in $\Act_{\mathcal{O}}(O)$ (which is empty if $\id_{O}$
  and $f$ are not equivalent), and the square of fibres is
  \nolabelcsquare{P_f \times F(O)}{P_f \times G(O)}{F(X)}{G(X),}
  which is cartesian as required.
\end{proof}

\begin{remark}
  We can regard polynomial monads as being the monads in an
  $(\infty,2)$-category whose objects are presheaf \icats{}, whose
  morphisms are local right adjoints, and whose 2-morphisms are
  cartesian transformations. The natural morphisms between polynomial
  monads are then the lax morphisms of monads in this
  $(\infty,2)$-category. If $T$ is a polynomial monad on
  $\mathcal{S}^{\mathcal{I}}$ and $S$ is a polynomial monad on
  $\mathcal{S}^{\mathcal{J}}$, then by the results of \cite{adjmnd}
  these correspond to commutative squares
  \[
    \begin{tikzcd} \Alg_{S}(\mathcal{S}^{\mathcal{J}}) \arrow{r}{\Phi}
\arrow[d, swap, "U_{S}"] & \Alg_{T}(\mathcal{S}^{\mathcal{I}})
\arrow{d}{U_{T}} \\ \mathcal{S}^{\mathcal{J}} \arrow{r}{f^{*}}&
\mathcal{S}^{\mathcal{I}},
    \end{tikzcd}
  \] for some functor $f \colon \mathcal{I} \to \mathcal{J}$, such
  that the mate transformation
  \[ F_{T}f^{*} \to \Phi F_{S}\] is cartesian. Noting the
  contravariance here, this motivates the
  following definition of an \icat{} of polynomial monads:
\end{remark}

\begin{definition}
  Consider the pullback
  \[ \Fun(\Delta^{1}, \LCatI) \times_{\LCatI} \CatI\] along
  $\name{ev}_{1} \colon \Fun(\Delta^{1}, \LCatI) \to \LCatI$ and
  $\mathcal{S}^{(\blank)} \colon \CatI^{\op}\to \LCatI$. We write
  $\name{PolyMnd}^{\op}$ for the subcategory of this pullback
  whose objects
  are the monadic right adjoints of polynomial monads, and whose
  morphisms are commutative squares whose mate transformations are
  cartesian.
 \end{definition}

 \begin{remark}
   Note that since $U_{T}$ detects pullbacks, the mate transformation
   above is cartesian \IFF{} the transformation
  \[ Tf^{*} \to f^{*}S\] obtained by composing with $U_{T}$ is
cartesian.
\end{remark}

Next, we observe that any Segal morphism between extendable
patterns gives a morphism of polynomial monads:
\begin{propn}\label{propn:cartmate}
  Suppose $f \colon \mathcal{O} \to \mathcal{P}$ is a Segal morphism
  between extendable patterns. Then the mate transformation
  \[j_{\mathcal{O},!}f^{\xint,*} \to f^{*}j_{\mathcal{P},!}\] of
  functors
  $\Seg_{\mathcal{P}^{\xint}}(\mathcal{S}) \to
  \Seg_{\mathcal{O}}(\mathcal{S})$ is cartesian.
\end{propn}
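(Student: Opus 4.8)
The plan is to make the mate transformation completely explicit using the colimit formula for $j_{!}$ on an extendable pattern, and then to recognize each of its components as a base-change map for a colimit indexed by an $\infty$-groupoid. The mate is the Beck--Chevalley transformation associated to the commutative square of inclusions $j_{\mathcal{O}}\colon \mathcal{O}^{\xint}\to\mathcal{O}$, $j_{\mathcal{P}}\colon \mathcal{P}^{\xint}\to\mathcal{P}$ and $f, f^{\xint}$. Since $\mathcal{O}$ and $\mathcal{P}$ are extendable, the pointwise left Kan extension formula together with the cofinality of active morphisms (Lemma~\ref{lem:uniqintcoinit}, as in Corollary~\ref{cor:freealg}) gives $j_{\mathcal{O},!}\Psi(O)\simeq \colim_{X\in\Act_{\mathcal{O}}(O)}\Psi(X)$ and $j_{\mathcal{P},!}G(P)\simeq \colim_{Y\in\Act_{\mathcal{P}}(P)}G(Y)$ for arbitrary functors $\Psi, G$. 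Evaluating the two composites at $O\in\mathcal{O}$, and using that $f$ preserves active morphisms, I would identify
\[ (j_{\mathcal{O},!}f^{\xint,*}G)(O) \simeq \colim_{X\in\Act_{\mathcal{O}}(O)} G(f(X)), \qquad (f^{*}j_{\mathcal{P},!}G)(O) \simeq \colim_{Y\in\Act_{\mathcal{P}}(f(O))} G(Y). \]
Writing $H_{G}\colon \Act_{\mathcal{P}}(f(O))\to\mathcal{S}$ for the functor $(Y\actto f(O))\mapsto G(Y)$ and $\phi_{O}\colon \Act_{\mathcal{O}}(O)\to\Act_{\mathcal{P}}(f(O))$ for the map sending $X\actto O$ to $f(X)\actto f(O)$, I expect the component of the mate at $O$ to be exactly the base-change comparison $\colim_{\Act_{\mathcal{O}}(O)}\phi_{O}^{*}H_{G}\to\colim_{\Act_{\mathcal{P}}(f(O))}H_{G}$.

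Next I would reduce cartesianness of the whole transformation to cartesianness of its components. Because the Segal condition is a limit condition, $\Seg_{\mathcal{O}}(\mathcal{S})$ is closed under limits in $\Fun(\mathcal{O},\mathcal{S})$ (Lemma~\ref{lem:locality}), so a square in $\Seg_{\mathcal{O}}(\mathcal{S})$ is cartesian \IFF{} it is cartesian in $\Fun(\mathcal{O},\mathcal{S})$, \IFF{} it is cartesian after evaluation at every $O\in\mathcal{O}$. Thus it suffices to show that for every morphism $G\to G'$ in $\Seg_{\mathcal{P}^{\xint}}(\mathcal{S})$ and every $O$ the square
\[
\begin{tikzcd}
\colim_{\Act_{\mathcal{O}}(O)} \phi_{O}^{*}H_{G} \arrow{r} \arrow{d} & \colim_{\Act_{\mathcal{P}}(f(O))} H_{G} \arrow{d} \\
\colim_{\Act_{\mathcal{O}}(O)} \phi_{O}^{*}H_{G'} \arrow{r} & \colim_{\Act_{\mathcal{P}}(f(O))} H_{G'}
\end{tikzcd}
\]
is cartesian.

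The heart of the argument is a general fact that I would isolate as a lemma: for a map of spaces $g\colon A\to B$ and a natural transformation $\theta\colon H\to H'$ in $\Fun(B,\mathcal{S})$, the naturality square comparing $\colim_{A}g^{*}(\blank)$ with $\colim_{B}(\blank)$ is cartesian. Under straightening this is immediate: $\colim_{B}H$ and $\colim_{B}H'$ are the total spaces $E_{H}, E_{H'}$ of the associated left fibrations over $B$, the transformation $\theta$ straightens to a map $E_{H}\to E_{H'}$ over $B$, and $\colim_{A}g^{*}H\simeq A\times_{B}E_{H}$ (and likewise for $H'$). Since $E_{H}\to B$ factors through $E_{H'}$, the pasting lemma for pullbacks yields $A\times_{B}E_{H}\simeq (A\times_{B}E_{H'})\times_{E_{H'}}E_{H}$, which is precisely the assertion that the square is cartesian. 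Applying this with $A=\Act_{\mathcal{O}}(O)$, $B=\Act_{\mathcal{P}}(f(O))$, $g=\phi_{O}$ and $\theta=H_{G}\to H_{G'}$ settles the component at $O$, and hence the proposition.

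I expect the main obstacle to be the bookkeeping of the first step---verifying that the component of the Beck--Chevalley mate at $O$ is genuinely the base-change map $\colim_{\phi_{O}}$, rather than the routine straightening computation of the last paragraph. This step is also where the $\infty$-groupoid structure of the spaces $\Act_{\mathcal{O}}(O)$ and $\Act_{\mathcal{P}}(f(O))$ is indispensable: for colimits indexed by general $\infty$-categories the analogous base-change squares need not be cartesian, so the argument relies crucially on passing to the \emph{underlying $\infty$-groupoids} of active morphisms, exactly as in the proof of Proposition~\ref{prop:polymonad}.
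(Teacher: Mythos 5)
Your proposal is correct and follows essentially the same route as the paper: both evaluate the mate objectwise via the colimit formula $j_{!}(\blank)(O)\simeq\colim_{\Act(O)}(\blank)$ and then observe that the resulting base-change square for colimits indexed by the $\infty$-groupoids $\Act_{\mathcal{O}}(O)\to\Act_{\mathcal{P}}(f(O))$ is cartesian because it is an equivalence on fibres. The only organizational difference is that the paper first reduces to the case $\Psi\simeq *$ using the terminal object of $\Seg_{\mathcal{P}^{\xint}}(\mathcal{S})$, whereas you handle a general map $G\to G'$ directly via the straightening/pasting-of-pullbacks lemma; both are valid.
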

\begin{proof}
We have to show that for every morphism $\Phi\to \Psi$ the commutative square
\[
\begin{tikzcd}
j_{\mathcal{O},!}f^{\xint,*}\Phi \arrow{r} \arrow{d} &
f^{*}j_{\mathcal{P},!}\Phi \arrow{d} \\
j_{\mathcal{O},!}f^{\xint,*}\Psi \arrow{r} & f^{*}j_{\mathcal{P},!}\Psi
\end{tikzcd}
\]
is cartesian in $\xSeg_\xxO(\xS)$. Since
$\Seg_{\mathcal{P}^{\xint}}(\mathcal{S})$ has a terminal object it
suffices to consider $\Psi \simeq *$, in which case we obtain the commutative square
  \[
    \begin{tikzcd}
      \colim_{X \in \Act_{\mathcal{O}}(E)} \Phi(fX) \arrow{r}
      \arrow{d} & \colim_{Y \in \Act_{\mathcal{P}}(f(E))} \Phi(Y)
      \arrow{d} \\
      \Act_{\mathcal{O}}(E) \arrow{r} & \Act_{\mathcal{P}}(f(E))
    \end{tikzcd}
  \]
  after evaluating at an object $E \in \mathcal{O}^{\el}$. To show
  that this square is cartesian, it now suffices to observe that for
  every point $(X \to E) \in \Act_{\mathcal{O}}(E)$, the map on fibres
  is the identity $\Phi(fX) \to \Phi(fX)$.
\end{proof}

\begin{defn}
  We let $\name{AlgPatt}^{\Seg}_{\name{ext}}$ denote the subcategory of
  $\name{AlgPatt}$ whose objects are the extendable patterns and whose
  morphisms are the Segal morphisms.
\end{defn}

\begin{cor}\label{cor:patttopolymnd}
  The functor $\AlgPatt^{\Seg} \to \Fun(\Delta^{1}, \CatI)^{\op}$ taking a
  pattern $\mathcal{O}$ to the monadic right adjoint $U_{\mathcal{O}} \colon \Seg_{\mathcal{O}}(\mathcal{S}) \to
  \Fun(\mathcal{O}^{\el}, \mathcal{\mathcal{S}})$ restricts to a functor
  $ \mathfrak{M} \colon \AlgPatt_{\name{ext}}^{\Seg} \to \PolyMnd.$ \qed
\end{cor}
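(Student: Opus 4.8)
The plan is to assemble the two preceding propositions into the claimed functor, treating the corollary as a matter of checking that an already-constructed functor lands in the relevant subcategory. First I would enhance the assignment $\mathcal{O} \mapsto U_{\mathcal{O}}$ to a functor landing in the fibre product $\Fun(\Delta^{1}, \LCatI) \times_{\LCatI} \CatI$ appearing in the definition of $\PolyMnd$, by recording in the $\CatI$-coordinate the small \icat{} $\mathcal{O}^{\el}$ (which is functorial in Segal morphisms via $f \mapsto f^{\el}$) together with the equivalence $\Fun(\mathcal{O}^{\el}, \mathcal{S}) \simeq \mathcal{S}^{\mathcal{O}^{\el}}$ identifying the codomain of $U_{\mathcal{O}}$ with the value of $\mathcal{S}^{(\blank)}$ at $\mathcal{O}^{\el}$. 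Since $\PolyMnd$ is the opposite of a (non-full) subcategory of this fibre product, and subcategory inclusions are faithful, it then suffices to verify that objects and morphisms of $\AlgPatt_{\name{ext}}^{\Seg}$ are carried into the defining classes of $\PolyMnd^{\op}$; no further coherence data needs to be checked.

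On objects, for an extendable pattern $\mathcal{O}$ the free--forgetful adjunction $F_{\mathcal{O}} \dashv U_{\mathcal{O}}$ is monadic by Corollary~\ref{cor:freemonadic}, and its associated monad $T_{\mathcal{O}}$ is polynomial by Proposition~\ref{prop:polymonad}. Hence $U_{\mathcal{O}}$ is exactly the monadic right adjoint of a polynomial monad, i.e.\ an object of $\PolyMnd^{\op}$.

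On morphisms, a Segal morphism $f \colon \mathcal{O} \to \mathcal{P}$ of extendable patterns is sent to the commutative square relating $U_{\mathcal{P}}$ and $U_{\mathcal{O}}$ along $f^{*}$ and $f^{\el,*}$, with underlying functor $f^{\el} \colon \mathcal{O}^{\el} \to \mathcal{P}^{\el}$. By the remark following the definition of $\PolyMnd$ (namely that $U_{T}$ detects pullbacks), the mate of this square is cartesian \IFF{} the transformation $T_{\mathcal{O}} f^{\el,*} \to f^{\el,*} T_{\mathcal{P}}$ is cartesian. Under the equivalence $\Fun(\mathcal{O}^{\el}, \mathcal{S}) \simeq \Seg_{\mathcal{O}^{\xint}}(\mathcal{S})$ and the relation $f^{\xint,*} j_{\mathcal{P}}^{*} \simeq j_{\mathcal{O}}^{*} f^{*}$, this transformation is identified with $j_{\mathcal{O}}^{*}$ applied to the mate $j_{\mathcal{O},!} f^{\xint,*} \to f^{*} j_{\mathcal{P},!}$, which is cartesian by Proposition~\ref{propn:cartmate}; since $j_{\mathcal{O}}^{*}$ is a restriction functor and so preserves pullbacks, the result is again cartesian. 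Thus $f$ is carried to a morphism of $\PolyMnd^{\op}$, and $\mathfrak{M}$ is well-defined.

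The genuine work is entirely contained in Propositions~\ref{prop:polymonad} and~\ref{propn:cartmate}, so the only step I would take real care over is the first paragraph: exhibiting the enhancement to the fibre product coherently and justifying that membership in the subcategory $\PolyMnd^{\op}$ need only be tested on objects and $1$-morphisms. A secondary point to get right is the precise matching of mate transformations, i.e.\ identifying the mate square of the definition of $\PolyMnd$ with the transformation of Proposition~\ref{propn:cartmate} after restricting along $j_{\mathcal{O}}$; once these identifications are made the corollary is immediate.
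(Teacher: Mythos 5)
Your proposal is correct and follows essentially the same route as the paper, which leaves the proof implicit (the corollary is stated with \qed immediately after Proposition~\ref{propn:cartmate}): polynomiality of $T_{\mathcal{O}}$ on objects comes from Proposition~\ref{prop:polymonad} together with monadicity from Corollary~\ref{cor:freemonadic}, and cartesianness of the mate on morphisms reduces, via the remark that $U_T$ detects pullbacks and the identification $f^{\xint,*}j_{\mathcal{P}}^* \simeq j_{\mathcal{O}}^*f^*$, to Proposition~\ref{propn:cartmate}. Your extra care in the first paragraph about lifting to the fibre product and testing subcategory membership only on objects and morphisms is a reasonable way to make explicit what the paper takes for granted.
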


\section{Generic Morphisms and the Nerve Theorem}\label{sec:nerve}
In the previous section we saw that the free Segal space monad for any
extendable pattern was a polynomial monad. Our next goal is to extract
an extendable pattern from any polynomial monad. As a first step
towards this, in this section we prove an \icatl{} version of Weber's
nerve theorem \cite{WeberFamilial}; our proof was particularly
inspired by that of Berger, Melli\`es, and
Weber~\cite{BergerMelliesWeber}.

We begin by defining \emph{generic morphisms} with respect to a
local right adjoint functor, and extend some basic observations about
them from \cite{Weber} to the \icatl{} setting.
\begin{defn}
  Suppose $F \colon \mathcal{C} \to \mathcal{D}$ is a local right
  adjoint functor between presentable \icats{}. Let
  $L_{*}\colon \mathcal{D}_{/F(*)} \to \mathcal{C}$ be the left
  adjoint to $F_{/*} \colon \mathcal{C} \to \mathcal{D}_{/F(*)}$; we
  will abusively write $L_{*}D$ for the value of $L_{*}$ at an object
  $D \to F(*)$. For any morphism $D \xto{\phi} F(C)$ in $\mathcal{D}$,
  we can view $\phi$ as a morphism in $\mathcal{D}_{/F(*)}$ via the
  map $F(q) \colon F(C) \to F(*)$, where $q$ is the unique morphism
  $C \to *$. We say $\phi$ is $F$-\emph{generic} (or just
  \emph{generic} if $F$ is clear from context) if the adjoint morphism
  \[L_{*}D \simeq L_{*}(F(q)\circ \phi) \to C\]
  is an equivalence. (In other words, the generic morphisms are
  precisely the unit morphisms $D \to F_{/*}L_{*}D$.)
\end{defn}

\begin{remark}\label{rmk:genericfill}
  Using the universal property of the left adjoint, we can rephrase
  this definition purely in terms of $F$ as follows: $\phi \colon D \to F(B)$ is $F$-generic if for every
  commutative square
  \[
    \begin{tikzcd}
      D \arrow{r}{\psi} \arrow[d, swap, "\phi"] & F(A) \arrow{d} {F(\alpha)} \\
      F(B) \arrow{r}{F(\beta)} \arrow[dotted]{ur} & F(*)
    \end{tikzcd}
  \]
  there exists a unique morphism $\gamma \colon B \to A$ such
  that $F(\gamma) \circ \phi \simeq \psi$ and the equivalence in the
  square arises by combining this with the canonical equivalence $F(\alpha) \circ
  F(\gamma) \simeq F(\alpha\gamma) \simeq F(\beta)$ induced by $*$
  being terminal. This is the version of the
  definition considered in \cite{Weber}.
\end{remark}

\begin{lemma}\label{lem:genericfillX}
  Let
  $\phi \colon D \to F(B)$ be an $F$-generic morphism. Then given a
  commutative square
  \[
    \begin{tikzcd}
      D \arrow{r}{\psi} \arrow[d, swap, "\phi"] & F(A) \arrow{d} {F(\alpha)} \\
      F(B) \arrow{r}{F(\beta)} \arrow[dotted]{ur} & F(X),
    \end{tikzcd}
  \]
  there exists a unique commutative triangle
  \[
    \begin{tikzcd}
      A \arrow{rr}{\gamma} \arrow[dr, swap, "\alpha"] & & B \arrow{dl}{\beta}\\
       & X
    \end{tikzcd}
  \]
  such that $F(\gamma) \circ \phi \simeq \psi$ and the equivalence in the
  square arises by combining this with the equivalence $F(\alpha) \circ
  F(\gamma) \simeq F(\alpha\gamma) \simeq F(\beta)$ given by applying
  $F$ to the triangle.
\end{lemma}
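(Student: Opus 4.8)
The plan is to reduce the statement to the terminal case already recorded in Remark~\ref{rmk:genericfill}, exploiting that an $F$-generic morphism is by definition a unit for the adjunction $L_{*} \dashv F_{/*}$. Concretely, since $\phi \colon D \to F(B)$ is generic we have $L_{*}D \simeq B$ with $\phi$ the unit, so for every $C \in \mathcal{C}$ precomposition with $\phi$ gives a natural equivalence
\[ \phi^{*}_{C} \colon \Map_{\mathcal{C}}(B, C) \isoto \Map_{\mathcal{D}_{/F(*)}}(D, F_{/*}C), \qquad g \mapsto F(g) \circ \phi. \]
The whole argument then takes place at the level of these mapping spaces, where the asserted existence and uniqueness of the triangle become the single statement that a certain space of fillers is contractible.

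First I would use that $*$ is terminal to view all the data over $F(*)$: composing the given square with $F(X \to *)$ shows that $\psi$ and $\phi$ are morphisms into $F_{/*}A$ and $F_{/*}B$ sharing a common structure map $s \colon D \to F(*)$, the identification $F(A \to *) \circ \psi \simeq F(B \to *) \circ \phi \simeq s$ being induced by the chosen homotopy $H \colon F(\alpha) \circ \psi \simeq F(\beta) \circ \phi$ that witnesses commutativity of the square. Next I would write down the naturality square of $\phi^{*}$ along $\alpha \colon A \to X$, whose horizontal maps are $\phi^{*}_{A}$ and $\phi^{*}_{X}$ and whose vertical maps are postcomposition with $\alpha$ and with $F(\alpha)$. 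Since both horizontal maps are equivalences, this square of spaces is a homotopy pullback, so the comparison map from $\Map_{\mathcal{C}}(B,A)$ to the pullback $P$ of the remaining three corners is an equivalence. A point of $P$ is exactly a triple $(\psi, \beta, H)$ of the given shape, and I would check that the fibre of $\Map_{\mathcal{C}}(B,A) \to P$ over this point is precisely the space of data consisting of $\gamma \colon B \to A$ together with equivalences $F(\gamma) \circ \phi \simeq \psi$ and $\alpha \gamma \simeq \beta$ compatible with $H$ — that is, the space of triangles asserted in the statement. As this fibre is contractible, the desired triangle exists and is unique.

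The main obstacle is bookkeeping rather than conceptual: the content of genericity only directly provides fillers over the terminal object, so all the real work is in organizing the coherence data so that ``unique commutative triangle over $F(X)$'' is literally identified with a fibre of the comparison map $\Map_{\mathcal{C}}(B,A) \to P$. In particular I must make sure that the homotopy $H$ exhibiting the original square as commutative is exactly the datum recording the point of $P$, and that the naturality homotopy of $\phi^{*}$ matches the prescribed compatibility $F(\alpha) \circ F(\gamma) \simeq F(\alpha\gamma) \simeq F(\beta)$ from the statement; once this matching is in place, contractibility of the fibre is formal from the square of mapping spaces being a pullback, and no further use of the local right adjoint structure beyond genericity of $\phi$ is needed.
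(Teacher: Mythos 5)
Your argument is correct, but it takes a genuinely different route from the paper's. The paper's proof passes to the adjoint square over $X$ via the relative left adjoint $L_{X} \colon \mathcal{D}_{/F(X)} \to \mathcal{C}_{/X}$, uses the base-change identity $\xi_{!}L_{X} \simeq L_{*}F(\xi)_{!}$ — which depends on $F$ preserving pullbacks, via the commutation $F(\xi)^{*}F_{/*} \simeq F_{/X}\xi^{*}$ — and the fact that $\xi_{!}$ detects equivalences to conclude that $L_{X}D \to B$ is an equivalence; the unique filler is then read off from the universal property of $L_{X}D$. You instead stay entirely at the level of mapping spaces: genericity of $\phi$ says that $g \mapsto F(g)\circ\phi$ gives equivalences $\Map_{\mathcal{C}}(B,C) \isoto \Map_{\mathcal{D}_{/F(*)}}(D,F_{/*}C)$ natural in $C$, so the naturality square along $\alpha$ is cartesian and the space of fillers is the contractible fibre of the resulting comparison equivalence. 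Your version buys a proof that uses nothing beyond the unit property of $\phi$ — no relative adjoint $L_{X}$ and no pullback-preservation by $F$ — and it yields contractibility of the whole space of fillers directly; the paper's version buys the explicit identification of the filler as $B \simeq L_{X}D \to A$. The one step you should make explicit is the passage from the given square in $\mathcal{D}$ to a point of your pullback $P$ of sliced mapping spaces: since $\Map_{\mathcal{C}}(B,X) \to \Map_{\mathcal{D}}(D,F(X))$ canonically factors through the fibre over the structure map $s = F(q_{B})\circ\phi$, the pasting law for pullbacks identifies $P$ with the analogous pullback of unsliced mapping spaces, so the fibre you compute really is the space of fillers of the original square in $\mathcal{D}$. (You also read the triangle correctly: $\gamma$ must run $B \to A$ with $\alpha\gamma \simeq \beta$, as in Remark~\ref{rmk:genericfill}; the arrow in the displayed triangle of the lemma is drawn in the opposite direction.)
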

\begin{proof}
	The existence of a unique filler in the original square is equivalent to the existence of such a filler in the adjoint square
  \[
    \begin{tikzcd}
      L_{X}D \arrow{r} \arrow{d} & A \arrow{d}{\alpha} \\
      B \arrow{r}{\beta} & X.
    \end{tikzcd}
    \]
  Since $F$ preserves pullbacks, if $\xi$ denotes the unique morphism
  $X \to *$ we have a commutative square of right adjoints
  \[
    \begin{tikzcd}
      \mathcal{C} \arrow{r}{F_{/*}} \arrow[d, swap, "\xi^{*}"] &
      \mathcal{D}_{/F(*)} \arrow{d}{F(\xi)^{*}} \\
      \mathcal{C}_{/X} \arrow{r}{F_{/X}} & \mathcal{D}_{/F(X)}.
    \end{tikzcd}
  \]
  This induces a corresponding square of left adjoints
  \[
    \begin{tikzcd}
      \mathcal{D}_{/F(X)} \arrow{r}{L_{X}} \arrow[d, swap, "F(\xi)_{!}"] &
      \mathcal{C}_{/X} \arrow{d}{\xi_{!}} \\
      \mathcal{D}_{/F(*)} \arrow{r}{L_{*}} & \mathcal{C}.
    \end{tikzcd}
  \]
  Thus $\xi_{!}L_{X} \simeq L_{*}F(\xi)_{!}$; since $\xi_{!}$ detects
  equivalences, we see that for
  $D \xto{\phi} F(B) \xto{F(\beta)} F(X)$ the adjoint morphism
  $L_{X}D \to B$ over $X$ is equivalent to $L_{*}X \to B$ computed
  using the morphism $F(B) \to F(*)$ that is the image of $B \to *$,
  as this is the composite
  $F(B) \xto{F(\beta)} F(X) \xto{F(\xi)} F(*)$. Since $\phi$ is generic,
  it therefore follows that the map $L_{X}D \to B$ is also an
  equivalence, hence the unique filler arises from the composite $B
  \simeq L_{X}D \to A$.
\end{proof}

\begin{remark}\label{rmk:genericfact}
  For any morphism $\phi \colon D \to F(C)$, if $\psi \colon L_{*}D
  \to C$ is the adjoint morphism, 
  we can write $\phi$ as a
  composite
  \[ D \xto{\eta_{D}} F(L_{*}D) \xto{F(\psi)} F(C),\]
  where $\eta_{D}$ is the unit of the adjunction $L_{*}\dashv F_{/*}$.
  This is the \emph{unique} factorization of $\phi$ as a generic
  morphism followed by a morphism in the image of $F$; we will often
  refer to this as the \emph{generic--free factorization} of
  $\phi$.
\end{remark}

\begin{lemma}[Cf.~{\cite[Proposition 5.10]{Weber}}]\label{lem:cartnattr}
  Suppose $F,G \colon \mathcal{C} \to \mathcal{D}$ are local right
  adjoint functors between presentable \icats{} and $\phi \colon F \to
  G$ is a cartesian natural transformation. Then a morphism
  $f \colon D \to F(C)$ is
  $F$-generic \IFF{} the composite $D \to F(C) \to G(C)$ is $G$-generic.
\end{lemma}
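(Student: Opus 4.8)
The plan is to reformulate $F$-genericity as a corepresentability condition and then transport it across $\phi$, using that mapping-space functors preserve pullbacks. Recall that $f \colon D \to F(C)$ is $F$-generic precisely when the adjoint morphism $L_{*}D \to C$ is an equivalence, where $L_{*}$ is the left adjoint of $F_{/*}$ and we view $f$ as a morphism $D \to F_{/*}(C)$ in $\mathcal{D}_{/F(*)}$ via the structure map $d_{F} := F(!_{C}) \circ f$ with $!_{C} \colon C \to *$. By the Yoneda lemma together with the adjunction $L_{*} \dashv F_{/*}$, this is equivalent to asking that for every $A \in \mathcal{C}$ the map
\[ \Map_{\mathcal{C}}(C, A) \longrightarrow \Map_{\mathcal{D}_{/F(*)}}(D, F(A)), \qquad \gamma \longmapsto F(\gamma) \circ f, \]
be an equivalence, the target being the fibre of $\Map_{\mathcal{D}}(D, F(A)) \to \Map_{\mathcal{D}}(D, F(*))$ over $d_{F}$. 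The identical reformulation for $G$ shows that $\phi_{C} \circ f$ is $G$-generic exactly when $\gamma \mapsto G(\gamma) \circ \phi_{C} \circ f$ induces an equivalence $\Map_{\mathcal{C}}(C, A) \isoto \Map_{\mathcal{D}_{/G(*)}}(D, G(A))$ for all $A$, with target the fibre over $d_{G} := G(!_{C}) \circ \phi_{C} \circ f$.

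Next I would compare the two target spaces. Since $\phi$ is cartesian, its naturality square at $!_{A} \colon A \to *$ is a pullback, and applying the corepresentable functor $\Map_{\mathcal{D}}(D, \blank)$, which preserves pullbacks of spaces, yields a pullback square
\[
\begin{tikzcd}
\Map_{\mathcal{D}}(D, F(A)) \arrow{r}{\phi_{A}\circ\blank} \arrow{d} & \Map_{\mathcal{D}}(D, G(A)) \arrow{d} \\
\Map_{\mathcal{D}}(D, F(*)) \arrow{r}{\phi_{*}\circ\blank} & \Map_{\mathcal{D}}(D, G(*)).
\end{tikzcd}
\]
Naturality of $\phi$ at $!_{C}$ gives $\phi_{*} \circ d_{F} \simeq d_{G}$, so taking the fibre of the left-hand vertical map over $d_{F}$ and using the pullback identifies it with the fibre of the right-hand vertical map over $d_{G}$; concretely, post-composition with $\phi_{A}$ defines an equivalence $\Map_{\mathcal{D}_{/F(*)}}(D, F(A)) \isoto \Map_{\mathcal{D}_{/G(*)}}(D, G(A))$.

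Finally I would check that this equivalence intertwines the two source maps: for $\gamma \colon C \to A$ the naturality square of $\phi$ at $\gamma$ gives $\phi_{A} \circ F(\gamma) \circ f \simeq G(\gamma) \circ \phi_{C} \circ f$, so post-composition with $\phi_{A}$ carries the $F$-source map to the $G$-source map. Hence for each $A$ the $F$-corepresentability map is an equivalence if and only if the $G$-one is, and letting $A$ vary over $\mathcal{C}$ yields the claim. The step I expect to demand the most care is the bookkeeping in the fibre identification: one must verify that $d_{G} \simeq \phi_{*} \circ d_{F}$ so that the two fibres are taken over corresponding points, and that the slice mapping spaces $\Map_{\mathcal{D}_{/F(*)}}$ and $\Map_{\mathcal{D}_{/G(*)}}$ are exactly the fibres produced by $\Map_{\mathcal{D}}(D, \blank)$. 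Cartesianness of $\phi$ is used precisely once, in turning the naturality square at $!_{A}$ into the pullback of spaces above; the filling description of genericity in Remark~\ref{rmk:genericfill} could be used to rephrase the same argument if one prefers to avoid the adjoint $L_{*}$.
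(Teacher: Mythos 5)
Your proof is correct, but it takes a somewhat different route from the paper's. The paper uses the cartesian square
\[
\begin{tikzcd}
F(X) \arrow{r} \arrow{d} & G(X) \arrow{d}\\
F(*) \arrow{r}{\phi(*)} & G(*)
\end{tikzcd}
\]
to factor the functor $F_{/*}$ as $\phi(*)^{*}\circ G_{/*}$, passes to left adjoints to obtain $L_{*,F} \simeq L_{*,G}\circ \phi(*)_{!}$, and then simply observes that the adjoint morphism $L_{*,F}D \to C$ of $f$ coincides with the adjoint morphism $L_{*,G}D \to C$ of $\phi_{C}\circ f$. Your argument is the same idea unwound at the level of mapping spaces: your identification of $\Map_{\mathcal{D}_{/F(*)}}(D, F(A))$ with $\Map_{\mathcal{D}_{/G(*)}}(D, G(A))$ via the pullback of spaces is precisely the adjunction $\phi(*)_{!}\dashv \phi(*)^{*}$ applied to $F_{/*}(A) \simeq \phi(*)^{*}G_{/*}(A)$, and your Yoneda reformulation of genericity is the corepresentability version of ``the adjoint morphism is an equivalence'' (equivalently, the filler description of Remark~\ref{rmk:genericfill}). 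What the paper's formulation buys is brevity and the avoidance of coherence bookkeeping: by working with the functors and their adjoints directly, one never has to verify that the comparison of fibres intertwines the two source maps naturally in $\gamma$ and $A$ (the step you rightly flag as the delicate one; it does go through, since the required homotopies are supplied coherently by $\phi$ being a natural transformation rather than a pointwise collection of squares). What your version buys is explicitness --- it isolates exactly where cartesianness is used (turning one naturality square into a pullback of spaces) and gives a hands-on description of the bijection between factorizations, which is closer in spirit to Weber's original 1-categorical treatment.
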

\begin{proof}
  Since $\phi$ is a cartesian transformation, we have natural
  cartesian squares
  \[
    \begin{tikzcd}
      F(X) \arrow{r} \arrow{d} & G(X) \arrow{d}\\
      F(*) \arrow{r}{\phi(*)} & G(*).
    \end{tikzcd}
  \]
  This means we can write $F_{/*}$ as the composite
  \[ \mathcal{C} \xto{G_{/*}} \mathcal{D}_{/G(*)} \xto{\phi(*)^{*}}
    \mathcal{D}_{/F(*)}.\]
  But then the left adjoint $L_{*,F}$ of $F_{/*}$ is the composite
  \[ \mathcal{D}_{/F(*)} \xto{\phi(*)_!} \mathcal{D}_{/G(*)}
    \xto{L_{*,G}} \mathcal{C},\]
  where $L_{*,G}$ denotes the left adjoint to $G_{/*}$. Given $f \colon
  D \to F(C)$, this means the adjoint morphism $L_{*,F}D \to C$ is the
  same as the adjoint morphism $L_{*,G}D \to C$ for the composite $D
  \to F(C) \to G(C)$.
\end{proof}

\begin{lemma}[Cf.~{\cite[Lemma 5.14]{Weber}}]\label{lem:lracomp}
  Suppose $F \colon \mathcal{C} \to \mathcal{D}$ and $G \colon
  \mathcal{D} \to \mathcal{E}$ are local right adjoint functors
  between presentable \icats{}. If $f \colon D \to F(C)$ is
  $F$-generic and $g \colon E
  \to G(D)$ is $G$-generic, then the composite
  \[ E \xto{g} G(D) \xto{G(f)} GF(C) \]
  is $GF$-generic.
\end{lemma}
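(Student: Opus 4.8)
The plan is to verify the universal ``filler'' characterization of generic morphisms from Remark~\ref{rmk:genericfill} directly for the composite $h := G(f)\circ g \colon E \to GF(C)$, peeling off $g$ first and then $f$ by two applications of Lemma~\ref{lem:genericfillX}. First I would record the routine observation that $GF$ is again a local right adjoint, so that $GF$-genericity even makes sense: for each $c \in \mathcal{C}$ the slice functor factors as $\mathcal{C}_{/c} \xrightarrow{F_{/c}} \mathcal{D}_{/F(c)} \xrightarrow{G_{/F(c)}} \mathcal{E}_{/GF(c)}$, a composite of right adjoints, hence a right adjoint. By Remark~\ref{rmk:genericfill} it then suffices to show that for every $A \in \mathcal{C}$ and every commutative square with $h$ down the left, $GF(\alpha)$ down the right, $\psi \colon E \to GF(A)$ across the top, and $GF(\beta)$ across the bottom (where $\alpha,\beta$ are the maps to the terminal object $* \in \mathcal{C}$), there is a contractible space of fillers $\gamma \colon C \to A$ with $GF(\gamma)\circ h \simeq \psi$.

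The core is a two-step reduction. Rewriting the square using $h = G(f)\circ g$ and $G(F(\beta)\circ f) = GF(\beta)\circ G(f)$, I obtain a commutative square in $\mathcal{E}$
\[
\begin{tikzcd}
E \arrow{r}{\psi} \arrow[d, swap, "g"] & G(F(A)) \arrow{d}{G(F(\alpha))} \\
G(D) \arrow{r}{G(F(\beta)\circ f)} & G(F(*))
\end{tikzcd}
\]
to which I apply Lemma~\ref{lem:genericfillX} for the $G$-generic morphism $g$, taking the lower-right object to be $F(*)$. This produces a unique $\gamma_{1} \colon D \to F(A)$ with $G(\gamma_{1})\circ g \simeq \psi$ and $F(\alpha)\circ\gamma_{1} \simeq F(\beta)\circ f$. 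The latter identity says precisely that
\[
\begin{tikzcd}
D \arrow{r}{\gamma_{1}} \arrow[d, swap, "f"] & F(A) \arrow{d}{F(\alpha)} \\
F(C) \arrow{r}{F(\beta)} & F(*)
\end{tikzcd}
\]
commutes, so I can now invoke the $F$-genericity of $f$ (Remark~\ref{rmk:genericfill}) to extract a unique $\gamma \colon C \to A$ with $F(\gamma)\circ f \simeq \gamma_{1}$. Then $GF(\gamma)\circ h = G\bigl(F(\gamma)\circ f\bigr)\circ g \simeq G(\gamma_{1})\circ g \simeq \psi$, which yields the required filler; all the direction bookkeeping works out so that the two intermediate morphisms $\gamma_1$ and $\gamma$ point the correct way.

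The one point to handle with care is uniqueness, which in the \icatl{} setting means contractibility of the space of fillers rather than a bijection on components; this is the main (though mild) obstacle. I would argue it by reading each step as an equivalence of spaces: Lemma~\ref{lem:genericfillX} identifies the space of $\psi$'s completing the first square contractibly with the space of $\gamma_{1} \colon D \to F(A)$ satisfying $F(\alpha)\circ\gamma_{1}\simeq F(\beta)\circ f$, and the genericity of $f$ identifies the latter contractibly with the space of $\gamma \colon C \to A$ with $F(\gamma)\circ f \simeq \gamma_{1}$; composing these equivalences, naturally in $\psi$, exhibits the space of $GF$-fillers as contractible. Alternatively, one can bypass the filler bookkeeping entirely by computing adjoints: since $(GF)_{/*} = G_{/F(*)}\circ F_{/*}$, its left adjoint is $L^{F}_{*}\circ L^{G}_{F(*)}$, and the base-change compatibility of the $L$-functors established in the proof of Lemma~\ref{lem:genericfillX} lets one compute $L^{G}_{F(*)}$ applied to $g$ as $D$ (by $G$-genericity) and then $L^{F}_{*}$ applied to $f$ as $C$ (by $F$-genericity), so that the adjoint of $h$ is an equivalence and $h$ is $GF$-generic. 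Apart from this contractibility argument, the algebra of the two filling steps is completely straightforward.
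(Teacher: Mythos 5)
Your argument is correct, but your primary route differs from the paper's. The paper proves Lemma~\ref{lem:lracomp} by the adjoint computation you relegate to an aside at the end: since $(GF)_{/*}$ factors as $G_{/F(*)}\circ F_{/*}$, its left adjoint is computed in two stages, and the adjoint of $G(f)\circ g$ is the composite of equivalences $L_{*}^{F}L_{*}^{G}E \isoto L_{*}^{F}D \isoto C$ supplied by the genericity of $g$ and then of $f$. Your main argument instead verifies the lifting characterization of Remark~\ref{rmk:genericfill} directly, peeling off $g$ via Lemma~\ref{lem:genericfillX} (correctly applied over $F(*)$ rather than the terminal object, which is exactly what that lemma permits) and then $f$ via Remark~\ref{rmk:genericfill}; this works, and you rightly identify the one delicate point, namely upgrading ``unique filler'' to contractibility of the space of fillers, which you handle by fibering the space of $GF$-fillers over the contractible space of $G$-fillers with contractible fibres of $F$-fillers. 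The adjoint route buys precisely the avoidance of that bookkeeping, which is presumably why the paper adopts it; and in writing the left adjoint as $L^{F}_{*}\circ L^{G}_{F(*)}$ and invoking base change from the proof of Lemma~\ref{lem:genericfillX}, your closing remark is in fact slightly more careful than the paper, which writes $L_{*,G}E$ where the left adjoint of $G_{/F(*)}$ is what is literally being applied.
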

\begin{proof}
  The functor $(GF)_{/*}$ factors in two steps as
  \[ \mathcal{C} \xto{F_{/*}} \mathcal{D}_{/F(*)} \xto{G_{/F(*)}}
    \mathcal{E}_{/GF(*)}.\]
  The left adjoint is therefore also computed in two steps; to find
  the morphism adjoint to $G(f)g$ we first get the
  commutative diagram
  \[
    \begin{tikzcd}
      L_{*,G}E \arrow{r}{\sim} \arrow{dr} & D \arrow{d} \arrow{r}{f} &
      F(C) \arrow{dl} \\
      & F(*),
    \end{tikzcd}
  \]
  and then $L_{*,F}L_{*,G}E \isoto L_{*,F}D \isoto C$, which is an
  equivalence as required.
\end{proof}

\begin{defn}\label{def:Wint}
  Suppose $\mathcal{I}$ is a small \icat{} and $T$ is a polynomial
  monad on the functor \icat{} $\mathcal{S}^{\mathcal{I}}$.  We define
  $\mathcal{U}(T)^{\op}$ to be the full subcategory of
  $\mathcal{S}^\xI$ spanned by the objects $X$ that admit a generic
  morphism $I \to TX$
  with $I \in \mathcal{I}^{\op}$ (regarded as an
  object of $\mathcal{S}^{\mathcal{I}}$ through the Yoneda
  embedding). We write $\xW(T)^\op$ for the full subcategory of
  $\xAlg_T(\xS^\xI)$ spanned by the free $T$-algebras on the objects
  of $\mathcal{U}(T)$.
\end{defn}

\begin{remark}
  From the definition of generic morphisms it follows that we can
  equivalently describe the objects of $\mathcal{U}(T)^{\op}$
  as those of the form $L_*I$ for some
  $I\in \xI^\op$ and some morphism $I\to T*$ in
  $\mathcal{S}^{\mathcal{I}}$. 
\end{remark}

\begin{lemma}\label{lem:unitmultgen}
  Let $T$ be a polynomial monad on $\mathcal{S}^{\mathcal{I}}$.
  \begin{enumerate}[(i)]
  \item For any object $X \in \mathcal{S}^{\mathcal{I}}$, the unit map
    $X \to T(X)$ is generic.
  \item If $X \xto{\phi} T(Y)$ and $Y \xto{\psi} T(Z)$ are generic morphisms, then
    the composite
    \[ X \xto{\phi} TY \xto{T\psi} T^{2}Z \xto{\mu_{Z}} TZ \]
    is generic, where $\mu$ denotes the multiplication transformation
    of the monad.
  \end{enumerate}
\end{lemma}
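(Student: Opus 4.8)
The plan is to deduce both parts of Lemma~\ref{lem:unitmultgen} from the general facts about generic morphisms established earlier in this section, rather than computing anything directly from the polynomial structure of $T$. The two statements are exactly the unit and composition laws for generic morphisms relative to a \emph{monad} (as opposed to a bare local right adjoint), so the strategy is to reduce each to the corresponding lemma for local right adjoint functors together with the compatibility of generics with the cartesian transformations $\eta$ and $\mu$.

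For part (i), the natural approach is to apply Lemma~\ref{lem:cartnattr} to the unit transformation $\eta \colon \id \to T$, which is cartesian since $T$ is a polynomial (hence cartesian) monad. The identity functor $\id_{\mathcal{S}^{\mathcal{I}}}$ is trivially a local right adjoint, and a morphism $X \to \id(X) = X$ is $\id$-generic precisely when it is an equivalence (the left adjoint $L_{*}$ for $F = \id$ is the identity, so $\eta_{D}$ is an equivalence). Thus $\id_{X} \colon X \to X$ is $\id$-generic, and Lemma~\ref{lem:cartnattr} applied to $\eta$ says that $\id_{X}$ is $\id$-generic \IFF{} the composite $X \xto{\id} X \xto{\eta_{X}} TX$ is $T$-generic. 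Hence $\eta_{X} \colon X \to TX$ is generic, which is exactly (i).

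For part (ii), the idea is to combine Lemma~\ref{lem:lracomp} with Lemma~\ref{lem:cartnattr} applied to the multiplication $\mu$. Given generic morphisms $\phi \colon X \to TY$ and $\psi \colon Y \to TZ$, first note that $T\psi \colon TY \to T(TZ) = T^{2}Z$ is $T$-generic: this follows because $\psi$ being $T$-generic means the adjoint $L_{*}Y \to TZ$ is an equivalence, and applying $T$ preserves this (one can phrase this via the functoriality of the generic--free factorization of Remark~\ref{rmk:genericfact}, or directly note that $T\psi$ is the image under $T$ of a generic, whose adjoint is computed the same way). By Lemma~\ref{lem:lracomp} applied to $F = G = T$, the composite $X \xto{\phi} TY \xto{T\psi} T^{2}Z$ is then $T^{2} = T\circ T$-generic. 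It remains to postcompose with $\mu_{Z} \colon T^{2}Z \to TZ$ and show genericness is preserved. Here I would invoke Lemma~\ref{lem:cartnattr} with the cartesian natural transformation $\mu \colon T^{2} \to T$ (cartesian by the definition of a cartesian monad): a morphism into $T^{2}(Z)$ is $T^{2}$-generic \IFF{} its composite with $\mu_{Z}$ into $T(Z)$ is $T$-generic. Applying this to the $T^{2}$-generic morphism just constructed yields that $\mu_{Z}\circ T\psi \circ \phi$ is $T$-generic, which is precisely (ii).

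The main obstacle I anticipate is the middle step of part (ii): verifying carefully that $T\psi$ is $T$-generic and that Lemma~\ref{lem:lracomp} applies cleanly to the composite $F\circ G$ with both factors equal to $T$. The subtlety is bookkeeping around which ``slice over the terminal object'' structure is used, since Lemma~\ref{lem:lracomp} factors $(GF)_{/*}$ in two stages and one must match the genericity of $T\psi$ (an image of a generic under $T$) with the second stage of that factorization. This is conceptually routine given the lemmas in hand, but it is the point where one must be precise about the adjoints $L_{*,T}$ rather than wave hands; everything else is a direct citation of Lemma~\ref{lem:cartnattr} together with the fact that $\eta$ and $\mu$ are cartesian transformations of local right adjoints.
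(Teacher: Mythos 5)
Your argument is essentially the paper's own proof: part (i) is exactly the paper's application of Lemma~\ref{lem:cartnattr} to the cartesian unit transformation (using that $\id$-generic means equivalence), and part (ii) combines Lemma~\ref{lem:lracomp} with Lemma~\ref{lem:cartnattr} for the cartesian multiplication precisely as the paper does. The one thing to correct is your intermediate claim that $T\psi$ is $T$-generic. That claim is not a hypothesis of Lemma~\ref{lem:lracomp}, and your justification for it is off: genericity of $\psi \colon Y \to TZ$ says that the \emph{adjoint} map $L_{*}Y \to Z$ (not a map $L_{*}Y \to TZ$) is an equivalence, and applying $T$ to a generic morphism does not in general produce a generic morphism. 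Lemma~\ref{lem:lracomp} takes as input an $F$-generic $f \colon D \to F(C)$ and a $G$-generic $g \colon E \to G(D)$ and concludes that $G(f)\circ g$ is $GF$-generic; with $F = G = T$, $f = \psi$, $g = \phi$, its conclusion is exactly that $T\psi \circ \phi$ is $T^{2}$-generic, with no assertion needed about $T\psi$ on its own. Once that spurious step is deleted, the remainder of your part (ii) --- postcomposing with $\mu_{Z}$ and invoking Lemma~\ref{lem:cartnattr} for the cartesian transformation $\mu \colon T^{2} \to T$ --- is precisely the argument in the paper, so the proof is correct after this repair.
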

\begin{proof}
  Since $T$ is a polynomial monad, the unit transformation $\id \to T$
  is cartesian and so by Lemma~\ref{lem:cartnattr} the unit map $X \to
  TX$ is generic for all $X$ (since an $\id$-generic map is precisely
  an equivalence).

  The composite $X \xto{\phi} TY \xto{T\psi} T^{2}Z$ is $T^{2}$-generic by
  Lemma~\ref{lem:lracomp}, and as the multiplication $\mu$ is a
  cartesian transformation this implies the composite of this with
  $\mu_{Z} \colon T^{2}Z \to TZ$ is $T$-generic by 
  Lemma~\ref{lem:cartnattr}.
\end{proof}

\begin{propn}\label{propn:WTint}
  Let $T$ be a polynomial monad on $\mathcal{S}^{\mathcal{I}}$.
  \begin{enumerate}[(i)]
  \item The full subcategory $\mathcal{U}(T)^{\op}$ contains
    $\mathcal{I}^{\op}$.
  \item For any generic morphism $X \to TY$ with $X \in
    \mathcal{U}(T)^{\op}$, the object $Y$ also lies in $\mathcal{U}(T)^{\op}$.
 \end{enumerate}
\end{propn}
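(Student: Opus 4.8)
The plan is to deduce both parts as immediate formal consequences of Lemma~\ref{lem:unitmultgen}, which tells us that unit maps are generic and that generic morphisms compose (after pushing forward by the monad and applying the multiplication). No new input about the specific structure of $T$ should be needed beyond genericity of the unit and the composition rule.

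For part (i), I would fix $I \in \mathcal{I}^{\op}$, regarded as an object of $\mathcal{S}^{\mathcal{I}}$ via the Yoneda embedding, and observe that by Lemma~\ref{lem:unitmultgen}(i) the unit map $I \to TI$ is generic. This is exactly a generic morphism from an object of $\mathcal{I}^{\op}$ to $TI$, so unwinding Definition~\ref{def:Wint} shows $I$ lies in $\mathcal{U}(T)^{\op}$. Hence $\mathcal{I}^{\op} \subseteq \mathcal{U}(T)^{\op}$.

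For part (ii), suppose $X \in \mathcal{U}(T)^{\op}$ and let $\psi \colon X \to TY$ be a generic morphism. By definition of $\mathcal{U}(T)^{\op}$ there is a generic morphism $\phi \colon I \to TX$ with $I \in \mathcal{I}^{\op}$. Applying Lemma~\ref{lem:unitmultgen}(ii) to the pair of generic morphisms $\phi$ and $\psi$, the composite
\[ I \xto{\phi} TX \xto{T\psi} T^{2}Y \xto{\mu_{Y}} TY \]
is again generic, where $\mu$ is the multiplication of $T$. This exhibits a generic morphism out of $I \in \mathcal{I}^{\op}$ landing in $TY$, so $Y$ lies in $\mathcal{U}(T)^{\op}$, as required.

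The argument is pure bookkeeping of genericity, so I do not expect a genuine obstacle once Lemma~\ref{lem:unitmultgen} is available. The only points requiring care are unwinding the definition of $\mathcal{U}(T)^{\op}$ correctly—membership is witnessed by \emph{some} generic map out of a representable in $\mathcal{I}^{\op}$—and matching up the indices in the composition lemma so that the relabelled generic morphisms $\phi$ and $\psi$ compose to a generic morphism with target $TY$ rather than, say, $T^{2}Y$. Both are routine.
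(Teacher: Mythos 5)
Your proof is correct and follows essentially the same route as the paper: part (i) is the genericity of the unit map $I \to TI$ from Lemma~\ref{lem:unitmultgen}(i), and part (ii) is the composition rule of Lemma~\ref{lem:unitmultgen}(ii) applied to a witnessing generic morphism $I \to TX$. The paper merely phrases (i) slightly differently, noting that $I \to TI \to T*$ is the generic--free factorization, but this is the same observation.
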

\begin{proof}
  The unit map $I \to TI$ is generic by
  Lemma~\ref{lem:unitmultgen}(i). Hence $I \to TI \to T*$ is a
  generic--free factorization, where the second map is the image under
  $T$ of the unique map $I \to *$. This shows that $I$ is in
  $\mathcal{U}(T)^{\op}$, which proves (i).

  To prove (ii), observe that since $X$ is in
  $\mathcal{U}(T)^{\op}$, we have a generic morphism $I \to TX$
  with $I$ in $\mathcal{I}^{\op}$. Then by
  Lemma~\ref{lem:unitmultgen}(ii) the composite
  \[ I \to TX \to T^{2}Y \xto{\mu_{Y}} TY \]
  is also generic, which means that $Y$ is also
  in $\mathcal{U}(T)^{\op}$.
\end{proof}

\begin{remark}\label{rem:W}
  Note that the functor $\mathcal{U}(T) \to \mathcal{W}(T)$ need not
  exhibit $\mathcal{U}(T)$ as a subcategory of $\mathcal{W}(T)$.
\end{remark}

Our goal is now to show that the algebras for the polynomial monad $T$
can be described in terms of the \icats{} $\mathcal{U}(T)$ and
$\mathcal{W}(T)$ --- this is the content of the nerve theorem. The
next proposition gives the key input needed to prove this.

\begin{notation}\label{no nu}
  Given a functor $j \colon \mathcal{A}^{\op} \to
  \mathcal{S}^{\mathcal{I}}$, we let
  \[\nu_{\mathcal{A}}\colon \mathcal{S}^{\mathcal{I}} \to 
    \Fun((\mathcal{S}^{\mathcal{I}})^{\op}, \mathcal{S}) \xto{j^{*}}
    \mathcal{S}^{\mathcal{A}}\] denote the composition of the Yoneda
  embedding and $j^*$. Thus $\nu_\xA$ takes
  $\Phi \colon \mathcal{I} \to \mathcal{S}$ to
  $\Map_{\mathcal{S}^{\mathcal{I}}}(j(\blank), \Phi)$.
\end{notation}

\begin{propn}\label{propn:WTintnerve}
  Let $T$ be a polynomial monad on $\mathcal{S}^{\mathcal{I}}$.
  \begin{enumerate}[(i)]
  \item The functor $\nu_{\mathcal{U}(T)} \colon
    \mathcal{S}^{\mathcal{I}} \to
    \mathcal{S}^{\mathcal{U}(T)}$ is fully faithful, and
    given by right Kan extension along the inclusion $\mathcal{I}
    \hookrightarrow \mathcal{U}(T)$.
  \item For every $\Phi \in \mathcal{S}^{\mathcal{I}}$, the diagram
    \[ (\mathcal{U}(T)^{\op})_{/\Phi}^{\triangleright} \to
      \mathcal{S}^{\mathcal{I}}\]
    is a colimit diagram.
  \item For every $\Phi$ in $\mathcal{S}^{\mathcal{I}}$ the composite
    diagram
    \[ (\mathcal{U}(T)^{\op})_{/\Phi}^{\triangleright} \to
      \mathcal{S}^{\mathcal{I}} \xto{T} \mathcal{S}^{\mathcal{I}}
      \xto{\nu_{\mathcal{U}(T)}}
        \mathcal{S}^{\mathcal{U}(T)}\]
      is a colimit diagram. (In other words, the colimit diagram in
      (ii) is preserved by the functor $\nu_{\mathcal{U}(T)}T$.)
  \end{enumerate}
\end{propn}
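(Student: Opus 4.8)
The plan is to dispatch (i) and (ii) by general facts about restricted Yoneda functors and dense subcategories, concentrating the real work on (iii), where the generic--free factorization does the job. For (i), I would observe that by construction $\nu_{\mathcal{U}(T)}(\Phi)(X) \simeq \Map_{\mathcal{S}^{\mathcal{I}}}(X,\Phi)$ for $X \in \mathcal{U}(T)^{\op}$. Restricting along the inclusion $\mathcal{I}^{\op} \hookrightarrow \mathcal{U}(T)^{\op}$, which is available by Proposition~\ref{propn:WTint}(i), recovers $\Phi$ via the Yoneda lemma, so it remains to identify $\nu_{\mathcal{U}(T)}(\Phi)$ with the right Kan extension of $\Phi$ along $\mathcal{I} \hookrightarrow \mathcal{U}(T)$. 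This is immediate from writing each $X \in \mathcal{U}(T)^{\op} \subseteq \mathcal{S}^{\mathcal{I}}$ as the canonical colimit of the representables mapping to it, whence
\[
\nu_{\mathcal{U}(T)}(\Phi)(X) \simeq \Map_{\mathcal{S}^{\mathcal{I}}}(X,\Phi) \simeq \lim_{(I \to X)} \Phi(I),
\]
which is the pointwise formula for the Kan extension. Since $\mathcal{I} \hookrightarrow \mathcal{U}(T)$ is fully faithful, the right Kan extension along it is fully faithful \cite{ht}, giving (i).

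For (ii), I would note that full-faithfulness of the restricted Yoneda functor $\nu_{\mathcal{U}(T)}$ is equivalent to the density of $\mathcal{U}(T)^{\op}$ in $\mathcal{S}^{\mathcal{I}}$, i.e.\ to the assertion that for every $\Phi$ the canonical cocone $(\mathcal{U}(T)^{\op})_{/\Phi}^{\triangleright} \to \mathcal{S}^{\mathcal{I}}$ is a colimit diagram \cite{ht}; thus (ii) follows from (i). One could also deduce density directly from the fact that $\mathcal{U}(T)^{\op}$ contains the representables $\mathcal{I}^{\op}$, which are already dense.

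The heart of the matter is (iii). Since colimits in $\mathcal{S}^{\mathcal{U}(T)}$ are computed pointwise, I would fix $W \in \mathcal{U}(T)^{\op}$ and show that the canonical map
\[
\colim_{(X \to \Phi) \in (\mathcal{U}(T)^{\op})_{/\Phi}} \Map(W, TX) \longrightarrow \Map(W, T\Phi)
\]
is an equivalence. Realizing the left-hand side as the classifying space of the left fibration $\mathcal{E}_W \to (\mathcal{U}(T)^{\op})_{/\Phi}$ classified by $\Map(W, T(\blank))$ --- whose objects are triples $(X \in \mathcal{U}(T)^{\op}, h \colon X \to \Phi, \phi \colon W \to TX)$ and whose structure map sends such a triple to $T(h) \circ \phi$ --- I would apply the generic--free factorization of Remark~\ref{rmk:genericfact}. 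Factoring $\phi$ as $W \to TX_0 \to TX$ with $W \to TX_0$ generic, Proposition~\ref{propn:WTint}(ii) ensures $X_0 \in \mathcal{U}(T)^{\op}$ (this is exactly where $W \in \mathcal{U}(T)^{\op}$ is used), and Lemma~\ref{lem:genericfillX} makes the factorization functorial. This produces a retraction $r$ of $\mathcal{E}_W$ onto its full subcategory $\mathcal{E}_W^{\mathrm{gen}}$ of triples with $\phi$ generic: the maps $X_0 \to X$ assemble into a natural transformation $\iota r \to \mathrm{id}$ while $r\iota \simeq \mathrm{id}$, so $\iota$ induces an equivalence on classifying spaces. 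Finally, uniqueness of the generic--free factorization shows both that $\mathcal{E}_W^{\mathrm{gen}}$ is an $\infty$-groupoid and that the structure map identifies it with $\Map(W, T\Phi)$; combining these identifications yields the desired equivalence, and hence (iii).

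The main obstacle is the reflection step in (iii): verifying that $r$ is a well-defined functor and that the generic--free factorization behaves like an orthogonal factorization system in this setting, which is what makes $\iota r \to \mathrm{id}$ natural and forces $\mathcal{E}_W^{\mathrm{gen}}$ to be a groupoid. Both points rest on the uniqueness in Remark~\ref{rmk:genericfact}, the filling property of Lemma~\ref{lem:genericfillX}, and the stability statement of Proposition~\ref{propn:WTint}(ii); once these are secured, the remaining identifications are routine applications of the Yoneda lemma and the Grothendieck construction.
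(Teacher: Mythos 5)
Your treatment of (i) and (ii) is essentially the paper's, which packages both into Lemma~\ref{lem:nerveRKE}: identify $\nu_{\mathcal{U}(T)}$ with right Kan extension along the fully faithful inclusion $\mathcal{I}\hookrightarrow\mathcal{U}(T)$ and read off full faithfulness and density. (One caution: your parenthetical alternative for (ii) --- that a full subcategory containing a dense subcategory is automatically dense --- is false in general; it holds here only because of the Kan-extension identification you have already made, so it is not an independent argument.)

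For (iii) you take a genuinely different route. Both proofs reduce to showing that, for $W\in\mathcal{U}(T)^{\op}$, the induced map $\|\mathcal{E}_W\|\to\Map(W,T\Phi)$ is an equivalence, where $\mathcal{E}_W$ is the left fibration you describe. The paper notes that any functor to an $\infty$-groupoid is a cartesian fibration and applies \cite[Lemma 4.1.3.2]{ht}: it suffices that each fibre --- the $\infty$-category of factorizations $W\to TY\to T\Phi$ with $Y\in\mathcal{U}(T)^{\op}$ and second map free --- be weakly contractible, which holds because the generic--free factorization supplies an initial object (Proposition~\ref{propn:WTint}(ii) keeping $Y$ inside $\mathcal{U}(T)^{\op}$). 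You instead build a global coreflection of $\mathcal{E}_W$ onto the full subcategory $\mathcal{E}_W^{\mathrm{gen}}$ of generic triples and identify that subcategory with $\Map(W,T\Phi)$. This is correct, and you rightly flag the one real obstacle: in the $\infty$-categorical setting one cannot define $r$ on objects and ``check naturality''. The clean way to secure the coreflection is to verify that each comma category $\mathcal{E}_W^{\mathrm{gen}}\times_{\mathcal{E}_W}(\mathcal{E}_W)_{/e}$ has a terminal object --- which it does, being the contractible space of generic--free factorizations of the relevant morphism --- but at that point you are carrying out essentially the same fibrewise contractibility check that the paper's cofinality argument isolates directly. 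So your approach works but is strictly more labour: the cofinality criterion lets the paper bypass constructing $r$, proving $\mathcal{E}_W^{\mathrm{gen}}$ is an $\infty$-groupoid, and checking that the structure map restricts to an equivalence on it.
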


The proof uses the following technical observation:
\begin{lemma}\label{lem:nerveRKE}
  Suppose $j \colon \mathcal{A}^{\op} \hookrightarrow
  \mathcal{S}^{\mathcal{I}}$ is a full subcategory of a presheaf
  \icat{} $\mathcal{S}^{\mathcal{I}}$ such that $\mathcal{I}^{\op}$
  (viewed as a full subcategory of $\mathcal{S}^{\mathcal{I}}$ via the
  Yoneda embedding) is contained in $\mathcal{A}^{\op}$, so that we have
  a fully faithful functor $i \colon \mathcal{I} \to \mathcal{A}$. Then:
  \begin{enumerate}[(i)]
  \item $\nu_{\mathcal{A}}$ is equivalent to the functor $i_{*} \colon
    \mathcal{S}^{\mathcal{I}}\to \mathcal{S}^{\mathcal{A}}$ given by
    right Kan extension along $i$.
  \item $\nu_{\mathcal{A}}$ is fully faithful.
  \item For every $\Phi$ in $\mathcal{S}^{\mathcal{I}}$, the diagram
    \[ (\mathcal{A}^{\op})_{/\Phi}^{\triangleright} \to \mathcal{S}^{\mathcal{I}}\]
    is a colimit diagram, and this colimit is preserved by $\nu_{\mathcal{A}}$.
  \end{enumerate}
\end{lemma}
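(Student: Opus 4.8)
The plan is to prove part (i) first, since (ii) and (iii) then follow formally. For (i), I would compute $\nu_{\mathcal{A}}$ pointwise. Fix $A \in \mathcal{A}$; regarding $j(A)$ as an object of the presheaf \icat{} $\mathcal{S}^{\mathcal{I}}$, it is canonically the colimit of the representables over it,
\[ j(A) \simeq \colim_{(I, \alpha)\in (\mathcal{I}^{\op})_{/j(A)}} y(I), \]
so applying $\Map_{\mathcal{S}^{\mathcal{I}}}(\blank, \Phi)$ together with the Yoneda lemma yields $\nu_{\mathcal{A}}(\Phi)(A) \simeq \lim_{(I,\alpha)} \Phi(I)$. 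The key point is that $j$ is fully faithful and restricts along $i$ to the Yoneda embedding, so $y(I) \simeq j(i(I))$ and a map $\alpha \colon y(I) \to j(A)$ is the same datum as a map $A \to i(I)$ in $\mathcal{A}$; this identifies the indexing \icat{} $(\mathcal{I}^{\op})_{/j(A)}$ with the comma \icat{} $\mathcal{I}_{A/} := \mathcal{I}\times_{\mathcal{A}} \mathcal{A}_{A/}$ and the diagram with the one computing the pointwise right Kan extension. Hence $\nu_{\mathcal{A}}(\Phi)(A) \simeq \lim_{\mathcal{I}_{A/}} \Phi \simeq (i_{*}\Phi)(A)$, naturally in $A$ and $\Phi$, which gives (i).

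Part (ii) is then immediate: since $i$ is fully faithful, each comma \icat{} $\mathcal{I}_{i(I)/}$ has the initial object $(I, \id_{i(I)})$, so the counit $i^{*}i_{*} \to \id$ is an equivalence, and therefore $i_{*} \simeq \nu_{\mathcal{A}}$ is fully faithful.

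For part (iii) I would establish the preservation statement first and then reflect it back. Full faithfulness of $j$ gives $\nu_{\mathcal{A}}\circ j \simeq y$, the Yoneda embedding of $\mathcal{A}^{\op}$ into $\mathcal{S}^{\mathcal{A}}$, while full faithfulness of $\nu_{\mathcal{A}}$ from (ii) identifies $(\mathcal{A}^{\op})_{/\Phi}$ with the \icat{} of elements of the presheaf $\nu_{\mathcal{A}}(\Phi)$. Since every presheaf is the canonical colimit of the representables over it, we obtain $\colim_{(\mathcal{A}^{\op})_{/\Phi}} \nu_{\mathcal{A}}(j(\blank)) \simeq \nu_{\mathcal{A}}(\Phi)$; that is, $\nu_{\mathcal{A}}$ carries the canonical cocone on $\Phi$ to a colimit cocone, giving the ``preserved by $\nu_{\mathcal{A}}$'' half. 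It then remains to see that the cocone was already a colimit cocone in $\mathcal{S}^{\mathcal{I}}$, and here I would use that a fully faithful functor reflects such cocones: the colimit property of a cocone is tested by mapping its cone point and legs into an arbitrary object $c$, and under the fully faithful $\nu_{\mathcal{A}}$ this becomes the corresponding test of the image cocone against $\nu_{\mathcal{A}}(c)$, which is an equivalence because the image is a colimit cocone and hence satisfies the test against every object of $\mathcal{S}^{\mathcal{A}}$.

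The main obstacle is genuinely confined to part (i): the careful identification of $(\mathcal{I}^{\op})_{/j(A)}$ with $\mathcal{I}_{A/}$ via full faithfulness of $j$, and the verification that this comparison is natural in $A$ and $\Phi$ so as to upgrade the pointwise equivalence to an equivalence of functors $\nu_{\mathcal{A}} \simeq i_{*}$. Once (i) is in hand, (ii) and (iii) are the standard package expressing that $\mathcal{A}^{\op}$ is a dense subcategory of $\mathcal{S}^{\mathcal{I}}$ (indeed it contains the dense subcategory $\mathcal{I}^{\op}$), and require no further computation.
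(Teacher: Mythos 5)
Your proposal is correct and follows essentially the same route as the paper's proof: part (i) by writing $j(A)$ as the canonical colimit of representables, applying $\Map(\blank,\Phi)$, and identifying the indexing \icat{} with $\mathcal{I}_{A/}$; part (ii) from full faithfulness of $i$; and part (iii) by noting that the image cocone under $\nu_{\mathcal{A}}$ is the canonical (Yoneda) colimit cocone in $\mathcal{S}^{\mathcal{A}}$ and that the fully faithful $\nu_{\mathcal{A}}$ reflects colimit cocones. The extra detail you supply (the initial objects of the comma \icats{} for (ii), the explicit reflection argument for (iii)) is sound and only makes explicit what the paper leaves implicit.
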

\begin{proof}
  For any $\Phi \in \mathcal{S}^{\mathcal{I}}$, the diagram
  $(\mathcal{I}^{\op})_{/\Phi}^{\triangleright} \to
  \mathcal{S}^{\mathcal{I}}$ is a colimit, so we have a natural equivalence
  \[ \nu_{\mathcal{A}}\Phi(a) \simeq \Map(j(a), \Phi) \simeq
    \Map(\colim_{x \in (\mathcal{I}^{\op})_{/j(a)}} y(x), \Phi) \simeq
    \lim_{x \in \mathcal{I}_{a/}} \Phi(x) \simeq (i_{*}\Phi)(a).\]
  This proves (i).
  Since $i\colon \mathcal{I} \to \mathcal{A}$ is fully faithful, it
  follows that $i_{*}$ is also fully faithful, which proves (ii). To
  prove (iii), since $\nu_{\mathcal{A}}$ is fully faithful it suffices
  to show that the composite
  \[ (\mathcal{A}^{\op})_{/\Phi}^{\triangleright} \to
    \mathcal{S}^{\mathcal{I}} \xto{\nu_{\mathcal{A}}}
    \mathcal{S}^{\mathcal{A}}\] is a colimit diagram. But this is now
  a Yoneda cocone for $\mathcal{A}^{\op}$, which is always a colimit
  in $\mathcal{S}^{\mathcal{A}}$.
\end{proof}

\begin{proof}[Proof of Proposition~\ref{propn:WTintnerve}]
  (i) and (ii) follow from Proposition~\ref{propn:WTint}(i) and
  Lemma~\ref{lem:nerveRKE}. To prove (iii), since colimits in functor
  categories are computed objectwise, it suffices to show that for
  every $X \in \mathcal{U}(T)$ and $\Phi \in \mathcal{S}^{\mathcal{I}}$, the morphism
  \[ \colim_{Y \in (\mathcal{U}(T)^{\op})_{/\Phi}}
    \Map_{\mathcal{S}^{\mathcal{I}}}(X, TY) \to
    \Map_{\mathcal{S}^{\mathcal{I}}}(X, T\Phi)\] is an
  equivalence. Let
  $\mathcal{E} \to (\mathcal{U}(T)^{\op})_{/\Phi}$ be the left
  fibration for the functor
  $(\mathcal{U}(T)^{\op})_{/\Phi} \to \mathcal{S}$ taking $Y$
  to $\Map_{\mathcal{S}^{\mathcal{I}}}(X, TY)$; then we have a
  pullback square
  \[
    \begin{tikzcd}
      \mathcal{E} \arrow{rr} \arrow{d} & &
      \mathcal{S}^{\mathcal{I}}_{X/} \arrow{d} \\
      (\mathcal{U}(T)^{\op})_{/\Phi} \arrow{r}&
      \mathcal{S}^{\mathcal{I}} \arrow{r}{T} & \mathcal{S}^{\mathcal{I}},\\
    \end{tikzcd}
  \]
  so that an object of $\mathcal{E}$ is a pair
  $(Y \to \Phi, X \to TY)$. By \cite[Proposition 3.3.4.5]{ht}, the
  space
  $\colim_{Y \in (\mathcal{U}(T)^{\op})_{/\Phi}}
  \Map_{\mathcal{S}^{\mathcal{I}}}(X, TY)$ is equivalent to the space
  $\| \mathcal{E} \|$ obtained by inverting all morphisms in
  $\mathcal{E}$, and the morphism we are interested in is the map of
  spaces induced by the functor of \icats{}
  $\mathcal{E} \to \Map_{\mathcal{S}^{\mathcal{I}}}(X, T\Phi)$ taking
  $(Y \xto{\alpha} \Phi, X \to TY)$ to the composite
  $X \to TY \xto{T\alpha} T\Phi$. By \cite[Proposition 4.1.1.3]{ht} a
  morphism of spaces that arises from a cofinal functor of \icats{} is
  an equivalence, so it suffices to show that the functor
  $\mathcal{E} \to \Map_{\mathcal{S}^{\mathcal{I}}}(X, T\Phi)$ is
  cofinal.  Since every functor to an $\infty$-groupoid is a cartesian
  fibration, to prove this we may apply \cite[Lemma 4.1.3.2]{ht},
  which says that a cartesian fibration with weakly contractible
  fibres is cofinal. It thus suffices to check that the fibres
  $\mathcal{E}_{\phi}$ at $\phi \colon X \to T\Phi$ are weakly
  contractible. But the fibre $\mathcal{E}_{\phi}$ is the \icat{} of
  factorizations of $\phi$ of the form $X \to TY \xto{T\alpha}
  T\Phi$. Since $T$ is a local right adjoint, this \icat{} has an
  initial object, corresponding to the generic-free factorization
  $X \to TY \to T\Phi$, as $Y$ also lies in $\mathcal{U}(T)$
  by Proposition~\ref{propn:WTint}(ii); hence $\mathcal{E}_{\phi}$ is
  indeed weakly contractible, as required.
\end{proof}

\begin{thm}[Nerve Theorem]\label{thm:nervecartsq}
  Suppose $T$ is a polynomial monad on $\mathcal{S}^{\mathcal{I}}$,
  and let $j_{T}$ denote the restriction of $F_{T}^{\op}$ to a functor
  $\mathcal{U}(T) \to \mathcal{W}(T)$. Then the commutative
  square
  \[
    \begin{tikzcd}
      \Alg_{T}(\mathcal{S}^{\mathcal{I}})
      \arrow{r}{\nu_{\mathcal{W}(T)}} \arrow[d, swap, "U_{T}"] &
      \Fun(\mathcal{W}(T), \mathcal{S}) \arrow{d}{j_{T}^{*}} \\
      \mathcal{S}^{\mathcal{I}} \arrow{r}{\nu_{\mathcal{U}(T)}} &
      \Fun(\mathcal{U}(T), \mathcal{S})
    \end{tikzcd}
  \]
  is cartesian, and the mate transformation
  \[ j_{T,!}\nu_{\mathcal{U}(T)} \to
    \nu_{\mathcal{W}(T)}F_{T} \] is an equivalence. In particular,
  $\nu_{\xW(T)}\colon \Alg_{T}(\mathcal{S}^{\mathcal{I}}) \to
  \Fun(\mathcal{W}(T),\mathcal{S})$ is fully faithful, and the left
  adjoint $j_{T,!}$ restricts to $F_T$.
\end{thm}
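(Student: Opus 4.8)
The plan is to derive all four assertions from Proposition~\ref{propn:WTintnerve}, treating the mate equivalence as the main input and obtaining cartesianness by a monadicity argument. Write $\mathcal{A} := \mathcal{U}(T)$ and $\mathcal{B} := \mathcal{W}(T)$, let $j^{\mathcal{A}}, j^{\mathcal{B}}$ be the inclusions of $\mathcal{A}^{\op}, \mathcal{B}^{\op}$ into $\mathcal{S}^{\mathcal{I}}$ and $\Alg_T(\mathcal{S}^{\mathcal{I}})$ (so $j^{\mathcal{B}}j_T^{\op} = F_T j^{\mathcal{A}}$ by Definition~\ref{def:Wint}), and set $\mathcal{P} := \mathcal{S}^{\mathcal{I}} \times_{\Fun(\mathcal{A}, \mathcal{S})} \Fun(\mathcal{B}, \mathcal{S})$ with projections $\mathrm{pr}_1, \mathrm{pr}_2$; the commutative square determines $\Theta := (U_T, \nu_{\mathcal{B}}) \colon \Alg_T(\mathcal{S}^{\mathcal{I}}) \to \mathcal{P}$. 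First I would record the commutativity itself: for a $T$-algebra $A$ and $a \in \mathcal{A}$, the free--forgetful adjunction and the Yoneda lemma give $(\nu_{\mathcal{B}}A)(j_T a) = \Map_{\Alg_T}(F_T j^{\mathcal{A}}(a), A) \simeq \Map_{\mathcal{S}^{\mathcal{I}}}(j^{\mathcal{A}}(a), U_T A) = (\nu_{\mathcal{A}} U_T A)(a)$, whence $j_T^* \nu_{\mathcal{B}} \simeq \nu_{\mathcal{A}} U_T$.

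The heart of the argument is that the mate $\theta \colon j_{T,!}\nu_{\mathcal{A}} \to \nu_{\mathcal{B}}F_T$ is an equivalence. I would show both sides carry the canonical colimit $(\mathcal{A}^{\op})^{\triangleright}_{/\Phi} \to \mathcal{S}^{\mathcal{I}}$ of Proposition~\ref{propn:WTintnerve}(ii) to a colimit: the left-hand side because $\nu_{\mathcal{A}}$ preserves it (Lemma~\ref{lem:nerveRKE}(iii)) and $j_{T,!}$ is cocontinuous, and the right-hand side because, evaluated at any object of $\mathcal{B}^{\op}$ of the form $F_T Z$, it is computed by $\nu_{\mathcal{A}}T$, which preserves this colimit by Proposition~\ref{propn:WTintnerve}(iii). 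Since every $\Phi$ is the corresponding colimit of objects of $\mathcal{A}^{\op}$ and $\theta$ is natural, it then suffices to check $\theta_X$ for $X = j^{\mathcal{A}}(a)$. Here $\nu_{\mathcal{A}}(X)$ is the corepresentable $\Map_{\mathcal{A}}(a, -)$, whose left Kan extension along $j_T$ is $\Map_{\mathcal{B}}(j_T a, -)$; on the other hand $\nu_{\mathcal{B}}(F_T X) = \Map_{\mathcal{B}}(j_T a, -)$ as well, since $F_T X = j^{\mathcal{B}}(j_T a)$ and $j^{\mathcal{B}}$ is fully faithful, so the two sides agree.

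For cartesianness I would prove $\mathrm{pr}_1 \colon \mathcal{P} \to \mathcal{S}^{\mathcal{I}}$ is monadic with monad $T$. The functor $j_T$ is essentially surjective (every object of $\mathcal{B}^{\op}$ is a free algebra on $\mathcal{A}^{\op}$), so $j_T^*$, and hence $\mathrm{pr}_1$, is conservative; $\mathcal{P}$ is presentable and $\mathrm{pr}_1$ preserves limits, so it admits a left adjoint by \cite[Corollary 5.5.2.9]{ht}; and colimits of $\mathrm{pr}_1$-split simplicial objects are computed componentwise and preserved by $\mathrm{pr}_1$, using that $\nu_{\mathcal{A}}$ preserves split (absolute) colimits and $j_T^*$ is cocontinuous. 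Thus $\mathrm{pr}_1$ is monadic by \cite[Theorem 4.7.3.5]{ha}. Its left adjoint is identified by a direct computation: using that $\nu_{\mathcal{A}}$ is fully faithful (Proposition~\ref{propn:WTintnerve}(i)) together with the mate equivalence, one gets for $(\Psi, H) \in \mathcal{P}$ a chain $\Map_{\mathcal{P}}(\Theta F_T \Phi, (\Psi, H)) \simeq \Map_{\Fun(\mathcal{B},\mathcal{S})}(\nu_{\mathcal{B}}F_T\Phi, H) \simeq \Map_{\Fun(\mathcal{A},\mathcal{S})}(\nu_{\mathcal{A}}\Phi, j_T^* H) \simeq \Map_{\mathcal{S}^{\mathcal{I}}}(\Phi, \Psi)$, so $\Theta F_T$ is left adjoint to $\mathrm{pr}_1$ and the induced monad is $\mathrm{pr}_1 \Theta F_T = U_T F_T = T$. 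Since $\mathrm{pr}_1$ and $U_T$ are then monadic over $\mathcal{S}^{\mathcal{I}}$ with the same monad and $\Theta$ commutes with the forgetful functors while sending free $T$-algebras to the free objects of $\mathrm{pr}_1$, the functor $\Theta$ is the comparison equivalence and the square is cartesian.

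The two ``in particular'' statements then follow formally. Full faithfulness of $\nu_{\mathcal{B}}$ holds because, with $\nu_{\mathcal{A}}$ fully faithful, the space $\Map_{\mathcal{P}}(\Theta A, \Theta A')$ collapses to $\Map_{\Fun(\mathcal{B},\mathcal{S})}(\nu_{\mathcal{B}}A, \nu_{\mathcal{B}}A')$, which equals $\Map_{\Alg_T}(A, A')$ as $\Theta$ is an equivalence; and ``$j_{T,!}$ restricts to $F_T$'' is a restatement of the mate equivalence now that both $\nu_{\mathcal{A}}$ and $\nu_{\mathcal{B}}$ are known to be fully faithful embeddings. The main obstacle is the mate equivalence, where the colimit-preservation of $\nu_{\mathcal{B}}F_T$ is exactly the point at which Proposition~\ref{propn:WTintnerve}(iii) is used; a secondary point requiring care is verifying the split-colimit hypothesis of the monadicity theorem componentwise in the pullback $\mathcal{P}$.
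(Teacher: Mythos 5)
Your proposal is correct, and its core coincides with the paper's: the mate equivalence is established exactly as in the paper's proof, by first checking it on representables $X = j^{\mathcal{A}}(a)$ (where both sides become $\Map_{\mathcal{W}(T)}(j_{T}a,\blank)$) and then reducing the general case to representables using the canonical colimit of Proposition~\ref{propn:WTintnerve}(ii), preserved on one side because $j_{T,!}$ is cocontinuous and $\nu_{\mathcal{U}(T)}$ preserves it (Lemma~\ref{lem:nerveRKE}(iii)), and on the other because $j_{T}^{*}\nu_{\mathcal{W}(T)}F_{T} \simeq \nu_{\mathcal{U}(T)}T$ preserves it by Proposition~\ref{propn:WTintnerve}(iii) and $j_{T}^{*}$ detects colimits. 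Where you genuinely diverge is in deducing cartesianness: the paper outsources this entirely to \cite[Proposition 5.3.5]{AnalMnd}, treating the mate equivalence as the only hypothesis needing verification, whereas you inline a proof — showing $\mathrm{pr}_{1}$ on the pullback $\mathcal{P}$ is monadic (conservativity via essential surjectivity of $j_{T}$, presentability, split simplicial objects computed componentwise), identifying its left adjoint as $\Theta F_{T}$ via the collapse of the pullback mapping spaces along the fully faithful $\nu_{\mathcal{U}(T)}$, and concluding that $\Theta$ is the comparison equivalence between two monadic adjunctions with the same monad $T$ (which rests on \cite[Corollary 4.7.3.16]{ha} or its proof; you should cite that explicitly rather than assert it). This buys self-containedness at the cost of length, and it also yields the two ``in particular'' statements by direct inspection rather than as formal consequences. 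The only steps I would flag for more care are the naturality of your mapping-space chain (needed to promote it to an actual adjunction) and the verification that the monad \emph{structure}, not just the underlying endofunctor, of $\mathrm{pr}_{1}\Theta F_{T}$ agrees with that of $T$; both are routine but should be said.
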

\begin{proof}
  We want to apply \cite[Proposition 5.3.5]{AnalMnd} to conclude that
  the square is cartesian. All the requirements for this are clearly
  satisfied, with one exception: We must show that the mate
  transformation
  \[ j_{T,!}\nu_{\mathcal{U}(T)} \to
    \nu_{\mathcal{W}(T)}F_{T} \] is an equivalence, \ie{} is given by
  an equivalence when evaluated at every object $\Phi\in \xS^\xI$. We
  first consider the case of
  $X \in \mathcal{U}(T)^{\op} \subseteq
  \mathcal{S}^{\mathcal{I}}$. Then $\nu_{\mathcal{U}(T)}X$ is
  the presheaf on $\mathcal{U}(T)$ represented by $X$, hence
  $j_{T,!}\nu_{\mathcal{U}(T)}X$ is represented by
  $j_{T}X \simeq F_{T}X$, and so
  $j_{T,!}\nu_{\mathcal{U}(T)}X \isoto
  \nu_{\mathcal{W}(T)}F_{T}X$, as required.

  Now let $\Phi \in \mathcal{S}^{\mathcal{I}}$ be a general object. Since $j_{T}^{*}$ detects equivalences, it
  suffices to show that the evaluation of the transformation
  \[ j_{T}^{*}j_{T,!}\nu_{\mathcal{U}(T)} \to
  j_{T}^{*}\nu_{\mathcal{W}(T)}F_{T} \simeq
  \nu_{\mathcal{U}(T)}T\]
  at $\Phi$ is an equivalence. We know from
  Lemma~\ref{lem:nerveRKE}(iii) and Proposition~\ref{propn:WTintnerve}(iii) that
  $\Phi$ is the colimit of the diagram
  $(\mathcal{U}(T)^{\op})_{/\Phi} \to
  \mathcal{S}^{\mathcal{I}}$ taking $X \to \Phi$ to $X$, and this
  colimit is preserved by the functors $\nu_{\mathcal{U}(T)}$
  and $\nu_{\mathcal{U}(T)}T$. Since $j_{T}^{*}j_{T,!}$
  preserves colimits (being itself a left adjoint), we have a
  commutative square
  \[ 
    \begin{tikzcd}
      \colim_{X \in (\mathcal{U}(T)^{\op})_{/\Phi}}
      j_{T}^{*}j_{T,!}\nu_{\mathcal{U}(T)}X \arrow[d, swap, "\wr"]
      \arrow{r} &       \colim_{X \in
        (\mathcal{U}(T)^{\op})_{/\Phi}}
      \nu_{\mathcal{U}(T)}TX \arrow{d}{\wr} \\
      j_{T}^{*}j_{T,!}\nu_{\mathcal{U}(T)}\Phi \arrow{r} & \nu_{\mathcal{U}(T)}T\Phi,
    \end{tikzcd}
  \]
  where the vertical morphisms are equivalences. Moreover, the top
  horizontal morphism is an equivalence, since it is the colimit of
  equivalences $j_{T}^{*}j_{T,!}\nu_{\mathcal{U}(T)}X \isoto
  \nu_{\mathcal{U}(T)}TX$ for $X \in
  \mathcal{U}(T)^{\op}$. The bottom horizontal morphism is
  therefore also an equivalence, which completes the proof.
\end{proof}

\begin{cor}\label{cor:AlgTpres}
  $\Alg_{T}(\mathcal{S}^{\mathcal{I}})$ is equivalent to the full
  subcategory of $\Fun(\mathcal{W}(T), \mathcal{S})$ spanned by
  functors that are local with respect to the morphisms
  \[j_{T,!}(\colim_{I \in (\mathcal{I}_{X/})^{\op}} y(I)) \to j_{T,!}y(X)\]
  for $X \in \mathcal{U}(T)$. In particular
  $\Alg_{T}(\mathcal{S})^{\mathcal{I}}$ is an accessible localization
  of $Fun(\mathcal{W}(T), \mathcal{S})$ and so a presentable \icat{}. \qed
\end{cor}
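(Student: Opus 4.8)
The plan is to read off the description of $\Alg_{T}(\mathcal{S}^{\mathcal{I}})$ directly from the Nerve Theorem \ref{thm:nervecartsq}, which supplies exactly the two ingredients needed: that the square relating $\nu_{\mathcal{W}(T)}$ and $\nu_{\mathcal{U}(T)}$ along $U_{T}$ and $j_{T}^{*}$ is cartesian, and that $\nu_{\mathcal{U}(T)}$ is fully faithful (Lemma \ref{lem:nerveRKE}, Proposition \ref{propn:WTintnerve}(i)). First I would use that a pullback of a fully faithful functor is again fully faithful: since the top edge $\nu_{\mathcal{W}(T)}$ of the cartesian square is the pullback of $\nu_{\mathcal{U}(T)}$ along $j_{T}^{*}$, its essential image is the full subcategory of $\Fun(\mathcal{W}(T), \mathcal{S})$ on those $G$ for which $j_{T}^{*}G$ lies in the essential image of $\nu_{\mathcal{U}(T)}$. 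Thus $\Alg_{T}(\mathcal{S}^{\mathcal{I}})$ is identified with this full subcategory.

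Next I would identify that essential image. By Proposition \ref{propn:WTintnerve}(i), $\nu_{\mathcal{U}(T)}$ is the functor of right Kan extension along the inclusion $i \colon \mathcal{I} \hookrightarrow \mathcal{U}(T)$; as $i$ is fully faithful, its essential image consists precisely of those $H \colon \mathcal{U}(T) \to \mathcal{S}$ that are right Kan extended from $\mathcal{I}$, i.e., for which the canonical map $H(X) \to \lim_{(X \to I) \in \mathcal{I}_{X/}} H(I)$ is an equivalence for every $X \in \mathcal{U}(T)$.

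The remaining step is to rewrite this pointwise right Kan extension condition on $j_{T}^{*}G$ as the stated locality condition on $G$. Applying $\Map_{\Fun(\mathcal{W}(T), \mathcal{S})}(\blank, G)$ to the morphism $j_{T,!}(\colim_{I \in (\mathcal{I}_{X/})^{\op}} y(I)) \to j_{T,!} y(X)$ and using the adjunction $j_{T,!} \dashv j_{T}^{*}$ followed by the Yoneda lemma, one finds that $G$ is local with respect to this morphism exactly when $(j_{T}^{*}G)(X) \to \lim_{I \in \mathcal{I}_{X/}} (j_{T}^{*}G)(I)$ is an equivalence; letting $X$ range over $\mathcal{U}(T)$ recovers precisely the right Kan extension condition of the previous paragraph. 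This yields the claimed description of $\Alg_{T}(\mathcal{S}^{\mathcal{I}})$ as a subcategory of local objects.

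Finally, for the ``in particular'' statement I would observe that $\mathcal{U}(T)$, and hence $\mathcal{W}(T)$, is essentially small, so the displayed morphisms range over a set; being local with respect to a set of morphisms exhibits the subcategory as an accessible localization of $\Fun(\mathcal{W}(T), \mathcal{S})$ by \cite[Proposition 5.5.4.15]{ht}, and an accessible localization of a presentable \icat{} is again presentable. The only step demanding genuine care is the adjunction-and-Yoneda bookkeeping of the third paragraph: one must track the variance so that the colimit over $(\mathcal{I}_{X/})^{\op}$ becomes a limit over $\mathcal{I}_{X/}$ matching the right Kan extension formula, and confirm that the canonical comparison maps agree on both sides.
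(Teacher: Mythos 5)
Your proposal is correct and is precisely the argument the paper intends: the corollary is stated with no proof because it follows from the Nerve Theorem exactly as you describe, by pulling back the fully faithful $\nu_{\mathcal{U}(T)}$ (identified as $i_*$ in Proposition~\ref{propn:WTintnerve}(i)) along $j_T^*$ and translating the right Kan extension condition into locality via the adjunction $j_{T,!} \dashv j_T^*$ and the Yoneda lemma. Your variance bookkeeping in the third paragraph and the smallness observation for the ``in particular'' clause are both accurate.
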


We now want to show that the \icats{} $\mathcal{U}(T)$ and
$\mathcal{W}(T)$ are compatible with morphisms of polynomial monads.

\begin{propn}\label{propn:Wmor}
  Let $T$ be a polynomial monad on $\mathcal{S}^{\mathcal{I}}$ and $S$
  a polynomial monad on $\mathcal{S}^{\mathcal{J}}$, and suppose we
  have a commutative square
  \[
    \begin{tikzcd}
      \Alg_{S}(\mathcal{S}^{\mathcal{J}}) \arrow{r}{\Phi} \arrow[d, swap, "U_{S}"] &
      \Alg_{T}(\mathcal{S}^{\mathcal{I}}) \arrow{d}{U_{T}}
      \\
      \mathcal{S}^{\mathcal{J}} \arrow{r}{f^{*}} & \mathcal{S}^{\mathcal{I}},
    \end{tikzcd}
  \]
  such that the mate transformation $F_{T}f^{*} \to \Phi F_{S}$ is
  cartesian. 
  \begin{enumerate}[(i)]
  \item If $X \to TY$ is $T$-generic, then the composite
    $f_{!}X \to f_{!}TY \to Sf_{!}Y$ is $S$-generic, where $f_{!}
    \colon \mathcal{S}^{\mathcal{I}} \to \mathcal{S}^{\mathcal{J}}$ is
    the left adjoint to $f^{*}$, given by left Kan extension along
    $f$, and the natural transformation $f_{!}T \to Sf_{!}$ is
    obtained from the mate by applying $U_{T}$ and moving adjoints
    around.
  \item The functor $f_{!}$ restricts to a functor
    $\mathcal{U}(T)^{\op} \to \mathcal{U}(S)^{\op}$.
  \item The functor $\Phi \colon \Alg_{T}(\mathcal{S}^{\mathcal{I}})
    \to \Alg_{S}(\mathcal{S}^{\mathcal{J}})$ has a left adjoint $\Psi$.
  \item The functor $\Psi$ restricts to a functor
    $\mathcal{W}(S)^{\op} \to \mathcal{W}(T)^{\op}$, and we have a
    commutative square
    \[
      \begin{tikzcd}
        \mathcal{U}(T) \arrow{r}{f_{!}^{\op}} \arrow[d, swap, "F_{T}^{\op}"] &
        \mathcal{U}(S) \arrow{d}{F_{S}^{\op}} \\
        \mathcal{W}(T) \arrow{r}{\Psi^{\op}} & \mathcal{W}(S).
      \end{tikzcd}
      \]
  \end{enumerate}
\end{propn}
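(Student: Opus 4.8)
The plan is to prove the four parts in order, with part~(i) as the technical core and the others flowing from it. Throughout I use that, since $U_T$ detects pullbacks (as noted after the definition of $\PolyMnd$), the hypothesis that the mate $F_T f^* \to \Phi F_S$ is cartesian is equivalent, after applying $U_T$, to the transformation $\theta \colon Tf^* \to f^* S$ being cartesian. Note also that $T$, $S$ and $f^*$ are all local right adjoints — an accessible genuine right adjoint such as $f^*$ preserves \emph{all} limits and so is in particular a local right adjoint — so the slice functors $T_{/*}$, $S_{/*}$ and $f^*$ admit left adjoints $L^T_*$, $L^S_*$ and $f_!$, and the generic-morphism machinery of Lemmas~\ref{lem:cartnattr}--\ref{lem:lracomp} applies.

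For part~(i) the idea is to compare the two ``generic data'' adjunctions through $f$. Since $\theta$ is cartesian and $f^* * \simeq *$, for each $C \in \mathcal{S}^{\mathcal{J}}$ the naturality square of $\theta$ at $C \to *$ exhibits $Tf^*C$ as the pullback $T(*) \times_{f^* S(*)} f^* S C$ along $\theta_* \colon T(*) \to f^* S(*)$. This identifies $T_{/*} \circ f^*$ with the composite $\theta_*^{*} \circ (f^*)_{/S*} \circ S_{/*}$, where $(f^*)_{/S*}$ is the functor on slices induced by $f^*$ and $\theta_*^{*}$ is pullback along $\theta_*$. Passing to left adjoints, using that the left adjoint of $(f^*)_{/S*}$ is ``$f_!$ on slices'' and that of $\theta_*^{*}$ is postcomposition $\theta_{*,!}$, yields a natural equivalence $f_! \circ L^T_* \simeq L^S_* \circ g$, where $g$ sends $[A \to T*]$ to $[f_! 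A \to S*]$ via the $f_!\dashv f^*$-adjoint $\widetilde{\theta}_*$ of $\theta_*$. Now if $X \to TY$ is $T$-generic, then $L^T_*[X \to T*] \simeq Y$, so this equivalence gives $L^S_*[f_! X \to S*] \simeq f_! Y$; to conclude that $f_! X \to S f_! Y$ is $S$-generic it remains to identify the slice object $g[X\to T*] = [f_! X \to S*]$ with $[f_! X \to Sf_! Y \to S*]$, i.e.\ to check the two resulting maps $f_! X \to S(*)$ agree. This compatibility is the main obstacle, and it is a naturality computation: by naturality of the mate $m \colon f_!T \to Sf_!$ (the map $f_! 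TY \to S f_! Y$ of the statement) applied to $Y \to *$, combined with the identity $S(!)\circ m_* = \widetilde{\theta}_*$ — which follows from naturality of $\theta$ together with the fact that $*$ is terminal, forcing the relevant composite $* \to f^* f_! * \to *$ to be the identity — the two maps coincide. This proves~(i).

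Part~(ii) is then immediate: the left Kan extension $f_!$ sends the representable indexed by $I \in \mathcal{I}^{\op}$ to that indexed by $f(I) \in \mathcal{J}^{\op}$ (from $\Map(f_! y_I, \blank) \simeq \Map(y_I, f^*\blank)$). Hence a generic morphism $I \to TX$ witnessing $X \in \mathcal{U}(T)^{\op}$ is carried by~(i) to a generic morphism $f(I) \to S f_! X$ with source in $\mathcal{J}^{\op}$, so $f_! X \in \mathcal{U}(S)^{\op}$, as needed.

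For part~(iii), both algebra \icats{} are presentable by Corollary~\ref{cor:AlgTpres}, and $\Phi$ preserves limits and is accessible: $U_T\Phi \simeq f^* U_S$ preserves limits and $\kappa$-filtered colimits for suitable $\kappa$ (since $f^*$ preserves all limits and colimits and $U_S$ preserves them for the accessible monad $S$), while $U_T$ creates these and is conservative, so the properties transfer to $\Phi$; the adjoint functor theorem \cite[Corollary 5.5.2.9]{ht} supplies the left adjoint $\Psi$. Part~(iv) is then formal: taking left adjoints of the equivalence $U_T\Phi \simeq f^* U_S$ gives $\Psi F_T \simeq F_S f_!$. Thus $\Psi$ carries the object $F_T X$ of $\mathcal{W}(T)^{\op}$ (for $X \in \mathcal{U}(T)^{\op}$) to $F_S f_! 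X$, which lies in $\mathcal{W}(S)^{\op}$ by~(ii); this shows $\Psi$ restricts between the $\mathcal{W}$'s and makes the square of~(iv) commute, completing the proof.
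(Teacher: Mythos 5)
Your proposal is correct and follows essentially the same route as the paper's proof: part (i) via the identification of $(Tf^{*})_{/*}$ with $u^{*}\circ(f^{*})_{/S*}\circ S_{/*}$ (your $u=\theta_{*}$) and passage to left adjoints to get $f_{!}L^{T}_{*}\simeq L^{S}_{*}f_{!}u_{!}$, and parts (ii)--(iv) by the same representability, adjoint-functor-theorem, and $\Psi F_{T}\simeq F_{S}f_{!}$ arguments. The only difference is that you spell out the mate/naturality compatibility identifying $g[X\to T*]$ with $[f_{!}X\to Sf_{!}Y\to S*]$, which the paper asserts without detail.
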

\begin{proof}
  We first prove (i). Let $u$ denote the map
  $T*\simeq Tf^{*}*\to f^{*}S*$. Since the transformation
  $T_{/*}f^* \simeq Tf^{*}\to f^{*}S$ is cartesian, the functor
  $(Tf^{*})_{/*} \colon \mathcal{S}^{\mathcal{J}} \to
  \mathcal{S}^{\mathcal{I}}_{/T*}$ is equivalent to the composite
  \[ \mathcal{S}^{\mathcal{J}} \xto{S_{/*}}
\mathcal{S}^{\mathcal{J}}_{/S*} \xto{f^{*}}
\mathcal{S}^{\mathcal{J}}_{/f^{*}S*} \xto{u^{*}}
\mathcal{S}^{\mathcal{I}}_{/T*}.\] This means we have a corresponding
equivalence of left adjoints
  \[f_{!}L_{*}^T \simeq L_{*}^Sf_{!}u_{!},\] Since $X \to TY$ is
$T$-generic, the adjoint map $L_{*}^TX \to Y$ is an equivalence,
hence so is $f_{!}L^{T}_{*}X \to f_{!}Y$. But under the equivalence of
left adjoints this map $L^{S}_{*}f_{!}u_!X \isoto f_{!}Y$ is adjoint
to $f_{!}X \to f_!TY \to Sf_{!}Y$, as required.

To prove (ii), we must show that if $X$ is in
$\mathcal{U}(T)^{\op}$, so that there
is a generic morphism $I \to TX$ with $I \in \mathcal{I}^{\op}$, then
$f_{!}X$ is in $\mathcal{U}(S)^{\op}$. By (i), the composite $f(I) \simeq f_{!}I
\to f_{!}TX \to Sf_{!}X$ is $T$-generic. Since $f(I)$ is in
$\mathcal{\mathcal{J}}$, this implies that $f_{!}X$ is in
$\mathcal{U}(T)^{\op}$.

To show part (iii), note that by Corollary~\ref{cor:AlgTpres} the
\icats{} $\Alg_{T}(\mathcal{S}^{\mathcal{I}})$ and
$\Alg_{S}(\mathcal{S}^{\mathcal{J}})$ are presentable. Since $U_{S}$ detects equivalences, preserves
limits, and is accessible, and $f^{*}$ preserves both limits and
colimits, it follows that $\Phi$ is accessible and preserves
limits. By the adjoint functor theorem this implies that $\Phi$ has a
left adjoint $\Psi$, as required.

From our commutative square of right adjoints we now get an
equivalence $\Psi F_{T}\simeq F_{S}f_{!}$. By definition the \icats{}
$\mathcal{W}(T)^{\op}$ and $\mathcal{W}(S)^{\op}$ consist of free
algebras on objects of $\mathcal{U}(T)^{\op}$ and
$\mathcal{U}(S)^{\op}$, respectively, so it follows from (ii)
that $\Psi$ takes $\mathcal{W}(T)^{\op}$ to $\mathcal{W}(S)^{\op}$,
and gives the required commutative square.
\end{proof}

\begin{lemma}\label{lem WTWS}
  The functor $\Psi\colon \xW(T)^\op\to \xW(S)^\op$ of the previous proposition preserves free maps and takes morphisms which are adjoint to $T$-generic maps to morphisms which are adjoint to $S$-generic maps.
\end{lemma}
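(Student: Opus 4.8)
The plan is to describe $\Psi$ explicitly on morphisms between free algebras and then read off both assertions. Recall that $\xW(T)^{\op}$ is the full subcategory of $\Alg_{T}(\xS^{\xI})$ on the free algebras $F_{T}X$ with $X\in\xU(T)^{\op}$, and $\Psi$ is the restriction of the left adjoint $\Psi\colon\Alg_{T}(\xS^{\xI})\to\Alg_{S}(\xS^{\xJ})$. A morphism $\alpha\colon F_{T}X\to F_{T}Y$ in $\xW(T)^{\op}$ is the same datum as its adjunct $\tilde\alpha\colon X\to U_{T}F_{T}Y=TY$ under $F_{T}\dashv U_{T}$; the free maps are those with $\tilde\alpha\simeq\eta^{T}_{Y}\circ g$ for some $g\colon X\to Y$ in $\xU(T)^{\op}$, while the maps adjoint to $T$-generic maps are those for which $\tilde\alpha$ is $T$-generic (Remark~\ref{rmk:genericfact}).

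For the free maps the statement is immediate from Proposition~\ref{propn:Wmor}: part (iv) supplies the natural equivalence $\Psi F_{T}\simeq F_{S}f_{!}$, so $\Psi$ carries $F_{T}(g)$ to $F_{S}(f_{!}g)$, and by part (ii) the morphism $f_{!}g$ lies in $\xU(S)^{\op}$; hence $F_{S}(f_{!}g)$ is again free.

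The substance lies in the generic case. First I would establish the key identification: under the bijections of adjuncts above, $\Psi$ sends $\alpha$ to the morphism $F_{S}f_{!}X\to F_{S}f_{!}Y$ whose adjunct is the composite
\[ f_{!}X \xto{f_{!}\tilde\alpha} f_{!}TY \xto{\lambda_{Y}} Sf_{!}Y, \]
where $\lambda\colon f_{!}T\to Sf_{!}$ is the mate of Proposition~\ref{propn:Wmor}. This is a compatibility between two transformations built from the \emph{same} commutative square of right adjoints $U_{T}\Phi\simeq f^{*}U_{S}$, namely the equivalence $\theta\colon\Psi F_{T}\simeq F_{S}f_{!}$ coming from the induced square of left adjoints and the transformation $\lambda$, which is by construction obtained from this square by applying $U_{T}$ and then passing from $f^{*}$ to $f_{!}$. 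Concretely, I would unwind the adjunct $U_{S}(\Psi\alpha)\circ\eta^{S}_{f_{!}X}$ of $\Psi\alpha$, transport it through $\theta$ and the adjunction $\Psi\dashv\Phi$, and match it against the defining chain of $\lambda$. Granting this identification, part (b) follows at once: if $\tilde\alpha$ is $T$-generic, then Proposition~\ref{propn:Wmor}(i) states precisely that $\lambda_{Y}\circ f_{!}\tilde\alpha$ is $S$-generic, so $\Psi\alpha$ is adjoint to an $S$-generic map.

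The main obstacle is the key identification itself: it is a purely formal but somewhat intricate ``compatibility of mates'' computation, tracking a single $2$-cell through the four adjunctions $F_{T}\dashv U_{T}$, $F_{S}\dashv U_{S}$, $f_{!}\dashv f^{*}$, and $\Psi\dashv\Phi$. Once it is in place, both halves of the lemma reduce to citing the respective parts of Proposition~\ref{propn:Wmor}.
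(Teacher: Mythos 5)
Your proposal is correct and follows essentially the same route as the paper: both handle the free case via the equivalence $\Psi F_T\simeq F_S f_!$ from Proposition~\ref{propn:Wmor}(iv), and both reduce the generic case to identifying the adjunct of $\Psi\alpha$ with the composite $f_!X\xto{f_!\tilde\alpha}f_!TY\to Sf_!Y$ and then citing Proposition~\ref{propn:Wmor}(i). The ``compatibility of mates'' computation you defer is precisely what the paper's proof carries out explicitly, via a diagram whose square commutes by naturality and whose triangle is verified by passing to adjoints and using the adjunction identities.
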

\begin{proof}
  The commutativity of the square of Proposition~\ref{propn:Wmor}.(iv)
  shows that $\Psi$ preserves free maps. Suppose
  $\alpha\colon F_TX\to F_TY$ is a morphism in $\xW(T)^\op$ which is
  adjoint to a $T$-generic map $X\to TY$, we want to see that
  $\Psi\alpha$ is adjoint to an $S$-generic morphism. By the
  equivalence $\Psi F_T\simeq F_S f_!$ of Proposition~\ref{propn:Wmor}
  and the construction of the generic--free factorization the map
  $\Psi \alpha$ is adjoint to the composite
  \[ f_{!}X \xto{\eta^{S}f_{!}} S f_{!} X \to S f_{!} Y,\]
  where $\eta^{S}$ is the unit of the monad $S$. We claim that there
  is a commutative diagram
  \[
    \begin{tikzcd}
      & f_!TX \arrow[r] \arrow[d] & f_!TY \arrow[d]\\
      f_!X \arrow[ru, "f_!\eta^T"] \arrow[r, swap, "\eta^Sf_!"] & Sf_!X \arrow[r] & Sf_!Y
    \end{tikzcd}
  \]
  where $\eta^T$ is the unit of $T$, the right horizontal maps are
  induced by $\alpha$ and the vertical maps are induced by the
  equivalence $S f_!\simeq U_S\Psi F_T$ together with the natural
  transformation $\tau\colon f_!U_T\to U_S\Psi$ adjoint to the unit
  $\eta^{S}_{f_!}\colon f_!\to Sf_!$. The square in the diagram
  commutes by naturality. To see that the triangle commutes
  we first observe that $\tau$ is also adjoint to the counit map
  $\epsilon_\Psi\colon F_S U_S\Psi\to \Psi$. Using this it is easy to
  see that left triangle is adjoint to a triangle
  \[
  \begin{tikzcd}
  F_Sf_! X \arrow[r] \arrow[rd, swap, "\id"] & F_Sf_!TX\arrow[d]\\
  & F_Sf_!X
  \end{tikzcd}
  \] 
  which is equivalent to the commutative triangle
  \[
  \begin{tikzcd}
  \Psi F_T X \arrow[r] \arrow[rd, swap, "\id"] & \Psi F_T U_T F_T X\arrow[d, "\Psi\epsilon_{F_T X}"]\\
  & \Psi F_TX
  \end{tikzcd}
  \] 
  obtained from the adjunction identities. This shows that the diagram above commutes, and hence $\Psi \alpha$
  is adjoint to the composite $f_!X\to f_{!}TX \to f_! TY \to S f_!Y$,
  which is $S$-generic by Proposition~\ref{propn:Wmor}(i).
\end{proof}

Combining the preceding results, we get the following:
\begin{cor}\label{cor morTS}
  In the situation of Proposition~\ref{propn:Wmor}, we have a
  commutative cube
  \[
    \begin{tikzcd}[row sep=small, column sep=small]
      \Alg_{S}(\mathcal{S}^{\mathcal{J}})
\arrow[hookrightarrow]{rr} \arrow{dd} \arrow{dr} & &
\Fun(\mathcal{W}(S), \mathcal{S}) \arrow{dr} \arrow{dd} \\ &
\Alg_{T}(\mathcal{S}^{\mathcal{I}}) \arrow[crossing
over,hookrightarrow]{rr} & & \Fun(\mathcal{W}(T), \mathcal{S})
\arrow{dd} \\ \mathcal{S}^{\mathcal{J}} \arrow[hookrightarrow]{rr}
\arrow{dr} & & \Fun(\mathcal{U}(S), \mathcal{S}) \arrow{dr}
\\ & \mathcal{S}^{\mathcal{I}} \arrow[hookrightarrow]{rr}
\arrow[leftarrow,crossing over]{uu} & & \Fun(\mathcal{U}(T),
\mathcal{S}),
    \end{tikzcd}
  \]
  which exhibits the morphism of polynomial monads $T \to S$ as arising from
the commutative square in Proposition~\ref{propn:Wmor}(iv). \qed
\end{cor}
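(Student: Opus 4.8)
The plan is to realize the whole cube as the inclusion of one coherent commutative square of $\infty$-categories into another, so that its coherence comes for free from functoriality rather than from separately checking six faces. Concretely, I would take the two $\infty$-categories at the ends of the nerve direction (the two faces of the cube perpendicular to the fully faithful horizontal maps): the \emph{inner} square with corners $\Alg_{S}(\mathcal{S}^{\mathcal{J}}), \Alg_{T}(\mathcal{S}^{\mathcal{I}}), \mathcal{S}^{\mathcal{J}}, \mathcal{S}^{\mathcal{I}}$ and structure functors $\Phi, f^{*}, U_{S}, U_{T}$, and the \emph{outer} square of presheaf categories. The cube is then exactly the map from the inner to the outer square given by the four nerve embeddings.

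First I would construct the outer square by applying the contravariant functor $\Fun(\blank, \mathcal{S}) \colon \LCatI^{\op} \to \LCatI$ to the commutative square of Proposition~\ref{propn:Wmor}(iv) (which is well defined as a functor by Lemma~\ref{lem WTWS}). Since $j_{T}$ and $j_{S}$ are the restrictions of $F_{T}^{\op}$ and $F_{S}^{\op}$ appearing in the Nerve Theorem, this yields a coherent commutative square
\[
\begin{tikzcd}
\Fun(\mathcal{W}(S), \mathcal{S}) \arrow{r}{(\Psi^{\op})^{*}} \arrow[d, swap, "j_{S}^{*}"] & \Fun(\mathcal{W}(T), \mathcal{S}) \arrow{d}{j_{T}^{*}} \\
\Fun(\mathcal{U}(S), \mathcal{S}) \arrow{r}{(f_{!}^{\op})^{*}} & \Fun(\mathcal{U}(T), \mathcal{S}).
\end{tikzcd}
\]
Next I would exhibit the four corners of the inner square as full subcategories of the four corners above, via the fully faithful nerve embeddings $\nu_{\mathcal{W}(S)}, \nu_{\mathcal{W}(T)}$ (Theorem~\ref{thm:nervecartsq}) and $\nu_{\mathcal{U}(S)}, \nu_{\mathcal{U}(T)}$ (Proposition~\ref{propn:WTintnerve}(i)), and check that each of the four functors in the displayed square carries the relevant full subcategory into the relevant one, inducing the claimed map. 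For the two vertical functors this is precisely the commutativity of the Nerve Theorem squares, which gives $j_{S}^{*}\nu_{\mathcal{W}(S)} \simeq \nu_{\mathcal{U}(S)}U_{S}$ and $j_{T}^{*}\nu_{\mathcal{W}(T)} \simeq \nu_{\mathcal{U}(T)}U_{T}$. For the two horizontal functors I would use the naturality of the restricted Yoneda embedding under an adjunction: since $\Psi$ is left adjoint to $\Phi$ and restricts to $\mathcal{W}(T)^{\op} \to \mathcal{W}(S)^{\op}$ compatibly with the full subcategory inclusions (Proposition~\ref{propn:Wmor}(iv)), for $A \in \Alg_{S}(\mathcal{S}^{\mathcal{J}})$ and $W \in \mathcal{W}(T)^{\op}$ there are natural equivalences
\[ (\Psi^{\op})^{*}\nu_{\mathcal{W}(S)}(A)(W) \simeq \Map_{\Alg_{S}}(\Psi W, A) \simeq \Map_{\Alg_{T}}(W, \Phi A) \simeq \nu_{\mathcal{W}(T)}(\Phi A)(W), \]
so that $(\Psi^{\op})^{*}\nu_{\mathcal{W}(S)} \simeq \nu_{\mathcal{W}(T)}\Phi$; the identical argument with the adjunction $f_{!} \dashv f^{*}$ and Proposition~\ref{propn:Wmor}(ii) gives $(f_{!}^{\op})^{*}\nu_{\mathcal{U}(S)} \simeq \nu_{\mathcal{U}(T)}f^{*}$. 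These are exactly the two ``new'' faces of the cube.

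Finally I would invoke the standard fact that a pointwise choice of full subcategories of a diagram $\Delta^{1}\times\Delta^{1}\to\LCatI$ preserved by all of its structure functors assembles into a subfunctor equipped with a canonical objectwise fully faithful inclusion into the original diagram (full subcategory inclusions being detected pointwise, as in Lemma~\ref{lem:subcatlim}). Applied to the displayed square and the above full subcategories, this produces a coherent commutative square with corners $\Alg_{S}(\mathcal{S}^{\mathcal{J}}), \Alg_{T}(\mathcal{S}^{\mathcal{I}}), \mathcal{S}^{\mathcal{J}}, \mathcal{S}^{\mathcal{I}}$ and structure functors $\Phi, f^{*}, U_{S}, U_{T}$ --- this inner square is the commutative square defining the morphism of polynomial monads $T \to S$ --- together with a natural inclusion into the outer square, which \emph{is} a functor $\Delta^{1}\times\Delta^{1}\times\Delta^{1}\to\LCatI$, i.e.\ the desired cube. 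The main obstacle is exactly this coherence: one must produce a genuine $\Delta^{1}\times\Delta^{1}\times\Delta^{1}$-diagram rather than six faces that merely commute individually, and the full-subcategory-inclusion device accomplishes this, since the cube is then manufactured by applying the functorial constructions $\Fun(\blank, \mathcal{S})$ and restricted Yoneda to the single coherent square of Proposition~\ref{propn:Wmor}(iv), thereby exhibiting the monad morphism as arising from it.
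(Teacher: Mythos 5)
Your proof is correct, but it assembles the cube by a genuinely different route from the paper. You stay entirely on the right-adjoint side: you apply $\Fun(\blank,\mathcal{S})$ to the square of Proposition~\ref{propn:Wmor}(iv) to get the outer face, verify by the adjunction identity $\Map(\Psi W, A)\simeq \Map(W,\Phi A)$ (and its analogue for $f_{!}\dashv f^{*}$, together with the Nerve Theorem squares) that each restriction functor carries the essential image of one nerve embedding into the next, and then invoke the subfunctor device to promote these pointwise-preserved full subcategories to a coherent map of squares. The paper instead passes to \emph{left} adjoints of the monad morphism, restricts to the full subcategories $\mathcal{U}(\blank)^{\op}\subseteq\mathcal{S}^{(\blank)}$ and $\mathcal{W}(\blank)^{\op}\subseteq\Alg_{(\blank)}$ to get a cube of small-into-cocomplete categories, extends the small square to presheaves by the universal property of free cocompletion (so the coherence is inherited from functoriality of $\mathcal{P}(\blank)$ rather than from a subfunctor argument), and finally passes back to right adjoints. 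Your version buys an explicit identification of the two ``new'' faces via mapping spaces, at the cost of needing the (standard, and correctly stated) assembly lemma for pointwise-preserved full subcategories; the paper's version is purely formal but leaves those faces implicit. One point worth making explicit in your write-up: the subfunctor construction identifies the inner face with the square $(\Phi, f^{*}, U_{S}, U_{T})$ only up to the canonical equivalences supplied by the Nerve Theorem's cartesian square, so the final claim that the cube ``exhibits the monad morphism as arising from Proposition~\ref{propn:Wmor}(iv)'' rests on those identifications being the ones from Theorem~\ref{thm:nervecartsq}; this is the same level of care the paper takes, so it is not a gap, but it deserves a sentence. Your citation of Lemma~\ref{lem:subcatlim} is only an analogy (that lemma concerns closure under limits, not subfunctor assembly), so it would be cleaner to state the assembly fact directly, e.g.\ via cocartesian fibrations.
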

\begin{proof}
  Taking left adjoints, the morphism of polynomial monads $T \to S$
  gives a commutative square
  \[
    \begin{tikzcd}
      \mathcal{S}^{\mathcal{I}} \arrow{r}{f_{!}} \arrow[d, swap, "F_{T}"] &
      \mathcal{S}^{\mathcal{J}} \arrow{d}{F_{S}} \\
      \Alg_{T}(\mathcal{S}^{\mathcal{I}}) \arrow{r}{\Psi} &
      \Alg_{S}(\mathcal{S}^{\mathcal{J}}),
    \end{tikzcd}
    \]
    and we have shown that this restricts to a commutative square
    \[
      \begin{tikzcd}
      \mathcal{U}(T)^{\op} \arrow{d} \arrow{r} & \mathcal{U}(S)^{\op} \arrow{d} \\
      \mathcal{W}(T)^{\op} \arrow{r} & \mathcal{W}(S)^{\op}
    \end{tikzcd}
  \]
   relating these full subcategories. Thus we have a commutative cube
   \[
     \begin{tikzcd}[row sep=small, column sep=small]
       \mathcal{U}(T)^{\op}
\arrow[hookrightarrow]{rr} \arrow{dd} \arrow{dr} & &
\mathcal{S}^{\mathcal{I}} \arrow{dr} \arrow{dd} \\ &
\mathcal{U}(S)^{\op} \arrow[crossing
over,hookrightarrow]{rr} & & \mathcal{S}^{\mathcal{J}}
\arrow{dd} \\ \mathcal{W}(T)^{\op} \arrow[hookrightarrow]{rr}
\arrow{dr} & & \Alg_{T}(\mathcal{S}^{\mathcal{I}}) \arrow{dr}
\\ & \mathcal{W}(S)^{\op} \arrow[hookrightarrow]{rr}
\arrow[leftarrow,crossing over]{uu} & & \Alg_{S}(\mathcal{S}^{\mathcal{J}}).
    \end{tikzcd}
  \]
  The right-hand square consists of cocomplete \icats{} and
  colimit-preserving functors, so this canonically extends to presheaves
  on the left-hand square, giving a commutative cube
  \[
    \begin{tikzcd}[row sep=small, column sep=small]
      \Fun(\mathcal{U}(T), \mathcal{S})
\arrow{rr} \arrow{dd} \arrow{dr} & &
\mathcal{S}^{\mathcal{I}} \arrow{dr} \arrow{dd} \\ &
\Fun(\mathcal{U}(S), \mathcal{S}) \arrow[crossing
over]{rr} & & \mathcal{S}^{\mathcal{J}}
\arrow{dd} \\ \Fun(\mathcal{W}(T), \mathcal{S}) \arrow{rr}
\arrow{dr} & & \Alg_{T}(\mathcal{S}^{\mathcal{I}}) \arrow{dr}
\\ & \Fun(\mathcal{W}(S), \mathcal{S}) \arrow{rr}
\arrow[leftarrow,crossing over]{uu} & & \Alg_{S}(\mathcal{S}^{\mathcal{J}})
    \end{tikzcd}
  \]
  This consists entirely of left adjoints, and passing to right
  adjoints we get the cube we want.
\end{proof}

\section{Factorization Systems from Polynomial Monads}\label{sec:Kleislifact}
Suppose $T$ is a polynomial monad on $\mathcal{S}^{\mathcal{I}}$. Then
a morphism $F_{T}X \to F_{T}Y$ in the Kleisli \icat{} $\mathcal{K}(T)$
has a canonical factorization of the form
\[ F_{T}X \to F_{T}L_{*}X \to F_{T}Y \]
adjoint to the generic-free factorization of $X \to TY$ as $X \to
TL_{*}X \to TY$ through the unit of the local left adjoint
$L_{*}$. Our first goal in this section is to show that this canonical
factorization is well-defined, in the sense that if we have
equivalences $F_{T}X \simeq F_{T}X'$, $F_{T}Y \simeq F_{T}Y'$ in
$\mathcal{K}(T)$ (which need not come from morphisms in
$\mathcal{S}^{\mathcal{I}}$), then there is a commutative diagram
\[
  \begin{tikzcd}
    F_{T}X \arrow{d}{\wr} \arrow{r} & F_{T}L_{*}X
    \arrow{r}\arrow[d, "\wr"] & F_{T}Y \arrow{d}{\wr} \\
    F_{T}X' \arrow{r} & F_{T}L_{*}X' \arrow{r} & F_{T}Y'
  \end{tikzcd}
\]
where the middle vertical map is again an equivalence. We can then say
that a morphism $\phi \colon F_{T}X \to F_{T}Y$ is
\begin{itemize}
\item \emph{inert} if the map $F_{T}X \to F_{T}L_{*}X$ in the
  canonical factorization is an equivalence,
\item \emph{active} if the map $F_{T}L_{*}X \to F_{T}Y$ in the
  canonical factorization is an equivalence,
\end{itemize}
as this does not depend on the choice of the objects $X$ and $Y$. We
will see that the inert morphisms are obtained by closing the free
morphisms under equivalences (which need not all be free), while the
active morphisms are precisely those that are adjoint to generic
morphisms. Our main goal in this section is to prove that these
classes give a factorization system:
\begin{thm}\label{Klfact}
  Let $T$ be a polynomial monad on $\mathcal{S}^{\mathcal{I}}$. Then
  the active and inert morphisms give a factorization system on
  $\mathcal{K}(T)$, whereby every morphism factors as an active
  morphism followed by an inert morphism; this factorization is
  precisely the canonical factorization, up to equivalence.
\end{thm}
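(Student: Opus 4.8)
The plan is to read the factorization system off the generic--free factorization of \ref{rmk:genericfact}, transported across the Kleisli adjunction $F_{T} \dashv U_{T}$. Using the identification $\Map_{\mathcal{K}(T)}(F_{T}X, F_{T}Y) \simeq \Map_{\mathcal{S}^{\mathcal{I}}}(X, TY)$, a morphism $F_{T}X \to F_{T}Y$ is the same datum as a map $\phi\colon X \to TY$, and \ref{rmk:genericfact} factors $\phi$ as $X \xrightarrow{\eta} T L_{*}X \xrightarrow{T\psi} TY$ with $\eta$ generic and $T\psi$ in the image of $T$; this is the canonical factorization. First I would pin down the two classes intrinsically: I take the \emph{active} morphisms to be those adjoint to a generic morphism $X \to TY$, and the \emph{inert} morphisms to be the free morphisms $F_{T}(g)$ closed under equivalence in $\Fun(\Delta^{1}, \mathcal{K}(T))$. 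To match these with the descriptions preceding the theorem, I would record that by \ref{lem:unitmultgen}(i) and naturality of the unit the generic part of a free morphism $F_{T}(g)$ is an equivalence, so every free morphism is inert; conversely a morphism whose generic part is an equivalence is, up to equivalence, free (the passage to equivalences being genuinely necessary, since not every equivalence of $\mathcal{K}(T)$ is free, cf.\ \ref{rem:W}). Dually a morphism is active \IFF{} its free part is an equivalence.

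The heart of the argument is the orthogonality of active morphisms against inert ones. Since orthogonality is invariant under equivalences of the right-hand map, it suffices to treat an active $a\colon F_{T}X \to F_{T}Z$, adjoint to a generic $\alpha\colon X \to TZ$, against a free $i = F_{T}(g)\colon F_{T}R \to F_{T}S$. I would show directly that the square of mapping spaces
\[
\begin{tikzcd}
\Map_{\mathcal{S}^{\mathcal{I}}}(Z, TR) \arrow{r}{a^{*}} \arrow[d, swap, "i_{*}"] & \Map_{\mathcal{S}^{\mathcal{I}}}(X, TR) \arrow{d}{i_{*}} \\
\Map_{\mathcal{S}^{\mathcal{I}}}(Z, TS) \arrow{r}{a^{*}} & \Map_{\mathcal{S}^{\mathcal{I}}}(X, TS)
\end{tikzcd}
\]
is cartesian, where $a^{*}$ is Kleisli precomposition with $\alpha$ and $i_{*}$ is postcomposition with $Tg$; this cartesianness is exactly the statement that the space of diagonal lifts in any commutative square with left edge $a$ and right edge $i$ is contractible, i.e.\ $a \perp i$. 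The essential input is the universal property of the generic morphism $\alpha$ (\ref{rmk:genericfill}, \ref{lem:genericfillX}): $\alpha$ is initial among maps out of $X$ into $T(\blank)$ over $T(*)$, so that maps $X \to TC$ over $T(*)$ are identified with maps $Z \simeq L_{*}X \to C$. The one subtlety, and the main obstacle, is that the bottom edge of a general lifting square is an arbitrary map rather than a free one; I would handle this by passing to adjoints and using the cartesianness of the unit and multiplication of the polynomial monad $T$ to recognize the relevant squares of spaces as pullbacks. This is morally the same contractible-fibre phenomenon used in the proof of \ref{propn:WTintnerve}(iii), where generic--free factorization exhibits an initial object in a space of factorizations.

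With orthogonality in hand the remaining steps are formal. Existence of an active-then-inert factorization of every morphism is the image under $F_{T}$ of \ref{rmk:genericfact}. For the subcategory conditions: inert morphisms are closed under composition by functoriality of $F_{T}$ and stable under equivalence by construction, while active morphisms are closed under composition by \ref{lem:unitmultgen}(ii) (the Kleisli composite of two maps adjoint to generics is again adjoint to a generic) and are stable under equivalence. The only nonformal point, that both classes contain all equivalences, follows from orthogonality by the standard cancellation argument: writing an equivalence $e$ as $i \circ a$ with $a$ active and $i$ inert, two applications of orthogonality produce a two-sided inverse of $a$, whence $a$ is an equivalence and so is $i = e \circ a^{-1}$, so $e$ lies in both classes. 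Finally, orthogonality together with existence shows that the space of active-then-inert factorizations of any morphism is contractible --- comparing any two factorizations through the unique diagonal lift --- which is precisely the definition of a factorization system in our sense (\cite[\S 5.2.8]{ht}). Contractibility simultaneously identifies this factorization with the canonical one coming from \ref{rmk:genericfact}, establishing in particular that the canonical factorization is independent of the chosen representatives $X$ and $Y$, as asserted before the theorem.
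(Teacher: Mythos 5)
Your proposal is correct in substance and arrives at the same technical core as the paper's proof, but it packages the argument differently, and the comparison is instructive. The paper first proves that the canonical (generic--free) factorization is functorial and equivalence-invariant (Proposition~\ref{propn:canfactfunct}, resting on Proposition~\ref{propn:Lcommsq} and Corollary~\ref{cor LL}), uses this to define the two classes and to verify retract-closure, and then checks orthogonality by a two-step reduction: Lemma~\ref{lem:Flift} handles lifting squares whose right \emph{and bottom} edges are free, and the general case is reduced to that one by inserting the canonical factorizations of the right and bottom edges. You instead define the classes in a manifestly equivalence-invariant way (adjoint-to-generic, resp.\ closure of free maps under equivalence in the arrow category), formulate orthogonality as cartesianness of a square of Kleisli mapping spaces, and verify the paper's own definition of a factorization system (contractibility of the space of factorizations) rather than the retract-closure formulation of \cite[Definition 5.2.8.8]{ht} --- which lets you recover the well-definedness of the canonical factorization \emph{a posteriori} instead of proving it up front. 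The step you rightly single out as the main obstacle --- an arbitrary, non-free bottom edge --- is exactly where the two proofs converge: passing to fibres over a point $h \in \Map(Z,TS)$, the map on fibres is induced by $L_S(h\bullet a) \to L_S(h)$, and the statement that this is an equivalence when $a$ is adjoint to a generic is precisely Corollary~\ref{cor LL} (via Proposition~\ref{propn:Lcommsq}), whose proof uses the cartesianness of $\mu$ and the fact that $T$ preserves pullbacks, as you predicted. So your route would go through, and it arguably isolates the one genuinely nonformal input more cleanly, at the cost of leaving that input as a gesture rather than a computation.

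Two small wrinkles. First, closure of the inert class under composition is not ``by construction'': a composite of two maps, each equivalent in $\Fun(\Delta^1,\mathcal{K}(T))$ to a free map, is a priori only free--equivalence--free, and commuting the middle equivalence past a free map needs either Corollary~\ref{cor LL} again or (more cheaply) the orthogonality you have already established --- factor the composite and use lifting to see the active part is an equivalence. Second, your cancellation argument showing that equivalences lie in both classes is fine but can be short-circuited: Lemma~\ref{lem equfree} shows directly that every equivalence is adjoint to a generic, and its canonical factorization then exhibits it as equivalent to a free map. Neither point affects the viability of the proof.
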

This factorization system restricts to the full subcategory
$\mathcal{W}(T)^{\op}$, which induces a canonical pattern structure on
$\mathcal{W}(T)$; in the next section we will discuss how this relates
to the original monad $T$.

We start with some observations relating the local left adjoint of $T$
to the Kleisli \icat{}:
\begin{notation}
  For $X \in \mathcal{S}^{\mathcal{I}}$, we write $L_X\colon
  \mathcal{S}^{\mathcal{I}}_{/T(X)}\to \mathcal{S}^{\mathcal{I}}_{/X}$ for the
  left adjoint of the functor $T_X \colon\mathcal{S}^{\mathcal{I}}_{/X}\to \mathcal{S}^{\mathcal{I}}_{/T(X)}$
  induced by $T$.
\end{notation}

\begin{propn}\label{propn:SIXphiwc}
  Let $\mathcal{K}(T)$ denote the Kleisli category of $T$, \ie{} the
  full subcategory of $\Alg_{T}(\mathcal{S}^{\mathcal{I}})$ spanned by
  the free algebras. For $\phi \colon FY \to FX$ in $\mathcal{K}(T)$,
  the \icat{}
  \[ (\mathcal{S}^{\mathcal{I}}_{/X})_{\phi/} :=
  \mathcal{S}^{\mathcal{I}}_{/X} \times_{ \mathcal{K}(T)_{/FX} } (
  \mathcal{K}(T)_{/FX} )_{\phi/}\]
  has an initial object.
\end{propn}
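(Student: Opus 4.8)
The plan is to exhibit the initial object explicitly as the generic--free factorization of the adjoint of $\phi$, and to verify its universal property by a chain of adjunction equivalences rather than by constructing fillers by hand.

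First I would rewrite the fibre product as a comma \icat{}. Writing $\Theta \colon \mathcal{S}^{\mathcal{I}}_{/X} \to \mathcal{K}(T)_{/FX}$ for the functor induced by the free functor $F = F_{T}$, the standard identification $\mathcal{A} \times_{\mathcal{B}} \mathcal{B}_{b/} \simeq (b \downarrow \Theta)$ shows that $(\mathcal{S}^{\mathcal{I}}_{/X})_{\phi/}$ is the comma \icat{} of objects of $\mathcal{S}^{\mathcal{I}}_{/X}$ equipped with a map from $\phi$; its objects are pairs $(Z \xto{g} X,\ \phi \to \Theta(Z,g))$. A comma \icat{} $b \downarrow \Theta$ has an initial object precisely when the functor $a \mapsto \Map_{\mathcal{B}}(b, \Theta a)$ is corepresentable, in which case the initial object is the corepresenting object together with its universal element. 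So it suffices to prove that $a \mapsto \Map_{\mathcal{K}(T)_{/FX}}(\phi, \Theta a)$ is corepresentable.

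Next I would compute this mapping space by adjunction, naturally in $a = (Z,g)$. Unwinding the slice, $\Map_{\mathcal{K}(T)_{/FX}}(\phi, \Theta a)$ is the fibre of $\Map_{\mathcal{K}(T)}(FY, FZ) \to \Map_{\mathcal{K}(T)}(FY, FX)$ (postcomposition with $Fg$) over $\phi$. Since $\mathcal{K}(T)$ is a full subcategory of $\Alg_{T}(\mathcal{S}^{\mathcal{I}})$, the free--forgetful adjunction $F \dashv U_{T}$ gives $\Map_{\mathcal{K}(T)}(FY, FW) \simeq \Map_{\mathcal{S}^{\mathcal{I}}}(Y, TW)$ naturally in $W$, carrying postcomposition with $Fg$ to postcomposition with $U_{T}(Fg) = T(g)$ and carrying $\phi$ to its adjoint $\bar\phi \colon Y \to TX$. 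Hence the fibre is identified with $\Map_{\mathcal{S}^{\mathcal{I}}_{/TX}}(\bar\phi, T_{X}(Z,g))$, where $T_{X}$ is the functor on slices induced by $T$. Finally, since $T_{X}$ has the left adjoint $L_{X}$, this is naturally equivalent to $\Map_{\mathcal{S}^{\mathcal{I}}_{/X}}(L_{X}\bar\phi, (Z,g))$. Thus the functor is corepresented by $L_{X}\bar\phi \in \mathcal{S}^{\mathcal{I}}_{/X}$, and the initial object of $(\mathcal{S}^{\mathcal{I}}_{/X})_{\phi/}$ is $L_{X}\bar\phi$ together with the universal element, namely the unit $\bar\phi \to T_{X}L_{X}\bar\phi$. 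Under the identifications above this is exactly the generic--free factorization $Y \to T(L_{X}\bar\phi) \to TX$ of $\bar\phi$, whose underlying object is $L_{*}Y$.

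The routine verifications are the two standard facts invoked (the comma description of the fibre product and the corepresentability criterion for initiality), together with the bookkeeping of the slice structures. The main point requiring care is the naturality in $a$ of the composite chain of equivalences: each step (the free--forgetful adjunction on $\mathcal{K}(T)$ and the slice adjunction $L_{X} \dashv T_{X}$) is natural on its own, but I must check that postcomposition with $Fg$ is carried to postcomposition with $T(g)$ under $F \dashv U_{T}$ and that this is compatible with passing to fibres over $\phi$, so that the final equivalence $\Map_{\mathcal{K}(T)_{/FX}}(\phi, \Theta(\blank)) \simeq \Map_{\mathcal{S}^{\mathcal{I}}_{/X}}(L_{X}\bar\phi, \blank)$ holds as an equivalence of functors on $\mathcal{S}^{\mathcal{I}}_{/X}$ and not merely objectwise. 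Granting this, corepresentability --- and hence the existence of the initial object --- follows immediately.
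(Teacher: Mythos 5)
Your proof is correct and follows essentially the same route as the paper: the paper's argument is precisely the chain of correspondences (a triangle $FY \to FZ \to FX$ over $FX$ corresponds via $F \dashv U_T$ to $Y \to TZ \to TX$, which corresponds via $L_X \dashv T_X$ to $L_XY \to Z \to X$), concluding that $L_XY \to X$ is initial. You have merely recast this unwinding in terms of corepresentability of $\Map_{\mathcal{K}(T)_{/FX}}(\phi, \Theta(\blank))$, which makes the naturality bookkeeping explicit but is the same argument.
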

\begin{proof}
  An object in this \icat{} is a morphism $f \colon Z \to X$ together
  with a commutative triangle
  \[
    \begin{tikzcd}
      FY \arrow{rr} \arrow{dr}[swap]{\phi} & &  FZ \arrow{dl}{F(f)} \\
      & FX.
    \end{tikzcd}
  \]
  This corresponds to a commutative triangle
  \[
    \begin{tikzcd}
      Y \arrow{rr} \arrow{dr}[swap]{\phi'} & &  TZ \arrow{dl}{T(f)} \\
      & TX,
    \end{tikzcd}
  \]
  which in turn corresponds to
  \[
    \begin{tikzcd}
      L_{X}Y \arrow{rr} \arrow{dr}[swap]{\phi''} & &  Z \arrow{dl}{f} \\
      & X.
    \end{tikzcd}
  \]
  Thus $L_{X}Y \xto{\phi''} X$ gives an initial object, as required.
\end{proof}

\begin{cor}
  The functor
  $F_{X} \colon \mathcal{S}^{\mathcal{I}}_{/X} \to
  \mathcal{K}(T)_{/FX}$ given by $F$ has a left adjoint
  $\mathcal{L}_{X}$, which takes $\phi \colon FY \to FX$ to the
  corresponding map $\phi'' \colon L_{X}Y \to X$.
\end{cor}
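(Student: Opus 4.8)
The plan is to deduce the existence of the left adjoint directly from the pointwise criterion for the existence of adjoints, \cite[Proposition 5.2.4.2]{ht}, together with Proposition~\ref{propn:SIXphiwc}. Recall that a functor $G \colon \mathcal{C} \to \mathcal{D}$ admits a left adjoint precisely when, for every object $d \in \mathcal{D}$, the comma \icat{} $\mathcal{C} \times_{\mathcal{D}} \mathcal{D}_{d/}$ of objects of $\mathcal{C}$ \emph{under} $d$ has an initial object. Applying this with $G = F_{X}$, $\mathcal{C} = \mathcal{S}^{\mathcal{I}}_{/X}$, and $\mathcal{D} = \mathcal{K}(T)_{/FX}$, the relevant comma \icat{} at an object $\phi \colon FY \to FX$ is
\[ \mathcal{S}^{\mathcal{I}}_{/X} \times_{\mathcal{K}(T)_{/FX}} (\mathcal{K}(T)_{/FX})_{\phi/}, \]
whose objects are pairs consisting of a map $f \colon Z \to X$ in $\mathcal{S}^{\mathcal{I}}_{/X}$ and a factorization of $\phi$ through $F(f)$ in $\mathcal{K}(T)_{/FX}$.

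First I would observe that this is \emph{literally} the \icat{} $(\mathcal{S}^{\mathcal{I}}_{/X})_{\phi/}$ analyzed in Proposition~\ref{propn:SIXphiwc}, so no further identification is required: that proposition already exhibits an initial object, namely $\phi'' \colon L_{X}Y \to X$. Feeding this into the adjoint-functor criterion immediately produces a left adjoint $\mathcal{L}_{X}$ of $F_{X}$, and by the standard description of a left adjoint in terms of initial objects of comma \icats{} its value $\mathcal{L}_{X}(\phi)$ is that initial object, i.e.\ $\phi''$. This is exactly the claimed formula, so the Corollary follows.

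I expect essentially no hard step here: the entire mathematical content lies in Proposition~\ref{propn:SIXphiwc}, which is already proved, and the Corollary is a formal consequence. The only point deserving a moment's care is matching conventions---verifying that the objects of the comma \icat{} appearing in \cite[Proposition 5.2.4.2]{ht} (pairs $(f \colon Z \to X,\ \phi \to F_{X}(f))$) agree on the nose with the objects of $(\mathcal{S}^{\mathcal{I}}_{/X})_{\phi/}$, and that the ``initial object'' supplied by the proposition is what the universal property of the left adjoint demands. Both are immediate from the chain of correspondences $(FY \to FZ \to FX) \leftrightarrow (Y \to TZ \to TX) \leftrightarrow (L_{X}Y \to Z \to X)$ recorded in the proof of Proposition~\ref{propn:SIXphiwc}; note also that functoriality of $\mathcal{L}_{X}$ need not be checked by hand, as it is guaranteed by the criterion itself. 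In particular no presentability or size hypotheses enter, since \cite[Proposition 5.2.4.2]{ht} is a purely pointwise existence statement.
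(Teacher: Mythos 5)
Your argument is correct and is essentially identical to the paper's own proof: the paper likewise observes that $F_{X}$ admits a left adjoint if and only if each comma \icat{} $(\mathcal{S}^{\mathcal{I}}_{/X})_{\phi/}$ has an initial object, which is exactly the content of Proposition~4.2 (\texttt{propn:SIXphiwc}), and reads off the value $\mathcal{L}_{X}(\phi) = \phi''$ from that initial object. The only difference is that you make the ``standard argument'' explicit by citing the pointwise adjoint-existence criterion from \emph{Higher Topos Theory}, which is a harmless (indeed helpful) elaboration.
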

\begin{proof}
  By a standard argument the functor $F_X$ is a right adjoint if and
  only if $(\mathcal{S}^{\mathcal{I}}_{/X})_{\phi/}$ has an initial
  object, which is the statement of Proposition~\ref{propn:SIXphiwc}.
\end{proof}

\begin{remark}\label{rmk:FXfullyff}
  For $f \colon Y \to X$, the counit map $\mathcal{L}_{X}F_{X}(f) \to f$ is
  given by the commutative triangle
  \[
  \begin{tikzcd}
  L_{X}Y \arrow{rr}{\sim} \arrow{dr} & & Y \arrow{dl}{f} \\
  & X,
  \end{tikzcd}
  \]
  where the map $L_{X}Y \to Y$ is the map adjoint to the unit $Y \to
  TY$ which is an equivalence by Lemma~\ref{lem:unitmultgen}. It
  follows that $F_{X}$ is fully
  faithful, and so $\mathcal{L}_{X}$ exhibits $\mathcal{S}^{\mathcal{I}}_{/X}$ as a
  localization of $\mathcal{K}(T)_{/FX}$.
\end{remark}

\begin{remark}
  The functor
  $F_{X} \colon \mathcal{S}^{\mathcal{I}}_{/X} \to
  \mathcal{K}(T)_{/FX}$ also has a right adjoint $U_{X}$, which takes
  $\phi \colon FY \to FX$ to the morphism obtained as the pullback of
  $U\phi \colon TY \to TX$ along the unit map
  $\epsilon_{X} \colon X \to TX$. Note that the unit
  $\id \to U_{X}F_{X}$ is an equivalence since $\epsilon$ is a
  cartesian transformation, which also implies that $F_{X}$ is fully
  faithful.
\end{remark}

\begin{remark}
  For $\phi \colon FY \to FX$, the unit map
  $\phi \to F_{X}\mathcal{L}_{X}(\phi)$ is the commutative triangle
  \[
  \begin{tikzcd}
  FY \arrow{rr} \arrow[dr, swap, "\phi"] & & FL_{X}Y \arrow{dl} \\
  & FX,
  \end{tikzcd}
  \]
  \ie{} the canonical factorization of $\phi$. By naturality, this
  means we can extend any commutative triangle
  \[
    \begin{tikzcd}
      FY \arrow{rr}{\psi} \arrow{dr} & & FY' \arrow{dl} \\
       & FX
    \end{tikzcd}
  \]
  to a commutative diagram
  \[
    \begin{tikzcd}
      FY \arrow{rr}{\psi} \arrow{d} & & FY' \arrow{d} \\
      FL_{X}Y \arrow{rr}{F_{X}\mathcal{L}_{X}\psi} \arrow{dr} & & FL_{X}Y' \arrow{dl}\\
       & FX
    \end{tikzcd}
  \]
  relating the canonical factorizations of the two maps to $FX$. The
  next observations will allow us to prove that the canonical
  factorization is also natural when we vary $FX$.
\end{remark}

\begin{propn}\label{propn:Lcommsq}
  For every object $C\in \xS^\xI$ we have a commutative diagram
  \[
  \begin{tikzcd}
  \xS^\xI_{/T^2 C} \arrow[r,"\mu_{C,!}"] \arrow[d,swap, "L_{TC}"] & \xS^\xI_{/TC}  \arrow[d, "L_C"]\\
  \xS^\xI_{/T C}\arrow[r, "L_C"] &   \xS^\xI_{/C},
  \end{tikzcd}
  \]
  where the top horizontal map is given by composition with the
  component at $C$ of the multiplication $\mu\colon T^2\to T$ at $C$.
\end{propn}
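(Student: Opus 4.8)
The plan is to prove the statement by passing to right adjoints. Both composites $L_C\circ \mu_{C,!}$ and $L_C\circ L_{TC}$ are composites of left adjoints, hence are themselves left adjoints; since a left adjoint is determined up to equivalence by its right adjoint, and a commuting square of left adjoints is the same data as the mate square of its right adjoints, it suffices to produce a commutative square of the corresponding right adjoints. Now the right adjoint of $\mu_{C,!}$ (postcomposition with $\mu_C$) is the base-change functor $\mu_C^{*}$, while the right adjoint of $L_X$ is, by definition, $T_X$. Thus the right adjoint of $L_C\circ\mu_{C,!}$ is $\mu_C^{*}\circ T_C$ and the right adjoint of $L_C\circ L_{TC}$ is $T_{TC}\circ T_C$, so I am reduced to exhibiting a natural equivalence $\mu_C^{*}\circ T_C\simeq T_{TC}\circ T_C$ of functors $\xS^\xI_{/C}\to \xS^\xI_{/T^2C}$.

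Unwinding the definitions, the functor $T_{TC}\circ T_C$ sends an object $g\colon Z\to C$ first to $Tg\colon TZ\to TC$ and then to $T^2g\colon T^2Z\to T^2C$, whereas $\mu_C^{*}\circ T_C$ sends it to the pullback of $Tg\colon TZ\to TC$ along $\mu_C\colon T^2C\to TC$. These agree precisely because $\mu$ is a cartesian natural transformation: the naturality square
\[
\begin{tikzcd}
T^2Z \arrow[r,"T^2 g"] \arrow[d,swap,"\mu_Z"] & T^2C \arrow[d,"\mu_C"] \\
TZ \arrow[r,swap,"Tg"] & TC
\end{tikzcd}
\]
is cartesian, which identifies $T^2Z$ with $TZ\times_{TC}T^2C$ as objects over $T^2C$, naturally in $g$. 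Since $T$ is a polynomial monad, its multiplication $\mu$ is indeed cartesian, so the required equivalence of right adjoints holds; transporting it back through the mate correspondence yields the commutative square of left adjoints.

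The remaining points are routine bookkeeping: that the base-change adjunction $\mu_{C,!}\dashv \mu_C^{*}$ on slices is available (as $\xS^\xI$ has pullbacks), and that $L_X$ exists as the left adjoint of $T_X$ (guaranteed by $T$ being a local right adjoint). Note that neither the unit of $T$ nor preservation of weakly contractible limits is needed here; the single essential input is the cartesianness of $\mu$. The main, and only mild, obstacle is to confirm that the comparison equivalence assembled from the cartesian naturality squares is genuinely the mate of the canonical structure map filling the original square, so that inverting it does certify commutativity of the square of left adjoints rather than merely an abstract equivalence of the two composites.
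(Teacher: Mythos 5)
Your proposal is correct and follows essentially the same route as the paper's proof: pass to right adjoints, identify the two composites as $\mu_C^{*}\circ T_C$ and $T_{TC}\circ T_C$, and use cartesianness of $\mu$ to identify the pullback of $T\alpha$ along $\mu_C$ with $T^2\alpha$. Your closing worry about matching a pre-specified structure map is moot here, since the proposition only asserts the existence of a commutative square, which is exactly what the mate of the equivalence of right adjoints provides.
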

\begin{proof}
It suffices to show that the diagram
  \[
\begin{tikzcd}
\xS^\xI_{/T^2 C}  & \xS^\xI_{/T C}\arrow[l,"\mu_{C}^*"] \\
\xS^\xI_{/TC}\arrow[u,"T_{TC}"] & \xS^\xI_{/C} \arrow[l, "T_C"] \arrow[u, "T_C"]
\end{tikzcd}
\]
of the corresponding right adjoints commutes. Given an object $\alpha\colon B\to C$ in $\xS^\xI_{/C}$, its image under the composite of the right vertical and the upper horizontal map is the left vertical map of the pullback square
\[
\begin{tikzcd}
A \arrow[r] \arrow[d]& TB \arrow[d, "T\alpha"] \\
T^2C \arrow[r, "\mu_C"] & TC.
\end{tikzcd}
\]
Since the multiplication $\mu$ of the polynomial monad $T$ is a cartesian natural transformation, the map $A\to T^2C$ can be identified with $T^2 \alpha\colon T^2B\to T^2C$ which is the same as the image of $\alpha$ under the composite of $T_C$ and $T_{TC}$.
\end{proof}

\begin{corollary}\label{cor LL}
  Given morphisms $\alpha \colon A \to TB$ and $\beta \colon B \to
TC$, we have adjoint morphisms $L_BA \to B$ and $L_C B \to
C$; we also have the composites $A \xto{\alpha} TB \xto{T\beta}
T^{2}C \xto{\mu} TC$ and $L_BA \to B \to TC$  with adjoints
$L_CA \to C$ and $L_CL_BA \to C$. These are
equivalent, \ie{} $L_C A \simeq L_C L_BA$.
\end{corollary}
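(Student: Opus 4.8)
The plan is to reduce the statement to Proposition~\ref{propn:Lcommsq} together with a single base-change compatibility of the local left adjoints with pushforward, staying entirely within the adjoint-functor formalism in which \ref{propn:Lcommsq} is proved. First I would record the two objects being compared. Writing $\bar\alpha \colon L_B A \to B$ for the morphism adjoint to $\alpha$, the object computing $L_C L_B A$ is $L_B A \xto{\bar\alpha} B \xto{\beta} TC$, that is $\beta_!\bar\alpha \in \xS^\xI_{/TC}$, while the object computing $L_C A$ is the composite $A \xto{\alpha} TB \xto{T\beta} T^2C \xto{\mu_C} TC$, which we may rewrite as $\mu_{C,!}\bigl((T\beta)_!\alpha\bigr)$ with $(T\beta)_!\alpha \in \xS^\xI_{/T^2C}$. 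Proposition~\ref{propn:Lcommsq} then gives $L_C A \simeq L_C\mu_{C,!}\bigl((T\beta)_!\alpha\bigr) \simeq L_C L_{TC}\bigl((T\beta)_!\alpha\bigr)$, so the whole statement reduces to identifying $L_{TC}\bigl((T\beta)_!\alpha\bigr)$ with $\beta_! L_B\alpha$.

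The key step is therefore to show that the square
\[
\begin{tikzcd}
\xS^\xI_{/TB} \arrow[r, "(T\beta)_!"] \arrow[d, swap, "L_B"] & \xS^\xI_{/T^2C} \arrow[d, "L_{TC}"] \\
\xS^\xI_{/B} \arrow[r, "\beta_!"] & \xS^\xI_{/TC}
\end{tikzcd}
\]
commutes. Since all four functors are left adjoints, I would argue exactly as in the proof of Proposition~\ref{propn:Lcommsq}, by the correspondence between commuting squares of left adjoints and of their right adjoints: it suffices to check that the transposed square of right adjoints, namely $(T\beta)^* T_{TC} \simeq T_B \beta^*$, commutes. Unwinding this for $\delta \colon E \to TC$, the left-hand side is the pullback of $T\delta \colon TE \to T^2C$ along $T\beta$, whereas the right-hand side is $T$ applied to the pullback $B \times_{TC} E \to B$; these agree because $T$ preserves pullbacks, being a local right adjoint and hence preserving weakly contractible limits, so that $T(B\times_{TC}E) \simeq TB \times_{T^2C} TE$ compatibly with the projections to $TB$.

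Combining the two steps yields $L_C A \simeq L_C L_{TC}\bigl((T\beta)_!\alpha\bigr) \simeq L_C\bigl(\beta_! L_B\alpha\bigr) = L_C L_B A$, as desired. The main obstacle is the base-change step: one must make sure the comparison is genuinely an instance of transposing \emph{all four} edges of a commuting square of right adjoints, so that no Beck--Chevalley hypothesis beyond $T$ preserving pullbacks is required, and that the pullback square witnessing commutativity is the one supplied by $T$-preservation of pullbacks (the same mechanism, via cartesianness, that drives the proof of Proposition~\ref{propn:Lcommsq}). The bookkeeping identifying the relevant objects as $\mu_{C,!}((T\beta)_!\alpha)$ and $\beta_!\bar\alpha$ is routine but needs to be set up carefully so that the two invocations of local left adjoints are over the correct bases.
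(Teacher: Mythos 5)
Your proof is correct and follows essentially the same route as the paper's: both reduce the claim to the commutativity of the composite rectangle obtained by pasting the square of Proposition~\ref{propn:Lcommsq} with the base-change square $\beta_! L_B \simeq L_{TC}(T\beta)_!$, the latter verified by transposing to the square of right adjoints $(T\beta)^*T_{TC}\simeq T_B\beta^*$ and invoking that $T$ preserves pullbacks. The only difference is presentational (you apply the two squares sequentially rather than drawing the pasted diagram at once), so no further comment is needed.
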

\begin{proof}
  Consider the diagram
  \[
  \begin{tikzcd}
  \xS^\xI_{/TB} \arrow[r, "(T\beta)_!"] \arrow[d, swap, "L_B"]& \xS^\xI_{/T^2 C} \arrow[r, "\mu_{C,!}",]\arrow[d, "L_{TC}"] & \xS^\xI_{/TC} \arrow[d, "L_C"]\\
  \xS^\xI_{/B} \arrow[r, "\beta_!"]& \xS^\xI_{/TC}\arrow[r, "L_C"] & \xS^\xI_{/C}.
  \end{tikzcd}
  \]
  Here the left square commutes since it is the square of left
  adjoints corresponding to the square
  \[
    \begin{tikzcd}
      \xS^\xI_{/TC} \arrow{r}{\beta^{*}} \arrow{d}{T_{TC}} & \xS^\xI_{/B} \arrow{d}{T_{B}} \\
      \xS^\xI_{/T^2 C} \arrow{r}{(T\beta)^{*}} & \xS^\xI_{/TB},
    \end{tikzcd}
    \]
    which commutes since $T$ preserves pullbacks, and the right square
    commutes by Proposition~\ref{propn:Lcommsq}.  By construction the
    morphisms $L_CA\to C$ and $L_CL_BA\to C$ are given by
    $L_C\mu_{C,!}T\beta_!(\alpha)$ and $L_C \beta_! L_{B}(\alpha)$,
    and so are equivalent by the commutativity of the outer square.
\end{proof}

\begin{propn}\label{propn:canfactfunct}
  Given a commutative square
  \[
    \begin{tikzcd}
      F_{T}A \arrow{r} \arrow{d} & F_{T}B \arrow{d} \\
      F_{T}C \arrow{r} & F_{T}D,
    \end{tikzcd}
  \]
  in $\mathcal{K}(T)$, there exists a canonical commutative diagram
  \[
    \begin{tikzcd}
      F_{T}A \arrow{r} \arrow{d} & F_{T}L_{B}A \arrow{r} \arrow{d}  &
      F_{T}B \arrow{d} \\
      F_{T}C \arrow{r}  & F_{T}L_{D}C \arrow{r} & F_{T}D.
    \end{tikzcd}
  \]
   Here the diagram associated to the degenerate square
  \[
    \begin{tikzcd}
      F_{T}A \arrow[equals]{d} \arrow{r} & F_{T}B \arrow[equals]{d} \\
      F_{T}A \arrow{r} & F_{T}B
    \end{tikzcd}
  \]
  is the degenerate diagram
  \[
    \begin{tikzcd}
      F_{T}A \arrow[equals]{d} \arrow{r} & F_{T}L_{B}A
      \arrow[equals]{d} \arrow{r} & F_{T}B \arrow[equals]{d} \\
      F_{T}A \arrow{r} & F_{T}L_{B}A \arrow{r} & F_{T}B.
    \end{tikzcd}
  \]
  Moreover, we have compatibility with composition, in the sense that if we have a commutative diagram
  \[
    \begin{tikzcd}
      F_{T}A \arrow{r} \arrow{d} & F_{T}B \arrow{d} \\
      F_{T}C  \arrow{d} \arrow{r} & F_{T}D \arrow{d} \\
      F_{T}X \arrow{r} & F_{T}Y,
    \end{tikzcd}
  \]
  then the vertical composite of the associated diagrams
  \[
    \begin{tikzcd}
      F_{T}A \arrow{r} \arrow{d} & F_{T}L_{B}A \arrow{r} \arrow{d} &
      F_{T}B \arrow{d} \\
      F_{T}C  \arrow{d} \arrow{r} & F_{T}L_{D}C \arrow{r} \arrow{d} &
      F_{T}D \arrow{d} \\
      F_{T}X \arrow{r} & F_{T}L_{Y}X \arrow{r} & F_{T}Y,
    \end{tikzcd}
  \]
  is the diagram associated to the composite square
  \[
    \begin{tikzcd}
      F_{T}A \arrow{r} \arrow{d} & F_{T}B \arrow{d} \\
      F_{T}X \arrow{r} & F_{T}Y.
    \end{tikzcd}
  \]
\end{propn}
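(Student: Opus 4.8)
The plan is to package the entire statement as the functoriality of a single \emph{factorization functor}
\[ \Sigma \colon \Fun(\Delta^{1}, \mathcal{K}(T)) \to \Fun(\Delta^{2}, \mathcal{K}(T)) \]
sending a morphism $\phi \colon F_{T}A \to F_{T}B$ to its canonical factorization $F_{T}A \to F_{T}L_{B}A \to F_{T}B$. Once $\Sigma$ is available, everything in the proposition is automatic. A commutative square is precisely a morphism in $\Fun(\Delta^{1}, \mathcal{K}(T))$ (with the horizontal maps as the two objects and the vertical maps as the morphism between them), so applying $\Sigma$ yields a morphism in $\Fun(\Delta^{2}, \mathcal{K}(T))$, which is exactly the desired $3 \times 2$ diagram; the degenerate square is an identity morphism and hence is sent to an identity, giving the normalization statement; and a vertical stack of squares is a composite of two morphisms in $\Fun(\Delta^{1}, \mathcal{K}(T))$, so compatibility with composition is just functoriality of $\Sigma$.

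To construct $\Sigma$ I would realize the canonical factorization as the unit of a reflective localization. Consider the target evaluation $\name{ev}_{1} \colon \Fun(\Delta^{1}, \mathcal{K}(T)) \to \mathcal{K}(T)$, the cocartesian fibration classified by $F_{T}X \mapsto \mathcal{K}(T)_{/F_{T}X}$ with pushforward along $\beta \colon F_{T}X \to F_{T}X'$ given by postcomposition. By Remark~\ref{rmk:FXfullyff} each fibre $\mathcal{K}(T)_{/F_{T}X}$ carries the reflective subcategory $\mathcal{S}^{\mathcal{I}}_{/X}$ of free arrows into $F_{T}X$, with reflector $F_{X}\mathcal{L}_{X}$ and unit the canonical factorization. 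Let $\mathcal{G} \subseteq \Fun(\Delta^{1}, \mathcal{K}(T))$ be the full subcategory of free arrows, so that $\mathcal{G}$ is fibrewise this reflective subcategory. I claim $\mathcal{G}$ is reflective in the total \icat{}. By the fibrewise criterion it suffices to check that postcomposition preserves the fibrewise local equivalences: if $\psi \colon (F_{T}Y \to F_{T}X) \to (F_{T}Y' \to F_{T}X)$ over $F_{T}X$ has $\mathcal{L}_{X}\psi$ (that is $L_{X}Y \to L_{X}Y'$) an equivalence, then for any $\beta \colon F_{T}X \to F_{T}X'$ the postcomposed map over $F_{T}X'$ is again inverted by $\mathcal{L}_{X'}$. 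This is exactly Corollary~\ref{cor LL}: writing $\beta$ adjointly as $X \to TX'$, the natural equivalence $L_{X'}(\blank) \simeq L_{X'}L_{X}(\blank)$ coming from the commuting squares of adjoints in Proposition~\ref{propn:Lcommsq} identifies the postcomposed reflection with $L_{X'}$ applied to $L_{X}Y \to L_{X}Y'$, which is an equivalence.

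Granting the claim, the reflection $\mathcal{G} \hookrightarrow \Fun(\Delta^{1}, \mathcal{K}(T))$ has unit a natural transformation $\mathrm{id} \Rightarrow R$ whose component at $\phi \colon F_{T}A \to F_{T}B$ is the square with identity bottom edge relating $\phi$ to $F_{B}\mathcal{L}_{B}\phi = (F_{T}L_{B}A \to F_{T}B)$, i.e.\ the canonical factorization. Viewing the unit as a functor into $\Fun(\Delta^{1} \times \Delta^{1}, \mathcal{K}(T))$ and restricting along the upper path $\Delta^{2} \hookrightarrow \Delta^{1} \times \Delta^{1}$ given by $0 \mapsto (0,0)$, $1 \mapsto (1,0)$, $2 \mapsto (1,1)$ produces $\Sigma$, whose value at $\phi$ is the $2$-simplex $F_{T}A \to F_{T}L_{B}A \to F_{T}B$ with long edge $\phi$ (the bottom edge of the unit square being the identity). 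All three assertions then follow as explained in the first paragraph.

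The main obstacle is the reflectivity claim, and specifically the naturality it requires: the equivalence $L_{X'} \simeq L_{X'}L_{X}$ of Corollary~\ref{cor LL} must be applied not only to objects but to the morphism $\psi$, so I would first upgrade that corollary to a natural equivalence of functors (which it is, being assembled from the commuting squares of left adjoints in Proposition~\ref{propn:Lcommsq}), and then feed it into the standard criterion that a fibrewise reflective subcategory of a cocartesian fibration whose local equivalences are stable under cocartesian pushforward is reflective in the total \icat{}, with fibrewise reflector. The remaining points — that $\name{ev}_{1}$ is the asserted cocartesian fibration and that $\mathcal{G}$ is fibrewise reflective with reflector $F_{X}\mathcal{L}_{X}$ — are routine given Remark~\ref{rmk:FXfullyff}.
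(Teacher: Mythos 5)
Your proposal is correct, but it takes a genuinely different route from the paper's. The paper works one square at a time: it regards the square as a pair of morphisms $F_{T}C \from F_{T}A \to F_{T}B$ in $\mathcal{K}(T)_{/F_{T}D}$, applies naturality of the unit of $\mathcal{L}_{D} \dashv F_{D}$ to produce an extended diagram over $F_{T}D$, uses Corollary~\ref{cor LL} to collapse $F_{T}L_{D}L_{B}A \simeq F_{T}L_{D}A$, and then verifies the normalization and composition compatibilities by two further explicit diagram contractions. You instead assemble the fibrewise adjunctions $\mathcal{L}_{X} \dashv F_{X}$ of Remark~\ref{rmk:FXfullyff} into a single reflective localization of $\Fun(\Delta^{1},\mathcal{K}(T))$ onto the full subcategory of free arrows, so that the canonical factorization becomes the unit of one adjunction and all three assertions are instances of the functoriality of that unit. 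This buys a uniform treatment of the coherence data that the paper checks by hand, at the cost of invoking the fibrewise-localization criterion for cocartesian fibrations; that criterion is true and provable in a few lines (for $x$ in the fibre over $b$ and $g$ in the fibre over $b'$, the map $\Map_{\mathcal{E}}(R_{b}x,g)\to\Map_{\mathcal{E}}(x,g)$ lies over $\Map_{\mathcal{B}}(b,b')$ and on the fibre over $\beta$ is induced by $\beta_{!}$ of the unit), but it should be written out or cited. The essential input is the same in both arguments, namely Corollary~\ref{cor LL}.

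One small repair is needed in your treatment of that input. Your proposed ``naturality upgrade'' of Corollary~\ref{cor LL} does not follow directly from Proposition~\ref{propn:Lcommsq}, because the functors there are defined on slices of $\mathcal{S}^{\mathcal{I}}$, whereas you would need naturality with respect to general Kleisli morphisms in the fibre $\mathcal{K}(T)_{/F_{T}X}$. Fortunately you do not need it: in the fibrewise-localization criterion it suffices that postcomposition with $\beta$ carries the \emph{unit} morphisms $\phi \to F_{X}\mathcal{L}_{X}\phi$ to local equivalences, i.e.\ that the single comparison map $L_{X'}Y \to L_{X'}L_{X}Y$ over $X'$ is an equivalence for each object --- which is exactly Corollary~\ref{cor LL} read objectwise (and read, as the paper itself does in its proof, as asserting that the \emph{canonical} comparison map is an equivalence). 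Alternatively, the comparison transformation $\mathcal{L}_{X'}(\beta\circ\blank) \to \mathcal{L}_{X'}(\beta\circ F_{X}\mathcal{L}_{X}(\blank))$ is natural for free, being the whiskering of the unit $\id \to F_{X}\mathcal{L}_{X}$, so the objectwise statement of Corollary~\ref{cor LL} already promotes it to a natural equivalence of functors on the Kleisli slice.
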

\begin{proof}
  We can view the original square as a pair of morphisms
  \[
    \begin{tikzcd}
F_{T}C & F_{T}A \arrow{l} \arrow{r} & F_{T}B     
    \end{tikzcd}
  \]
  in $\mathcal{K}(T)_{/F_{T}D}$. Adding the canonical factorization of
  the arrow $F_{T}A \to F_{T}B$, the naturality of the unit for the
  adjunction $\mathcal{L}_{D} \dashv F_{D}$ gives a commutative
  diagram
  \[
    \begin{tikzcd}
    F_{T}C  \arrow{d} & F_{T}A \arrow{l} \arrow{r} \arrow{d} &
    F_{T}L_{B}A \arrow{d} \arrow{r} & F_{T}B \arrow{d} \\
    F_{T}L_{D}C & F_{T}L_{D}A \arrow{l} \arrow{r}{\sim} & F_{T}L_{D}L_{B}A
    \arrow{r} & F_{T}L_{D}B
    \end{tikzcd}
  \]
  over $F_{T}D$, where the second arrow in the bottom row is an
  equivalence by Corollary~\ref{cor LL}. If we invert this equivalence
  we can contract the diagram to
  \[
    \begin{tikzcd}
      F_{T}A \arrow{r} \arrow{d} & F_{T}L_{B}A \arrow{r} \arrow{d} &
      F_{T}B \\
      F_{T}C \arrow{r} & F_{T}L_{D}C
    \end{tikzcd}
  \]
  over $F_{T}D$; adding $F_{T}D$ back in now gives the desired
  diagram.

  From the degenerate square we can make the extended diagram
  \[
    \begin{tikzcd}[column sep=large]
      F_{T}A  \arrow{d} & F_{T}A \arrow[equals]{l} \arrow{r}{\eta_{F_{T}A}} \arrow{d}[left]{\eta_{F_{T}A}}
      & F_{T}L_{B}A \arrow{d}[left]{\eta_{F_{T}L_{B}A}} \arrow[bend
      left=65,equals]{dd} \arrow{r} & F_{T}B \arrow{d}  \arrow[equals,bend left=50]{dd}\\
      F_{T}L_{B}A & F_{T}L_{B}A \arrow[equals]{l}
      \ar[r, "F_{B}\mathcal{L}_{B}\eta_{F_{T}A}"{above}, "\sim"{below}] \arrow[bend right,equals]{dr} & F_{T}L_{B}L_{B}A
        \arrow[crossing over]{r} \arrow{d}[left]{F_{T}\epsilon_{L_{B}A}} & F_{T}L_{B}B
          \arrow{d}[left]{F_{T}\epsilon_{L_{B}}} \\
          & & F_{T}L_{B}A \arrow{r} & F_{T}B,
    \end{tikzcd}
  \]
  all over $F_{T}B$. Here the adjunction identities for $\mathcal{L}_{B} \dashv F_{B}$
  imply that the two composites $F_{T}L_{B}A \to F_{T}L_{B}A$ are
  identities, as indicated; this means the general definition indeed
  specializes to give the degenerate diagram in this case.
    
  To see we have compatibility with
  composition, consider the diagram
  \[
    \begin{tikzcd}
      F_{T}A \arrow{d} \arrow{r} & F_{T}L_{B}A \arrow{r} \arrow{d} & F_{T}B \arrow{d}\\
      F_{T}C \arrow{d} \arrow{r} & F_{T}L_{D}C \arrow{r} & F_{T}D \\
      F_{T}X
    \end{tikzcd}
  \]
  in $\mathcal{K}(T)_{/F_{T}Y}$. Using the unit for the adjunction
  $\mathcal{L}_{Y}\dashv F_{Y}$ this extends to a commutative diagram
  \[
    \begin{tikzcd}[column sep=tiny,row sep=small]
      F_{T}A \arrow{ddd} \arrow{dr} \arrow{rrr} & & &  F_{T}L_{B}A \arrow{rrr}\arrow{ddd}
      \arrow{dr} && & F_{T}B \arrow{dr} \arrow{ddd}\\
      & F_{T}C \arrow{dr} \arrow[crossing over]{rrr} & & &
      F_{T}L_{D}C \arrow[crossing over]{rrr} &&&  F_{T}D \arrow{ddd}\\
      & & F_{T}X \\
      F_{T}L_{Y}A \arrow{dr} \arrow{rrr}{\sim} & & &  F_{T}L_{Y}L_{B}A \arrow{rrr}
      \arrow{dr} && & F_{T}L_{Y}B \arrow{dr} \\
      & F_{T}L_{Y}C  \arrow[crossing over,leftarrow]{uuu}\arrow{dr}
      \arrow{rrr}{\sim} & & & F_{T}L_{Y}L_{D}C \arrow[crossing over,leftarrow]{uuu}
      \arrow{rrr} &&&  F_{T}L_{Y}D \\
      & & F_{T}L_{Y}X\arrow[crossing over,leftarrow]{uuu}
    \end{tikzcd}
  \]
  over $F_{T}Y$, where the two indicated morphisms are equivalences by
  Corollary~\ref{cor LL}. Inverting these, we can contract the diagram
  to
  \[
    \begin{tikzcd}
      F_{T}A \arrow{rr}\arrow{d} & & F_{T}L_{B}A  \arrow{rr} \arrow{d} \arrow{ddl}
      & & F_{T}B \arrow{d}\\
      F_{T}C \arrow[crossing over]{rr}  \arrow{dd} & &  F_{T}L_{D}C \arrow{rr}
      \arrow{d} & &F_{T}D \arrow{dd} \\
      & F_{T}L_{Y}A \arrow{r} \arrow{dr} & F_{T}L_{Y}C  \arrow{d} \\
      F_{T}X \arrow{rr} & & F_{T}L_{Y}X \arrow{rr} & & F_{T}Y
    \end{tikzcd}
    \]
  where we see both the composite of the diagrams for the two squares
  and the diagram for the composite square, as required.  
\end{proof}

\begin{cor}
  A commutative square
  \[
    \begin{tikzcd}
      F_{T}A \arrow{r} \arrow[d, swap, "\wr"] & F_{T}B \arrow{d}{\wr} \\
      F_{T}C \arrow{r} & F_{T}D
    \end{tikzcd}
  \]
  in $\mathcal{K}(T)$, where the vertical morphisms are equivalences,
  can be extended to a commutative diagram
  \[
    \begin{tikzcd}
      F_{T}A \arrow{r} \arrow[d, swap, "\wr"] & F_{T}L_{B}A \arrow{r} \arrow{d}{\wr} & F_{T}B \arrow{d}{\wr} \\
      F_{T}C \arrow{r} &  F_{T}L_{D}C \arrow{r} &  F_{T}D
    \end{tikzcd}
  \]
  where the middle vertical map is also an equivalence.
\end{cor}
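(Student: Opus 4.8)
The plan is to deduce this directly from Proposition~\ref{propn:canfactfunct}, using its compatibility with composition together with its explicit behaviour on degenerate squares. First I would apply Proposition~\ref{propn:canfactfunct} to the given square; its output is precisely a commutative diagram of the claimed shape, with middle column the canonical factorizations $F_{T}A \to F_{T}L_{B}A \to F_{T}B$ and $F_{T}C \to F_{T}L_{D}C \to F_{T}D$. Thus the only thing left to verify is that the induced middle vertical map $m \colon F_{T}L_{B}A \to F_{T}L_{D}C$ is an equivalence.

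To this end, write $g \colon F_{T}A \isoto F_{T}C$ and $h \colon F_{T}B \isoto F_{T}D$ for the two vertical equivalences, and choose inverses $g^{-1}$ and $h^{-1}$. Since the original square commutes and $g,h$ are equivalences, the ``inverse'' square with horizontal maps $F_{T}C \to F_{T}D$ on top and $F_{T}A \to F_{T}B$ on the bottom, and vertical maps $g^{-1}, h^{-1}$, again commutes. Applying Proposition~\ref{propn:canfactfunct} to it produces a middle vertical map $m' \colon F_{T}L_{D}C \to F_{T}L_{B}A$.

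The key step is to stack the two squares. Stacking the original square on top of the inverse square gives a rectangle whose left and right vertical composites are $g^{-1} \circ g \simeq \id$ and $h^{-1} \circ h \simeq \id$, so that the composite square is the degenerate square on $F_{T}A \to F_{T}B$. By the compatibility with composition in Proposition~\ref{propn:canfactfunct}, the vertical composite of the two associated diagrams is the diagram associated to this composite square; in particular the composite $m' \circ m$ of the two middle maps agrees with the middle map of the diagram associated to a degenerate square, which by the explicit description in Proposition~\ref{propn:canfactfunct} is the identity. Hence $m' \circ m \simeq \id_{F_{T}L_{B}A}$. Stacking the two squares in the opposite order shows in exactly the same way that $m \circ m' \simeq \id_{F_{T}L_{D}C}$. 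Therefore $m$ is an equivalence with inverse $m'$, which completes the proof.

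I expect the main obstacle to be bookkeeping rather than ideas: one must ensure that the ``inverse'' square commutes coherently (not merely up to a non-canonical homotopy) so that Proposition~\ref{propn:canfactfunct} genuinely applies, and that the two stackings really yield the degenerate squares once $g^{-1}g$ and $h^{-1}h$ are identified with identities. As everything takes place in the $\infty$-category $\mathcal{K}(T)$, these identifications require some care to arrange coherently, but no input beyond Proposition~\ref{propn:canfactfunct} is needed.
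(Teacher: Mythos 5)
Your argument is correct and is exactly the one the paper intends: the paper's proof simply says the statement "follows immediately from the compatibility of the diagrams in Proposition~\ref{propn:canfactfunct} with composition and identities," and your stacking of the square with its inverse square in both orders, combined with the explicit description on degenerate squares, is the standard way to unpack that remark. No gaps; the coherence bookkeeping you flag is genuine but routine.
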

\begin{proof}
  This follows immediately from the compatibility of the diagrams in
  Proposition~\ref{propn:canfactfunct} with composition and identities.
\end{proof}
In other words, the canonical factorizations in $\mathcal{K}(T)$ are
invariant under equivalences. This means the following conditions on
morphisms are well-defined:
\begin{defn}
  We say a morphism $\phi \colon F_{T}A \to F_{T}B$ in
  $\mathcal{K}(T)$ with canonical factorization
  \[ F_{T}A \to F_{T}L_{B}A \to F_{T}B\]
  is \emph{inert} if the morphism $F_{T}A \to F_{T}L_{B}A$ in the
  canonical factorization is an equivalence, and \emph{active} if the
  morphism $F_{T}L_{B}A \to F_{T}B$ in the canonical factorization is
  an equivalence.
\end{defn}
Our goal in the rest of this section is to show that the active and
inert morphisms form a factorization system on $\mathcal{K}(T)$. We
start with some observations about equivalences in $\mathcal{K}(T)$
that will lead to a simpler characterization of the active maps.

\begin{lemma}\label{lem Fconservative}
  If $T$ is a polynomial monad then the free functor $F_{T}$ is
  conservative: if $F_{T}(\phi)$ is an equivalence for some
  $\phi \colon X \to Y$ in $\mathcal{S}^{\mathcal{I}}$ then $\phi$ is
  an equivalence.
\end{lemma}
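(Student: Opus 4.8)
The plan is to reduce the claim to the fact that the unit of a polynomial monad is a cartesian natural transformation. First I would use that the forgetful functor $U_{T} \colon \Alg_{T}(\mathcal{S}^{\mathcal{I}}) \to \mathcal{S}^{\mathcal{I}}$ is monadic, and hence conservative, so that it detects equivalences. Therefore, if $F_{T}(\phi)$ is an equivalence, then so is $U_{T}F_{T}(\phi)$. Since $U_{T}F_{T}$ is by definition the underlying endofunctor $T$ of the monad, this says precisely that $T(\phi) \colon T(X) \to T(Y)$ is an equivalence in $\mathcal{S}^{\mathcal{I}}$.

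Next I would invoke that $T$ is polynomial, hence in particular cartesian, which means its unit transformation $\eta \colon \id \to T$ is a cartesian natural transformation. Applied to $\phi \colon X \to Y$, this gives that the naturality square
\[
\begin{tikzcd}
X \arrow{r}{\phi} \arrow[d, swap, "\eta_{X}"] & Y \arrow{d}{\eta_{Y}} \\
T(X) \arrow{r}{T(\phi)} & T(Y)
\end{tikzcd}
\]
is cartesian, i.e.\ a pullback. Since $T(\phi)$ is an equivalence by the previous step, and equivalences are stable under pullback, the top horizontal map $\phi$ is also an equivalence, which is what we wanted to show.

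There is essentially no serious obstacle here beyond correctly locating the needed input: the only properties of $T$ that enter are conservativity of the monadic forgetful functor and cartesianness of the unit, both of which are immediate from the definition of a polynomial monad. It is worth noting that the local right adjoint property of $T$ plays no role in this particular statement; it is purely the cartesian structure of the monad that is being used.
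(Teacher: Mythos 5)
Your proof is correct, and it takes a genuinely different and more elementary route than the paper's. The paper argues via the slice functor $F_{Y} \colon \mathcal{S}^{\mathcal{I}}_{/Y} \to \mathcal{K}(T)_{/F_{T}Y}$: this functor is fully faithful (because the unit map $Y \to TY$ is generic), so the inverse of $F_{T}(\phi)$, viewed as a morphism in $\mathcal{K}(T)_{/F_{T}Y}$ between objects in the image of $F_{Y}$, lifts to $\mathcal{S}^{\mathcal{I}}_{/Y}$ and exhibits $\phi$ as an equivalence. That argument rests on the generic--free machinery, which formally invokes the local right adjoint property of $T$ (via the left adjoints $L_{*}$ and the genericity of the unit). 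Your argument bypasses all of this: conservativity of the monadic forgetful functor $U_{T}$ reduces the claim to $T(\phi)$ being an equivalence, and then the cartesian naturality square of the unit exhibits $\phi$ as a pullback of $T(\phi)$ along $\eta_{Y}$, hence an equivalence. Your closing observation is accurate and worth retaining: only the cartesianness of the unit is used, not the local right adjoint condition, so your proof applies verbatim to any cartesian monad. What the paper's approach buys in exchange is that the fully faithfulness of $F_{Y}$ is needed anyway for the surrounding development (e.g.\ Lemma~\ref{lem:freeunique} and the identification of inert morphisms), so the conservativity statement falls out of infrastructure already in place.
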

\begin{proof}
  The functor $F_{Y}\colon
  \mathcal{S}^{\mathcal{I}}_{/Y} \to \mathcal{K}(T)_{/F_{T}Y}$ is
  fully faithful by Remark~\ref{rmk:FXfullyff}. The inverse of
  $F_{T}\phi$ gives a morphism
  \[
    \begin{tikzcd}
      F_{T}Y \arrow[equals]{dr} \arrow{rr}{(F_{T}\phi)^{-1}}  & &
      F_{T}X  \arrow{dl}{F_{T}\phi}\\
       & F_{T}Y
    \end{tikzcd}
    \]
  in $\mathcal{K}(T)_{/F_{T}Y}$ between objects in the image of
  $F_{Y}$, hence it is also in the image of $F_{Y}$ and lifts to an
  equivalence in $\mathcal{S}^{\mathcal{I}}_{/Y}$ by faithfulness.
\end{proof}

\begin{lemma}\label{lem:freeunique}
  Given a commutative triangle
  \[
  \begin{tikzcd}
  FA \arrow{rr}{\phi} \arrow{dr}[swap]{\alpha} & & FB \arrow{dl}{\beta}
  \\
  & FC,
  \end{tikzcd}
  \]
  and morphisms $a \colon A \to C$ and $b \colon B \to C$ with
  equivalences $\alpha \simeq F(a)$ and $\beta \simeq F(b)$, then the
  triangle lifts to a unique commutative triangle
  \[
  \begin{tikzcd}
  A \arrow{rr}{f} \arrow{dr}[swap]{a} & & B \arrow{dl}{b}
  \\
  & C.
  \end{tikzcd}
  \]
\end{lemma}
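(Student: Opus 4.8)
The plan is to deduce the statement directly from the full faithfulness of the functor $F_{C} \colon \mathcal{S}^{\mathcal{I}}_{/C} \to \mathcal{K}(T)_{/FC}$ established in Remark~\ref{rmk:FXfullyff}. The key observation is that the commutative triangle exhibiting $\phi$ over $FC$ is precisely the data of a morphism in the slice \icat{} $\mathcal{K}(T)_{/FC}$ from the object $(FA, \alpha)$ to the object $(FB, \beta)$, namely $\phi$ together with the homotopy $\beta \circ \phi \simeq \alpha$. So the whole content of the lemma is that such a morphism lifts uniquely to $\mathcal{S}^{\mathcal{I}}_{/C}$.

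First I would use the given equivalences $\alpha \simeq F(a) = F_{C}(a)$ and $\beta \simeq F(b) = F_{C}(b)$ to identify the source and target of this morphism with objects in the essential image of $F_{C}$; here I use only that $F_{C}$ sends $(A, a \colon A \to C)$ to $(FA, F(a))$, and likewise for $b$. Since $F_{C}$ is fully faithful, the induced map of mapping spaces
\[ \Map_{\mathcal{S}^{\mathcal{I}}_{/C}}(a, b) \to
   \Map_{\mathcal{K}(T)_{/FC}}(F_{C}(a), F_{C}(b)) \]
is an equivalence. Transporting $\phi$ across the identifications above gives a point of the target; its preimage under this equivalence is therefore a contractible space. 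Any point of this preimage is a morphism $f \colon A \to B$ in $\mathcal{S}^{\mathcal{I}}_{/C}$, that is, a commutative triangle with $b \circ f \simeq a$, whose image under $F_{C}$ recovers the original triangle. The contractibility of the preimage is exactly the asserted uniqueness.

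The only point requiring care is the bookkeeping of the equivalences $\alpha \simeq F(a)$ and $\beta \simeq F(b)$ when transporting $\phi$ into $\Map_{\mathcal{K}(T)_{/FC}}(F_{C}(a), F_{C}(b))$, but this is a purely formal manipulation in the slice \icat{} and presents no real obstacle: the substance of the statement is already contained in the full faithfulness of $F_{C}$, so no further input about the monad $T$ is needed.
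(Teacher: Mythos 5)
Your proof is correct and is essentially identical to the paper's: both arguments reduce the statement to the full faithfulness of $F_{C}\colon \mathcal{S}^{\mathcal{I}}_{/C} \to \mathcal{K}(T)_{/F_{T}C}$ from Remark~\ref{rmk:FXfullyff}, viewing the given triangle as a morphism in the slice between objects in the image of $F_{C}$. Your write-up just spells out the mapping-space equivalence in more detail than the paper does.
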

\begin{proof}
  The functor
  $F_{C}\colon \mathcal{S}^{\mathcal{I}}_{/C} \to
  \mathcal{K}(T)_{/F_{T}C}$ is fully faithful by
  Remark~\ref{rmk:FXfullyff}. This immediately implies the result,
  since the first triangle is precisely a morphism in
  $\mathcal{K}(T)_{/F_{T}C}$ between objects in the image of $F_{C}$.
\end{proof}

\begin{lemma}\label{lem equfree}
  Every equivalence $\phi\colon F_TX\isoto F_TY$ is adjoint to a
  generic map. 
\end{lemma}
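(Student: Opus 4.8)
The plan is to show that an equivalence $\phi\colon F_{T}X \isoto F_{T}Y$ is \emph{active}, since by the discussion preceding the theorem the active morphisms in $\mathcal{K}(T)$ are precisely those adjoint to generic maps. I will argue this directly by identifying the relevant adjoint map. Let $\phi'\colon X \to TY$ denote the morphism adjoint to $\phi$ under the free--forgetful adjunction; by definition $\phi'$ is generic exactly when the adjoint morphism $L_{Y}X \to Y$ is an equivalence. This morphism is nothing but $\mathcal{L}_{Y}(\phi)$, where $\mathcal{L}_{Y}\dashv F_{Y}$ is the adjunction between $\mathcal{S}^{\mathcal{I}}_{/Y}$ and $\mathcal{K}(T)_{/F_{T}Y}$ from the corollary to Proposition~\ref{propn:SIXphiwc}, evaluated on $\phi$ viewed as an object of $\mathcal{K}(T)_{/F_{T}Y}$ via its own structure map.

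The key observation is that when $\phi$ is an equivalence, the object $(F_{T}X \xto{\phi} F_{T}Y)$ of $\mathcal{K}(T)_{/F_{T}Y}$ is terminal, hence equivalent to $\mathrm{id}_{F_{T}Y} = F_{Y}(\mathrm{id}_{Y})$. Since $\mathcal{L}_{Y}$ is a functor it preserves this equivalence of objects, so
\[ \mathcal{L}_{Y}(\phi) \simeq \mathcal{L}_{Y}F_{Y}(\mathrm{id}_{Y}) \simeq \mathrm{id}_{Y},\]
where the second equivalence holds because $F_{Y}$ is fully faithful, so the counit $\mathcal{L}_{Y}F_{Y} \to \mathrm{id}$ is an equivalence (Remark~\ref{rmk:FXfullyff}). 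Thus $L_{Y}X \to Y$ is an equivalence, which is exactly the condition that $\phi'$ be generic; hence $\phi$ is adjoint to a generic map.

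The argument is short, and the only delicate point is the bookkeeping of adjunctions: one must match the definition of a generic morphism (phrased via the local left adjoint $L_{*}$, or equivalently $L_{Y}$ by the discussion around Lemma~\ref{lem:genericfillX}) with the localization functor $\mathcal{L}_{Y}$ on the slice $\mathcal{K}(T)_{/F_{T}Y}$. Once $\mathcal{L}_{Y}(\phi)$ is correctly identified with the adjoint map $L_{Y}X \to Y$, the conclusion is immediate, polynomiality of $T$ entering only through the existence of $L_{Y}$ and of the adjunction $\mathcal{L}_{Y}\dashv F_{Y}$.
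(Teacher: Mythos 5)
Your proof is correct and takes essentially the same route as the paper: both arguments apply $\mathcal{L}_{Y}$ to $\phi$ regarded in the slice $\mathcal{K}(T)_{/F_{T}Y}$ and reduce to the fact that $L_{Y}Y \to Y$ is an equivalence --- the paper cites Lemma~\ref{lem:unitmultgen}(i) directly, while you route through the counit equivalence of Remark~\ref{rmk:FXfullyff}, which rests on the same fact. The identification of $\mathcal{L}_{Y}(\phi)$ with the adjoint map $L_{Y}X \to Y$, which you rightly flag as the only delicate bookkeeping, is exactly the step the paper relies on as well.
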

\begin{proof}
  Regarding $\phi$ as a morphism in $\mathcal{K}(T)_{/F_{T}Y}$, we
  apply $\mathcal{L}_{Y}$ to get a commutative triangle
  \[
    \begin{tikzcd}
      L_{Y}X \ar[rr, "\mathcal{L}_{Y}\phi"{above}, "\sim"{below}]
      \arrow{dr} & & L_{Y}Y
      \arrow{dl}{\sim} \\
      & Y,
    \end{tikzcd}
  \]
  where the right diagonal map is an equivalence by
  Lemma~\ref{lem:unitmultgen}(i) and the horizontal map is an
  equivalence by the functoriality of $\mathcal{L}_{Y}$. Hence the
  left diagonal map is also an equivalence, which is precisely the
  condition for the map $X \to TY$ adjoint to $\phi$ to be generic.
\end{proof}

\begin{lemma}\label{lem:actgen}
  A morphism $\phi \colon F_{T}X \to F_{T}Y$ is active \IFF{} the adjoint morphism
  $\phi' \colon X \to TY$ is generic.
\end{lemma}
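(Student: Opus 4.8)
The plan is to unwind the definition of ``active'' until it becomes literally the definition of a generic morphism, with conservativity of $F_T$ as the single non-formal input. First I would set $\phi' \colon X \to TY$ to be the morphism adjoint to $\phi$ under the Kleisli adjunction $F_T \dashv U_T$, and take its generic--free factorization $X \xto{\eta} T(L_Y X) \xto{T\psi} TY$ as in Remark~\ref{rmk:genericfact}, where $\psi \colon L_Y X \to Y$ is the adjunct of $\phi'$ (identifying $L_Y X \simeq L_* X$ as discussed below). By the construction of the canonical factorization at the start of this section, the canonical factorization of $\phi$ is then
\[ F_T X \to F_T L_Y X \xto{F_T\psi} F_T Y, \]
whose active part is exactly the free morphism $F_T(\psi)$. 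To confirm this identification I would check that the Kleisli composite of the unit map $X \to T(L_Y X)$ with $F_T(\psi)$, the latter being adjoint to $\eta_Y \circ \psi$, recovers $\phi'$, using the monad identity $\mu \circ T\eta \simeq \id$.

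Granting this, the equivalence is immediate. By definition $\phi$ is active precisely when $F_T(\psi)$ is an equivalence, and since $F_T$ is conservative by Lemma~\ref{lem Fconservative}, this happens if and only if $\psi$ is an equivalence. As $\psi$ is the adjunct of $\phi'$, the definition of a generic morphism says exactly that $\psi$ is an equivalence if and only if $\phi'$ is $T$-generic; chaining the biconditionals gives the lemma. In place of conservativity one could equally invoke full faithfulness of the relative free functor $F_Y \colon \mathcal{S}^{\mathcal{I}}_{/Y} \to \mathcal{K}(T)_{/F_T Y}$ from Remark~\ref{rmk:FXfullyff}, viewing $F_T(\psi)$ as the image of the object $\psi \in \mathcal{S}^{\mathcal{I}}_{/Y}$.

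I expect the only delicate point to be the bookkeeping of the first paragraph: correctly identifying the active part of the canonical factorization with a free morphism on the adjunct $\psi$, and checking that genericity phrased through the relative left adjoint $L_Y$ agrees with the definition phrased through the absolute $L_*$. This last compatibility is already implicit in the reasoning around Lemma~\ref{lem:genericfillX} (the identity $\xi_! L_Y \simeq L_* F(\xi)_!$ relating the two local left adjoints), so no new work should be needed. All the substantive content --- that the canonical factorization is well-defined up to equivalence and has the stated pieces --- was established earlier in the section, so beyond this unwinding I anticipate no genuine obstacle.
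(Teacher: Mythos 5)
Your proof is correct and follows essentially the same route as the paper's: identify the second (free) piece of the canonical factorization of $\phi$ with $F_T$ applied to the adjunct $\psi \colon L_Y X \to Y$ of $\phi'$, then use conservativity of $F_T$ (Lemma~\ref{lem Fconservative}) to reduce the condition defining activeness of $\phi$ to $\psi$ being an equivalence, which is exactly the definition of $\phi'$ being generic. One terminological slip worth fixing: the free morphism $F_T(\psi)$ is the \emph{inert} part of the canonical factorization, not the ``active part'' --- though this does not affect your argument, since the definition of ``active'' is precisely that this second, inert piece is an equivalence, and that is how you use it.
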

\begin{proof}
  Let $\phi'' \colon L_{Y}X \to Y$ be the map adjoint to $\phi'$.
  By definition, $\phi$ is active \IFF{} in the canonical
  factorization
  \[ F_{T}X \xto{\phi_{a}} F_{T}L_{Y}X \xto{\phi_{i}} F_{T}Y,\] the
  map $\phi_{i} = F_{T}(\phi'')$ is an equivalence. By Lemma~\ref{lem
    Fconservative} this happens \IFF{} $\phi''$ is an equivalence,
  which is precisely the condition for $\phi'$ to be generic.
\end{proof}

\begin{lemma}\label{lem:freeint}
  For any morphism $\phi \colon X \to Y$ in
  $\mathcal{S}^{\mathcal{I}}$, the free morphism $F_{T}(\phi) \colon F_{T}X
  \to F_{T}Y$ is inert.
\end{lemma}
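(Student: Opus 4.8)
The plan is to compute the canonical factorization of $F_{T}(\phi)$ directly from the generic--free factorization of its adjoint transpose and to recognize that its active part is trivial. First I would pass to the Kleisli transpose: under the equivalence $\Map_{\mathcal{K}(T)}(F_{T}X, F_{T}Y) \simeq \Map_{\mathcal{S}^{\mathcal{I}}}(X, TY)$, the free morphism $F_{T}(\phi)$ corresponds to the composite $X \xto{\phi} Y \xto{\eta^{T}_{Y}} TY$, where $\eta^{T}$ denotes the unit of the monad $T$ (the adjunction unit $\id \to U_{T}F_{T} = T$). By definition the canonical factorization $F_{T}X \to F_{T}L_{Y}X \to F_{T}Y$ is the transpose of the generic--free factorization of this map $X \to TY$, so it suffices to identify the latter.

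The key observation is naturality of the unit: the naturality square of $\eta^{T} \colon \id \to T$ applied to $\phi$ gives an identification $\eta^{T}_{Y} \circ \phi \simeq T(\phi) \circ \eta^{T}_{X}$. Here $\eta^{T}_{X} \colon X \to TX$ is generic by Lemma~\ref{lem:unitmultgen}(i), while $T(\phi) \colon TX \to TY$ lies in the image of $T$, so this exhibits $\eta^{T}_{Y}\phi$ as a generic morphism followed by a free one. Since the generic--free factorization is unique (Remark~\ref{rmk:genericfact}), I can conclude that $L_{Y}X \simeq X$, that the generic part is $\eta^{T}_{X}$, and that the free part is $T(\phi)$. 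Transposing back, the active part $F_{T}X \to F_{T}L_{Y}X \simeq F_{T}X$ of the canonical factorization is the Kleisli morphism adjoint to $\eta^{T}_{X}$, which is the identity of $F_{T}X$; in particular it is an equivalence, so $F_{T}(\phi)$ is inert.

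I expect the only point requiring care to be the bookkeeping that makes ``the transpose of the generic--free factorization'' literally match the canonical factorization appearing in the definition of inert morphisms. Concretely, I would check that the codomain-relative left adjoint $L_{Y}$ (left adjoint to $T_{Y} \colon \mathcal{S}^{\mathcal{I}}_{/Y} \to \mathcal{S}^{\mathcal{I}}_{/TY}$) produces, when applied to $\eta^{T}_{Y}\phi \colon X \to TY$, exactly the generic part of the factorization above --- this is the compatibility between $L_{Y}$ and the absolute construction $L_{*}$ used in the generic--free factorization, which is precisely the content exploited in the proof of Lemma~\ref{lem:genericfillX}. Once this identification is in place, uniqueness of the factorization does all the work, and no further computation is needed.
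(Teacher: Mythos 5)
Your argument is correct and is essentially the paper's own proof in different clothing: both transpose $F_T(\phi)$ to the map $X \xto{\eta^T_X} TX \xto{T\phi} TY$ via naturality of the unit, invoke Lemma~\ref{lem:unitmultgen}(i) to see that $\eta^T_X$ is generic, and conclude from uniqueness of the generic--free factorization that the active part of the canonical factorization is an equivalence. The compatibility point you flag between $L_Y$ and $L_*$ is real but is exactly the reindexing already established in the proof of Lemma~\ref{lem:genericfillX}, and the paper's proof handles it the same way by passing through adjoint triangles over $Y$.
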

\begin{proof}
  The commutative triangle 
  \[
  \begin{tikzcd}
  F_{T}X \arrow[d, equal] \arrow[rd, "F_T\phi"] & \\
  F_{T}X \arrow[r, "F_T\phi"{below}] & F_{T}Y
  \end{tikzcd}
  \]
  is adjoint to 
  \[
  \begin{tikzcd}
  X \arrow[d, "\eta_X", swap] \arrow[rd] & \\
  TX \arrow[r, "T\phi"] & TY
  \end{tikzcd}
  \]
  which is in turn adjoint to 
  \[
  \begin{tikzcd}
  L_*X \arrow[d, "\lambda" swap, "\wr"] \arrow[rd] & \\
  X \arrow[r, "\phi"] & Y,
  \end{tikzcd}
  \]
  where $\lambda$ is an equivalence since the unit map $\eta_X$ is generic by Lemma~\ref{lem:unitmultgen}(i).
  By adjointness we see that $F_T$ takes the last triangle to the right triangle in the commutative diagram
  \[
  \begin{tikzcd}
  F_{T}X \arrow[r] \arrow[rd, equal] & F_{T}L_*X \arrow[d, "F_{T}\lambda" swap, "\wr"] \arrow[rd] & \\
  & F_{T}X \arrow[r, "F_{T}\phi"] & F_{T}Y,
  \end{tikzcd}
  \]
  where the upper horizontal map is adjoint to the unit $X\to TL_*X$;
  this is an equivalence as $F_{T}\lambda$ is one. By definition the upper horizontal map and the
  right diagonal map give the canonical factorization of $F_{T}\phi$,
  hence $F_{T}\phi$ is inert.
\end{proof}

\begin{warning}\label{warn equgen}
  Note, however, that it is \emph{not} necessarily true that every
  inert map is of the form $F_{T}(\phi)$ for $\phi$ a morphism in
  $\mathcal{S}^{\mathcal{I}}$: The equivalences in
  $\Alg_{T}(\mathcal{S}^{\mathcal{I}})$ need not all be in the image of
  $\mathcal{S}^{\mathcal{I}}$.
\end{warning}

\begin{remark}\label{rmk:canfact}
  By Lemmas~\ref{lem:actgen} and \ref{lem:freeint}, the canonical
  factorization of a morphism $\phi \colon F_{T}A \to F_{T}B$ as $F_{T}A \to F_{T}L_{*}A \to F_{T}B$
  is a factorization of $\phi$ as an active morphism followed by an
  inert morphism.
\end{remark}

\begin{lemma}\label{lem:Flift}
  Given a commutative square
  \[
    \begin{tikzcd}
      F_{T}A \arrow{r}{\phi} \arrow[d,swap, "\psi"] & F_{T}B
      \arrow{d}{F_{T}\beta} \\
      F_{T}C \arrow{r}{F_{T}\gamma} \arrow[dashed]{ur} & F_{T}D
    \end{tikzcd}
  \]
  where $\psi$ is active, there exists a unique diagonal filler, which
  is of the form $F_{T}(\alpha)$ for a unique commutative triangle
  \[
    \begin{tikzcd}
      C \arrow{rr}{\alpha} \arrow{dr}[swap]{\gamma} & & B \arrow{dl}{\beta}
      \\
       & D.
    \end{tikzcd}
  \]
\end{lemma}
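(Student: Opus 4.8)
The plan is to convert the lifting problem into an equivalent lifting problem in a slice $\infty$-category where it becomes transparent, using the adjunction $\mathcal{L}_D \dashv F_D$ and the fully faithfulness of $F_D$ established in Remark~\ref{rmk:FXfullyff}. The key observation is that $\psi$ being active means, by Lemma~\ref{lem:actgen}, that the adjoint map $\psi' \colon A \to TC$ is generic, i.e.\ the adjoint map $\mathcal{L}_C A \to C$ (equivalently $L_C A \to C$) is an equivalence.

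First I would regard the whole square as living in the slice $\mathcal{K}(T)_{/F_T D}$: the square is precisely a morphism from the object $(F_T A \xto{F_T\gamma \circ \psi} F_T D)$ to the object $(F_T B \xto{F_T\beta} F_T D)$, where the latter lies in the image of the fully faithful functor $F_D \colon \mathcal{S}^{\mathcal{I}}_{/D} \to \mathcal{K}(T)_{/F_T D}$. I would then apply the left adjoint $\mathcal{L}_D$ to the top horizontal map $\phi$. Because $F_D$ is fully faithful, producing the required diagonal filler $F_T C \to F_T B$ over $F_T D$ is equivalent, by the adjunction $\mathcal{L}_D \dashv F_D$, to producing a morphism $\mathcal{L}_D(F_T A) \to B$ over $D$ in $\mathcal{S}^{\mathcal{I}}_{/D}$ together with a compatibility; and since $F_D$ is fully faithful, uniqueness of the filler follows from uniqueness at the level of $\mathcal{S}^{\mathcal{I}}_{/D}$.

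The crucial simplification is that $\psi$ active forces the source to collapse: applying $\mathcal{L}_D$ to $\psi$, I would use Corollary~\ref{cor LL} (giving $L_D A \simeq L_D L_C A$) together with the fact that $L_C A \to C$ is an equivalence (genericity of $\psi'$) to identify $\mathcal{L}_D(F_T A) = L_D A \simeq L_D C$ over $D$. But $F_T C \to F_T D$ is free, i.e.\ $\mathcal{L}_D(F_T\gamma)$ recovers $\gamma \colon C \to D$ with $L_D C \isoto C$ by the counit computation of Remark~\ref{rmk:FXfullyff}. Hence after applying $\mathcal{L}_D$, the left vertical map becomes an equivalence $L_D A \isoto C$, and the lifting problem in $\mathcal{S}^{\mathcal{I}}_{/D}$ reduces to the trivial problem of filling a triangle whose one edge is an equivalence, which has a unique solution. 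Transporting this solution back through $F_D$ and contracting the equivalences yields the desired diagonal $F_T C \to F_T B$; and since both $F_T C \to F_T D$ and $F_T B \to F_T D$ are free, Lemma~\ref{lem:freeunique} guarantees that the filler is itself of the form $F_T(\alpha)$ for a unique triangle of $\alpha, \gamma, \beta$ in $\mathcal{S}^{\mathcal{I}}_{/D}$.

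The main obstacle I anticipate is the careful bookkeeping of the equivalences introduced by Corollary~\ref{cor LL} and by the counit $\mathcal{L}_D F_D \to \id$: I must check that the identifications $L_D A \simeq L_D L_C A \simeq L_D C \simeq C$ are compatible with the structure maps to $D$ and with the given square, so that the reduced lifting problem really is the trivial one and the transported filler genuinely fits into the original square. This coherence is exactly the kind of argument already carried out in Proposition~\ref{propn:canfactfunct}, so I would lean on its naturality statements (compatibility of the canonical factorization with composition and with equivalences, via the preceding Corollary) rather than reproving the coherences by hand. Uniqueness is then automatic from the fully faithfulness of $F_D$ together with Lemma~\ref{lem:freeunique}.
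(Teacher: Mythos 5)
Your proposal is correct and follows essentially the same route as the paper: pass to the adjoint square via the local left adjoint (the paper does this ``as in Lemma~\ref{lem:genericfillX}'', you do it via $\mathcal{L}_D \dashv F_D$ and Corollary~\ref{cor LL}), observe that activity of $\psi$ makes the left vertical of the adjoint square an equivalence so the reduced lifting problem is trivial, and invoke Lemma~\ref{lem:freeunique} to see the filler has the form $F_T(\alpha)$. Your explicit appeal to Corollary~\ref{cor LL} and the full faithfulness of $F_D$ just makes visible the bookkeeping that the paper delegates to the earlier lemma.
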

\begin{proof}
  As in Lemma~\ref{lem:genericfillX}, the square is adjoint to
  \[
    \begin{tikzcd}
      A \arrow{r}{\phi'} \arrow[d,swap, "\psi'"] & TB
      \arrow{d}{T\beta} \\
      TC \arrow{r}{T\gamma}  & TD,
    \end{tikzcd}
  \]
  which in turn is adjoint to
  \[
    \begin{tikzcd}
      L_{*}A \arrow{r}{\phi''} \arrow[d,swap, "\wr"] & B
      \arrow{d}{\beta} \\
      C \arrow{r}{\gamma}  & D,
    \end{tikzcd}
  \]
  where the left vertical morphism is an equivalence since $\psi$ is
  active and this implies that $\psi'$ is generic by
  Lemma~\ref{lem:actgen}. This square has a unique filler, which in
  turn corresponds to a unique filler in the original square, since we
  saw in Lemma~\ref{lem:freeunique} that all fillers are uniquely of
  this form.
\end{proof}

\begin{proof}[Proof of Theorem~\ref{Klfact}]
  We check the requirements of \cite[Definition 5.2.8.8]{ht} (which
  are equivalent to our previous definition of a factorization system
  by \cite[Proposition 5.2.8.17]{ht}). We must thus check:
  \begin{enumerate}[(1)]
  \item The classes of inert and active maps are closed under
    retracts.
  \item The active maps are left orthogonal to the inert maps, \ie{} for every commutative square
    \[
      \begin{tikzcd}
        F_{T}A \arrow{r}{\alpha} \arrow[d,swap, "\beta"] & F_{T}B \arrow{d}{\gamma} \\
        F_{T}C \arrow{r}{\delta} \arrow[dashed]{ur} & F_{T}D
      \end{tikzcd}
    \]
    where $\beta$ is active and $\gamma$ is inert, there exists a unique
    filler.
  \item Every morphism can be factored as an active map followed by an
    inert map. 
  \end{enumerate}
  Condition (3) is by now clear, since by Remark~\ref{rmk:canfact} the canonical factorization gives an
  active-inert factorization. For condition (1), suppose we have a retract diagram
  \[
    \begin{tikzcd}
      F_{T}A \arrow{r} \arrow[d, swap, "\phi"] \arrow[equals,bend left]{rr} & F_{T}A'
      \arrow{r} \arrow{d}{\psi} & F_{T}A \arrow{d}{\phi} \\
      F_{T}B \arrow{r} \arrow[equals,bend right]{rr} & F_{T}B' \arrow{r} & F_{T}B.
    \end{tikzcd}
  \]
  By applying Proposition~\ref{propn:canfactfunct}  to the two squares,
  we obtain a commutative diagram
  \[
    \begin{tikzcd}
      F_{T}A \arrow{r} \arrow[d, swap, "\phi_a"] \arrow[equals,bend left]{rr} & F_{T}A'
      \arrow{r} \arrow{d}{\psi_{a}} & F_{T}A \arrow{d}{\phi_a} \\
      F_{T}L_{*}A \arrow[r, "f"] \arrow[d, swap, "\phi_i"] & F_{T}L_{*}A' \arrow[r, "g"] \arrow{d}{\psi_{i}} &
      F_{T}L_{*}A \arrow{d}{\phi_i} \\
      F_{T}B \arrow{r} \arrow[equals,bend right]{rr} & F_{T}B' \arrow{r} & F_{T}B,
    \end{tikzcd}
  \]
  relating the canonical factorizations of $\phi$ and $\psi$, where
  the compatibility with composition and identities in
  Proposition~\ref{propn:canfactfunct} implies that $gf \simeq
  \id$. If $\psi$ is active then by definition the map labelled
  $\psi_{i}$ is an equivalence, and so $\phi_{i}$ is a retract of an
  equivalence; hence $\phi_{i}$ is also an equivalence, which means
  $\phi$ is active. The same argument shows that inert morphisms are
  also closed under retracts.
  
  It remains to prove (2). Consider a commutative square
  \[
    \begin{tikzcd}
      F_{T}A \arrow{r}{\alpha} \arrow[d,swap, "\beta"] & F_{T}B \arrow{d}{\gamma} \\
      F_{T}C \arrow{r}{\delta} & F_{T}D
    \end{tikzcd}
  \]
  with $\beta$ active and $\gamma$ inert. Including the canonical
  factorizations of $\gamma$ and $\delta$, we get a diagram
  \[
    \begin{tikzcd}
      F_{T}A \arrow{rr}{\alpha} \arrow[dd,swap, "\beta"] & & F_{T}B
      \arrow{d}{\wr} \\
      & & F_{T}L_{*}B \arrow{d} \\
      F_{T}C \arrow{r} \arrow[dashed]{urr} & F_{T}L_{*}C \arrow{r}
      \arrow[dashed]{ur} & F_{T}D,
    \end{tikzcd}
  \]
  and since the map $F_{T}B \to F_{T}L_{*}B$ is an equivalence
  (since $\gamma$ is inert), a lift in the original square
  corresponds to a lift $F_{T}C \to F_{T}L_{*}B$ here. Applying
  Lemma~\ref{lem:Flift} to the square
  \[
    \begin{tikzcd}
      F_{T}A \arrow{r} \arrow{d} & F_{T} L_{*}B \arrow{d} \\
      F_{T} L_{*}C \arrow{r} \arrow[dashed]{ur} & F_{T}D,
    \end{tikzcd}
  \]
  we see that there is a unique diagonal filler $F_{T}L_{*}C \to
  F_{T}L_{*}B$, which comes from a unique commutative triangle
  \[
    \begin{tikzcd}
      L_{*}C \arrow{rr} \arrow{dr} & & L_{*}B \arrow{dl} \\
      & D.
    \end{tikzcd}
  \]
  This gives in particular a lift in the original square, but now
  applying Lemma~\ref{lem:Flift} to a square
  \[
    \begin{tikzcd}
      F_{T}C \arrow{r} \arrow{d} & F_{T} L_{*}B \arrow{d} \\
      F_{T} L_{*}C \arrow{r} \arrow[dashed]{ur} & F_{T}D,
    \end{tikzcd}
  \]
  we see that any lift $F_{T}C \to F_{T}L_{*}B$ must factor
  through $F_{T}L_{*}C$ and so must be the lift we just
  constructed.
\end{proof}

Let us say that a morphism in $\mathcal{W}(T)$ is inert or active if
it corresponds to an inert or active morphisms in $\mathcal{K}(T)$
under the inclusion $\mathcal{W}(T)^{\op} \hookrightarrow
\mathcal{K}(T)$. Then the factorization system we constructed
restricts to one on $\mathcal{W}(T)$:
\begin{cor}\label{cor intactfact}
  The inert and active morphisms restrict to a factorization system on
  $\mathcal{W}(T)$.
\end{cor}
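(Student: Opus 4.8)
The plan is to use the standard fact that a factorization system on an \icat{} $\mathcal{C}$ restricts to one on a full subcategory $\mathcal{D} \subseteq \mathcal{C}$ as soon as every morphism of $\mathcal{D}$ admits a factorization whose intermediate object again lies in $\mathcal{D}$. Here $\mathcal{C} = \mathcal{K}(T)$ carries the active--inert factorization system from Theorem~\ref{Klfact}, and $\mathcal{D} = \mathcal{W}(T)^{\op}$ is the full subcategory spanned by the free algebras on the objects of $\mathcal{U}(T)^{\op}$. So first I would verify this restriction principle in the setting of \cite[Definition 5.2.8.8]{ht}: closure of the active and inert classes under retracts is inherited from $\mathcal{K}(T)$ because $\mathcal{W}(T)^{\op}$ is full, and left orthogonality of active maps to inert maps is likewise automatic, since any lifting problem between morphisms of $\mathcal{W}(T)^{\op}$ is already a lifting problem in $\mathcal{K}(T)$ whose (contractible) space of solutions is computed in the full subcategory. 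Thus the only genuine point is the existence of factorizations internal to $\mathcal{W}(T)^{\op}$.

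For this, the key step is to show that the canonical factorization of a morphism does not leave $\mathcal{W}(T)^{\op}$. Given $\phi \colon F_T X \to F_T Y$ with $X, Y \in \mathcal{U}(T)^{\op}$, Remark~\ref{rmk:canfact} identifies its active--inert factorization with the canonical factorization $F_T X \to F_T L_* X \to F_T Y$, whose intermediate object $F_T L_* X$ is the free algebra on $L_* X \in \mathcal{S}^{\mathcal{I}}$, where $L_* X$ arises from the generic--free factorization $X \to T(L_* X) \to TY$ of the adjoint map $X \to TY$. Since the first map $X \to T(L_* X)$ is $T$-generic and $X$ lies in $\mathcal{U}(T)^{\op}$, Proposition~\ref{propn:WTint}(ii) applies and yields $L_* X \in \mathcal{U}(T)^{\op}$. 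Hence $F_T L_* X \in \mathcal{W}(T)^{\op}$, so the factorization stays inside the full subcategory.

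Combining these observations completes the proof: the active and inert morphisms of $\mathcal{K}(T)$ that lie in $\mathcal{W}(T)^{\op}$ satisfy the retract, orthogonality, and factorization conditions of \cite[Definition 5.2.8.8]{ht}, and hence form a factorization system on $\mathcal{W}(T)$ by \cite[Proposition 5.2.8.17]{ht}. I expect the main (and really only) obstacle to be the stability of the intermediate object under the factorization --- but this is precisely what Proposition~\ref{propn:WTint}(ii) delivers once one recognizes the middle term $L_* X$ as the generic target of $X \to T(L_* X)$; the remaining axioms are formal consequences of the fullness of $\mathcal{W}(T)^{\op}$ in $\mathcal{K}(T)$.
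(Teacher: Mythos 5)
Your proof is correct and follows essentially the same route as the paper: both reduce the problem to showing that the intermediate object of the canonical (active--inert) factorization of a morphism in $\mathcal{W}(T)^{\op}$ again lies in $\mathcal{W}(T)^{\op}$, and both deduce this from Proposition~\ref{propn:WTint}(ii) applied to the generic map $X \to T(L_*X)$. The paper's proof is just terser, leaving implicit the observation you spell out that the retract and orthogonality axioms are inherited automatically by fullness.
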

\begin{proof}
  It is enough to show that for a morphism $F_{T}I \to F_{T}J$ in
  $\mathcal{W}(T)^{\op}$, if $F_{T}I \to F_{T}X \to F_{T}J$ is its
  active-inert factorization in $\mathcal{K}(T)$, then $F_{T}X$ also lies in
  $\mathcal{W}(T)^{\op}$. Since the canonical factorization is an
  active-inert factorization, this follows from
  Proposition~\ref{propn:WTint}(ii).
\end{proof}

\section{Patterns from Polynomial Monads}
\label{sec:poly1}

Suppose $T$ is a polynomial monad on $\mathcal{S}^{\mathcal{I}}$. In
the previous section we saw that the \icat{} $\mathcal{W}(T)$ has a
canonical inert--active factorization system. Using this we can define
a natural algebraic pattern structure on $\mathcal{W}(T)$ by taking
the elementary objects to be those of the form $F_{T}(I)$ with
$I \in \mathcal{S}^{\mathcal{I}}$ in the image of $\mathcal{I}^{\op}$
under the Yoneda embedding.

In this section we will study these algebraic patterns. We will see
that $\mathcal{W}(T)$ is always an extendable pattern, and that the
free Segal $\mathcal{W}(T)$-space monad is closely related to
the original monad $T$: there is a canonical morphism $T \to
T_{\mathcal{W}(T)}$ in $\PolyMnd$, which induces an equivalence on
\icats{} of algebras. 
Moreover, the
patterns $\mathcal{W}(T)$ are natural in $T$, and so determine 
a functor
\[ \mathfrak{P} \colon \PolyMnd \to \AlgPattSE;\]
the morphisms $T \to T_{\mathcal{W}(T)}$ then give a natural
transformation $\id \to \mathfrak{M}\mathfrak{P}$.

\begin{notation}\label{not:WTftrs}
  In the first part of this section we fix a polynomial monad $T$ on
  $\mathcal{S}^{\mathcal{I}}$. From our work in the previous two
  sections we then have the following commutative diagram, where it
  will be convenient to name the various functors as indicated:
  \[
    \begin{tikzcd}
      \mathcal{I} \arrow{r}{e} \arrow[hookrightarrow, d, swap, "i"] & \mathcal{W}(T)^{\el}
\arrow[hookrightarrow]{d}{i'} \\
\mathcal{U}(T) \arrow{r}{u} \arrow[dr, swap, "j"] & \mathcal{W}(T)^{\xint}
\arrow{d}{j'} \\
& \mathcal{W}(T).
\end{tikzcd}
\]
\end{notation}

\begin{propn}
  Let $\mathcal{K}(T)^{\xint}$ denote the subcategory of
  $\mathcal{K}(T)$ containing only the inert morphisms. Then the slice
  $\mathcal{K}(T)^{\xint}_{/F_{T}X}$ is equivalent to the full subcategory
  of $\mathcal{K}(T)_{/F_{T}X}$ spanned by the inert morphisms to $FX$.
  The functor $F_{X} \colon \mathcal{S}^{\mathcal{I}}_{/X} \to
  \mathcal{K}(T)_{/F_{T}X}$ restricts to an equivalence
  \[ \mathcal{S}^{\mathcal{I}}_{/X} \isoto
  \mathcal{K}(T)^{\xint}_{/F_{T}X} \]
  with inverse $\mathcal{L}_{X}$.
\end{propn}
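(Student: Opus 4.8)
The plan is to prove the two assertions in turn: first the comparison between the slice of the subcategory $\mathcal{K}(T)^{\xint}$ and the full subcategory of $\mathcal{K}(T)_{/F_{T}X}$ on inert morphisms, and then the equivalence induced by $F_{X}$, which will follow formally once the first part is in place.

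For the first assertion, I would study the canonical functor $\mathcal{K}(T)^{\xint}_{/F_{T}X} \to \mathcal{K}(T)_{/F_{T}X}$ induced by the subcategory inclusion $\mathcal{K}(T)^{\xint} \hookrightarrow \mathcal{K}(T)$. On objects this is the identity on inert morphisms to $F_{T}X$, so it is essentially surjective onto the full subcategory spanned by such morphisms. For full faithfulness, given inert morphisms $p \colon D \to F_{T}X$ and $q \colon D' \to F_{T}X$, the map on mapping spaces is the induced map of fibres, over $p$, of the commutative square comparing $q_{*} \colon \Map_{\mathcal{K}(T)^{\xint}}(D,D') \to \Map_{\mathcal{K}(T)^{\xint}}(D,F_{T}X)$ with its analogue in $\mathcal{K}(T)$. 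Since $\mathcal{K}(T)^{\xint} \hookrightarrow \mathcal{K}(T)$ is a subcategory inclusion, the vertical comparison maps are inclusions of components, and hence so is the induced map of fibres, being a base change of the inclusion of components $\Map_{\mathcal{K}(T)^{\xint}}(D,D') \hookrightarrow \Map_{\mathcal{K}(T)}(D,D')$. Thus the functor is fully faithful precisely when every $h \colon D \to D'$ in $\mathcal{K}(T)$ with $q \circ h \simeq p$ is already inert. This is exactly where the factorization system of Theorem~\ref{Klfact} enters: factoring $h \simeq h_{i} \circ h_{a}$ as an active morphism $h_{a}$ followed by an inert morphism $h_{i}$, and using that inert morphisms are closed under composition, the relation $p \simeq (q \circ h_{i}) \circ h_{a}$ exhibits an active--inert factorization of the inert morphism $p$; by uniqueness of factorizations its active part $h_{a}$ must be an equivalence, so $h$ is inert.

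For the second assertion, I would invoke that $F_{X} \colon \mathcal{S}^{\mathcal{I}}_{/X} \to \mathcal{K}(T)_{/F_{T}X}$ is a fully faithful right adjoint with left adjoint $\mathcal{L}_{X}$, as established in the corollary to Proposition~\ref{propn:SIXphiwc} together with Remark~\ref{rmk:FXfullyff}. A fully faithful right adjoint identifies its source with the reflective full subcategory of objects $\phi$ at which the unit $\phi \to F_{X}\mathcal{L}_{X}(\phi)$ is an equivalence, and on this subcategory $\mathcal{L}_{X}$ is the inverse. It then remains only to identify this essential image with the inert morphisms. By the description of the unit $\phi \to F_{X}\mathcal{L}_{X}(\phi)$ as the canonical factorization of $\phi$ (given in the remarks following Remark~\ref{rmk:FXfullyff}), the unit at $\phi \colon F_{T}A \to F_{T}X$ is the triangle whose horizontal edge is the active part $F_{T}A \to F_{T}L_{X}A$; this unit is an equivalence in $\mathcal{K}(T)_{/F_{T}X}$ exactly when that edge is an equivalence in $\mathcal{K}(T)$, which is by definition the condition that $\phi$ be inert. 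Since conversely every $F_{X}(f) = F_{T}(f)$ is inert by Lemma~\ref{lem:freeint}, the essential image of $F_{X}$ is precisely the full subcategory of inert morphisms to $F_{T}X$. Combining this with the first assertion identifies the essential image with $\mathcal{K}(T)^{\xint}_{/F_{T}X}$, so that $F_{X}$ and $\mathcal{L}_{X}$ restrict to mutually inverse equivalences $\mathcal{S}^{\mathcal{I}}_{/X} \isoto \mathcal{K}(T)^{\xint}_{/F_{T}X}$.

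The main obstacle I anticipate is the full faithfulness in the first assertion, where one must argue carefully at the level of mapping spaces that a triangle with inert legs automatically has an inert edge; once this factorization-system argument is settled, everything else is a formal consequence of the adjunction $\mathcal{L}_{X} \dashv F_{X}$ and the already-established identification of its unit with the canonical active--inert factorization, so the remaining steps are routine.
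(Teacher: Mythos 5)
Your proof is correct and follows essentially the same route as the paper: the left-cancellation property of inert morphisms supplied by the factorization system of Theorem~\ref{Klfact} identifies $\mathcal{K}(T)^{\xint}_{/F_{T}X}$ with the full subcategory of $\mathcal{K}(T)_{/F_{T}X}$ on inert morphisms, and the identification of the unit of $\mathcal{L}_{X} \dashv F_{X}$ with the canonical factorization shows that the essential image of the fully faithful right adjoint $F_{X}$ is exactly this subcategory. You merely spell out the cancellation argument and the mapping-space comparison in more detail than the paper does.
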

\begin{proof}
  It follows from the existence of the active--inert factorization
  system on $\mathcal{K}(T)$ that if we have a commutative triangle
  \[
    \begin{tikzcd}
      F_{T}A \arrow{dr} \arrow{rr} & & F_{T}B \arrow{dl} \\
       & F_{T}X
    \end{tikzcd}
  \]
  where the two diagonal morphisms are inert, then the horizontal
  morphism is also inert. This implies that
  $\mathcal{K}(T)_{/F_{T}X}^{\xint}$ is the full subcategory of
  $\mathcal{K}(T)_{/F_{T}X}$ spanned by the inert morphisms. Moreover,
  every inert morphism to $F_{T}X$ is equivalent to a free morphism in
  $\mathcal{K}(T)_{/F_{T}X}$, so this full subcategory consists
  precisely of the objects in the image of $F_{X}$. Since $F_{X}$ is
  fully faithful by Remark~\ref{rmk:FXfullyff}, it follows that the
  adjunction $\mathcal{L}_{X} \dashv F_{X}$ restricts to an
  equivalence between $\mathcal{S}^{\mathcal{I}}_{/X}$ and
  $\mathcal{K}(T)_{/F_{T}X}^{\xint}$.  
\end{proof}

Restricting this equivalence, we get the following:
\begin{cor}\label{cor:sliceeq}
  The functor $F_{X}^{\op}$ restricts to an equivalence
  \[ \mathcal{I}_{X/} \isoto \mathcal{W}(T)^{\el}_{F_{T}X/}\]
  for every $X \in \mathcal{U}(T)$. \qed
\end{cor}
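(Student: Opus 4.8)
The plan is to obtain the equivalence by restricting the equivalence $F_{X}\colon \mathcal{S}^{\mathcal{I}}_{/X}\isoto \mathcal{K}(T)^{\xint}_{/F_{T}X}$ of the preceding proposition to suitable full subcategories and then passing to opposites. First I would deal with the bookkeeping. Under the inclusion $\mathcal{W}(T)^{\op}\hookrightarrow \mathcal{K}(T)$ an inert morphism $F_{T}X\to E$ of $\mathcal{W}(T)$ with $E$ elementary corresponds to an inert morphism $E\to F_{T}X$ of $\mathcal{K}(T)$ whose source $E\simeq F_{T}I$ is free on a representable $I\in\mathcal{I}^{\op}$; hence, after passing to opposites, $\mathcal{W}(T)^{\el}_{F_{T}X/}$ is identified with the full subcategory of $\mathcal{K}(T)^{\xint}_{/F_{T}X}$ spanned by the inert morphisms with elementary source. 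Dually, since $X\in\mathcal{U}(T)$ and $\mathcal{I}\hookrightarrow \mathcal{U}(T)$, the category $\mathcal{I}_{X/}=\mathcal{I}\times_{\mathcal{U}(T)}\mathcal{U}(T)_{X/}$ is the full subcategory of $(\mathcal{S}^{\mathcal{I}}_{/X})^{\op}$ spanned by the objects $y(I)\to X$ with $I$ representable.

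Since $F_{X}^{\op}$ is an equivalence it restricts to an equivalence of $\mathcal{I}_{X/}$ onto its essential image, so it is enough to show that this image is exactly $\mathcal{W}(T)^{\el}_{F_{T}X/}$; equivalently, $F_{X}$ and its inverse $\mathcal{L}_{X}$ must match the objects with representable source with the inert objects with elementary source. One inclusion is immediate: $F_{X}(y(I)\to X)=(F_{T}I\to F_{T}X)$ has elementary source $F_{T}I$ by definition and inert structure map by Lemma~\ref{lem:freeint}. For the reverse inclusion, let $\gamma\colon E\to F_{T}X$ be inert with $E$ elementary; writing $E\simeq F_{T}I$ with $I$ representable we have $\mathcal{L}_{X}(\gamma)\simeq (L_{X}I\to X)$, and since $F_{X}\mathcal{L}_{X}(\gamma)\simeq\gamma$ its source satisfies $F_{T}L_{X}I\simeq E\simeq F_{T}I$. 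Everything therefore reduces to the reflection statement that $F_{T}L_{X}I\simeq F_{T}I$ forces $L_{X}I$ to be representable.

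The main obstacle is exactly this reflection statement, which I would isolate as follows: if $I$ is representable and $\theta\colon F_{T}I\isoto F_{T}B$ is an equivalence in $\Alg_{T}(\mathcal{S}^{\mathcal{I}})$, then $B$ is representable. To prove it I would use the generic--free machinery. By Lemma~\ref{lem equfree} (see also Lemma~\ref{lem:actgen}) the equivalence $\theta$ is adjoint to a generic morphism $\iota\colon I\to TB$ and its inverse to a generic morphism $\iota'\colon B\to TI$; Proposition~\ref{propn:WTint}(ii) applied to $\iota$ already gives $B\in\mathcal{U}(T)^{\op}$. To upgrade this to representability I would feed $\iota'$, together with the identity $\mu_{I}\circ T\iota'\circ\iota=\eta_{I}$ expressing that the round trip is the unit, into the universal property of generic morphisms (Lemma~\ref{lem:genericfillX}) in order to produce a lift $b\colon B\to I$ in $\mathcal{S}^{\mathcal{I}}$ of the inverse equivalence $\theta^{-1}$; once such a $b$ is found, $F_{T}b$ is an equivalence, so $b$ is an equivalence by conservativity of $F_{T}$ (Lemma~\ref{lem Fconservative}) and $B\simeq I$ is representable. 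A parallel route I would keep in mind uses that the unit of $T$ is cartesian, which exhibits each object $Y$ as the fibre $TY\times_{T*}*$ of $T(!_{Y})$ over $\eta_{*}$; one then tries to transport the underlying equivalence $TI\simeq TB$ through these pullback squares, the delicate point being compatibility with the maps to $T*$. This reflection property is the single genuinely non-formal step; granting it, the corollary follows by restricting the preceding proposition's equivalence as described above.
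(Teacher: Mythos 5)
Your reduction is the right one, and it follows the paper's (entirely tacit) proof: restrict the equivalence $F_{X}\colon \mathcal{S}^{\mathcal{I}}_{/X}\isoto \mathcal{K}(T)^{\xint}_{/F_{T}X}$ to the full subcategories of objects with representable source on one side and inert maps with elementary source on the other. You are also right that the only non-formal content is the reflection statement: if $F_{T}B\simeq F_{T}I$ with $I$ representable, then $B$ is representable. The gap is that neither of your two proposed arguments for this statement closes, and both fail for the same reason. To apply Lemma~\ref{lem:genericfillX} to the generic $\iota\colon I\to TB$ and produce $\gamma\colon B\to I$, you must first exhibit a commutative square whose other leg is some $\psi\colon I\to TI$ over a common $T(X)$; the only available candidate is $\psi=\eta_{I}$ over $X=*$, and that square commutes precisely when $T(!_{B})\circ\iota\simeq \eta_{*}\circ !_{I}$ as maps $I\to T*$, i.e.\ when $\iota$ lies over the unit point of $T*$ --- but since $\iota$ is generic and the unit is cartesian this already forces $B\simeq L_{*}(I\to T*)\simeq I$, which is the conclusion. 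The identity $\mu_{I}\circ T\iota'\circ\iota\simeq\eta_{I}$ that you propose to feed in does not break this circle: pushed to $T*$ it only re-records that $\iota'$ is generic, which you already have from Lemma~\ref{lem equfree}, and it says nothing about which fibre of $TB\to T*$ contains $\iota$. Your second route stalls at exactly the same point, as you half-acknowledge: $F_{T}(!_{B})\circ\theta$ and $F_{T}(!_{I})$ are in general genuinely different points of $\Map(I,T*)$, and their agreement is again equivalent to $B\simeq I$.

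Note that the statement cannot be soft: for incomplete polynomial monads $F_{T}$ really does identify non-equivalent objects of $\mathcal{U}(T)$ (this is the content of Remark~\ref{rem:W} and of conditions (4)--(5) in Proposition~\ref{prop complete}), so conservativity of $F_{T}$ plus formal adjunction juggling cannot suffice; some input from the monad structure beyond the single triangle identity you use is required. The information you leave on the table is the \emph{other} identity, $\mu_{B}\circ T\iota\circ\iota'\simeq\eta_{B}$: evaluating it on the points of $B$, projecting to $T*$, and using that $\mu$ is cartesian (so that the arity of a composite operation is an $L_{*}$ of arities, hence a colimit of arities) is what actually constrains $B$ --- for instance it exhibits $B$ as a retract of a representable. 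As written, your proof has a genuine gap at its single load-bearing step.
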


\begin{cor}\label{cor:elintcartsq}
  The top commutative square in Notation~\ref{not:WTftrs} induces a
  commutative square of functors to $\mathcal{S}$. Taking the mate of this
  square gives a commutative square
  \[
  \begin{tikzcd}
  \Fun(\mathcal{W}(T)^{\el}, \mathcal{S}) \arrow{r}{i'_{*}}  \arrow[d,swap,"e^{*}"]&
  \Fun(\mathcal{W}(T)^{\xint}, \mathcal{S}) \arrow{d}{u^{*}} \\
  \Fun(\mathcal{I}, \mathcal{S}) \arrow{r}{i_{*}} &
  \Fun(\mathcal{U}(T), \mathcal{S}),
  \end{tikzcd}
  \]
  and this is moreover cartesian.
\end{cor}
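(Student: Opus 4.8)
The plan is to prove the two assertions contained in the statement: that the mate transformation filling the square is an equivalence, so that we genuinely obtain a \emph{commutative} square, and that this square is moreover \emph{cartesian} (a pullback in $\CatI$). Both reductions will rest on the slice equivalence of Corollary~\ref{cor:sliceeq}. As preparation I would record that $i\colon \mathcal{I}\hookrightarrow \mathcal{U}(T)$ and $i'\colon \mathcal{W}(T)^{\el}\hookrightarrow \mathcal{W}(T)^{\xint}$ are fully faithful inclusions, so the right Kan extension functors $i_{*}=\nu_{\mathcal{U}(T)}$ and $i'_{*}$ are fully faithful (for $i_{*}$ this is Proposition~\ref{propn:WTintnerve}(i)); the essential image of $i'_{*}$ is $\Seg_{\mathcal{W}(T)^{\xint}}(\mathcal{S})$, i.e.\ the functors on $\mathcal{W}(T)^{\xint}$ that are right Kan extended from $\mathcal{W}(T)^{\el}$, and the essential image of $i_{*}$ is that of $\nu_{\mathcal{U}(T)}$.

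For commutativity I would evaluate the mate $\theta\colon u^{*}i'_{*}\to i_{*}e^{*}$ at $G\in \Fun(\mathcal{W}(T)^{\el},\mathcal{S})$ and at $X\in\mathcal{U}(T)$. Using the pointwise formulas for right Kan extension along the fully faithful functors $i'$ and $i$, one has $(u^{*}i'_{*}G)(X)\simeq \lim_{\mathcal{W}(T)^{\el}_{u(X)/}}G$ and $(i_{*}e^{*}G)(X)\simeq \lim_{\mathcal{I}_{X/}} G\circ e$, and the component $\theta_{X}$ is the canonical map induced by restriction along the functor $\mathcal{I}_{X/}\to \mathcal{W}(T)^{\el}_{u(X)/}$ coming from the top square of Notation~\ref{not:WTftrs}. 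Since $u(X)=F_{T}X$, this functor is exactly the equivalence $F_{X}^{\op}$ of Corollary~\ref{cor:sliceeq}; being an equivalence it is coinitial, so the two limits agree and $\theta$ is an equivalence. This shows the square commutes.

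For cartesianness I would exploit that $i_{*}$ is fully faithful: the pullback $\Fun(\mathcal{I},\mathcal{S})\times_{\Fun(\mathcal{U}(T),\mathcal{S})}\Fun(\mathcal{W}(T)^{\xint},\mathcal{S})$ is equivalent to the full subcategory of $\Fun(\mathcal{W}(T)^{\xint},\mathcal{S})$ on those $F$ with $u^{*}F$ in the essential image of $i_{*}$, and the comparison functor from $\Fun(\mathcal{W}(T)^{\el},\mathcal{S})$ is $i'_{*}$. As $i'_{*}$ is fully faithful, it remains only to match essential images, i.e.\ to show that $F$ is a Segal $\mathcal{W}(T)^{\xint}$-object \IFF{} $u^{*}F$ lies in the image of $i_{*}$. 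The forward implication is the commutativity just established. For the converse I would write $u^{*}F\simeq i_{*}H$; since $u$ is essentially surjective every object of $\mathcal{W}(T)^{\xint}$ has the form $u(X)$ with $X\in\mathcal{U}(T)$, and using $i'e=ui$ together with full faithfulness of $i$ one identifies $F(e(J))\simeq H(J)$ for $J\in\mathcal{I}_{X/}$. Corollary~\ref{cor:sliceeq} then reduces the Segal limit $\lim_{\mathcal{W}(T)^{\el}_{u(X)/}}F$ to $\lim_{\mathcal{I}_{X/}}H=(i_{*}H)(X)=F(u(X))$, verifying the Segal condition at $u(X)$ and hence everywhere; thus $F\simeq i'_{*}(i'^{*}F)$.

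The hard part is not conceptual but bookkeeping: carefully unwinding the two pointwise Kan extension formulas and checking that the functor on slices induced by the top square of Notation~\ref{not:WTftrs} really is the equivalence $F_{X}^{\op}$ of Corollary~\ref{cor:sliceeq} (and that $u(X)=F_{T}X$ under the identifications there). Once this identification is in place, both the equivalence of the mate and the essential-image comparison that yields cartesianness are formal consequences of Corollary~\ref{cor:sliceeq} and the full faithfulness of the two right Kan extensions.
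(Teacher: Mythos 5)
Your proposal is correct and follows essentially the same route as the paper: the mate is shown to be an equivalence pointwise via the slice equivalence of Corollary~\ref{cor:sliceeq}, and cartesianness is reduced, using full faithfulness of $i_{*}$ and $i'_{*}$, to matching essential images. The only (inessential) difference is in the final step: where you re-run the pointwise limit computation to verify the Segal condition at each $u(X)$, the paper argues more formally, using conservativity of $u^{*}$ (from essential surjectivity of $u$) to identify $u^{*}$ applied to the unit of $i'^{*}\dashv i'_{*}$ with the unit of $i^{*}\dashv i_{*}$.
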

\begin{proof}
  To see that there is such a commutative square amounts to checking
  that the mate transformation
  \[ u^{*}i'_{*}\Phi \to i_{*}e^{*}\Phi \]
  is an equivalence for $\Phi \colon \mathcal{W}(T)^{\el}\to
  \mathcal{S}$. Evaluated at $X \in \mathcal{U}(T)$, this is the map
  on limits
  \[ \lim_{\mathcal{W}(T)^{\el}_{F_{T}X/}} \Phi \to
  \lim_{\mathcal{I}_{X/}} \Phi e,\]
  induced by the functor $\mathcal{I}_{X/} \to
  \mathcal{W}(T)^{\el}_{F_{T}X/}$. Since this functor is an equivalence by
  Corollary~\ref{cor:sliceeq}, the mate transformation is indeed an
  equivalence. The functors $i_{*}$ and $i'_{*}$ are fully
  faithful, since they are given by right Kan extensions along the fully faithful
  functors $i$ and $i'$. To see that the square is cartesian
  it therefore suffices to check that an object $\Phi \in
  \Fun(\mathcal{W}(T)^{\xint}, \mathcal{S})$ is in the image of
  $i'_{*}$ \IFF{} $u^{*}\Phi$ is in the image of $i_{*}$.
  Here $\Phi$ is in the image of $i'_{*}$ \IFF{} the unit map $\Phi
  \to i'_{*}i'^{*}\Phi$ is an equivalence.
  The functor $u^{*}$ is conservative, because $u$ is essentially
  surjective, and so this holds
  \IFF{} $u^{*}\Phi \to u^{*}i'_{*}i'^{*}\Phi$ is an equivalence. We
  can identify the composite
  \[ u^{*}\Phi \to u^{*}i'_{*}i'^{*}\Phi \isoto i_{*}e^{*}i'^{*}\Phi
    \simeq i_{*}i^{*}u^{*}\Phi \]
  with the unit map for $i^{*} \dashv i_{*}$, and since the mate
  transformation is an equivalence this means that the latter is an
  equivalence \IFF{} $\Phi$ is in the image of $i'_{*}$. As $i_{*}$ is
  also fully faithful, this condition holds precisely when $u^{*}\Phi$
  is in the image of $i_{*}$, as required.
\end{proof}

\begin{cor}\label{cor morAlgFun}
  We have a commutative diagram
  \[
  \begin{tikzcd}
  \Alg_{T}(\mathcal{S}^{\mathcal{I}}) \arrow[hookrightarrow]{r}{\nu_{\mathcal{W}(T)}}
  \arrow{d} \arrow[bend right=70]{dd}[left]{U_{T}} &
  \Fun(\mathcal{W}(T), \mathcal{S}) \arrow{d}{j'^{*}} \arrow[bend left=70]{dd}{j^{*}} \\
  \Fun(\mathcal{W}(T)^{\el}, \mathcal{S}) \arrow[d,swap,"e^{*}"] \arrow[hookrightarrow]{r}{i'_{*}} &
  \Fun(\mathcal{W}(T)^{\xint}, \mathcal{S}) \arrow{d}{u^{*}} \\
  \Fun(\mathcal{I}, \mathcal{S}) \arrow[hookrightarrow]{r}{i_{*}} & \Fun(\mathcal{U}(T), \mathcal{S}),
  \end{tikzcd}
  \]
  where both squares are cartesian.
\end{cor}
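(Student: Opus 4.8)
The plan is to exhibit the diagram as a vertical pasting of two cartesian squares and then invoke the pasting law for pullbacks. First I would observe that the bottom square is precisely the cartesian square of Corollary~\ref{cor:elintcartsq}, whose horizontal maps $i_*$ and $i'_*$ are fully faithful as right Kan extensions along the fully faithful functors $i$ and $i'$. Next I would identify the outer rectangle---with top edge $\nu_{\mathcal{W}(T)}$, bottom edge $i_*$, left edge $U_T$, and right edge $u^*(j')^*$---with the cartesian square of the Nerve Theorem (Theorem~\ref{thm:nervecartsq}): this uses $\nu_{\mathcal{U}(T)} \simeq i_*$ from Lemma~\ref{lem:nerveRKE}(i), together with the identification $j_T = j = j'\circ u$ from Notation~\ref{not:WTftrs}, so that $j_T^* = u^*(j')^*$ is exactly the right-hand vertical composite. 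Thus the outer rectangle is cartesian and $\nu_{\mathcal{W}(T)}$ is fully faithful.

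The one substantive point is that the upper square actually commutes. For this I would take the top-left vertical functor to be the restriction $V := (i')^*(j')^*\nu_{\mathcal{W}(T)}$ and show that $(j')^*\nu_{\mathcal{W}(T)}$ lands in the essential image of $i'_*$, so that $(j')^*\nu_{\mathcal{W}(T)} \simeq i'_* V$. Indeed, for $A \in \Alg_T(\mathcal{S}^{\mathcal{I}})$ the Nerve Theorem gives
\[ u^*(j')^*\nu_{\mathcal{W}(T)}A \simeq j_T^*\nu_{\mathcal{W}(T)}A \simeq \nu_{\mathcal{U}(T)}U_T A \simeq i_* U_T A, \]
which lies in the image of $i_*$; by the criterion established inside the proof of Corollary~\ref{cor:elintcartsq} (that $\Phi$ lies in the image of $i'_*$ \IFF{} $u^*\Phi$ lies in the image of $i_*$), it follows that $(j')^*\nu_{\mathcal{W}(T)}A$ lies in the image of $i'_*$. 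Composing vertically on the left then yields $e^* V = (i'e)^*(j')^*\nu_{\mathcal{W}(T)} = (ui)^*(j')^*\nu_{\mathcal{W}(T)} = i^* i_* U_T \simeq U_T$, using $i'\circ e = u\circ i$ from Notation~\ref{not:WTftrs} and that $i$ is fully faithful; so the whole diagram commutes and recovers $U_T$ as the left vertical composite.

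Finally, with the bottom square and the outer rectangle both cartesian and the full diagram commuting, the pasting law for pullbacks shows that the top square is cartesian as well. The main obstacle is the commutativity argument of the second paragraph; everything else is bookkeeping of the functors in Notation~\ref{not:WTftrs} together with the two cartesian squares already in hand.
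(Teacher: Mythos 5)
Your proof is correct and follows essentially the same route as the paper: the outer rectangle is the cartesian square of the Nerve Theorem (after factoring $j^{*}=u^{*}j'^{*}$), the bottom square is Corollary~\ref{cor:elintcartsq}, the left vertical functor factors through the pullback, and the pasting law yields cartesianness of the top square. Your explicit verification that $(j')^{*}\nu_{\mathcal{W}(T)}$ lands in the essential image of $i'_{*}$ just unpacks what the paper phrases as ``factors uniquely through the pullback,'' so the two arguments coincide in substance.
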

\begin{proof}
  By the Nerve Theorem~\ref{thm:nervecartsq}, we have a cartesian
  square
  \[
    \begin{tikzcd}
      \Alg_{T}(\mathcal{S}^{\mathcal{I}}) \arrow{r}{\nu_{\mathcal{W}(T)}}
      \arrow[d,swap,"U_{T}"]  & \Fun(\mathcal{W}(T), \mathcal{S})
      \arrow{d}{j^{*}} \\
      \Fun(\mathcal{I}, \mathcal{S}) \arrow[hookrightarrow]{r}{i_{*}} &
      \Fun(\mathcal{U}(T), \mathcal{S}).
    \end{tikzcd}
  \]
  Here the right vertical functor $j^{*}$ factors as $\Fun(\mathcal{W}(T),
  \mathcal{S}) \xto{j'^{*}} \Fun(\mathcal{W}(T)^{\xint}, \mathcal{S})
  \xto{u^{*}} \Fun(\mathcal{U}(T), \mathcal{S})$. The left vertical functor
  therefore factors uniquely through the pullback of $i_{*}$ along
  $u^{*}$, which we can identify with $\Fun(\mathcal{W}(T)^{\el},
  \mathcal{S})$ by Corollary~\ref{cor:elintcartsq}. This gives the
  desired commutative diagram. Here the bottom and outer squares are
  cartesian, and so the top square is also cartesian.
\end{proof}

\begin{cor}\label{cor AlgSeg}
  We have a commutative square
  \[
  \begin{tikzcd}
  \Alg_{T}(\mathcal{S}^{\mathcal{I}}) \arrow{r}{\sim} \arrow{d} &
  \Seg_{\mathcal{W}(T)}(\mathcal{S}) \arrow{d} \\
  \Fun(\mathcal{W}(T)^{\el}, \mathcal{S}) \arrow{r}{\sim} &
  \Seg_{\mathcal{W}(T)^{\xint}}(\mathcal{S})
  \end{tikzcd}
  \]
  where the horizontal functors are equivalences.
\end{cor}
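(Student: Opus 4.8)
The plan is to obtain the square of the corollary by corestricting the top cartesian square of Corollary~\ref{cor morAlgFun} along identifications of the essential images of its two horizontal functors with the relevant Segal subcategories. First I would dispose of the bottom equivalence, which is essentially definitional: by construction $\Seg_{\mathcal{W}(T)^{\xint}}(\mathcal{S})$ is the full subcategory of $\Fun(\mathcal{W}(T)^{\xint}, \mathcal{S})$ spanned by the functors that are right Kan extensions along $i' \colon \mathcal{W}(T)^{\el} \to \mathcal{W}(T)^{\xint}$, so the right Kan extension functor $i'_{*}$ corestricts to an equivalence $\Fun(\mathcal{W}(T)^{\el}, \mathcal{S}) \isoto \Seg_{\mathcal{W}(T)^{\xint}}(\mathcal{S})$, with inverse given by restriction along $i'$. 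This is the bottom horizontal arrow.

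For the top equivalence, the key step is to identify the essential image of $\nu_{\mathcal{W}(T)}$ with $\Seg_{\mathcal{W}(T)}(\mathcal{S})$. By the Nerve Theorem~\ref{thm:nervecartsq} the functor $\nu_{\mathcal{W}(T)}$ is fully faithful, and the top square of Corollary~\ref{cor morAlgFun} is cartesian with fully faithful lower horizontal arrow $i'_{*}$. For such a pullback square the essential image of the top arrow is exactly the preimage under the right vertical functor $j'^{*}$ of the essential image of $i'_{*}$, that is, of $\Seg_{\mathcal{W}(T)^{\xint}}(\mathcal{S})$. Now the characterization of Segal objects via right Kan extension says precisely that $F \in \Fun(\mathcal{W}(T), \mathcal{S})$ is a Segal $\mathcal{W}(T)$-object if and only if $j'^{*}F = F|_{\mathcal{W}(T)^{\xint}}$ is a right Kan extension of $F|_{\mathcal{W}(T)^{\el}}$, i.e. if and only if $j'^{*}F$ lies in $\Seg_{\mathcal{W}(T)^{\xint}}(\mathcal{S})$. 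Combining these two facts identifies the essential image of $\nu_{\mathcal{W}(T)}$ with $\Seg_{\mathcal{W}(T)}(\mathcal{S})$, so that $\nu_{\mathcal{W}(T)}$ corestricts to an equivalence $\Alg_{T}(\mathcal{S}^{\mathcal{I}}) \isoto \Seg_{\mathcal{W}(T)}(\mathcal{S})$.

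Finally I would assemble the square. The right vertical arrow is $j'^{*}$, which preserves Segal objects because $j' = j_{\mathcal{W}(T)}$ is a Segal morphism (Lemma~\ref{lem:Segalmor}), and the left vertical arrow is the one already present in Corollary~\ref{cor morAlgFun}. Corestricting the two horizontal functors of that commuting top square to their essential images $\Seg_{\mathcal{W}(T)}(\mathcal{S})$ and $\Seg_{\mathcal{W}(T)^{\xint}}(\mathcal{S})$ then yields the desired commutative square, with both horizontals now equivalences. The only point requiring care—and hence the main (purely formal) obstacle—is verifying that for a cartesian square with fully faithful bottom arrow the essential image of the top arrow is the stated preimage, and matching this preimage against the Segal condition via the right-Kan-extension criterion. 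Everything substantive, namely the cartesianness of the relevant squares and the nerve-theoretic full faithfulness, has already been established in Corollaries~\ref{cor morAlgFun} and~\ref{cor:elintcartsq} and Theorem~\ref{thm:nervecartsq}, so this corollary is essentially bookkeeping on top of them.
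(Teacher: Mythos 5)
Your proposal is correct and follows essentially the same route as the paper: both arguments rest on the cartesianness of the top square in Corollary~\ref{cor morAlgFun}, the full faithfulness of $\nu_{\mathcal{W}(T)}$ and $i'_{*}$, and the definitional identification of $\Seg_{\mathcal{W}(T)}(\mathcal{S})$ as the preimage under $j'^{*}$ of $\Seg_{\mathcal{W}(T)^{\xint}}(\mathcal{S})$. The paper phrases the final step as a pasting of cartesian squares (the left square is cartesian since the outer and right ones are, and a pullback of an equivalence is an equivalence), whereas you phrase it via the essential image of a pullback of a fully faithful functor; these are the same formal content.
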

\begin{proof}
  By definition, $\Seg_{\mathcal{W}(T)^{\xint}}(\mathcal{S})$ is the
  essential image of the fully faithful functor $i'_{*}$ in
  $\Fun(\mathcal{W}(T)^{\xint}, \mathcal{S})$, and
  $\Seg_{\mathcal{W}(T)}(\mathcal{S})$ is the full subcategory of
  $\Fun(\mathcal{W}(T), \mathcal{S})$ spanned by the functors whose
  restriction along $j'$ lies in this full subcategory; we thus have a
  pullback square
  \[
    \begin{tikzcd}
      \Seg_{\mathcal{W}(T)}(\mathcal{S}) \arrow[hookrightarrow]{r} \arrow{d} &
      \Fun(\mathcal{W}(T), \mathcal{S}) \arrow{d}{j'^{*}} \\
      \Seg_{\mathcal{W}(T)^{\xint}}(\mathcal{S})
      \arrow[hookrightarrow]{r}  & \Fun(\mathcal{W}(T)^{\xint}, \mathcal{S}).
    \end{tikzcd}
  \]
  The top cartesian square in the diagram of Corollary~\ref{cor morAlgFun}
  factors through this, giving a commutative diagram
  \[
    \begin{tikzcd}
     \Alg_{T}(\mathcal{S}^{\mathcal{I})} \arrow{r} \arrow{d} & \Seg_{\mathcal{W}(T)}(\mathcal{S}) \arrow[hookrightarrow]{r} \arrow{d} &
      \Fun(\mathcal{W}(T), \mathcal{S}) \arrow{d}{j'^{*}} \\
      \Fun(\mathcal{W}(T)^{\el}, \mathcal{S}) \ar[r, "\sim"{below},
      "i_{*}"{above}] & \Seg_{\mathcal{W}(T)^{\xint}}(\mathcal{S})
      \arrow[hookrightarrow]{r}  & \Fun(\mathcal{W}(T)^{\xint}, \mathcal{S}).
    \end{tikzcd}
  \]
  Here the left-hand square is cartesian, since the outer and
  right-hand squares are cartesian, and so the induced functor
  $\Alg_{T}(\mathcal{S}^{\mathcal{I}}) \to
  \Seg_{\mathcal{W}(T)}(\mathcal{S})$ is indeed an equivalence.  
\end{proof}

\begin{propn}\label{cor:UTFXcoinit}
  For $X \in \mathcal{U}(T)$, the functor \[\mathcal{U}(T)_{X/} \to
  \mathcal{U}(T)_{F_{T}X/}:= \mathcal{U}(T)
  \times_{\mathcal{W}(T)^{\xint}} \mathcal{W}(T)^{\xint}_{F_{T}X/}\] is coinitial.
\end{propn}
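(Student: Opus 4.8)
The plan is to reduce the statement to the slice equivalence $\mathcal{S}^{\mathcal{I}}_{/X}\isoto\mathcal{K}(T)^{\xint}_{/F_{T}X}$ (with inverse $\mathcal{L}_{X}$) of the preceding proposition. First I would restrict this equivalence to the full subcategories spanned by the objects whose source lies in $\mathcal{U}(T)^{\op}$: since $\mathcal{U}(T)^{\op}$ is a full subcategory of $\mathcal{S}^{\mathcal{I}}$ which by Proposition~\ref{propn:WTint}(ii) is closed under the sources of generic morphisms, the restriction is again an equivalence, and passing to opposite categories it yields an equivalence $u_{X/}\colon\mathcal{U}(T)_{X/}\isoto\mathcal{W}(T)^{\xint}_{F_{T}X/}$ induced by $u$. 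Writing $g$ for the functor in the statement and $q\colon\mathcal{U}(T)_{F_{T}X/}\to\mathcal{W}(T)^{\xint}_{F_{T}X/}$ for the projection, we have $q\circ g\simeq u_{X/}$, where $q$ is the base change of $u$ along the forgetful left fibration $\mathcal{W}(T)^{\xint}_{F_{T}X/}\to\mathcal{W}(T)^{\xint}$.

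Because $u_{X/}$ is an equivalence, it is enough to prove that $q$ is an equivalence; then $g\simeq q^{-1}u_{X/}$ is an equivalence and in particular coinitial. (Equivalently, one observes that $g$ is a map of left fibrations over $\mathcal{U}(T)$ classified by the transformation $\Map_{\mathcal{U}(T)}(X,\blank)\to\Map_{\mathcal{W}(T)^{\xint}}(F_{T}X,F_{T}\blank)$ induced by $u$, so that by the comma-category criterion \cite[Theorem 4.1.3.1]{ht} coinitiality of $g$ amounts to this transformation being an equivalence.) Since $u$ is clearly essentially surjective, and the base change of a fully faithful functor is fully faithful, $q$ is an equivalence as soon as $u\colon\mathcal{U}(T)\to\mathcal{W}(T)^{\xint}$ is fully faithful. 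Using that free morphisms are inert (Lemma~\ref{lem:freeint}) and that each $F_{X}$ is fully faithful (Remark~\ref{rmk:FXfullyff}), the map $\Map_{\mathcal{U}(T)}(X,Z)\to\Map_{\mathcal{W}(T)^{\xint}}(F_{T}X,F_{T}Z)$ identifies its source with the free morphisms, so full faithfulness of $u$ is equivalent to the assertion that every inert morphism between free algebras on objects of $\mathcal{U}(T)^{\op}$ is free.

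The main obstacle is this last assertion, and it is exactly here that the hypothesis that the objects lie in $\mathcal{U}(T)$ is essential, since for arbitrary free algebras it can fail (Warning~\ref{warn equgen}). An inert morphism $F_{T}Z\to F_{T}X$ has trivial active part, hence is a free morphism precomposed with an equivalence $F_{T}Z\simeq F_{T}W$, where $W=L_{X}Z$ again lies in $\mathcal{U}(T)^{\op}$ by Proposition~\ref{propn:WTint}(ii); the claim thus reduces to showing that every equivalence between free algebras on objects of $\mathcal{U}(T)^{\op}$ is free. To prove this I would invoke the generic-morphism machinery: such an equivalence is adjoint to a generic morphism $Z\to TW$ (Lemma~\ref{lem equfree}), and composing a generic morphism $I\to TZ$ with $I\in\mathcal{I}^{\op}$ (which exists since $Z\in\mathcal{U}(T)^{\op}$) produces, via Lemma~\ref{lem:unitmultgen}(ii), a generic morphism $I\to TW$. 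Comparing the two generic–free factorizations of these maps (Remark~\ref{rmk:genericfact}) identifies $W$ with $Z$ compatibly, and the conservativity of $F_{T}$ (Lemma~\ref{lem Fconservative}) upgrades this to an equivalence $Z\simeq W$ in $\mathcal{S}^{\mathcal{I}}$ whose image is the given equivalence, as required. In fact this argument shows the stronger statement that $u$, and hence $g$, is an equivalence.
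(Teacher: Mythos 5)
The first half of your argument is correct, and is essentially Corollary~\ref{cor:sliceeq} one level up: restricting the equivalence $\mathcal{S}^{\mathcal{I}}_{/X} \simeq \mathcal{K}(T)^{\xint}_{/F_{T}X}$ to sources in $\mathcal{U}(T)^{\op}$ (legitimate because $\mathcal{L}_{X}$ preserves this subcategory by Proposition~\ref{propn:WTint}(ii)) does give an equivalence $\mathcal{U}(T)_{X/} \isoto \mathcal{W}(T)^{\xint}_{F_{T}X/}$. The fatal step is the second half. You reduce the claim to $q$ being an equivalence, hence to $u \colon \mathcal{U}(T) \to \mathcal{W}(T)^{\xint}$ being fully faithful, hence to the assertion that every equivalence between free algebras on objects of $\mathcal{U}(T)^{\op}$ is free. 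That assertion is false: full faithfulness of $u$ is precisely condition (3)/(5) of Proposition~\ref{prop complete}, i.e.\ it is equivalent to \emph{completeness} of $T$, so your closing sentence would make every polynomial monad complete and collapse \S\ref{sec:poly3}. A concrete counterexample is $T = G \times (\blank)$ on $\mathcal{S}$ for a nontrivial discrete group $G$ (a polynomial monad whose algebras are $G$-spaces): here $\mathcal{U}(T) \simeq *$, $\mathcal{W}(T) = \mathcal{W}(T)^{\xint} \simeq BG$, every endomorphism of $F_{T}*$ is an inert equivalence, but only the identity component of $\Map_{\Alg_{T}}(F_{T}*, F_{T}*) \simeq G$ is free. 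Your argument breaks at the last step: the two generic--free factorizations you compare are factorizations of two \emph{different} morphisms $I \to T*$ (one through $TZ$, the other through $TW$), so the uniqueness of Remark~\ref{rmk:genericfact} does not relate them; and even when both produce the same object (as in the example, where $Z \simeq W \simeq *$), there is no reason the resulting equivalence $Z \simeq W$ maps to the given one under $F_{T}$ --- conservativity of $F_{T}$ reflects equivalences among morphisms already in its image, it does not manufacture preimages. In the example the given equivalence is a nontrivial element of $G$ and is not $F_{T}$ of anything, exactly as Warning~\ref{warn equgen} cautions (that warning applies just as much to free algebras on objects of $\mathcal{U}(T)^{\op}$ as to arbitrary ones).

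The paper's own proof is much more modest: it applies the criterion of \cite[Theorem 4.1.3.1]{ht} directly and, for each object $\phi$ of $\mathcal{U}(T)_{F_{T}X/}$, exhibits a terminal object of the comma \icat{} $(\mathcal{U}(T)_{X/})_{/\phi}$ coming from the canonical factorization of $\phi$, in the style of Proposition~\ref{propn:SIXphiwc}; no claim about $q$ or about full faithfulness of $u$ is made. You correctly identified the delicate point --- the existence of inert equivalences that are not free --- but the right response is to prove weak contractibility of the comma \icats{}, not to argue such equivalences out of existence. (I would add that the same delicate point must be confronted in the paper's argument as well: an object of $(\mathcal{U}(T)_{X/})_{/\phi}$ is a factorization of $\phi$ into two \emph{free} morphisms through an object of $\mathcal{U}(T)^{\op}$, whereas the canonical factorization of an inert $\phi$ has as its first map an equivalence that need not be free. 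I encourage you to test both the paper's proof and the statement itself against the example $T = G \times (\blank)$ above.)
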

\begin{proof}
  By \cite[Theorem 4.1.3.1]{ht} it suffices to check that for
  $Y, \phi \colon FY \to FX$, the slice \icat{}
  $(\mathcal{U}(T)_{X/})_{/\phi}$ is weakly contractible. Here the
  canonical factorization of $\phi$ determines a terminal object, as
  in the proof of Proposition~\ref{propn:SIXphiwc}.
\end{proof}

\begin{cor}\label{cor:j!eq}
  There are natural equivalences of functors
  \[ \id \isoto u_{*}u^{*}, \]
  \[ u_{!}u^{*} \isoto \id, \]
  \[ j_{!}u^{*} \isoto j'_{!}.\]
\end{cor}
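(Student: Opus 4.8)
The plan is to establish the three equivalences in sequence, deriving each from the essential surjectivity of $u \colon \mathcal{U}(T) \to \mathcal{W}(T)^{\xint}$ together with the coinitiality statement in Proposition~\ref{cor:UTFXcoinit}. First I would prove $\id \isoto u_{*}u^{*}$. Since $u$ is essentially surjective, the functor $u^{*}$ is conservative, so it suffices to check that the unit $\Phi \to u_{*}u^{*}\Phi$ becomes an equivalence after applying $u^{*}$; but this is immediate once I know $u^{*}u_{*}u^{*} \simeq u^{*}$, which follows from the triangle identity. The genuinely substantive point is that $u_{*}u^{*}\Phi$ agrees with $\Phi$ on objects: evaluating the right Kan extension at $F_{T}X$ gives $\lim_{\mathcal{U}(T)_{F_{T}X/}} u^{*}\Phi$, and by Proposition~\ref{cor:UTFXcoinit} the functor $\mathcal{U}(T)_{X/} \to \mathcal{U}(T)_{F_{T}X/}$ is coinitial, so this limit is computed over $\mathcal{U}(T)_{X/}$, which has the initial object $\id_{X}$; hence the limit is $\Phi(F_{T}X)$, as desired.

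Next I would treat $u_{!}u^{*} \isoto \id$. The cleanest route is to observe that since $u$ is essentially surjective and (by the previous paragraph) $u_{*}$ is fully faithful, the functor $u^{*}$ is a localization with fully faithful right adjoint $u_{*}$. A localization functor whose right adjoint is fully faithful also admits $u^{*}$ as having a fully faithful \emph{left} adjoint precisely when the relevant slice categories are weakly contractible; more directly, the counit $u_{!}u^{*} \to \id$ is an equivalence iff $u^{*}$ is fully faithful as a functor, equivalently iff the unit $\id \to u_{*}u^{*}$ is an equivalence, which we have just shown. I would phrase this as: the equivalence $\id \isoto u_{*}u^{*}$ exhibits $u^{*}$ as fully faithful, and a fully faithful functor between presheaf categories that is restriction along an essentially surjective functor has its left adjoint $u_{!}$ satisfying $u_{!}u^{*}\simeq \id$ by the dual triangle identity.

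Finally, for $j_{!}u^{*} \isoto j'_{!}$, I would use the factorization $j = j' \circ u$ from Notation~\ref{not:WTftrs}, which gives $j_{!} \simeq j'_{!}u_{!}$ by functoriality of left Kan extension. Composing with $u^{*}$ yields $j_{!}u^{*} \simeq j'_{!}u_{!}u^{*}$, and the second equivalence $u_{!}u^{*}\isoto\id$ then collapses this to $j'_{!}$, as required.

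The main obstacle I anticipate is the first equivalence, specifically the identification of the limit defining $u_{*}u^{*}\Phi$ at each object. Everything hinges on Proposition~\ref{cor:UTFXcoinit} to reduce the indexing $\infty$-category $\mathcal{U}(T)_{F_{T}X/}$ to the slice $\mathcal{U}(T)_{X/}$, and then on recognizing that the latter has an initial object so the limit degenerates. Once this pointwise computation is in hand, the remaining two equivalences are formal consequences of adjunction identities and the composite $j = j'u$, so no further geometric input is needed.
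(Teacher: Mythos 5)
Your proof is correct and takes essentially the same route as the paper: the first equivalence is verified objectwise at each $F_{T}X$ using the coinitiality of $\mathcal{U}(T)_{X/} \to \mathcal{U}(T)_{F_{T}X/}$ from Proposition~\ref{cor:UTFXcoinit} together with the initial object $\id_{X}$, and the second and third then follow formally (the paper phrases your full-faithfulness argument for $u_{!}u^{*} \isoto \id$ as ``passing to left adjoints'', which amounts to the same thing). The only blemish is your claim that $u^{*}u_{*}u^{*} \simeq u^{*}$ ``follows from the triangle identity'' --- the triangle identity only exhibits $u^{*}\eta$ as a split monomorphism, not an equivalence --- but this remark is not load-bearing, since your objectwise computation of the unit already constitutes a complete proof and renders the conservativity reduction unnecessary.
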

\begin{proof}
  For $\Phi \colon \mathcal{W}(T)^{\xint} \to \mathcal{S}$ the unit map
  $\Phi \to u_{*}u^{*}\Phi$ evaluates at $F_{T}X \in \mathcal{W}(T)^{\xint}$ as
  \[ \Phi(F_{T}X) \to \lim_{\mathcal{U}(T)_{F_{T}X/}} \Phi \circ u, \]
  which is an equivalence by Corollary~\ref{cor:UTFXcoinit}. This gives
  the first equivalence, which implies the second by passing to left
  adjoints. Applying $j'_{!}$ this gives the third equivalence, since
  $j'_{!}u_{!} \simeq (j'u)_{!} \simeq j_{!}$.
\end{proof}

\begin{cor}\label{cor WTextendable}
  The algebraic pattern $\mathcal{W}(T)$ is extendable.
\end{cor}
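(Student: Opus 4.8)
The plan is to avoid checking the two conditions of Definition~\ref{defn:pattext} by hand and instead invoke the characterization of extendability in Proposition~\ref{prop:extendable}: the pattern $\mathcal{W}(T)$ is extendable if and only if the left Kan extension $j'_{!} = j_{\mathcal{W}(T),!}$ along the inclusion $j' \colon \mathcal{W}(T)^{\xint} \to \mathcal{W}(T)$ of Notation~\ref{not:WTftrs} carries $\Seg_{\mathcal{W}(T)^{\xint}}(\mathcal{S})$ into $\Seg_{\mathcal{W}(T)}(\mathcal{S})$. So I would fix a Segal $\mathcal{W}(T)^{\xint}$-object $\Phi$ and show that $j'_{!}\Phi$ is a Segal $\mathcal{W}(T)$-object, feeding the functors assembled in Notation~\ref{not:WTftrs} into the structural results of the previous two sections.

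First I would write $\Phi \simeq i'_{*}\Psi$ with $\Psi := i'^{*}\Phi \in \Fun(\mathcal{W}(T)^{\el},\mathcal{S})$, which is possible precisely because $\Seg_{\mathcal{W}(T)^{\xint}}(\mathcal{S})$ is the essential image of the fully faithful right Kan extension $i'_{*}$. Then I would run the chain of identifications
\[ j'_{!}\Phi \simeq j_{!}u^{*}\Phi \simeq j_{!}u^{*}i'_{*}\Psi \simeq j_{!}i_{*}e^{*}\Psi, \]
where the first equivalence is the third identity of Corollary~\ref{cor:j!eq}, namely $j_{!}u^{*} \simeq j'_{!}$, and the last uses the cartesian mate square of Corollary~\ref{cor:elintcartsq} in the form $u^{*}i'_{*} \simeq i_{*}e^{*}$. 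Since $\nu_{\mathcal{U}(T)} \simeq i_{*}$ by Proposition~\ref{propn:WTintnerve}(i), and since $j = j_{T}$ (the composite $j'u$ of Notation~\ref{not:WTftrs} agrees with the functor of the Nerve Theorem), the mate equivalence $j_{!}\,\nu_{\mathcal{U}(T)} \simeq \nu_{\mathcal{W}(T)}F_{T}$ of Theorem~\ref{thm:nervecartsq} yields
\[ j'_{!}\Phi \simeq \nu_{\mathcal{W}(T)}F_{T}(e^{*}\Psi). \]

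Finally I would observe that this object lies in the essential image of $\nu_{\mathcal{W}(T)}$, which by Corollary~\ref{cor AlgSeg} is exactly $\Seg_{\mathcal{W}(T)}(\mathcal{S})$; hence $j'_{!}\Phi$ is a Segal $\mathcal{W}(T)$-object and $\mathcal{W}(T)$ is extendable. The computation is essentially bookkeeping once the correct functors are named, and no new input beyond the cited corollaries is required. The only points needing care — and the only real content — are keeping the adjoints straight so that $j'_{!}$ is genuinely rewritten \emph{through} $\nu_{\mathcal{U}(T)} = i_{*}$ before the Nerve Theorem is applied, and confirming that the essential image of the fully faithful nerve $\nu_{\mathcal{W}(T)}$ is precisely the Segal objects rather than merely contained in them; both are already recorded in Corollaries~\ref{cor:j!eq}, \ref{cor:elintcartsq}, and \ref{cor AlgSeg}.
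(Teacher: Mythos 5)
Your proposal is correct and follows essentially the same route as the paper's own proof: reduce via Proposition~\ref{prop:extendable} to showing $j'_{!}$ preserves Segal objects, rewrite $j'_{!}\Phi \simeq j_{!}u^{*}\Phi$ using Corollary~\ref{cor:j!eq}, recognize $u^{*}\Phi$ as right Kan extended from $\mathcal{I}$ via Corollary~\ref{cor:elintcartsq}, and conclude by the Nerve Theorem that the result lies in $\Seg_{\mathcal{W}(T)}(\mathcal{S})$. The only difference is that you unwind the last step more explicitly through the mate equivalence $j_{T,!}\nu_{\mathcal{U}(T)} \simeq \nu_{\mathcal{W}(T)}F_{T}$ and Corollary~\ref{cor AlgSeg}, where the paper compresses this into a single appeal to Theorem~\ref{thm:nervecartsq}.
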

\begin{proof}
  We must show that $j'_{!}$ restricts to a functor
  $\Seg_{\mathcal{W}(T)^{\xint}}(\mathcal{S}) \to
  \Seg_{\mathcal{W}(T)}(\mathcal{S})$. Thus for
  $\Phi \in \Seg_{\mathcal{W}(T)^{\xint}}(\mathcal{S})$ we must show
  that $j'_{!}\Phi$ is a Segal object. By Corollary~\ref{cor:j!eq} the
  functor $j'_{!}\Phi$ is equivalent to $j_{!}u^{*}\Phi$. But since
  $\Phi$ is by assumption in
  $\Seg_{\mathcal{W}(T)^{\xint}}(\mathcal{S})$, we know by
  Corollary~\ref{cor:elintcartsq} that $u^{*}\Phi$ is right Kan
  extended from $\mathcal{I}$. Hence $j_{!}u^{*}\Phi$ is in
  $\Alg_{T}(\mathcal{S}^{\mathcal{I}}) \simeq
  \Seg_{\mathcal{W}(T)}(\mathcal{S})$ by
  Theorem~\ref{thm:nervecartsq}, as required.
\end{proof}

\begin{cor}\label{cor:morTWT}
  Inverting the equivalence of Corollary~\ref{cor AlgSeg}, we have a
  commutative square
  \[
    \begin{tikzcd}
      \Seg_{\mathcal{W}(T)}(\mathcal{S}) \arrow[d, swap, "U_{\mathcal{W}(T)}"]
     \ar[r, "\sim"{below}, "\phi"{above}] &
      \Alg_{T}(\mathcal{S}^{\mathcal{I}}) \arrow[d,"U_{T}"] \\
      \mathcal{S}^{\mathcal{W}(T)^{\el}} \arrow{r}{e^{*}} &
      \mathcal{S}^{\mathcal{I}}.
    \end{tikzcd}
  \]
  This square is a morphism of polynomial monads $T \to T_{\mathcal{W}(T)}$.
\end{cor}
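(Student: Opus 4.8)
The plan is to verify the two assertions in the statement: that the displayed square commutes, and that it represents a morphism in $\PolyMnd$, i.e.\ that its mate transformation is cartesian. Since both vertical functors are the monadic forgetful functors $U_T$ and $U_{\mathcal{W}(T)}$, the top map is the equivalence $\phi$, and the bottom map is $e^{*}=\mathcal{S}^{e}$ for the essentially surjective functor $e\colon \mathcal{I}\to \mathcal{W}(T)^{\el}$ of Notation~\ref{not:WTftrs}, the square has exactly the shape required by the definition of $\PolyMnd^{\op}$.

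For commutativity, recall from Corollary~\ref{cor morAlgFun} that $U_T$ factors as $\Alg_T(\mathcal{S}^{\mathcal{I}}) \xrightarrow{V} \Fun(\mathcal{W}(T)^{\el},\mathcal{S}) \xrightarrow{e^{*}} \mathcal{S}^{\mathcal{I}}$, where $V$ is the left vertical functor of Corollary~\ref{cor AlgSeg}. The commuting square of Corollary~\ref{cor AlgSeg} identifies $V$ with $U_{\mathcal{W}(T)}\circ \phi^{-1}$ (using $U_{\mathcal{W}(T)}=i'^{*}j'^{*}$ and $i'^{*}i'_{*}\simeq \id$), so $U_T\simeq e^{*}U_{\mathcal{W}(T)}\phi^{-1}$; precomposing with $\phi$ yields $U_T\phi\simeq e^{*}U_{\mathcal{W}(T)}$, which is the asserted commutativity, with $\phi^{-1}$ the corestriction of the nerve $\nu_{\mathcal{W}(T)}$.

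It then remains to show the mate $F_T e^{*}\to \phi F_{\mathcal{W}(T)}$ is cartesian, where $F_T$ and $F_{\mathcal{W}(T)}$ are the left adjoints of $U_T$ and $U_{\mathcal{W}(T)}$; I claim it is in fact an equivalence, which is a fortiori cartesian. To see this I would transport the mate along the equivalence $\phi^{-1}=\nu_{\mathcal{W}(T)}$ and identify its source and target. On the source side, the Nerve Theorem~\ref{thm:nervecartsq} gives $\nu_{\mathcal{W}(T)}F_T\simeq j_{!}\nu_{\mathcal{U}(T)}\simeq j_{!}i_{*}$ (the last step by Lemma~\ref{lem:nerveRKE}), so $\nu_{\mathcal{W}(T)}F_T e^{*}\simeq j_{!}i_{*}e^{*}$. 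On the target side, $\mathcal{W}(T)$ is extendable (Corollary~\ref{cor WTextendable}), so by Corollary~\ref{cor:freealg} we have $F_{\mathcal{W}(T)}\simeq j'_{!}i'_{*}$. Now the commuting square of Corollary~\ref{cor:elintcartsq} supplies $u^{*}i'_{*}\simeq i_{*}e^{*}$ and Corollary~\ref{cor:j!eq} supplies $j_{!}u^{*}\simeq j'_{!}$, whence
\[ j'_{!}i'_{*}\simeq j_{!}u^{*}i'_{*}\simeq j_{!}i_{*}e^{*}. \]
Thus the source and target of the transported mate agree, and identifying the mate with this chain of equivalences shows it is an equivalence. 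As a fallback, the remark following the definition of $\PolyMnd^{\op}$ reduces the claim to cartesianness of $Te^{*}\to e^{*}T_{\mathcal{W}(T)}$ (the image of the mate under $U_T$, which detects pullbacks), which is immediate once the mate is an equivalence.

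The hard part will be the final identification: one must verify that the mate, assembled from the units and counits of $F_T\dashv U_T$ and $F_{\mathcal{W}(T)}\dashv U_{\mathcal{W}(T)}$, is carried by $\nu_{\mathcal{W}(T)}$ to \emph{precisely} the equivalence built from Corollaries~\ref{cor:elintcartsq} and \ref{cor:j!eq}, and not merely to a transformation with equivalent endpoints. I would handle this by chasing the defining unit and counit through the Nerve Theorem identifications of $F_T$ with $j_{!}i_{*}$ and of $F_{\mathcal{W}(T)}$ with $j'_{!}i'_{*}$; alternatively, one can sidestep the coherence issue entirely and check directly that $Te^{*}\to e^{*}T_{\mathcal{W}(T)}$ is cartesian, using the explicit colimit formula for free Segal $\mathcal{W}(T)$-objects from Corollary~\ref{cor:freealg} on one side and the generic–free factorization description of $T$ on the other.
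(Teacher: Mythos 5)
The commutativity part of your argument is fine and matches how the paper sets things up, but the centerpiece of your proof --- the claim that the mate $F_{T}e^{*} \to \phi F_{\mathcal{W}(T)}$ is an equivalence --- is false, and it is exactly at the point you flag as ``the hard part'' that the argument breaks down irreparably, rather than merely needing a coherence check. The equivalence $U_{T}\phi \simeq e^{*}U_{\mathcal{W}(T)}$ gives $\phi^{-1}F_{T} \simeq F_{\mathcal{W}(T)}e_{!}$ on left adjoints, and under this identification the mate is $\phi F_{\mathcal{W}(T)}$ applied to the counit $e_{!}e^{*} \to \id$ of $e_{!}\dashv e^{*}$. Since $e \colon \mathcal{I} \to \mathcal{W}(T)^{\el}$ is essentially surjective but in general not fully faithful --- this failure is precisely what \emph{incompleteness} of $T$ measures --- the counit is not invertible, and $F_{\mathcal{W}(T)}$ does not invert it. Concretely, take $T = G \times (\blank)$ on $\mathcal{S}$ for a nontrivial group $G$: then $\Alg_{T}(\mathcal{S}) \simeq \mathcal{S}^{BG}$, one checks that $\mathcal{W}(T) = \mathcal{W}(T)^{\el} \simeq BG$ with vacuous Segal condition, so $F_{\mathcal{W}(T)} \simeq \id_{\mathcal{S}^{BG}}$ and the mate at $M$ is the action map $G \times U_{T}M \to M$, which is cartesian but not an equivalence (take $M = *$). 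More structurally: if the mate were always invertible, $\tau_{T}$ would identify $T$ with $T_{\mathcal{W}(T)}$ for every polynomial monad, contradicting Proposition~\ref{prop complete}, which shows this happens exactly for the complete ones.

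Your derivation that source and target ``agree'' would in any case not suffice --- two abstractly equivalent functors can be related by a non-invertible map --- but note that its conclusion $\nu_{\mathcal{W}(T)}F_{T}e^{*} \simeq F_{\mathcal{W}(T)}$ already fails in the example above (the left side is $G\times U_{T}(\blank)$, the right side the identity), so the chain of identifications cannot be computing the mate correctly. The paper's actual argument is short and avoids all of this: the counit $e_{!}e^{*}\to\id$ is a cartesian transformation; since $U_{\mathcal{W}(T)}$ is conservative and preserves limits, cartesianness of $F_{\mathcal{W}(T)}(e_{!}e^{*}\to\id)$ may be checked after applying $U_{\mathcal{W}(T)}$, where it becomes $T_{\mathcal{W}(T)}e_{!}e^{*}\to T_{\mathcal{W}(T)}$; and this is cartesian because $T_{\mathcal{W}(T)}$ is polynomial (Proposition~\ref{prop:polymonad} applied to the extendable pattern $\mathcal{W}(T)$) and hence preserves pullbacks. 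Your closing suggestion of directly verifying that $Te^{*}\to e^{*}T_{\mathcal{W}(T)}$ is cartesian is the right instinct, but your stated fallback is circular (``immediate once the mate is an equivalence''), and the direct computation is not carried out.
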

\begin{proof}
  Since $\mathcal{W}(T)$ is extendable, we know that the free Segal
  $\mathcal{W}(T)$-space monad $T_{\mathcal{W}(T)}$ is polynomial by
  Proposition~\ref{prop:polymonad}. For the square to be a morphism of
  polynomial monads, it remains to show that the mate transformation
  $F_{T}e^{*} \to \phi F_{\mathcal{W}(T)}$ is cartesian. The
  equivalence $U_{T}\phi \simeq e^{*}U_{\mathcal{W}(T)}$ gives an
  equivalence of left adjoints $\phi^{-1}F_{T} \simeq
  F_{\mathcal{W}(T)}e_{!}$ under which the mate transformation
  corresponds to the transformation
  \[ \phi F_{\mathcal{W}(T)}e_{!}e^{*} \to \phi F_{\mathcal{W}(T)} \]
  induced by the counit $e_{!}e^{*} \to \id$.   This counit is easily
  seen to be cartesian (as in \cite[Lemma 2.1.5]{AnalMnd}), and since
  $U_{\mathcal{W}(T)}$ is conservative and preserves limits, it
  suffices to check this implies the transformation
  \[ T_{\mathcal{W}(T)}e_{!}e^{*} \to T_{\mathcal{W}(T)} \]
  is cartesian, which is true since $T_{\mathcal{W}(T)}$ preserves pullbacks.
\end{proof}

We now show that the pattern $\mathcal{W}(T)$ is natural with respect
to morphisms of polynomial monads:
\begin{thm}\label{thm:functorP}
  There is a functor
  \[ \mathfrak{P} \colon \PolyMnd \to \name{AlgPatt}^{\Seg}_{\name{ext}}\]
  that takes a polynomial monad $T$ on $\xS^\xI$ to the algebraic
  pattern $\mathcal{W}(T)$, and a natural transformation
  \[ \tau \colon \id \to \mathfrak{M}\mathfrak{P},\]
  given by the morphism $T \to \mathfrak{M}(\mathcal{W}(T))$ from
  Corollary~\ref{cor:morTWT}, where $\mathfrak{M}$ is the functor from
  Corollary~\ref{cor:patttopolymnd} that takes an extendable pattern
  $\mathcal{O}$ to the free Segal $\mathcal{O}$-space monad.
\end{thm}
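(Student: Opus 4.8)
The plan is to upgrade the object-level construction $T \mapsto \mathcal{W}(T)$ into a genuine functor, and to identify the transformation $\tau$ as the natural family of morphisms $T \to \mathfrak{M}(\mathcal{W}(T))$ produced in Corollary~\ref{cor:morTWT}. Since $\PolyMnd^{\op}$ was defined as a subcategory of the pullback $\Fun(\Delta^1,\LCatI)\times_{\LCatI}\CatI$, a morphism in $\PolyMnd$ is exactly a commutative square of the kind appearing in Proposition~\ref{propn:Wmor} whose mate is cartesian. Thus a morphism $T \to S$ supplies functors $f \colon \mathcal{I}\to\mathcal{J}$ and $\Phi\colon\Alg_T\to\Alg_S$ with cartesian mate, and the work of \S\ref{sec:nerve} is precisely what is needed: Proposition~\ref{propn:Wmor}(iv) and Lemma~\ref{lem WTWS} show that $\Psi^{\op}\colon\mathcal{W}(T)\to\mathcal{W}(S)$ preserves elementary objects (via the square relating $F_T^{\op}$ and $F_S^{\op}$), preserves inert morphisms (since $\Psi$ preserves free maps), and preserves active morphisms (since $\Psi$ sends morphisms adjoint to $T$-generic maps to morphisms adjoint to $S$-generic maps, which by Lemma~\ref{lem:actgen} are exactly the active ones). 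Hence $\Psi^{\op}$ is a morphism of algebraic patterns, and in fact a Segal morphism because $\Psi$ preserves free algebras and the Segal condition on $\mathcal{W}(T)$ is controlled by the inert/elementary structure.

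The main step is therefore to assemble these pointwise assignments into a single functor, rather than merely checking the conditions on objects and individual morphisms. Here I would avoid reconstructing $\Psi$ by hand and instead use the cube of Corollary~\ref{cor morTS}, which exhibits the morphism of polynomial monads $T\to S$ as \emph{arising} from the commutative square in Proposition~\ref{propn:Wmor}(iv). Concretely, the left-hand face of that cube,
\[
  \begin{tikzcd}
    \mathcal{U}(T)^{\op} \arrow{r} \arrow{d} & \mathcal{U}(S)^{\op} \arrow{d} \\
    \mathcal{W}(T)^{\op} \arrow{r} & \mathcal{W}(S)^{\op},
  \end{tikzcd}
\]
records the data of $\mathfrak{P}$ on morphisms, and the naturality built into the passage to adjoints in that corollary is what promotes the construction to a functor. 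The cleanest way to make this precise is to realize $\mathfrak{P}$ as a composite of functors already available: the assignment sending a polynomial monad to its Kleisli \icat{} with the factorization system of Theorem~\ref{Klfact}, followed by restriction to the full subcategory $\mathcal{W}(T)$ of Corollary~\ref{cor intactfact}, is functorial by construction of the Kleisli \icat{} as a fibrewise operation on $\PolyMnd$; one then checks that the resulting functor lands in $\AlgPatt^{\Seg}_{\name{ext}}$ using Corollary~\ref{cor WTextendable} (extendability) together with the object/morphism-level verifications above.

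For the natural transformation, I would observe that Corollary~\ref{cor:morTWT} gives, for each $T$, a morphism $T \to \mathfrak{M}(\mathcal{W}(T))$ in $\PolyMnd$ via the square relating $U_{\mathcal{W}(T)}$ and $U_T$ through the equivalence $\phi$ of Corollary~\ref{cor AlgSeg}. Naturality of $\tau$ in $T$ then amounts to checking that these squares are compatible with morphisms $T\to S$, which follows from the compatibility of the nerve equivalences $\Alg_T\simeq\Seg_{\mathcal{W}(T)}$ with the functor $\Phi$ and its transported counterpart $L_{\Seg}\Psi^{\op}_!$; this compatibility is exactly encoded by the right-hand face of the cube in Corollary~\ref{cor morTS}, which consists of the algebra \icats{} and the comparison functors between them. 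The hard part will be verifying this last coherence cleanly --- i.e.\ that the equivalences $\phi_T$ and $\phi_S$ intertwine $\Phi$ with the induced map on Segal objects so that the family $\{\tau_T\}$ is genuinely natural and not merely a pointwise collection of equivalences. I expect this to reduce, after unwinding $\mathfrak{M}$ and $\mathfrak{P}$, to the naturality of the nerve theorem's cartesian square (Theorem~\ref{thm:nervecartsq}) with respect to morphisms of polynomial monads, which is precisely the content already extracted in Corollary~\ref{cor morTS}.
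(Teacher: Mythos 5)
Your overall route is the same as the paper's: use Proposition~\ref{propn:Wmor}(iv) and Lemma~\ref{lem WTWS} to see that $\Psi^{\op}$ preserves elementary objects and the inert and active morphisms, and then lean on the cube of Corollary~\ref{cor morTS} (and its extension via Corollary~\ref{cor:elintcartsq}) for compatibility with composition and for the naturality of $\tau$. Your suggestion of packaging $\mathfrak{P}$ as a composite through the Kleisli construction is a reasonable reorganization, but it is not meaningfully more rigorous than the paper's own ``obviously compatible with composition,'' since the functoriality of extracting the Kleisli subcategory and passing to left adjoints still has to be asserted rather than checked.

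There is, however, one step whose stated justification would fail. You claim $\Psi^{\op}$ is a Segal morphism ``because $\Psi$ preserves free algebras and the Segal condition on $\mathcal{W}(T)$ is controlled by the inert/elementary structure.'' Preserving inert morphisms and elementary objects makes $\Psi^{\op}$ a morphism of algebraic patterns, but that does \emph{not} imply it is a Segal morphism: the paper's own example of $\id_{\xF_*}$ viewed as $\xF_*^{\flat} \to \xF_*^{\natural}$ is a morphism of patterns that is not Segal. The correct argument, which is the one the paper uses, is that Corollary~\ref{cor AlgSeg} and Corollary~\ref{cor morTS} identify $\Phi \colon \Alg_S(\mathcal{S}^{\mathcal{J}}) \to \Alg_T(\mathcal{S}^{\mathcal{I}})$ with the restriction of $(\Psi^{\op})^{*}$ to Segal objects; since $\Phi$ lands in $\Alg_T(\mathcal{S}^{\mathcal{I}}) \simeq \Seg_{\mathcal{W}(T)}(\mathcal{S})$, the functor $(\Psi^{\op})^{*}$ carries Segal $\mathcal{W}(S)$-spaces to Segal $\mathcal{W}(T)$-spaces, and Lemma~\ref{lem:Segalmor} then gives the Segal property. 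You have all the ingredients for this (you invoke exactly this compatibility later when discussing naturality of $\tau$), so the fix is to route the Segal-morphism claim through Lemma~\ref{lem:Segalmor} rather than through the inert/elementary structure.
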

\begin{proof}
  Suppose we have a morphism of polynomial monads $T \to S$, given by
  a functor $f \colon \mathcal{I} \to \mathcal{J}$ and a commutative square
  \[
    \begin{tikzcd}
      \Alg_{S}(\mathcal{S}^{\mathcal{J}}) \arrow{r}{\Phi}
      \arrow[d,swap,"U_{S}"] & \Alg_{T}(\mathcal{S}^{\mathcal{I}})
      \arrow{d}{U_{T}} \\
      \mathcal{S}^{\mathcal{J}} \arrow{r}{f^{*}} & \mathcal{S}^{\mathcal{I}}.
    \end{tikzcd}
  \]
  By Proposition~\ref{propn:Wmor}, the functor $\Phi$ has a left
  adjoint $\Psi$ which restricts to a functor $\Psi^{\op} \colon
  \mathcal{W}(T) \to \mathcal{W}(S)$. Lemma~\ref{lem WTWS} implies
  that this functor preserves active and inert morphisms, since the
  active morphisms are precisely those that are adjoint to generic
  morphisms by Lemma~\ref{lem:actgen}, while the inert morphisms are
  the composites of free morphisms and equivalences. The commutative
  square from Proposition~\ref{propn:Wmor}(iv) restricts to a commutative square
  \[
    \begin{tikzcd}
      \mathcal{I} \arrow{d} \arrow{r}{f} & \mathcal{J} \arrow{d} \\
      \mathcal{W}(T) \arrow{r}{\Psi^{\op}} & \mathcal{W}(S),
    \end{tikzcd}
  \]
  and so $\Psi^{\op}$ also preserves elementary objects. Thus
  $\Psi^{\op}$ is a morphism of algebraic patterns. 
  
  It follows from Corollary~\ref{cor AlgSeg} and Corollary~\ref{cor
    morTS} that
  $\Phi \colon \Alg_{S}(\mathcal{S}^{\mathcal{J}}) \to
  \Alg_{T}(\mathcal{S}^{\mathcal{I}})$ can be identified with the
  restriction of $(\Psi^{\op})^{*}$ to Segal objects, thus
  $\Psi^{\op}$ is a Segal morphism by Lemma~\ref{lem:Segalmor}.

  Since this construction is obviously compatible with composition we
  obtain a functor \[\mathfrak P \colon
  \PolyMnd\to\AlgPatt^\Seg_{\xext}.\] Using
  Corollary~\ref{cor:elintcartsq} the commutative cube in Corollary~\ref{cor
    morTS} extends to a commutative diagram
  \[
    \begin{tikzcd}[row sep=tiny,column sep=tiny]
      \Alg_{S}(\mathcal{S}^{\mathcal{J}})
\arrow[hookrightarrow]{rr} \arrow{dd} \arrow{dr} & &
\Fun(\mathcal{W}(S), \mathcal{S}) \arrow{dr} \arrow{dd} \\
 & \Alg_{T}(\mathcal{S}^{\mathcal{I}}) \arrow[crossing
over,hookrightarrow]{rr} & & \Fun(\mathcal{W}(T), \mathcal{S})
\arrow{dd} \\
\Fun(\mathcal{W}(S)^{\el}, \mathcal{S}) \arrow{dd} \arrow{dr}
\arrow[hookrightarrow]{rr} & & \Fun(\mathcal{W}(S)^{\xint},
\mathcal{S}) \arrow{dd} \arrow{dr} \\
 & \Fun(\mathcal{W}(T)^{\el}, \mathcal{S}) \arrow{dd}
\arrow[leftarrow,crossing over]{uu} \arrow[hookrightarrow, crossing over]{rr} & & \Fun(\mathcal{W}(T)^{\xint},
\mathcal{S}) \arrow{dd} \\
 \mathcal{S}^{\mathcal{J}} \arrow[hookrightarrow]{rr}
\arrow{dr} & & \Fun(\mathcal{U}(S), \mathcal{S}) \arrow{dr}
\\ & \mathcal{S}^{\mathcal{I}} \arrow[hookrightarrow]{rr}
\arrow[leftarrow,crossing over]{uu} & & \Fun(\mathcal{U}(T),
\mathcal{S}),
    \end{tikzcd}
  \]
where the left side gives the naturality square
\[
  \begin{tikzcd}
    S \arrow{r} \arrow{d} & T_{\mathcal{W}(S)} \arrow{d} \\
    T \arrow{r} & T_{\mathcal{W}(T)}.
  \end{tikzcd}
\]
Since this construction is again compatible with composition, it gives
a natural transformation $\id \to \mathfrak{M}\mathfrak{P}$.
\end{proof}

\begin{variant}
  Let us say that a \emph{flagged algebraic pattern} is a pair
  $(\xxO, \xI\to \xxO^\el)$ where $\xxO$ is an algebraic pattern and
  $\xI\to \xxO^\el$ is an essentially surjective functor of
  \icats{}. We write $\FlAlgPatt$ for the full subcategory of
  $\AlgPatt \times_{\CatI} \xFun(\Delta^1, \CatI)$ spanned by the
  flagged algebraic patterns, and $\FlAlgPatt^{\Seg}_{\name{ext}}$ for
  the subcategory consisting of flagged algebraic patterns whose
  underlying patterns are extendable, with morphisms those such that
  the underlying morphisms of patterns are Segal morphisms. As a
  variant of the construction of $\mathfrak{P}$ above, we can
  define a functor 
  \[ \mathfrak{P}' \colon \PolyMnd \to
    \FlAlgPatt^{\Seg}_{\name{ext}} \] that takes a polynomial monad
  $T$ on $\mathcal{S}^{\mathcal{I}}$ to the flagged algebraic pattern
  $(\mathcal{W}(T), \mathcal{I} \xto{e} \mathcal{W}(T)^{\el})$. Note that
  we can recover the monad $T$ from this flagged pattern, since
  $U_{T}$ is equivalent to the composite
  \[\Seg_{\mathcal{W}(T)}(\mathcal{S}) \xto{U_{\mathcal{W}(T)}}
    \Fun(\mathcal{W}(T)^{\el}, \mathcal{S}) \xto{e^{*}}
    \Fun(\mathcal{I}, \mathcal{S}).\]
  For any flagged extendable pattern
  $(\mathcal{O}, f \colon \mathcal{I} \to \mathcal{O}^{\el})$
  the composite
  \[ \Seg_{\mathcal{O}}(\mathcal{S}) \xto{U_{\mathcal{O}}}
    \Fun(\mathcal{O}^{\el}, \mathcal{S}) \xto{f^{*}} \Fun(\mathcal{I},
    \mathcal{S}) \]
  is a monadic right adjoint (since $f^{*}$ preserves all limits and
  colimits and is conservative when $f$ is essentially
  surjective), but we do not know under what conditions on $f$ the
  corresponding monad is polynomial. This means that we do not have a
  satisfactory flagged version of the functor $\mathfrak{M}$ in
  general. 
  However, if we restrict to patterns $\mathcal{O}$ such that
  $\mathcal{O}^{\el}$ is
  an $\infty$-groupoid, then this construction \emph{does} give a polynomial
  monad for any essentially surjective morphism $f$ of
  $\infty$-groupoids, since in this case the left adjoint $f_{!}$
  preserves weakly contractible limits by \cite[Lemma 2.2.10]{AnalMnd}
  and the unit and counit for the adjunction $f_{!} \dashv f^{*}$ are
  cartesian transformations by \cite[Lemma 2.1.5]{AnalMnd}.
\end{variant}

\section{Saturation and Canonical Patterns}
\label{sec:poly2}

Suppose $\mathcal{O}$ is an extendable algebraic pattern. Then the
free Segal $\mathcal{O}$-space monad $T_{\mathcal{O}}$ is polynomial,
and our results in the previous section associate to this another
algebraic pattern $\olO:=
\mathcal{W}(T_{\mathcal{O}})$ such that there is an
equivalence\footnote{In the next section, we will see that furthermore
  the patterns $\mathcal{O}$ and $\olO$ determine the same polynomial monad.}
\[\Seg_{\mathcal{O}}(\mathcal{S}) \simeq
  \Seg_{\olO}(\mathcal{S}).\]
In this section we will explore the relationship between the patterns
$\mathcal{O}$ and $\olO$. We will show that under a
mild hypothesis on $\mathcal{O}$ (which can always be enforced by
passing to a full subcategory without changing the monad) there is a
canonical morphism of patterns
$\mathcal{O} \to \olO$, which gives a natural
transformation
\[ \id \to \mathfrak{P}\mathfrak{M}.\]
We will also give an explicit necessary and sufficient condition on
$\mathcal{O}$ for the map $\mathcal{O} \to \olO$ to be an
equivalence, and discuss some examples where this holds.

\begin{notation}
  In the first part of this section we fix an extendable pattern
  $\mathcal{O}$, and use the notations
  \[ \mathcal{O}^{\el} \xto{i} \mathcal{O}^{\xint} \xto{j}
    \mathcal{O}\]
  for the standard inclusions.
\end{notation}

We begin by studying the localized Yoneda embedding
\[ \mathcal{O}^{\op} \to \Fun(\mathcal{O}, \mathcal{S}) \to
  \Seg_{\mathcal{O}}(\mathcal{S}) \] for a pattern $\mathcal{O}$,
which will give the canonical map to $\olO$.
\begin{notation}
  Let
  $\LOpi \colon \mathcal{O}^{(\xint),\op} \to
  \Seg_{\mathcal{O}^{(\xint)}}(\mathcal{S})$ denote the composite of
  the Yoneda embedding
  $\yOpi \colon \mathcal{O}^{(\xint),\op} \to
  \Fun(\mathcal{O}^{(\xint)}, \mathcal{S})$ with the localization
  $\Fun(\mathcal{O}^{(\xint)}, \mathcal{S}) \to
  \Seg_{\mathcal{O}^{(\xint)}}(\mathcal{S})$.
\end{notation}

\begin{lemma}\label{lem:LambdaisF}
  For $X \in \mathcal{O}$, there is an equivalence
  \[ \Lambda_{\mathcal{O}}X \simeq
    F_{\mathcal{O}}\Lambda_{\mathcal{O}}^{\xint}X\]
  in $\Seg_{\mathcal{O}}(\mathcal{S})$. This equivalence is natural with respect to inert morphisms, \ie{}
  we have a commutative square
  \[
    \begin{tikzcd}
      \mathcal{O}^{\op} \arrow{r}{\Lambda_{\mathcal{O}}} &
      \Seg_{\mathcal{O}}(\mathcal{S}) \\
      \mathcal{O}^{\xint,\op} \arrow{u}{j^{\op}}
      \arrow{r}{\Lambda_{\mathcal{O}}^{\xint}} &
      \Seg_{\mathcal{O}^{\xint}}(\mathcal{S}). \arrow[u,swap,"F_{\mathcal{O}}"]
    \end{tikzcd}
  \]
\end{lemma}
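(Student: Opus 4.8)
The plan is to prove the pointwise equivalence by a mapping-space computation, testing both sides against an arbitrary Segal $\mathcal{O}$-space $G$ and then invoking the Yoneda lemma in $\Seg_{\mathcal{O}}(\mathcal{S})$. The key input is that, under the equivalence $\Seg_{\mathcal{O}^{\xint}}(\mathcal{S}) \simeq \Fun(\mathcal{O}^{\el}, \mathcal{S})$, the functor $F_{\mathcal{O}}$ is the left adjoint of the restriction $j^{*} \colon \Seg_{\mathcal{O}}(\mathcal{S}) \to \Seg_{\mathcal{O}^{\xint}}(\mathcal{S})$; this is the functor $L_{\Seg}j_{!}$ of Proposition~\ref{propn:f!adj} applied to the Segal morphism $j$. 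Since $\Lambda_{\mathcal{O}}^{\xint}X$ is by definition the Segal localization of the corepresentable $y_{\mathcal{O}}^{\xint}(X) = \Map_{\mathcal{O}^{\xint}}(X,\blank)$, and since $j^{*}G$ is again a Segal $\mathcal{O}^{\xint}$-object (because $j$ is a Segal morphism, by Lemma~\ref{lem:Segalmor}), the adjunction, the locality of $j^{*}G$ with respect to the Segal equivalences, and Yoneda give natural equivalences
\[ \Map_{\Seg_{\mathcal{O}}(\mathcal{S})}(F_{\mathcal{O}}\Lambda_{\mathcal{O}}^{\xint}X, G) \simeq \Map_{\Seg_{\mathcal{O}^{\xint}}(\mathcal{S})}(\Lambda_{\mathcal{O}}^{\xint}X, j^{*}G) \simeq \Map_{\Fun(\mathcal{O}^{\xint},\mathcal{S})}(y_{\mathcal{O}}^{\xint}(X), j^{*}G) \simeq G(X). \]

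On the other side, $\Lambda_{\mathcal{O}}X$ is the Segal localization of the corepresentable $y_{\mathcal{O}}(X) = \Map_{\mathcal{O}}(X,\blank)$, so for the same $G$ the locality of $G$ and Yoneda yield
\[ \Map_{\Seg_{\mathcal{O}}(\mathcal{S})}(\Lambda_{\mathcal{O}}X, G) \simeq \Map_{\Fun(\mathcal{O},\mathcal{S})}(y_{\mathcal{O}}(X), G) \simeq G(X). \]
Comparing the two computations, both functors $\Seg_{\mathcal{O}}(\mathcal{S}) \to \mathcal{S}$ corepresented by $F_{\mathcal{O}}\Lambda_{\mathcal{O}}^{\xint}X$ and by $\Lambda_{\mathcal{O}}X$ are naturally equivalent to evaluation at $X$, so the Yoneda lemma in $\Seg_{\mathcal{O}}(\mathcal{S})$ produces the asserted equivalence $\Lambda_{\mathcal{O}}X \simeq F_{\mathcal{O}}\Lambda_{\mathcal{O}}^{\xint}X$.

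The remaining content is the naturality in inert morphisms, i.e.\ the commutativity of the displayed square, and this is where the only real care is needed. I would run the identical computation functorially in $X \in \mathcal{O}^{\xint,\op}$: each equivalence above is natural in $X$ (via inert maps) as well as in $G$, so they assemble into an equivalence of functors $\mathcal{O}^{\xint,\op} \times \Seg_{\mathcal{O}}(\mathcal{S})^{\op} \to \mathcal{S}$, and the $\Seg_{\mathcal{O}}(\mathcal{S})$-Yoneda lemma then upgrades the objectwise equivalence to an equivalence of functors $\mathcal{O}^{\xint,\op} \to \Seg_{\mathcal{O}}(\mathcal{S})$, which is exactly the square. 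A clean way to package this bookkeeping is to realize the whole chain as induced by the natural equivalence $F_{\mathcal{O}} \circ L_{\Seg} \simeq L_{\Seg} \circ j_{!}$ of functors $\Fun(\mathcal{O}^{\xint},\mathcal{S}) \to \Seg_{\mathcal{O}}(\mathcal{S})$ — which one checks by comparing right adjoints, both of which send a Segal $\mathcal{O}$-object $G$ to $j^{*}G$ viewed in $\Fun(\mathcal{O}^{\xint},\mathcal{S})$, these agreeing precisely because $j^{*}G$ is already Segal — together with the standard equivalence $j_{!} \circ y_{\mathcal{O}}^{\xint} \simeq y_{\mathcal{O}} \circ j^{\op}$ expressing that left Kan extension carries corepresentables to corepresentables. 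Beyond this naturality packaging I do not anticipate any genuine obstacle.
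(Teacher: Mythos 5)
Your proposal is correct and follows essentially the same route as the paper: both sides are shown to corepresent the copresheaf $G \mapsto G(X)$ on $\Seg_{\mathcal{O}}(\mathcal{S})$ via the adjunction $F_{\mathcal{O}} \dashv U_{\mathcal{O}}$ (your $L_{\Seg}j_{!} \dashv j^{*}$ under the identification $\Seg_{\mathcal{O}^{\xint}}(\mathcal{S}) \simeq \Fun(\mathcal{O}^{\el},\mathcal{S})$), locality, and the Yoneda lemma, with the naturality in inert morphisms obtained by running the same chain of equivalences functorially in $X \in \mathcal{O}^{\xint,\op}$. Your closing remark packaging the naturality via $F_{\mathcal{O}} \circ L_{\Seg} \simeq L_{\Seg} \circ j_{!}$ and $j_{!} \circ y_{\mathcal{O}}^{\xint} \simeq y_{\mathcal{O}} \circ j^{\op}$ is a slightly more explicit bookkeeping of what the paper summarizes as ``by construction natural in $\mathcal{O}^{\xint}$.''
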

\begin{proof}
  For $\Phi \in \Seg_{\mathcal{O}}(\mathcal{S})$, we have natural
  equivalences
  \[
    \begin{split}
\Map_{\Seg_{\mathcal{O}}(\mathcal{S})}(\Lambda_{\mathcal{O}}X,
    \Phi) & \simeq
    \Map_{\Fun(\mathcal{O},\mathcal{S})}(y_{\mathcal{O}}X, \Phi) \\ & 
    \simeq \Phi(X), \\
      \Map_{\Seg_{\mathcal{O}}(\mathcal{S})}(F_{\mathcal{O}}\Lambda_{\mathcal{O}}^{\xint}X,
    \Phi) & \simeq
    \Map_{\Seg_{\mathcal{O}^{\xint}}(\mathcal{S})}(\Lambda_{\mathcal{O}}^{\xint}X,
    U_{\mathcal{O}}\Phi) \\ & \simeq \Map_{\Fun(\mathcal{O}^{\xint},
      \mathcal{S})}(y_{\mathcal{O}}^{\xint}X, U_{\mathcal{O}}\Phi) \\
    & 
    \simeq U_{\mathcal{O}}\Phi(X) \\ & \simeq \Phi(X).    
    \end{split}
  \]
  The objects
  $\Lambda_{\mathcal{O}}X$ and
  $F_{\mathcal{O}}\Lambda_{\mathcal{O}}^{\xint}X$ therefore corepresent
  the same copresheaf on $\Seg_{\mathcal{O}}(\mathcal{S})$ and hence
  are equivalent. Moreover, this equivalence is by construction
  natural in $\mathcal{O}^{\xint}$.
\end{proof}

\begin{lemma}\label{lem:Lambdadesc}
  The map
  \[ \Map_{\mathcal{O}}(X, Y) \to
    \Map_{\Seg_{\mathcal{O}}(\mathcal{S})}(\LO Y,
    \LO X)\]
  given by the functor $\LO$ fits in a commutative square
  \[
    \begin{tikzcd}
      \colim_{O \to Y \in
      \Act_{\mathcal{O}}(Y)} \Map_{\mathcal{O}^{\xint}}(X, O)
    \arrow{r} \arrow[d, swap, "\wr"] & \colim_{O \to Y \in
      \Act_{\mathcal{O}}(Y)}
    \Map_{\Seg_{\mathcal{O}^{\xint}}(\mathcal{S})}(\LOi O, \LOi X)
    \arrow{d}{\wr} \\
    \Map_{\mathcal{O}}(X, Y) \arrow{r} & 
    \Map_{\Seg_{\mathcal{O}}(\mathcal{S})}(\LO Y,
    \LO X),
    \end{tikzcd}
    \]
    where the vertical maps are equivalences and the top horizontal
    map comes from the functor $\LOi$.  
\end{lemma}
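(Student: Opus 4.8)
The plan is to identify all four corners of the square with mapping spaces of the form $\LO X(Y)$ and $\LOi X(O)$ via the Yoneda lemma, and then to recognise the two vertical maps as, respectively, the factorization-system decomposition of $\Map_{\mathcal{O}}(X,Y)$ and the colimit formula for $F_{\mathcal{O}} = j_{\mathcal{O},!}$ applied to $\LOi X$. Concretely, since $\LO X$ is a Segal $\mathcal{O}$-object and $\LO Y$ is the localization of the representable $y(Y)$, the localization adjunction together with the Yoneda lemma (exactly as in the proof of Lemma~\ref{lem:LambdaisF}) gives $\Map_{\Seg_{\mathcal{O}}(\mathcal{S})}(\LO Y, \LO X)\simeq\LO X(Y)$, and likewise $\Map_{\Seg_{\mathcal{O}^{\xint}}(\mathcal{S})}(\LOi O,\LOi X)\simeq\LOi X(O)$ for $O\in\mathcal{O}^{\xint}$. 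Under these identifications the bottom map becomes the localization comparison $\Map_{\mathcal{O}}(X,Y) = y(X)(Y)\to\LO X(Y)$, and the top map becomes the colimit over $\Act_{\mathcal{O}}(Y)$ of the comparisons $\Map_{\mathcal{O}^{\xint}}(X,O) = y^{\xint}(X)(O)\to\LOi X(O)$.

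For the left-hand vertical map I would argue directly from the inert--active factorization system: the $\infty$-groupoid $\Act_{\mathcal{O}}(Y)$ carries the left fibration whose fibre over $(O\actto Y)$ is $\Map_{\mathcal{O}^{\xint}}(X,O)$, whose total space is $\colim_{O\in\Act_{\mathcal{O}}(Y)}\Map_{\mathcal{O}^{\xint}}(X,O)$ and which maps to $\Map_{\mathcal{O}}(X,Y)$ by composing $X\intto O$ with $O\actto Y$; the fibre of this map over $f\colon X\to Y$ is the space of inert--active factorizations of $f$, which is contractible, so the map is an equivalence. For the right-hand vertical map I would use Lemma~\ref{lem:LambdaisF} to write $\LO X\simeq F_{\mathcal{O}}\LOi X$, where $F_{\mathcal{O}} = j_{\mathcal{O},!}$ by Corollary~\ref{cor:freealg} (as $\mathcal{S}$ is $\mathcal{O}$-admissible by Example~\ref{ex:Oadm}); evaluating the colimit formula of that corollary at $Y$ gives $\LO X(Y)\simeq\colim_{O\in\Act_{\mathcal{O}}(Y)}\LOi X(O)$. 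A short adjunction computation then identifies the $\alpha$-component of the right vertical map --- namely applying $F_{\mathcal{O}}$ and precomposing with $\LO(\alpha)\colon\LO Y\to\LO O$ --- with the leg $\LOi X(O)\to\LO X(Y)$ of this colimit cocone, i.e.\ the unit $\LOi X\to j_{\mathcal{O}}^{*}\LO X$ at $O$ followed by $\LO X(\alpha)$; hence the right vertical map is exactly the colimit-formula equivalence.

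Finally, commutativity I would check pointwise: an element of the top-left corner is a pair $(\alpha\colon O\actto Y,\ \beta\colon X\intto O)$, the left map sends it to the composite $\alpha\beta\colon X\to Y$, and the bottom map sends this to $\LO(\alpha\beta)\simeq\LO(\beta)\circ\LO(\alpha)$; since $\beta$ is inert, Lemma~\ref{lem:LambdaisF} gives $\LO(\beta)\simeq F_{\mathcal{O}}(\LOi\beta)$, so $\LO(\alpha\beta)\simeq F_{\mathcal{O}}(\LOi\beta)\circ\LO(\alpha)$, which is precisely the image of $\beta$ under the top map followed by the right vertical map. The main obstacle is the second paragraph: making the identification of the components of the right vertical map with the Kan-extension cocone legs precise as a map of spaces (rather than merely on objects), since this requires tracking the unit of the adjunction $F_{\mathcal{O}} = j_{\mathcal{O},!}\dashv j_{\mathcal{O}}^{*}$ and the contravariant functoriality of $\LO$ through the Yoneda equivalences; once this naturality is established, both the commutativity of the square and the fact that the two vertical maps are equivalences follow as above.
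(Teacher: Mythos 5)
Your proposal is correct and follows essentially the same route as the paper: identify the mapping spaces with values of $\LO X$ and $\LOi X$ via the Yoneda lemma, use the colimit formula $\LO X(Y)\simeq\colim_{O\in\Act_{\mathcal{O}}(Y)}\LOi X(O)$ supplied by extendability for the right vertical map, and identify the fibres of the composition map over $f\colon X\to Y$ with the contractible spaces of inert--active factorizations for the left vertical map. The naturality issue you flag at the end is precisely what the paper's proof takes care of, by first constructing a natural commutative square for each active morphism $O\actto Y$ (whose right-hand leg is the component inclusion into the colimit) and then passing to the colimit over $\Act_{\mathcal{O}}(Y)$, so that the identification of the right vertical map is obtained as a map of spaces rather than merely objectwise.
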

\begin{proof}
  From the commutative square of functors in Lemma~\ref{lem:LambdaisF}
  we get for all $O \in \mathcal{O}$ a commutative square
  \[
    \begin{tikzcd}
      \Map_{\mathcal{O}^{\xint}}(X,O) \arrow{r} \arrow{d} & 
       \Map_{\Seg_{\mathcal{O}^{\xint}}(\mathcal{S})}(\Lambda_{\mathcal{O}}^{\xint}O,
      \Lambda_{\mathcal{O}}^{\xint}X)
       \arrow{d} \\
      \Map_{\mathcal{O}}(X,O) \arrow{r} &\Map_{\Seg_{\mathcal{O}}(\mathcal{S})}(\Lambda_{\mathcal{O}}O,
      \Lambda_{\mathcal{O}}X)  ,
    \end{tikzcd}
  \]
  where the right-hand map can be identified with $\Lambda^{\xint}_{\mathcal{O}}X(O)\to \colim_{O' \in
    \Act_{\mathcal{O}}(O)} \Lambda^{\xint}_{\mathcal{O}}X(O')$ which is the canonical map to the colimit
  from the component at $\id_{O}$.
  On the other hand, for any active morphism $O \to Y$ we have a
  natural commutative diagram
  \[
    \begin{tikzcd}[column sep=small]
      \Map_{\mathcal{O}}(X,O) \arrow{r} \arrow{d} &
      \Map_{\Seg_{\mathcal{O}}(\mathcal{S})}(\Lambda_{\mathcal{O}}O,
      \Lambda_{\mathcal{O}}X) \arrow{r}{\sim} \arrow{d} &
      T_{\mathcal{O}}\Lambda^{\xint}_{\mathcal{O}}X(O) \arrow{r}{\sim}
      \arrow{d} & \colim_{O' \in \Act_{\mathcal{O}}(O)}
      \Lambda^{\xint}_{\mathcal{O}}X(O') \arrow{d} \\
      \Map_{\mathcal{O}}(X,Y) \arrow{r} &
      \Map_{\Seg_{\mathcal{O}}(\mathcal{S})}(\Lambda_{\mathcal{O}}X,
      \Lambda_{\mathcal{O}}Y) \arrow{r}{\sim} &
      T_{\mathcal{O}}\Lambda^{\xint}_{\mathcal{O}}X(Y) \arrow{r}{\sim}
      & \colim_{O'' \in \Act_{\mathcal{O}}(Y)}
      \Lambda^{\xint}_{\mathcal{O}}X(O''),
    \end{tikzcd}
  \]
  where the description of
  $F_{\mathcal{O}}\Lambda^{\xint}_{\mathcal{O}}X$ as a left Kan
  extension implies that the right-hand map is given on the component
  $\Lambda^{\xint}_{\mathcal{O}}X(O')$ for $O' \to O$ by the canonical
  map
  $\Lambda^{\xint}_{\mathcal{O}}X(O') \to \colim_{O'' \in
    \Act_{\mathcal{O}}(Y)} \Lambda^{\xint}_{\mathcal{O}}X(O'')$ for
  the component at $O' \to O \to Y$. Putting these two diagrams
  together we therefore obtain natural commutative squares
  \[
    \begin{tikzcd}
      \Map_{\mathcal{O}^{\xint}}(X,O) \arrow{r} \arrow{d} &
      \Lambda^{\xint}_{\mathcal{O}}X(O) \arrow{d} \\
      \Map_{\mathcal{O}}(X,Y) \arrow{r} & \colim_{O' \in \Act_{\mathcal{O}}(Y)}
      \Lambda^{\xint}_{\mathcal{O}}X(O'), 
    \end{tikzcd}
  \]
  for every active morphism $\phi \colon O \to Y$, where the right
  vertical map is the canonical one from the component of
  the colimit at $\phi$. Taking colimits over $\Act_{\mathcal{O}}(Y)$
  we therefore get a commutative square
  \[
    \begin{tikzcd}
      \colim_{O \in \Act_{\mathcal{O}}(Y)}\Map_{\mathcal{O}^{\xint}}(X,O) \arrow{r} \arrow{d} &
      \colim_{O \in \Act_{\mathcal{O}}(Y)} \Lambda^{\xint}_{\mathcal{O}}X(O) \arrow[equals]{d} \\
      \Map_{\mathcal{O}}(X,Y) \arrow{r} & \colim_{O \in \Act_{\mathcal{O}}(Y)}
      \Lambda^{\xint}_{\mathcal{O}}X(O).
    \end{tikzcd}
  \]
  Here the inert--active factorization system on $\mathcal{O}$ implies
  that the left vertical map is an equivalence, since its fibre at a
  morphism $\psi \colon X \to Y$ can be identified with the space of
  inert--active factorizations of $\psi$, and this completes the proof.
\end{proof}
  
\begin{remark}\label{rem MapSegOint}
  For $Y \in \mathcal{O}$, we have a natural equivalence
  \[
    \Map_{\Seg_{\mathcal{O}^{\xint}}(\mathcal{S})}(\Lambda^{\xint}_{\mathcal{O}}
    Y, \Phi) \simeq \Map_{\Fun(\mathcal{O}^{\xint},
      \mathcal{S})}(y_{\mathcal{O}}^{\xint}Y, \Phi) \simeq \Phi(Y).\]
  In particular,
  \[
    \Map_{\Seg_{\mathcal{O}^{\xint}}(\mathcal{S})}(\Lambda^{\xint}_{\mathcal{O}}Y,
    T_{\mathcal{O}}*) \simeq \Act_{\mathcal{O}}(Y),\] and so a
  morphism $\Lambda^{\xint}_{\mathcal{O}}Y \to T_{\mathcal{O}}*$
  corresponds to an active morphism $X \to Y$ in $\mathcal{O}$.
\end{remark}

We will now show that this equivalence identifies active morphisms in
$\mathcal{O}$ with generic morphisms in
$\Seg_{\mathcal{O}^{\xint}}(\mathcal{S})$:
\begin{propn}\label{propn:Oactgeneric}
  Suppose $\Lambda^{\xint}_{\mathcal{O}}Y \xto{\eta} T_{\mathcal{O}}*$
  corresponds to the active morphism $X \overset{\phi}{\actto} Y$ in
  $\Act_{\mathcal{O}}(Y)$ under the equivalence of Remark~\ref{rem
    MapSegOint}. Then the generic--free factorization of $\eta$ is
  \[\Lambda^{\xint}_{\mathcal{O}}Y \xto{\hat{\phi}}
    T_{\mathcal{O}}\Lambda^{\xint}_{\mathcal{O}}X \to
    T_{\mathcal{O}}*,\]
  where the first morphism is adjoint to $\Lambda_{\mathcal{O}}(\phi)
  \colon \Lambda_{\mathcal{O}}Y \to \Lambda_{\mathcal{O}}X$.
\end{propn}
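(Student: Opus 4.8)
The plan is to identify the asserted factorization as the \emph{unique} generic--free factorization of $\eta$ supplied by Remark~\ref{rmk:genericfact}, applied to the local right adjoint $T_{\mathcal{O}}$ on $\Seg_{\mathcal{O}^{\xint}}(\mathcal{S})$ (which is a local right adjoint since $T_{\mathcal{O}}$ is polynomial by Proposition~\ref{prop:polymonad}). That remark writes the generic--free factorization of $\eta$ as
\[ \LOi Y \xto{\eta_{\LOi Y}} T_{\mathcal{O}}(L_{*}\LOi Y) \to T_{\mathcal{O}}*, \]
where $\eta_{\LOi Y}$ is the unit of the adjunction $L_{*} \dashv (T_{\mathcal{O}})_{/*}$ and the second map is $T_{\mathcal{O}}$ applied to the terminal map $L_{*}\LOi Y \to *$. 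Thus it suffices to prove two things: first, that $L_{*}\LOi Y \simeq \LOi X$; and second, that under this identification the generic unit $\eta_{\LOi Y}$ agrees with $\hat{\phi}$. Once these are established the claim that the composite recovers $\eta$ is automatic, since a generic--free factorization is by construction a factorization of $\eta$.

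The computation of $L_{*}\LOi Y$ is the heart of the argument. Using the adjunction, for any $Z \in \Seg_{\mathcal{O}^{\xint}}(\mathcal{S})$ I would compute $\Map(L_{*}\LOi Y, Z)$ as the fibre over $\eta$ of the map
\[ \Map_{\Seg_{\mathcal{O}^{\xint}}(\mathcal{S})}(\LOi Y, T_{\mathcal{O}}Z) \to \Map_{\Seg_{\mathcal{O}^{\xint}}(\mathcal{S})}(\LOi Y, T_{\mathcal{O}}*). \]
By Remark~\ref{rem MapSegOint} together with the free Segal space formula of Corollary~\ref{cor:freealg}, this map is identified with the projection
\[ \colim_{W \in \Act_{\mathcal{O}}(Y)} Z(W) \to \colim_{W \in \Act_{\mathcal{O}}(Y)} * \simeq \Act_{\mathcal{O}}(Y). \]
Since $\Act_{\mathcal{O}}(Y)$ is an \igpd, this is exactly the projection from the total space of the left fibration classified by $W \mapsto Z(W)$, so its fibre over the point $\phi \colon X \actto Y$ is $Z(X) \simeq \Map_{\Seg_{\mathcal{O}^{\xint}}(\mathcal{S})}(\LOi X, Z)$, again by Remark~\ref{rem MapSegOint}. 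As these equivalences are natural in $Z$, the Yoneda lemma yields $L_{*}\LOi Y \simeq \LOi X$.

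It remains to match the generic unit with $\hat{\phi}$. Both are morphisms $\LOi Y \to T_{\mathcal{O}}\LOi X$, hence points of $(T_{\mathcal{O}}\LOi X)(Y) \simeq \colim_{W \in \Act_{\mathcal{O}}(Y)} \LOi X(W)$ via Remark~\ref{rem MapSegOint}. Tracing the identity of $L_{*}\LOi Y$ through the fibre computation of the previous paragraph (with $Z = \LOi X$) shows that the generic unit corresponds to the image in this colimit, at the component $\phi$, of the tautological point $u_{X} \in \LOi X(X)$ classified by $\id_{\LOi X}$. On the other hand, $\hat{\phi}$ is by definition adjoint to $\Lambda_{\mathcal{O}}(\phi) \colon \Lambda_{\mathcal{O}}Y \to \Lambda_{\mathcal{O}}X$ through Lemma~\ref{lem:LambdaisF}, and the final commutative square in the proof of Lemma~\ref{lem:Lambdadesc} identifies this adjoint, under the inert--active factorization equivalence, with precisely the component at $\phi$ of $u_{X}$ --- the active morphism $\phi$ being its own active part, with trivial inert part $\id_{X}$. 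Hence the two points coincide and $\eta_{\LOi Y} \simeq \hat{\phi}$, which completes the proof.

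The step I expect to be the main obstacle is the fibre computation identifying $L_{*}\LOi Y$: one must carefully recognize $(T_{\mathcal{O}}Z)(Y) \to \Act_{\mathcal{O}}(Y)$ as the universal left fibration over the \igpd{} $\Act_{\mathcal{O}}(Y)$ and extract its fibre over the active map $\phi$, all naturally in $Z$. The key structural fact enabling this is that $\Act_{\mathcal{O}}(Y)$ is an \igpd, so that the defining colimit decomposes fibrewise and the fibre over $\phi$ is simply evaluation at the source $X$.
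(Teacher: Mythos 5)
Your proof is correct. It reaches the same conclusion by a different organization of the same underlying computation, so a brief comparison is worthwhile. The paper exhibits the triangle explicitly and then verifies that $\hat{\phi}$ is generic by checking the unique-filler universal property of Remark~\ref{rmk:genericfill} against an arbitrary test object $\Phi$: a square into $T_{\mathcal{O}}\Phi \to T_{\mathcal{O}}*$ corresponds to a point $p$ in the fibre $\Phi(X)$ of $T_{\mathcal{O}}\Phi(Y) \simeq \colim_{O \in \Act_{\mathcal{O}}(Y)}\Phi(O)$ over $\phi$, and the filler is pinned down by Yoneda. You instead invoke the canonical generic--free factorization of Remark~\ref{rmk:genericfact} through $T_{\mathcal{O}}L_{*}\LOi Y$ and compute $L_{*}\LOi Y$ directly by corepresentability, using that $\colim_{W \in \Act_{\mathcal{O}}(Y)}Z(W) \to \Act_{\mathcal{O}}(Y)$ is the left fibration classified by $W \mapsto Z(W)$ over an \igpd{}, so its fibre over $\phi$ is $Z(X) \simeq \Map_{\Seg_{\mathcal{O}^{\xint}}(\mathcal{S})}(\LOi X, Z)$. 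These are two packagings of the same fact --- the paper's unique-filler check is precisely the statement that $\LOi X$ corepresents the fibre functor $Z \mapsto \Map(L_{*}\LOi Y, Z)$ --- but your version isolates the identification $L_{*}\LOi Y \simeq \LOi X$ as a single Yoneda argument and reduces the remaining work to matching two points of $(T_{\mathcal{O}}\LOi X)(Y)$, which you do correctly by tracing $\id_{\LOi X}$ through the fibre equivalence on one side and reading off $\hat{\phi}$ from the final square in the proof of Lemma~\ref{lem:Lambdadesc} on the other (both give the image of the tautological point of $\LOi X(X)$ at the component $\phi$, since the inert--active factorization of the active map $\phi$ has trivial inert part). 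The paper's route has the minor advantage of not needing the naturality-in-$Z$ bookkeeping explicitly; yours has the advantage of making the existence and shape of the factorization automatic from the start rather than constructed by hand.
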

\begin{proof}
  We first check that this factorization exists. By
  Lemma~\ref{lem:Lambdadesc} the morphism $\hat{\phi}$ adjoint to
  $\Lambda(\phi)$ corresponds to the point in
  $T_{\mathcal{O}}\Lambda^{\xint}_{\mathcal{O}}X(Y) \simeq \colim_{O
    \in \Act_{\mathcal{O}}(Y)} \Lambda^{\xint}_{\mathcal{O}}X(O)$
  given by the composite
  \[ \{\id_{X}\} \to \Map_{\mathcal{O}^{\xint}}(X,X) \to  \colim_{O \in \Act_{\mathcal{O}}(Y)}
    \Map_{\mathcal{O}^{\xint}}(X,O) \to \colim_{O
      \in \Act_{\mathcal{O}}(Y)} \Lambda^{\xint}_{\mathcal{O}}X(O),\]
  where the second morphism is the canonical one from the component of
  the colimit at $\phi$. We therefore have a commutative diagram
  \[
    \begin{tikzcd}
      {} & * \arrow{dl} \arrow{ddr} \\
      \colim_{O \in \Act_{\mathcal{O}}(Y)}
      \Map_{\mathcal{O}^{\xint}}(X,O) \arrow{d} \arrow{drr} &\\
      \colim_{O \in \Act_{\mathcal{O}}(Y)}
      \Lambda^{\xint}_{\mathcal{O}}X(O) \arrow{rr} & & \Act_{\mathcal{O}}(Y),
    \end{tikzcd}
  \]
  where the outer triangle corresponds to the desired factorization
  \[
    \begin{tikzcd}
      {} & \Lambda^{\xint}_{\mathcal{O}}Y \arrow{dr}{\eta}
      \arrow{dl}[above left]{\hat{\phi}} \\
      T_{\mathcal{O}}\Lambda^{\xint}_{\mathcal{O}}X \arrow{rr} & & T_{\mathcal{O}}(*).
    \end{tikzcd}
  \]
  Now we must show that $\hat{\phi}$ is generic, so suppose we have a commutative square
  \[
    \begin{tikzcd}
      \Lambda^{\xint}Y  \arrow{r}{\theta} \arrow[d, swap, "\hat{\phi}"] & T_\xxO\Phi \arrow{d} \\
      T_\xxO\Lambda^{\xint}X \arrow{r} & T_\xxO*,
    \end{tikzcd}
  \]
  where the top horizontal map corresponds to a point $p$ in the fibre $\Phi(X)$ of
  $T_{\mathcal{O}}\Phi(Y) \simeq \colim_{O \in
    \Act_{\mathcal{O}}(Y)}\Phi(O)$ at $\phi$. Suppose we have a
  commutative triangle of the form
  \[
    \begin{tikzcd}
      {} & \Lambda^{\xint}_{\mathcal{O}}Y \arrow{dl}[above left]{\hat{\phi}} \arrow{dr}{\theta} \\
      T_{\mathcal{O}}\Lambda^{\xint}X \arrow{rr}{T\psi} & & T_{\mathcal{O}}\Phi.
    \end{tikzcd}
  \]
  This amounts to an equivalence between $p$ and the image
  \[ * \xto{\id_{X}} \Map_{\mathcal{O}^{\xint}}(X,X) \to \colim_{O \in
    \Act_{\mathcal{O}}(Y)} \Map_{\mathcal{O}^{\xint}}(X,O) \to \colim_{O \in
    \Act_{\mathcal{O}}(Y)} \Lambda^{\xint}_{\mathcal{O}}X(O) \to \colim_{O \in
      \Act_{\mathcal{O}}(Y)} \Phi(O).\]
  But since the last map arises from $T\psi$, there is a commutative
  diagram
  \[
    \begin{tikzcd}
      \Map_{\mathcal{O}^{\xint}}(X,X) \arrow{d} \arrow{r} &
      \Lambda^{\xint}_{\mathcal{O}}X(X) \arrow{d} \arrow{r} & \Phi(X)
      \arrow{d} \\
      \colim_{O \in
    \Act_{\mathcal{O}}(Y)} \Map_{\mathcal{O}^{\xint}}(X,O) \arrow{r} &  \colim_{O \in
    \Act_{\mathcal{O}}(Y)} \Lambda^{\xint}_{\mathcal{O}}X(O) \arrow{r}
  &  \colim_{O \in
      \Act_{\mathcal{O}}(Y)} \Phi(O),
    \end{tikzcd}
    \]
  which tells us that $\psi$ must be the morphism
  $\Lambda^{\xint}_{\mathcal{O}}X \to \Phi$ obtained by localizing the
  unique natural transformation $y_{\mathcal{O}}^{\xint}X \to \Phi$
  that takes $\id_{X}$ to the point $p$. Thus $\hat{\phi}$
  satisfies the universal property of generic morphisms
  described in Remark~\ref{rmk:genericfill}. By the uniqueness of
  generic--free factorizations, this completes the proof.
\end{proof}

This proposition allows us to identify the objects of $\mathcal{U}(T_{\mathcal{O}})$:
\begin{defn}\label{def slim}
  We say an object $O \in \mathcal{O}$ is \emph{necessary} if it
  admits an active morphism $O \to E$ for some
  $E \in \mathcal{O}^{\el}$, and denote by $\mathcal{O}^{\circ}$ the
  full subcategory of $\mathcal{O}$ spanned by the necessary
  objects. We say the pattern $\mathcal{O}$ is \emph{slim} if all
  objects are necessary, and write $\AlgPattSES$
  for the full subcategory of $\AlgPattSE$ spanned
  by the slim extendable patterns.
\end{defn}

\begin{cor}\label{cor:overlineOdesc}
  Let $\mathcal{O}$ be an extendable algebraic pattern, and let
  $\olO := \mathcal{W}(T_{\mathcal{O}})$ denote the
  corresponding canonical pattern. Then:
  \begin{enumerate}[(i)]
  \item The objects of $\mathcal{U}(T_{\mathcal{O}})$ are the
    objects of $\Seg_{\mathcal{O}^{\xint}}(\mathcal{S})$ of the form
    $\LOi X$ with $X \in
    \mathcal{O}^{\circ}$. Thus
    $\LOi$ induces an essentially
    surjective functor $\mathcal{O}^{\circ,\xint} \to
    \mathcal{U}(T_{\mathcal{O}})$.
  \item The objects of $\olO$ are the
    objects of $\Seg_{\mathcal{O}}(\mathcal{S})$ of the form
    $\Lambda_{\mathcal{O}}X$ with $X \in
    \mathcal{O}^{\circ}$. Thus
    $\Lambda_{\mathcal{O}}$ induces an essentially
    surjective functor $\mathcal{O}^{\circ} \to
    \olO$.
  \item A morphism $\Lambda_{\mathcal{O}}X \to \Lambda_{\mathcal{O}}Y$
    is active \IFF{} it is a composite of an equivalence and the
    image of an active morphism $X \to Y$
    in $\mathcal{O}^{\circ}$. In particular, the functor
    $\mathcal{O}^{\circ}\to \olO$ preserves inert
    and active morphisms.
  \end{enumerate}
\end{cor}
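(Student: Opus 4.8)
The plan is to deduce all three parts from the explicit computation of generic morphisms in $\Seg_{\mathcal{O}^{\xint}}(\mathcal{S})$ furnished by Proposition~\ref{propn:Oactgeneric}, combined with the comparison $\Lambda_{\mathcal{O}}X \simeq F_{\mathcal{O}}\Lambda_{\mathcal{O}}^{\xint}X$ of Lemma~\ref{lem:LambdaisF}. For part (i) I would start from the description (in the remark following Definition~\ref{def:Wint}) of the objects of $\mathcal{U}(T_{\mathcal{O}})^{\op}$ as those of the form $L_{*}(\Lambda_{\mathcal{O}}^{\xint}E)$ for some $E \in \mathcal{O}^{\el}$ and some morphism $\Lambda_{\mathcal{O}}^{\xint}E \to T_{\mathcal{O}}(*)$. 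By Remark~\ref{rem MapSegOint} the space $\Map(\Lambda_{\mathcal{O}}^{\xint}E, T_{\mathcal{O}}(*)) \simeq \Act_{\mathcal{O}}(E)$, so such a morphism is exactly an active morphism $X \actto E$ with $E$ elementary, and in particular $X$ is necessary. Proposition~\ref{propn:Oactgeneric}, applied with target $E$, then computes the generic part of this morphism, identifying $L_{*}(\Lambda_{\mathcal{O}}^{\xint}E) \simeq \Lambda_{\mathcal{O}}^{\xint}X$; conversely, for any necessary $X$ a choice of active $X \actto E$ produces through the same proposition a generic morphism $\Lambda_{\mathcal{O}}^{\xint}E \to T_{\mathcal{O}}\Lambda_{\mathcal{O}}^{\xint}X$ exhibiting $\Lambda_{\mathcal{O}}^{\xint}X \in \mathcal{U}(T_{\mathcal{O}})^{\op}$. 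This pins down the essential image and hence the essentially surjective functor $\mathcal{O}^{\circ,\xint} \to \mathcal{U}(T_{\mathcal{O}})$ induced by $\Lambda_{\mathcal{O}}^{\xint}$.

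Part (ii) is then formal: by Definition~\ref{def:Wint} the objects of $\olO^{\op} = \mathcal{W}(T_{\mathcal{O}})^{\op}$ are the free algebras $F_{\mathcal{O}}Z$ with $Z \in \mathcal{U}(T_{\mathcal{O}})^{\op}$. Substituting $Z \simeq \Lambda_{\mathcal{O}}^{\xint}X$ from (i) and applying the equivalence $F_{\mathcal{O}}\Lambda_{\mathcal{O}}^{\xint}X \simeq \Lambda_{\mathcal{O}}X$ of Lemma~\ref{lem:LambdaisF} identifies these with the objects $\Lambda_{\mathcal{O}}X$, $X \in \mathcal{O}^{\circ}$, so that $\Lambda_{\mathcal{O}}$ induces the claimed essentially surjective functor $\mathcal{O}^{\circ}\to \olO$.

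For part (iii) I would argue both directions using the identification of active morphisms in the Kleisli category with maps adjoint to generic morphisms (Lemma~\ref{lem:actgen}). In the forward direction, an active $\psi \colon X \actto Y$ in $\mathcal{O}^{\circ}$ maps under $\Lambda_{\mathcal{O}}$ to $\Lambda_{\mathcal{O}}(\psi)$, which by Proposition~\ref{propn:Oactgeneric} is adjoint to the generic morphism $\hat{\psi}$; hence its image in $\olO \hookrightarrow \mathcal{K}(T_{\mathcal{O}})$ is active by Lemma~\ref{lem:actgen}, showing that the functor preserves active morphisms. For the converse, given an active $\beta \colon \Lambda_{\mathcal{O}}X \to \Lambda_{\mathcal{O}}Y$, its adjoint $g \colon \Lambda_{\mathcal{O}}^{\xint}Y \to T_{\mathcal{O}}\Lambda_{\mathcal{O}}^{\xint}X$ is generic; postcomposing with $T_{\mathcal{O}}$ of the terminal map yields, via Remark~\ref{rem MapSegOint}, an active morphism $Z \actto Y$ in $\mathcal{O}$, whose generic--free factorization runs through $T_{\mathcal{O}}\Lambda_{\mathcal{O}}^{\xint}Z$ by Proposition~\ref{propn:Oactgeneric}. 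The uniqueness of generic--free factorizations (Remark~\ref{rmk:genericfact}) then supplies an equivalence $\Lambda_{\mathcal{O}}^{\xint}X \simeq \Lambda_{\mathcal{O}}^{\xint}Z$ matching $g$ with $\hat{\psi}$, and passing back through $F_{\mathcal{O}}$ exhibits $\beta$ as the composite of an equivalence $\Lambda_{\mathcal{O}}X \simeq \Lambda_{\mathcal{O}}Z$ with the image of the active morphism $Z \actto Y$. Finally, preservation of inert morphisms is immediate: by the naturality square of Lemma~\ref{lem:LambdaisF} the image of an inert morphism $\iota$ is the free morphism $F_{\mathcal{O}}(\Lambda_{\mathcal{O}}^{\xint}\iota)$, which is inert by Lemma~\ref{lem:freeint}.

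The main obstacle is the converse in (iii): it requires carefully tracking the translation between the active--inert factorization in $\mathcal{K}(T_{\mathcal{O}})$ and the generic--free factorization on the monad side through the adjunction $F_{\mathcal{O}}\dashv U_{\mathcal{O}}$, together with the opposite-category conventions relating $\olO$ to $\mathcal{W}(T_{\mathcal{O}})^{\op} \subseteq \mathcal{K}(T_{\mathcal{O}})$. One subtlety to flag in the write-up is that the source object of the recovered active morphism is determined only up to the equivalence $\Lambda_{\mathcal{O}}^{\xint}X \simeq \Lambda_{\mathcal{O}}^{\xint}Z$ in $\Seg_{\mathcal{O}^{\xint}}(\mathcal{S})$ (which, by Warning~\ref{warn equgen}, need not come from a morphism of $\mathcal{O}$), so the statement genuinely requires the flexibility of composing with an equivalence rather than asserting an honest active morphism out of $X$ itself.
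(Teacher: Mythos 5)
Your proposal is correct and follows essentially the same route as the paper: identifying objects of $\mathcal{U}(T_{\mathcal{O}})$ via generic--free factorizations of morphisms $\LOi E \to T_{\mathcal{O}}*$ classified by $\Act_{\mathcal{O}}(E)$ (Proposition~\ref{propn:Oactgeneric}), deducing (ii) from Lemma~\ref{lem:LambdaisF}, and handling (iii) via Lemma~\ref{lem:actgen} together with the uniqueness of generic--free factorizations, with inert morphisms treated through the naturality square of Lemma~\ref{lem:LambdaisF}. Your more explicit unwinding of the converse direction of (iii), including the caveat about equivalences not lifting to $\mathcal{O}$, correctly fills in what the paper's proof leaves implicit.
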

\begin{proof}
  By definition, the objects of $\mathcal{U}(T_{\mathcal{O}})$ are
  the objects $\Phi$ of $\Seg_{\mathcal{O}^{\xint}}(\mathcal{S})$ that
  admit a generic morphism
  $\LOi E \to T_{\mathcal{O}}\Phi$ with
  $E \in \mathcal{O}^{\el}$. Such a generic morphism is determined by
  a morphism $\Lambda^{\xint}_{\mathcal{O}}E \to T_{\mathcal{O}}*$,
  and from Proposition~\ref{propn:Oactgeneric} we see that the
  generic--free factorizations of such morphisms yield precisely the
  objects of $\mathcal{O}^{\circ}$. This proves (i), from which (ii)
  follows using Lemma~\ref{lem:LambdaisF}. Finally, as active
  morphisms in $\olO$ are those morphisms which are adjoint to generic
  maps by Lemma~\ref{lem:actgen}, the first part of (iii) follows from the identification
  of such generic morphisms with active morphisms in $\mathcal{O}$ in
  Proposition~\ref{propn:Oactgeneric}. This shows that $\LO$ preserves
  active morphisms, while the commutative square of
  Lemma~\ref{lem:LambdaisF} implies that it preserves inert morphisms,
  since free morphisms in $\olO$ are in particular inert.
\end{proof}

\begin{remark}\label{rem barO}
  If $\mathcal{O}$ is a slim extendable pattern, then
  Corollary~\ref{cor:overlineOdesc} says that $\olO$ has the same
  objects as $\mathcal{O}$, and the active morphisms are obtained by
  combining active morphisms from $\mathcal{O}$ with equivalences
  (which may not all come from $\mathcal{O}$).
\end{remark}

\begin{remark}
  If $O$ is necessary and $O' \to O$ is an active morphism, then $O'$
  is also necessary. This implies that the inert--active factorization
  system in $\mathcal{O}$ restricts to
  $\mathcal{O}^{\circ}$, and that $\Act_{\mathcal{O}}(O) \simeq
  \Act_{\mathcal{O}^{\circ}}(O)$ for $O \in
  \mathcal{O}^{\circ}$. It follows that $\mathcal{O}^{\circ}$ is
  extendable when $\mathcal{O}$ is. In this case we
  therefore have a commutative diagram
  \[
    \begin{tikzcd}
      \Seg_{\mathcal{O}}(\mathcal{S}) \arrow{d} \arrow{rr} & &
      \Seg_{\mathcal{O}^{\circ}}(\mathcal{S}) \arrow{d} \\
      \Seg_{\mathcal{O}^{\xint}}(\mathcal{S}) \arrow{rr}
      \arrow{dr}[below left]{\sim}& &
      \Seg_{\mathcal{O}^{\circ,\xint}}(\mathcal{S}) \arrow{dl}{\sim}\\
       & \Fun(\mathcal{O}^{\el}, \mathcal{S}),
    \end{tikzcd}
  \]
  where the vertical maps are monadic right adjoints. The lower
  horizontal map is an equivalence since the diagonal maps are
  equivalences. Since the two monads on
  $\Fun(\mathcal{O}^{\el}, \mathcal{S})$ are the same (by definition
  $\Act_{\mathcal{O}}(E) \simeq \Act_{\mathcal{O}^{\circ}}(E)$ for
  $E \in \mathcal{O}^{\el}$), the top horizontal morphism is also an
  equivalence. Thus the patterns $\mathcal{O}$ and
  $\mathcal{O}^{\circ}$ describe the same monad, and so the
  objects of $\mathcal{O}$ that do not lie in $\mathcal{O}^{\circ}$
  are in this sense \emph{unnecessary}.
\end{remark}

\begin{examples}
  The examples of patterns discussed in \S\ref{sec:ex} are all slim,
  with the exception of the pattern $\simp^{\op, \natural}_{\Phi}$ of
  Example~\ref{ex DPhi}. The corresponding slim pattern
  $\simp^{\op, \natural,\circ}_{\Phi}$ is the full subcategory spanned
  by objects $([m], f)$ such that $f(m) \cong *$. Another non-slim
  example is the extension of the dendroidal category $\bbOmega^{\op,\natural}$ to a category
  of forests considered in \cite{HeutsHinichMoerdijk}, which has
  $\bbOmega^{\op,\natural}$ as its slim subpattern.
\end{examples}

\begin{remark}\label{rmk WTslim}
  If $T$ is a polynomial monad on $\xS^\xI$ then the algebraic pattern
  $\xW(T)$ is slim. This follows from the fact that objects in
  $\xW(T)$ can be identified with objects in $\xU(T)$, \ie{} objects
  $X$ admitting a generic map $I\to TX$ with $I\in \xI$. Since $\xI$
  has the same objects as $\xW(T)^\el$ and every generic map is
  adjoint to an active morphism in $\xW(T)$, the algebraic pattern
  $\xW(T)$ is indeed slim. We can thus regard $\mathfrak{P}$ as a
  functor
  \[ \PolyMnd \to \AlgPattSES.\]
\end{remark}

\begin{remark}\label{rem:overlineO}
  Suppose $f \colon \mathcal{O} \to \mathcal{P}$ is a Segal morphism
  between slim extendable patterns. Then we have a commutative diagram
  \[
    \begin{tikzcd}
      \mathcal{O}^{\op} \arrow{r}{f^{\op}}\arrow{d} &
      \mathcal{P}^{\op} \arrow{d} \\
      \Fun(\mathcal{O}, \mathcal{S}) \arrow{d}
      \arrow{r}{f_{!}}  & \Fun(\mathcal{P},
      \mathcal{S}) \arrow{d} \\
      \Seg_{\mathcal{O}}(\mathcal{S}) \arrow{r} & \Seg_{\mathcal{P}}(\mathcal{S}).
    \end{tikzcd}
  \]
  In other words, we have a commutative square
  \[
    \begin{tikzcd}
      \mathcal{O}^{(\xint),\op} \arrow{r}{f^{(\xint),\op}}
      \arrow[d, swap, "\Lambda_{\mathcal{O}}"] &
      \mathcal{P}^{(\xint),\op}
      \arrow{d}{\Lambda_{\mathcal{P}}} \\
      \Seg_{\mathcal{O}}(\mathcal{S}) \arrow{r} & \Seg_{\mathcal{P}}(\mathcal{S}),
    \end{tikzcd}
  \]
  which restricts to a commutative square
  \[
    \begin{tikzcd}
      \mathcal{O} \arrow{r}{f} \arrow{d} &
      \mathcal{P} \arrow{d} \\
      \olO \arrow{r} &  \overline{\mathcal{P}},
    \end{tikzcd}
  \]
  where all the morphisms are Segal morphism of algebraic patterns. 
  Thus we have proved:
\end{remark}

\begin{propn}\label{propn:nattrPM}
  There is a natural transformation
  $\sigma \colon \id \to \mathfrak{P}\mathfrak{M}$ of functors
  $\AlgPattSES \to \AlgPattSES$.
\end{propn}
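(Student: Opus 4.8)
The plan is to take the component $\sigma_{\mathcal{O}}\colon\mathcal{O}\to\olO$ to be the Segal morphism induced by the localized Yoneda embedding $\Lambda_{\mathcal{O}}\colon\mathcal{O}^{\op}\to\Seg_{\mathcal{O}}(\mathcal{S})$, and to extract naturality from the naturality of this embedding. First I would treat the individual components. Since $\mathcal{O}$ is slim, Corollary~\ref{cor:overlineOdesc}(ii) identifies $\olO^{\op}$ with the essential image of $\Lambda_{\mathcal{O}}$ in $\Seg_{\mathcal{O}}(\mathcal{S})$, so $\Lambda_{\mathcal{O}}$ corestricts to an essentially surjective functor $\mathcal{O}^{\op}\to\olO^{\op}$, that is, a functor $\sigma_{\mathcal{O}}\colon\mathcal{O}\to\olO$. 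By Corollary~\ref{cor:overlineOdesc}(iii) this preserves active and inert morphisms and elementary objects, so it is a morphism of algebraic patterns; moreover $\olO$ is slim and extendable by Remark~\ref{rmk WTslim} and Corollary~\ref{cor WTextendable}, and $\sigma_{\mathcal{O}}$ is a Segal morphism as discussed in Remark~\ref{rem:overlineO}. Thus each $\sigma_{\mathcal{O}}$ is a morphism in $\AlgPattSES$.

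Next I would pin down $\mathfrak{P}\mathfrak{M}(f)$ for a Segal morphism $f\colon\mathcal{O}\to\mathcal{P}$, the key point being that it is the restriction of the left adjoint $L_{\Seg}f_{!}$. Here $\mathfrak{M}(f)$ is the morphism of polynomial monads $T_{\mathcal{O}}\to T_{\mathcal{P}}$ encoded by the square relating $f^{*}$ on Segal objects to $(f^{\el})^{*}$ on elementary data; by Proposition~\ref{propn:Wmor}(iv) together with Theorem~\ref{thm:functorP}, $\mathfrak{P}$ sends it to $\Psi^{\op}$, where $\Psi$ is the left adjoint of $f^{*}\colon\Seg_{\mathcal{P}}(\mathcal{S})\to\Seg_{\mathcal{O}}(\mathcal{S})$. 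By Proposition~\ref{propn:f!adj} this left adjoint is exactly $L_{\Seg}f_{!}$, and by Proposition~\ref{propn:Wmor}(iv) it carries the full subcategory $\olO^{\op}$ into $\overline{\mathcal{P}}^{\op}$. The naturality square for $\sigma$ is then the opposite of
\[
\begin{tikzcd}
\mathcal{O}^{\op}\arrow{r}{f^{\op}}\arrow[d, swap, "\Lambda_{\mathcal{O}}"] & \mathcal{P}^{\op}\arrow{d}{\Lambda_{\mathcal{P}}}\\
\Seg_{\mathcal{O}}(\mathcal{S})\arrow{r}{L_{\Seg}f_{!}} & \Seg_{\mathcal{P}}(\mathcal{S}),
\end{tikzcd}
\]
which commutes because $f_{!}\circ y_{\mathcal{O}}\simeq y_{\mathcal{P}}\circ f^{\op}$ (left Kan extension extends $f^{\op}$ along the Yoneda embedding) and this equivalence descends to the Segal localizations since $f^{*}$ preserves Segal objects. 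This recovers exactly the commutative square of Remark~\ref{rem:overlineO}.

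Finally, to assemble the components and squares into a coherent natural transformation of functors $\AlgPattSES\to\AlgPattSES$, I would realize the localized Yoneda embedding as a single natural transformation in $\LCatI$. The assignment $\mathcal{O}\mapsto\mathcal{O}^{\op}$ (with $f\mapsto f^{\op}$) and the assignment $\mathcal{O}\mapsto\Seg_{\mathcal{O}}(\mathcal{S})$ (with $f\mapsto L_{\Seg}f_{!}$, functorial by passing to left adjoints from the evident functor $f\mapsto f^{*}$) define two functors $\AlgPattSES\to\LCatI$, and the Yoneda embedding, which is natural with respect to left Kan extension, followed by the Segal localization gives a natural transformation $\Lambda$ between them. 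By the identifications above, $\Lambda$ lands in the subfunctor $\mathcal{O}\mapsto\olO^{\op}$ (a subfunctor precisely by Proposition~\ref{propn:Wmor}(iv) and Lemma~\ref{lem WTWS}), which underlies $\mathfrak{P}\mathfrak{M}$; corestricting, post-composing with the $\op$ auto-equivalence of $\CatI$, and promoting to pattern structures (legitimate since each $\sigma_{\mathcal{O}}$ is a Segal morphism) then yields $\sigma\colon\id\to\mathfrak{P}\mathfrak{M}$. The hard part will be precisely this last step: exhibiting the localized Yoneda as a genuine $\infty$-categorical natural transformation and checking that its target subfunctor agrees with $\mathfrak{P}\mathfrak{M}$ as a pattern-valued functor, rather than merely verifying commutativity object- and morphism-wise as in the preceding remarks.
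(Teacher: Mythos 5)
Your proposal is correct and follows essentially the same route as the paper: the paper's proof is exactly Remark~\ref{rem:overlineO}, which builds $\sigma_{\mathcal{O}}$ from the localized Yoneda embedding $\Lambda_{\mathcal{O}}$ and obtains the naturality square from $f_{!}\circ y_{\mathcal{O}}\simeq y_{\mathcal{P}}\circ f^{\op}$ descended to the Segal localizations, with $\mathfrak{P}\mathfrak{M}(f)$ identified as the restriction of $L_{\Seg}f_{!}$ via Proposition~\ref{propn:Wmor}(iv). If anything you are more explicit than the paper about the final coherence step of assembling the objectwise squares into a genuine transformation of $\CatI$-valued (and then pattern-valued) functors, which the paper leaves implicit.
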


Our next goal is to identify when the map $\sigma_{\mathcal{O}}$ is an
equivalence, which turns out to correspond to the following condition:
\begin{defn}
  If $\mathcal{O}$ is a slim extendable pattern, we say that
  $\mathcal{O}$ is \emph{saturated} if for every object $O \in
  \mathcal{O}$ the copresheaf \[\Map_{\mathcal{O}}(O, \blank) \colon
    \mathcal{O} \to \mathcal{S}\]
  is a Segal $\mathcal{O}$-space. We write $\AlgPattSS$ for the full
  subcategory of $\AlgPattSES$ spanned by the saturated patterns.
\end{defn}

\begin{propn}\label{propn:Osatcond}
  The following conditions are equivalent for a slim extendable pattern $\mathcal{O}$:
  \begin{enumerate}[(1)]
  \item $\mathcal{O}$ is saturated.
  \item For every $X \in \mathcal{O}$, the canonical functor
    $\mathcal{O}^{\xint,\triangleleft}_{X/} \to \mathcal{O}$ is a
    limit diagram.
  \item The Yoneda embedding $\mathcal{O}^{\op} \to
    \Fun(\mathcal{O}, \mathcal{S})$ factors through
    $\Seg_{\mathcal{O}}(\mathcal{S})$.
  \item The functor $\LO \colon
    \mathcal{O}^{\op} \to
    \Seg_{\mathcal{O}}(\mathcal{S})$ is fully faithful.
  \end{enumerate}
\end{propn}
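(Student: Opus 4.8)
The plan is to prove the chain of equivalences (1)$\Leftrightarrow$(3), (3)$\Leftrightarrow$(4), and (1)$\Leftrightarrow$(2), with the last one carrying all the content; the first two are formal.

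The equivalence (1)$\Leftrightarrow$(3) is a direct unwinding of definitions. The Yoneda embedding $\yO\colon \mathcal{O}^{\op}\to \Fun(\mathcal{O},\mathcal{S})$ sends $O$ to the corepresentable $\Map_{\mathcal{O}}(O,\blank)$, so it factors through the full subcategory $\Seg_{\mathcal{O}}(\mathcal{S})$ exactly when every such corepresentable is a Segal $\mathcal{O}$-space, which is precisely the definition of saturation.

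For (3)$\Leftrightarrow$(4) I would use the general theory of localizations. Recall that $\LO$ is the composite of $\yO$ with the localization functor $L\colon \Fun(\mathcal{O},\mathcal{S})\to \Seg_{\mathcal{O}}(\mathcal{S})$ left adjoint to the inclusion $\iota$. Combining the adjunction $L\dashv\iota$ with the Yoneda lemma, the map on mapping spaces induced by $\LO$,
\[ \Map_{\mathcal{O}}(O,O') \to \Map_{\Seg_{\mathcal{O}}(\mathcal{S})}(\LO O',\LO O) \simeq (\iota L\, \yO(O))(O'), \]
is identified with the evaluation at $O'$ of the localization unit $\yO(O)\to \iota L\,\yO(O)$. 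Hence $\LO$ is fully faithful iff this unit is an equivalence for every $O$, i.e. iff each corepresentable is already local ($=$ Segal), which is (3); and when (3) holds the unit is invertible, so $\LO$ agrees with the fully faithful $\yO$.

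The substantive step is (1)$\Leftrightarrow$(2). Here I would argue through the Yoneda lemma: the statement that $\Map_{\mathcal{O}}(O,\blank)$ is a Segal $\mathcal{O}$-space for every $O$ is, by testing against all $O$, equivalent to the statement that for each $X$ the canonical cone exhibits $X$ as a limit in $\mathcal{O}$ of the elementary objects under $X$. To pass from this elementary-slice formulation to the inert-slice diagram $\mathcal{O}^{\xint,\triangleleft}_{X/}\to\mathcal{O}$ appearing in (2), I would invoke the Lemma of \S\ref{sec:pattern} characterizing Segal objects as right Kan extensions along $i_{\mathcal{O}}\colon\mathcal{O}^{\el}\hookrightarrow\mathcal{O}^{\xint}$, together with the fact (Lemma~\ref{lem:locality}) that $\Seg_{\mathcal{O}}(\mathcal{S})$ is closed under limits in $\Fun(\mathcal{O},\mathcal{S})$, so that limits of localized representables may be computed pointwise. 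The main obstacle I anticipate is exactly this translation: keeping careful track of which under-category indexes the relevant limit, and verifying that the Segal/right-Kan-extension structure genuinely lets one move between the elementary slice $\mathcal{O}^{\el}_{X/}$ and the inert slice $\mathcal{O}^{\xint}_{X/}$ without losing information. Once this bookkeeping is in place, (2) becomes the limit-diagram reformulation of saturation and the equivalence follows, while (1)$\Leftrightarrow$(3)$\Leftrightarrow$(4) are formal consequences of the Yoneda lemma and of the theory of accessible localizations already recorded in the excerpt.
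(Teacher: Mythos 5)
Your treatment of (1)$\Leftrightarrow$(3) and (3)$\Leftrightarrow$(4) is correct and essentially coincides with the paper's own proof: the paper likewise treats (1)$\Leftrightarrow$(2)$\Leftrightarrow$(3) and (3)$\Rightarrow$(4) as clear, and proves (4)$\Rightarrow$(3) by observing that the composite
\[ \Map_{\mathcal{O}}(X,Y) \isoto \Map_{\Fun(\mathcal{O},\mathcal{S})}(\yO Y,\yO X) \to \Map_{\Seg_{\mathcal{O}}(\mathcal{S})}(\LO Y,\LO X) \isoto \Map_{\Fun(\mathcal{O},\mathcal{S})}(\yO Y,\LO X) \]
is the localization unit $\yO X \to \LO X$ evaluated at $Y$, so that full faithfulness of $\LO$ forces $\yO X$ to be local; your formulation in terms of the unit of $L\dashv\iota$ is the same argument, cleanly packaged.

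The one place your plan goes astray is the step you yourself flag as the ``main obstacle'': passing between the elementary slice and the inert slice in (1)$\Leftrightarrow$(2). No right-Kan-extension or closure-under-limits argument can effect that translation, because the two conditions are genuinely different. The slice $\mathcal{O}^{\xint}_{X/}$ has an initial object, namely $\id_{X}$, so the diagram $\mathcal{O}^{\xint,\triangleleft}_{X/}\to\mathcal{O}$ is a limit diagram for \emph{every} $X$ in \emph{every} pattern, while condition (1) fails for, e.g., the slim extendable pattern $\xF_{*}^{\flat}$ (Example~\ref{ex:commspan}). Condition (2) as printed is a misprint for the diagram $\mathcal{O}^{\el,\triangleleft}_{X/}\to\mathcal{O}$; this is how the condition is quoted in the proof of Proposition~\ref{propn:Ointsatcond}, and it is the analogue of condition (3) there. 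With that reading, your Yoneda argument --- $\Map_{\mathcal{O}}(O,\blank)$ is Segal for all $O$ \IFF{} the canonical cone exhibits $X$ as $\lim_{E\in\mathcal{O}^{\el}_{X/}}E$ in $\mathcal{O}$ for all $X$ --- already \emph{is} the complete proof of (1)$\Leftrightarrow$(2), and the extra ``bookkeeping'' step you propose (via the right-Kan-extension characterization and Lemma~\ref{lem:locality}) is both unnecessary for the intended statement and incapable of rescuing the literal one. In short: your argument proves the intended proposition, but the mechanism you describe for bridging $\mathcal{O}^{\el}_{X/}$ and $\mathcal{O}^{\xint}_{X/}$ would not have closed that gap; the correct resolution is to recognize the indexing category in (2) as $\mathcal{O}^{\el}_{X/}$.
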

\begin{proof}
  The equivalence of (1), (2), and (3) is clear, and it is also clear
  that (3) implies (4). We prove the remaining implication from (4) to (3)
  by showing that (4) implies that for every $X \in \mathcal{O}$ there is an equivalence $\yO X \simeq
  \LO X$ in $\Fun(\mathcal{O},\mathcal{S})$.
  We have 
  \[ \Map_{\mathcal{O}}(X,Y)\isoto
  \Map_{\Fun(\mathcal{O}, \mathcal{S})}(\yO Y, \yO X)
  \to   \Map_{\Seg_{\mathcal{O}}(\mathcal{S})}(\LO Y, \LO X)
  \to \Map_{\Fun(\mathcal{O}, \mathcal{S})}(\yO Y, \LO X),\]
  where the first map is the Yoneda embedding. Since the composition
  of the first two morphisms is an equivalence by (4), the second map
  is an equivalence. The last map is an equivalence because $\LO X$ is
  a local object and $\yO Y \to \LO Y$ is a local equivalence.
  Hence, we have
  \[\Map_{\Fun(\mathcal{O}, \mathcal{S})}(\yO Y,\yO X) \isoto
    \Map_{\Fun(\mathcal{O}, \mathcal{S})}(\yO Y, \LO X)\] for every
  object $Y \in \mathcal{O}$, which then implies that
  $\yO X \simeq \LO X$ in $\Fun(\mathcal{O}, \mathcal{S})$ by the
  Yoneda Lemma.
\end{proof}

\begin{lemma}\label{lem:WTsat}
  Suppose $T$ is a polynomial monad. Then the pattern $\mathcal{W}(T)$
  is saturated.
\end{lemma}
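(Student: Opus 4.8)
The plan is to verify the saturation condition for $\mathcal{W}(T)$ using the characterization from Proposition~\ref{propn:Osatcond}. Since we already know from Corollary~\ref{cor WTextendable} that $\mathcal{W}(T)$ is extendable and from Remark~\ref{rmk WTslim} that it is slim, it remains only to check that for every object $W \in \mathcal{W}(T)$ the copresheaf $\Map_{\mathcal{W}(T)}(W, \blank)$ is a Segal $\mathcal{W}(T)$-space. By Proposition~\ref{propn:Osatcond}, this is equivalent to showing that the localized Yoneda embedding $\Lambda_{\mathcal{W}(T)} \colon \mathcal{W}(T)^{\op} \to \Seg_{\mathcal{W}(T)}(\mathcal{S})$ is fully faithful, \ie{} that the representable presheaves already satisfy the Segal condition.

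The key idea is to exploit the equivalence $\Seg_{\mathcal{W}(T)}(\mathcal{S}) \simeq \Alg_{T}(\mathcal{S}^{\mathcal{I}})$ from Corollary~\ref{cor AlgSeg} and the Nerve Theorem~\ref{thm:nervecartsq}, which realizes $\Alg_T(\mathcal{S}^\mathcal{I})$ as a full subcategory of $\Fun(\mathcal{W}(T), \mathcal{S})$ via the fully faithful nerve functor $\nu_{\mathcal{W}(T)}$. The objects of $\mathcal{W}(T)$ are the free algebras $F_T X$ for $X \in \mathcal{U}(T)$, and under the nerve embedding these correspond to the functors represented by $F_T X$. First I would observe that, because $\nu_{\mathcal{W}(T)}$ is fully faithful and $j_{T,!}$ restricts to $F_T$ (both from Theorem~\ref{thm:nervecartsq}), the image $\nu_{\mathcal{W}(T)}(F_T X)$ is precisely the presheaf $j_{T,!}(y(X))$ on $\mathcal{W}(T)$, where $y$ is the Yoneda embedding of $\mathcal{U}(T)$. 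The point is that the representable functor $\Map_{\mathcal{W}(T)}(F_T X, \blank)$ on $\mathcal{W}(T)$ agrees with $\nu_{\mathcal{W}(T)}$ applied to the free $T$-algebra on $X$, hence automatically lies in the essential image of $\nu_{\mathcal{W}(T)}$, which by the Nerve Theorem is exactly $\Seg_{\mathcal{W}(T)}(\mathcal{S})$.

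Concretely, I would argue as follows. For $W = F_T X \in \mathcal{W}(T)$ with $X \in \mathcal{U}(T)$, the representable copresheaf $\Map_{\mathcal{W}(T)}(F_T X, \blank)$ is the value $\nu_{\mathcal{W}(T)}(F_T X)$, since $\nu_{\mathcal{W}(T)}$ sends the free algebra on $X$ to the functor corepresented by $j_T X = F_T X$ (using that $j_{T,!}$ takes the representable $y(X)$ to the representable at $j_T X = F_T X$, by the final clause of Theorem~\ref{thm:nervecartsq}). As $\nu_{\mathcal{W}(T)}$ lands in $\Alg_T(\mathcal{S}^{\mathcal{I}}) \simeq \Seg_{\mathcal{W}(T)}(\mathcal{S})$ by the Nerve Theorem, every such representable is a Segal $\mathcal{W}(T)$-space. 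Since every object of $\mathcal{W}(T)$ is of this form, this establishes saturation via condition (3) of Proposition~\ref{propn:Osatcond}.

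The main obstacle I anticipate is carefully matching up the two ``Yoneda'' pictures: the corepresentable copresheaf $\Map_{\mathcal{W}(T)}(W, \blank)$ appearing in the definition of saturation lives on $\mathcal{W}(T)$, whereas $\nu_{\mathcal{W}(T)}$ produces presheaves by restricting the Yoneda embedding of $\mathcal{S}^{\mathcal{I}}$ along $j_T \colon \mathcal{U}(T) \to \mathcal{W}(T)$ (Notation~\ref{no nu}). One must verify that $\nu_{\mathcal{W}(T)}(F_T X)$ genuinely computes $\Map_{\mathcal{W}(T)}(F_T X, \blank)$ rather than a presheaf on the smaller category $\mathcal{U}(T)$; this amounts to unwinding that $j_{T,!} y(X)$ is the functor corepresented by $F_T X$, which follows from $j_{T,!}\nu_{\mathcal{U}(T)} \simeq \nu_{\mathcal{W}(T)}F_T$ together with $\nu_{\mathcal{U}(T)} X$ being corepresented by $X$ for $X \in \mathcal{U}(T)$. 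Once this identification is spelled out, the conclusion is immediate from the fully faithfulness part of the Nerve Theorem.
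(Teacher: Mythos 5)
Your proposal is correct and follows essentially the same route as the paper: both verify extendability and slimness, then use the Nerve Theorem to see that the restricted Yoneda functor $\nu_{\mathcal{W}(T)}$ lands in $\Seg_{\mathcal{W}(T)}(\mathcal{S})$, so the corepresentables $\Map_{\mathcal{W}(T)}(F_TX,\blank)$ are Segal spaces and Proposition~\ref{propn:Osatcond} applies. The only cosmetic difference is that you identify $\nu_{\mathcal{W}(T)}(F_TX)$ with the corepresentable via the mate equivalence $j_{T,!}\nu_{\mathcal{U}(T)} \simeq \nu_{\mathcal{W}(T)}F_T$, whereas the paper simply observes that $\mathcal{W}(T)^{\op}$ is a full subcategory of $\Alg_T(\mathcal{S}^{\mathcal{I}})$, so restricting $\nu_{\mathcal{W}(T)}$ to it is literally the Yoneda embedding.
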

\begin{proof}
  We already know the pattern $\mathcal{W}(T)$ is extendable (by
  Corollary~\ref{cor WTextendable}) and slim (by Remark~\ref{rmk
    WTslim}).
  By definition, $\mathcal{W}(T)^{\op}$ is a full subcategory of
  $\Alg_{T}(\mathcal{S}^{\mathcal{I}})$, and the Nerve
  Theorem~\ref{thm:nervecartsq} implies that the restricted Yoneda
  functor $\Alg_{T}(\mathcal{S}^{\mathcal{I}}) \to
  \Fun(\mathcal{W}(T), \mathcal{S})$ is fully faithful with image
  $\Seg_{\mathcal{W}(T)}(\mathcal{S})$. This implies in particular
  that the Yoneda embedding of $\mathcal{W}(T)$ takes values in Segal
  $\mathcal{W}(T)$-spaces, which implies that $\mathcal{W}(T)$ is
  saturated by Proposition~\ref{propn:Osatcond}.
\end{proof}

Lemma~\ref{lem:WTsat} implies in particular that the pattern $\olO$ is
always saturated, which gives the following:
\begin{cor}\label{cor:sigmasateq}
  The morphism $\sigma_{\mathcal{O}} \colon \mathcal{O} \to \olO$ is
  an equivalence \IFF{} $\mathcal{O}$ is saturated. \qed
\end{cor}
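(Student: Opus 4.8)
The plan is to prove Corollary~\ref{cor:sigmasateq} by combining the two directions already prepared in the excerpt. Recall that $\sigma_{\mathcal{O}}$ is exactly the morphism $\mathcal{O} \to \olO$ constructed in Remark~\ref{rem:overlineO} and Proposition~\ref{propn:nattrPM}, which on objects (for slim $\mathcal{O}$) sends $X$ to $\Lambda_{\mathcal{O}}X$ via the localized Yoneda embedding $\Lambda_{\mathcal{O}} \colon \mathcal{O}^{\op} \to \Seg_{\mathcal{O}}(\mathcal{S})$. So the statement to prove is: this functor $\mathcal{O} \to \olO$ is an equivalence of algebraic patterns if and only if $\mathcal{O}$ is saturated.

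First I would dispatch the easy direction. By Lemma~\ref{lem:WTsat}, applied to the polynomial monad $T_{\mathcal{O}} = \mathfrak{M}(\mathcal{O})$, the pattern $\olO = \mathcal{W}(T_{\mathcal{O}})$ is always saturated. Saturation is manifestly invariant under equivalences of patterns (it is the condition that the corepresentable functors $\Map_{\mathcal{O}}(O,\blank)$ are Segal objects, a condition preserved by equivalences of the underlying \icat{} compatibly with the pattern structure). Hence if $\sigma_{\mathcal{O}}$ is an equivalence, then $\mathcal{O}$, being equivalent to the saturated pattern $\olO$, is itself saturated.

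For the converse, suppose $\mathcal{O}$ is saturated. By Proposition~\ref{propn:Osatcond}(4), saturation means precisely that $\Lambda_{\mathcal{O}} \colon \mathcal{O}^{\op} \to \Seg_{\mathcal{O}}(\mathcal{S})$ is fully faithful. By Corollary~\ref{cor:overlineOdesc}(ii) (using that $\mathcal{O}$ is slim, so $\mathcal{O}^{\circ} = \mathcal{O}$), the functor $\Lambda_{\mathcal{O}}$ is essentially surjective onto $\olO$, which is by definition the full subcategory of $\Seg_{\mathcal{O}}(\mathcal{S})$ on the objects $\Lambda_{\mathcal{O}}X$. A fully faithful functor that is essentially surjective onto its target is an equivalence of \icats{}, so $\sigma_{\mathcal{O}}$ is an equivalence on underlying \icats{}. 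It remains to check that this equivalence is compatible with the two pieces of pattern structure, namely the factorization systems and the elementary objects. Compatibility with active and inert morphisms is exactly Corollary~\ref{cor:overlineOdesc}(iii): the functor $\mathcal{O} \to \olO$ preserves both classes, and since it is an equivalence of categories it must reflect the factorization systems as well (a fully faithful functor carrying one factorization system to another's generators, which is essentially surjective, identifies the two factorization systems). For the elementary objects, the elementary objects of $\olO$ are by construction the free algebras $F_{T_{\mathcal{O}}}(I)$ with $I$ in the image of $\mathcal{I} = \mathcal{O}^{\el}$; under the identifications of Corollary~\ref{cor:overlineOdesc} together with Lemma~\ref{lem:LambdaisF} these correspond exactly to $\Lambda_{\mathcal{O}}E$ for $E \in \mathcal{O}^{\el}$, so $\sigma_{\mathcal{O}}$ matches elementary objects on both sides.

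The main obstacle is the bookkeeping in the last step: verifying that the equivalence of underlying \icats{} upgrades to an equivalence of \emph{algebraic patterns}, i.e.\ that it respects the factorization system and elementaries on the nose rather than merely preserving them in one direction. The preservation statements are already supplied by Corollary~\ref{cor:overlineOdesc}(iii), so the remaining point is purely formal: once $\sigma_{\mathcal{O}}$ is an equivalence of \icats{} preserving the inert and active subcategories and the elementary objects, the inverse functor automatically preserves them too, because these are all equivalence-invariant structures (a factorization system is determined by its underlying \icat{} together with its two wide subcategories, and an equivalence that carries inert/active morphisms forward must carry them backward by essential uniqueness of factorizations). This gives an equivalence of patterns $\mathcal{O} \isoto \olO$, completing the proof.
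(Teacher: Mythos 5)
Your proposal is correct and follows the same route the paper intends: the forward direction uses Lemma~\ref{lem:WTsat} (that $\olO$ is always saturated) together with the invariance of saturation under equivalence, and the converse combines Proposition~\ref{propn:Osatcond}(4) (full faithfulness of $\Lambda_{\mathcal{O}}$) with Corollary~\ref{cor:overlineOdesc} (essential surjectivity onto $\olO$ and preservation of the pattern structure, using slimness). The paper leaves these details implicit behind its \qed; your write-up simply makes them explicit, including the correct formal point that an equivalence of underlying \icats{} preserving both classes of a factorization system and the elementary objects is automatically an equivalence of patterns.
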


\begin{cor}
  The natural transformation $\sigma$ exhibits the full subcategory $\AlgPattSS$ as a localization of
  $\AlgPattSES$.
\end{cor}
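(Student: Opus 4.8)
The plan is to apply the recognition criterion for localizations \cite[Proposition 5.2.7.4]{ht} to the endofunctor $L := \mathfrak{P}\mathfrak{M}$ of $\AlgPattSES$ together with the natural transformation $\sigma \colon \id \to L$ of Proposition~\ref{propn:nattrPM}. First I would record that $L$ factors through $\AlgPattSS$ with essential image exactly this full subcategory: for any slim extendable $\mathcal{O}$ we have $L\mathcal{O} = \olO = \mathcal{W}(T_{\mathcal{O}})$, which is saturated by Lemma~\ref{lem:WTsat}, while conversely every saturated pattern $\mathcal{Q}$ satisfies $\mathcal{Q} \simeq \overline{\mathcal{Q}} = L\mathcal{Q}$ by Corollary~\ref{cor:sigmasateq}. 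By \cite[Proposition 5.2.7.4]{ht} it then remains to verify that for every $\mathcal{O}$ the two maps $\sigma_{L\mathcal{O}}$ and $L(\sigma_{\mathcal{O}})$ from $L\mathcal{O}$ to $LL\mathcal{O}$ are equivalences.

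The first is immediate: since $L\mathcal{O} = \olO$ is saturated, $\sigma_{L\mathcal{O}} = \sigma_{\olO}$ is an equivalence by Corollary~\ref{cor:sigmasateq}. For the second I would show that the morphism of polynomial monads $\mathfrak{M}(\sigma_{\mathcal{O}}) \colon T_{\mathcal{O}} \to T_{\olO}$ is already an equivalence in $\PolyMnd$; since $\mathfrak{P}$ is a functor, $L(\sigma_{\mathcal{O}}) = \mathfrak{P}(\mathfrak{M}(\sigma_{\mathcal{O}}))$ is then an equivalence of patterns. A morphism in $\PolyMnd$ is an equivalence precisely when its two components are, so I must check that the functor on elementary objects $\sigma_{\mathcal{O}}^{\el} \colon \mathcal{O}^{\el} \to \olO^{\el}$ and the functor on Segal objects $\sigma_{\mathcal{O}}^{*} \colon \Seg_{\olO}(\xS) \to \Seg_{\mathcal{O}}(\xS)$ are both equivalences.

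On elementary objects, $\sigma_{\mathcal{O}}^{\el}$ is the functor $e \colon \mathcal{O}^{\el} = \mathcal{I} \to \mathcal{W}(T_{\mathcal{O}})^{\el}$ of Notation~\ref{not:WTftrs}, which is bijective on equivalence classes of objects by Remark~\ref{rmk WTslim} (and Corollary~\ref{cor:overlineOdesc}). Corollary~\ref{cor:sliceeq} supplies, for each $E \in \mathcal{O}^{\el}$, an equivalence of coslices $\mathcal{O}^{\el}_{E/} \isoto \olO^{\el}_{eE/}$ covering $e$; passing to fibres of the coslice projections over a second elementary object identifies the induced maps on mapping spaces, and together with the bijection on objects this shows $e$ is fully faithful, hence an equivalence. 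On Segal objects, Corollary~\ref{cor AlgSeg} applied to $T = T_{\mathcal{O}}$ yields an equivalence $\Seg_{\mathcal{O}}(\xS) = \Alg_{T_{\mathcal{O}}}(\xS^{\mathcal{O}^{\el}}) \simeq \Seg_{\olO}(\xS)$, and I would identify $\sigma_{\mathcal{O}}^{*}$ with the inverse of this equivalence, using that both the construction of $\sigma_{\mathcal{O}}$ in Remark~\ref{rem:overlineO} and the nerve-theoretic equivalence $\nu_{\olO}$ are built from the (localized) Yoneda embedding, together with the identification of the objects of $\olO$ as localized representables in Corollary~\ref{cor:overlineOdesc}(ii).

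The hard part is this last identification: one must check that restriction along $\sigma_{\mathcal{O}}$ genuinely \emph{realizes} the abstract equivalence $\Seg_{\mathcal{O}}(\xS) \simeq \Seg_{\olO}(\xS)$ of the Nerve Theorem, rather than merely that the two categories happen to be equivalent. Concretely this means matching the naturality square for $\Lambda$ from Remark~\ref{rem:overlineO} with the square defining $\mathfrak{M}(\sigma_{\mathcal{O}})$, i.e.\ verifying $U_{\mathcal{O}} \circ \sigma_{\mathcal{O}}^{*} \simeq e^{*} \circ U_{\olO}$ compatibly with the monad structures. Once that compatibility is established, conservativity and limit-preservation of the monadic functors $U_{\mathcal{O}}$, $U_{\olO}$ together with the equivalence $e^{*}$ force $\sigma_{\mathcal{O}}^{*}$ to be an equivalence, so $\mathfrak{M}(\sigma_{\mathcal{O}})$ is an equivalence of polynomial monads and the hypotheses of \cite[Proposition 5.2.7.4]{ht} are met.
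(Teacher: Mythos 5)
Your overall skeleton matches the paper's proof: identify the essential image of $L=\mathfrak{P}\mathfrak{M}$ as $\AlgPattSS$ using Lemma~\ref{lem:WTsat} and Corollary~\ref{cor:sigmasateq}, then verify condition (3) of \cite[Proposition 5.2.7.4]{ht}, with $\sigma_{L\mathcal{O}}$ handled exactly as in the paper. The identification of $\sigma_{\mathcal{O}}^{*}\colon \Seg_{\olO}(\xS)\to\Seg_{\mathcal{O}}(\xS)$ with (the inverse of) the nerve-theorem equivalence, via the universal property of the localized Yoneda embedding $\LO$, is also the right mechanism and matches what the paper uses implicitly.

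The gap is in your treatment of $L(\sigma_{\mathcal{O}})$. You reduce it to showing that $\mathfrak{M}(\sigma_{\mathcal{O}})$ is an equivalence \emph{in $\PolyMnd$}, which requires the underlying functor $e\colon \mathcal{O}^{\el}\to\olO^{\el}$ to be an equivalence. That statement is precisely the completeness of $T_{\mathcal{O}}$ (Proposition~\ref{propn:TOcomplete}), which is proved only in the following section and is not a formal consequence of the results you cite. Remark~\ref{rmk WTslim} and Corollary~\ref{cor:overlineOdesc} give essential surjectivity of $e$ but say nothing about injectivity on equivalence classes: a priori $\Lambda_{\mathcal{O}}$ could identify non-equivalent elementaries of $\mathcal{O}$ via equivalences in $\Seg_{\mathcal{O}}(\xS)$ that do not come from $\mathcal{O}$ (cf.\ Warning~\ref{warn equgen} and Remark~\ref{rem barO}). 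Your coslice argument via Corollary~\ref{cor:sliceeq} does not close this hole either: the equivalence $\mathcal{O}^{\el}_{E/}\isoto \olO^{\el}_{eE/}$ only identifies the \emph{total spaces} of the corresponding left fibrations; to extract an equivalence on individual mapping spaces you would already need to know that $e$ is a monomorphism on underlying $\infty$-groupoids, which is the hard part (the paper handles it via the faithfulness argument in Proposition~\ref{propn:TOcomplete} and Lemma~\ref{lem:LOacteq}). The detour is also unnecessary: $\mathfrak{P}$ of a morphism of polynomial monads depends only on the left adjoint $\Psi$ of the induced functor on algebra $\infty$-categories, restricted to the full subcategories $\mathcal{W}(-)^{\op}$, and not on the underlying functor of $\infty$-categories being an equivalence. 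Since $\sigma_{\mathcal{O}}^{*}$ is an equivalence, $\Psi$ is its inverse and $L(\sigma_{\mathcal{O}})$ is the restriction of that inverse, hence an equivalence --- this is the paper's one-line argument, and adopting it lets you delete the problematic claim about $e$ entirely.
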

\begin{proof}
  Let $L := \mathfrak{P}\mathfrak{M}$; then the essential image of $L$
  is precisely $\AlgPattSS$: by Corollary~\ref{cor:sigmasateq} the
  image of $L$ contains all saturated patterns, while all patterns in
  the image of $L$ are saturated by Lemma~\ref{lem:WTsat}. To see that
  $L$ and $\sigma$ exhibit $\AlgPattSS$ as a localization, we apply
  \cite[Proposition 5.2.7.4]{ht}. It suffices to verify condition (3) of this result, namely that the two
  morphisms
  \[ \sigma_{L\mathcal{O}}, L(\sigma_{\mathcal{O}}) \colon L
    \mathcal{O} \to LL\mathcal{O}\]
   are both equivalences for all $\mathcal{O}$ in $\AlgPattSES$. For
   $\sigma_{L\mathcal{O}}$ this holds by
   Corollary~\ref{cor:sigmasateq}, since $L \mathcal{O}$ is saturated,
   and for $L(\sigma_{\mathcal{O}})$ it holds since
   $\sigma_{\mathcal{O}}$ induces an equivalence
   \[ \sigma_{\mathcal{O}}^{*} \colon \Seg_{L
       \mathcal{O}}(\mathcal{S}) \isoto
     \Seg_{\mathcal{O}}(\mathcal{S}),\]
   and $L \sigma_{\mathcal{O}}$ is obtained by restricting the inverse
   of this equivalence.
\end{proof}

The following proposition shows that we can equivalently characterize
saturated patterns in terms of their subcategories of inert morphisms:
\begin{propn}\label{propn:Ointsatcond}
  The following conditions are equivalent for a slim extendable pattern $\mathcal{O}$:
  \begin{enumerate}[(1)]
  \item $\mathcal{O}$ is saturated.
  \item For every $X \in \mathcal{O}$, the functor
    \[\Map_{\mathcal{O}^{\xint}}(X,\blank) \colon \mathcal{O}^{\xint}
      \to \mathcal{S}\]
    is a Segal $\mathcal{O}^{\xint}$-space.
  \item For every $X$ in $\mathcal{O}$, the diagram
    $\mathcal{O}^{\el,\triangleleft}_{X/} \to \mathcal{O}^{\xint}$ is
    a limit diagram.
  \item The Yoneda embedding $\mathcal{O}^{\xint,\op} \to
    \Fun(\mathcal{O}^{\xint}, \mathcal{S})$ factors through
    $\Seg_{\mathcal{O}^{\xint}}(\mathcal{S})$.
  \item The functor $\Lambda_{\mathcal{O}}^{\xint} \colon
    \mathcal{O}^{\xint,\op} \to
    \Seg_{\mathcal{O}^{\xint}}(\mathcal{S})$ is fully faithful.
  \end{enumerate}
\end{propn}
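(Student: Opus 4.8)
The plan is to treat conditions (2)--(5) as essentially formal reformulations of one another, obtained by applying (the proof of) Proposition~\ref{propn:Osatcond} to the pattern $\mathcal{O}^{\xint}$ --- for which $(\mathcal{O}^{\xint})^{\xint} = \mathcal{O}^{\xint}$, $(\mathcal{O}^{\xint})^{\el} = \mathcal{O}^{\el}$, and the localized Yoneda functor is $\Lambda_{\mathcal{O}}^{\xint}$ --- and to isolate (1)$\Leftrightarrow$(2) as the only step carrying genuine content. The key input for the latter is the natural equivalence
\[ \Map_{\mathcal{O}}(X, Y) \simeq \colim_{(O \actto Y) \in \Act_{\mathcal{O}}(Y)} \Map_{\mathcal{O}^{\xint}}(X, O) \]
of Lemma~\ref{lem:Lambdadesc}, which by the formula for $j_{\mathcal{O},!}$ established in the proof of Proposition~\ref{prop:polymonad} exhibits $\Map_{\mathcal{O}}(X, \blank)$ as the left Kan extension $j_{\mathcal{O},!}\Map_{\mathcal{O}^{\xint}}(X, \blank)$ of the corepresentable functor on $\mathcal{O}^{\xint}$.

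For the formal part I would argue as follows. Condition (4) asserts exactly that $\Map_{\mathcal{O}^{\xint}}(X, \blank)$ is a Segal $\mathcal{O}^{\xint}$-space for every $X$, which is (2). Condition (3) says that $X \simeq \lim_{E \in \mathcal{O}^{\el}_{X/}} E$ in $\mathcal{O}^{\xint}$ for each $X$; by the Yoneda lemma this holds for all $X$ precisely when $\Map_{\mathcal{O}^{\xint}}(Y, X) \simeq \lim_{E \in \mathcal{O}^{\el}_{X/}} \Map_{\mathcal{O}^{\xint}}(Y, E)$ for all $X, Y$, i.e.\ when $\Map_{\mathcal{O}^{\xint}}(Y, \blank)$ is a Segal $\mathcal{O}^{\xint}$-space for every $Y$, which is again (2). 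Finally, (4)$\Rightarrow$(5) is immediate since when the Yoneda embedding $\yOi$ already lands in Segal objects the localization $\Lambda_{\mathcal{O}}^{\xint}$ agrees with it and hence is fully faithful, and (5)$\Rightarrow$(4) follows by the argument proving (4)$\Rightarrow$(3) in Proposition~\ref{propn:Osatcond}, applied verbatim to $\mathcal{O}^{\xint}$: fully faithfulness of $\Lambda_{\mathcal{O}}^{\xint}$ forces $\yOi X \simeq \Lambda_{\mathcal{O}}^{\xint} X$ by the Yoneda lemma, so $\yOi X$ is a Segal space.

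It then remains to prove (1)$\Leftrightarrow$(2). For (2)$\Rightarrow$(1) I would use that $\mathcal{O}$ is extendable, so by Proposition~\ref{prop:extendable} the functor $j_{\mathcal{O},!}$ preserves Segal objects; hence if $\Map_{\mathcal{O}^{\xint}}(X, \blank)$ is Segal then so is $\Map_{\mathcal{O}}(X, \blank) \simeq j_{\mathcal{O},!}\Map_{\mathcal{O}^{\xint}}(X, \blank)$, giving saturation. For the converse (1)$\Rightarrow$(2), the idea is to recover the corepresentable on $\mathcal{O}^{\xint}$ from the one on $\mathcal{O}$ as a fibre: the displayed equivalence identifies $\Map_{\mathcal{O}^{\xint}}(X, W)$ with the fibre over $\id_W$ of the projection $\Map_{\mathcal{O}}(X, W) \to \Act_{\mathcal{O}}(W)$ sending a morphism to its active part. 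I would check that this projection, together with the section $\ast \to \Act_{\mathcal{O}}(\blank)$ picking out identities, is a map of functors natural along inert morphisms (the active part of an inert map is an identity), so that, restricting along the Segal morphism $j_{\mathcal{O}}$, we obtain a pullback square of Segal $\mathcal{O}^{\xint}$-spaces
\[ \Map_{\mathcal{O}^{\xint}}(X, \blank) \simeq \ast \times_{j_{\mathcal{O}}^{*}\Act_{\mathcal{O}}} j_{\mathcal{O}}^{*}\Map_{\mathcal{O}}(X, \blank). \]
Here $j_{\mathcal{O}}^{*}\Map_{\mathcal{O}}(X, \blank)$ is Segal by saturation together with Lemma~\ref{lem:Segalmor}, and $j_{\mathcal{O}}^{*}\Act_{\mathcal{O}}$ is Segal since $\Act_{\mathcal{O}}$ is (extendability); as Segal $\mathcal{O}^{\xint}$-spaces are closed under limits in $\Fun(\mathcal{O}^{\xint}, \mathcal{S})$, this pullback is again Segal, yielding (2). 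The main obstacle I anticipate is the bookkeeping in this last step: verifying that the ``active part'' projection and the identity section really are natural transformations of functors on $\mathcal{O}^{\xint}$ (so that the fibre identification is natural in $W$), and that taking the fibre over $\id_W$ of the colimit in Lemma~\ref{lem:Lambdadesc} does return $\Map_{\mathcal{O}^{\xint}}(X, W)$, using that $\Act_{\mathcal{O}}(W)$ is a space and the projection is the tautological map out of a colimit indexed by it.
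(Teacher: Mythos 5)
Your proposal is correct, and its overall architecture matches the paper's: conditions (2)--(5) are dispatched formally as in Proposition~\ref{propn:Osatcond}, and the bridge to (1) runs through the identification $\Map_{\mathcal{O}}(X,\blank) \simeq j_{\mathcal{O},!}\Map_{\mathcal{O}^{\xint}}(X,\blank)$ together with extendability. Your (2)$\Rightarrow$(1) is essentially the paper's argument in copresheaf language (the paper phrases it as $\Lambda_{\mathcal{O}}X \simeq j_{\mathcal{O},!}y_{\mathcal{O}}^{\xint}X \simeq y_{\mathcal{O}}X$ under condition (4)).

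Where you genuinely diverge is the hard direction (1)$\Rightarrow$(2). The paper instead proves (1)$\Rightarrow$(3) directly: it reduces to showing that a morphism $\phi \colon Y \to X$ is inert as soon as all composites $Y \to X \intto E$ are inert, which (after an inert--active reduction) follows because $\phi$ and $\id_X$ have the same image under the equivalence $\Act_{\mathcal{O}}(X) \isoto \lim_{E} \Act_{\mathcal{O}}(E)$. Your route exhibits $\Map_{\mathcal{O}^{\xint}}(X,\blank)$ as the pullback $\ast \times_{j_{\mathcal{O}}^{*}\Act_{\mathcal{O}}} j_{\mathcal{O}}^{*}\Map_{\mathcal{O}}(X,\blank)$ of Segal $\mathcal{O}^{\xint}$-spaces and invokes closure of Segal objects under limits. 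Both arguments rest on exactly the same extendability input ($\Act_{\mathcal{O}}$ being a Segal $\mathcal{O}$-space); the paper's is more elementary and works one morphism at a time, while yours is more structural. The naturality bookkeeping you flag as the main obstacle can be discharged cleanly without chasing factorizations: the comparison map $\Map_{\mathcal{O}^{\xint}}(X,\blank) \to j_{\mathcal{O}}^{*}\Map_{\mathcal{O}}(X,\blank) \simeq j_{\mathcal{O}}^{*}j_{\mathcal{O},!}\Map_{\mathcal{O}^{\xint}}(X,\blank)$ is the unit of $j_{\mathcal{O},!} \dashv j_{\mathcal{O}}^{*}$, the projection to $j_{\mathcal{O}}^{*}\Act_{\mathcal{O}} \simeq j_{\mathcal{O}}^{*}j_{\mathcal{O},!}\ast$ is $j_{\mathcal{O}}^{*}j_{\mathcal{O},!}$ applied to the terminal map, and the identity section is the unit at $\ast$; the square commutes by naturality of the unit, and cartesianness is then a pointwise descent computation over the $\infty$-groupoid $\Act_{\mathcal{O}}(W)$, exactly as in the proof of Proposition~\ref{propn:TOcomplete}. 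With that observation supplied, your proof is complete.
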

\begin{proof}
  The equivalence of conditions (2)--(5) follows exactly as in the
  proof of Proposition~\ref{propn:Osatcond}. It remains to show that
  these conditions are equivalent to $\mathcal{O}$ being saturated.
  
  Since $\mathcal{O}$ is by assumption extendable, by
  Proposition~\ref{prop:extendable} we have a commutative square
  \[
    \begin{tikzcd}
      \Seg_{\mathcal{O}}(\mathcal{S}) \arrow{r} & \Fun(\mathcal{O},
      \mathcal{S}) \\
      \Seg_{\mathcal{O}^{\xint}}(\mathcal{S}) \arrow{u}{F_{\mathcal{O}}}
      \arrow{r} & \Fun(\mathcal{O}^{\xint}, \mathcal{S}). \arrow[u, swap, "j_{\mathcal{O},!}"]
    \end{tikzcd}
  \]
  Omitting notation for the horizontal inclusions, we have equivalences
  \[\Lambda_{\mathcal{O}}X \simeq
    F_{\mathcal{O}}\Lambda_{\mathcal{O}}^{\xint}X \simeq
    j_{\mathcal{O},!}\Lambda_{\mathcal{O}}^{\xint}X.\]
  If condition (4) holds, then $\Lambda_{\mathcal{O}}^{\xint}X$
  is the representable presheaf $y_{\mathcal{O}}^{\xint}X$, hence
  \[j_{\mathcal{O},!}\Lambda_{\mathcal{O}}^{\xint}X \simeq
    j_{\mathcal{O},!}y_{\mathcal{O}}^{\xint}X \simeq
    y_{\mathcal{O}}j_{\mathcal{O}}(X).\]
  In other words, $\Lambda_{\mathcal{O}}X$ is precisely the presheaf
  represented by $X \in \mathcal{O}$, which implies that $\mathcal{O}$
  is saturated by Proposition~\ref{propn:Osatcond}.

  Conversely, suppose $\mathcal{O}$ is saturated. By
  Proposition~\ref{propn:Osatcond} this means that for every $X \in
  \mathcal{O}$, the diagram $\mathcal{O}^{\el,\triangleleft}_{X/} \to
  \mathcal{O}$ is a limit diagram. To show that this diagram is then
  also a limit in the subcategory $\mathcal{O}^{\xint}$ (and hence
  verify condition (3)), it is enough to show that a morphism $\phi
  \colon Y \to X$
  is inert if the composites $Y \to X \intto E$ are all
  inert. Using the inert--active factorization system, we see that it
  suffices to consider the case where $\phi$ is active and prove that it
  is an equivalence. Recall that we have a morphism
  \[ \Act_{\mathcal{O}}(X) \to \lim_{E\in \mathcal{O}^{\el}_{X/}}
    \Act_{\mathcal{O}}(E),\]
  which takes $\phi \colon Y \actto X$ to the active parts of the
  inert--active decompositions of the composites $Y \actto X \intto
  E$. Since these composites are inert, the image of $\phi$ is given
  by $\id_{E}$ for all $E \in \mathcal{O}^{\el}_{X/}$, so that $\phi$
  has the same image as $\id_{X}$. But since $\mathcal{O}$ is
  extendable, this map of $\infty$-groupoids is an equivalence, and
  hence $\phi$ is equivalent to $\id_{X}$ in $\Act_{\mathcal{O}}(X)$,
  which means precisely that $\phi$ is an equivalence.
\end{proof}

We end this section by looking at some examples of saturated and non-saturated patterns.
\begin{examples}
  The patterns $\simp^{n,\op,\natural}$,
  $\bbTheta_{n}^{\op,\natural}$, and $\bbOmega^{\op,\natural}$
  (described in Examples~\ref{ex:simp}, \ref{ex:Theta}, and \ref{ex
    Omega}, respectively)
  are all saturated. In the case of
  $\simp^{\op,\natural}$, for example, this amounts to the observation
  that the object $[n] \in \simp^{\xint}$ is a colimit,
  \[ [1] \amalg_{[0]} \cdots \amalg_{[0]} [1] \simeq [n],\] while for
  $\bbOmega^{\op,\natural}$ the required colimit in
  $\bbOmega^{\xint}$ amounts to a decomposition of a tree as a
  colimit of its nodes and edges, and follows from \cite[Proposition
  1.1.19]{Kock}.
\end{examples}

\begin{example}\label{ex:commspan}
  The pattern $\xF_{*}^{\flat}$ from Example~\ref{ex:xF*flat} is
  \emph{not} saturated: The functor
  $\Lambda^{\xint}_{\xF_{*}^{\flat}} \colon \xF_{*}^{\flat,\xint,\op}
  \to \mathcal{S}$ takes $\angled{n}$ to a finite set $\mathbf{n}$
  with $n$ elements, and an inert morphism $\angled{n} \to \angled{m}$
  to the map $\mathbf{m} \to \mathbf{n}$ that takes $i \in \mathbf{m}$
  to its unique preimage $\phi^{-1}(i) \in \mathbf{n}$. Thus inert
  morphisms correspond bijectively to \emph{injective} morphisms of
  finite sets, and the functor is not fully faithful. The canonical
  pattern
  $\overline{\xF}_{*}^{\flat} \subseteq
  \Seg_{\xF_{*}^{\flat}}(\mathcal{S})^{\op}$ consists of the free
  commutative monoids on finite sets. By work of Cranch~\cite{Cranch}
  this can be identified with the (2,1)-category $\name{Span}(\xF)$
  whose objects are finite sets and whose morphisms are \emph{spans}
  of finite sets, with $\xF_{*} \to \overline{\xF}_{*}$ identifying
  $\xF_{*}$ with the subcategory where the morphisms from $I$ to $J$
  are spans $I \from K \to J$ with the backward map \emph{injective}.
\end{example}

\begin{example}
  More generally, for any \iopd{} $\mathcal{O}$ (in the sense of
  \cite{ha}) the canonical pattern $\olO$ can be
  identified with the opposite of the \icat{} of finitely generated
  free $\mathcal{O}$-monoids in $\mathcal{S}$, \ie{} the \emph{Lawvere
    theory} for $\mathcal{O}$-monoids.
\end{example}

\begin{remark}
  See \cite{GGN,Berman} for more
  on Lawvere theories in the \icatl{} context. Note that the monads
  corresponding to Lawvere theories always preserve sifted colimits,
  so the (coloured) Lawvere theories that fit into our theory
  are precisely the monads on $\mathcal{S}^{X}$ for an \igpd{} $X$
  that preserve both sifted colimits and weakly contractible
  limits. These are precisely the \emph{analytic} monads studied in
  \cite{AnalMnd}, where they are identified with $\infty$-operads in
  the sense of dendroidal Segal spaces.
\end{remark}

\begin{example}
  The pattern $\bbGamma^{\op,\natural}$ of Example~\ref{ex Gamma} is
  not saturated. We expect that its saturation is the $(2,1)$-category
  of graphs implicitly defined by Kock in \cite[\S
  3.3]{Kock_Properads}.
\end{example}

\section{Completion of  Polynomial Monads}\label{sec:poly3}
In this section we will study a class of polynomial monads that is
particularly closely related to algebraic patterns, namely the
\emph{complete} ones in the following sense:
\begin{defn}
  Let $T$ be a polynomial monad on $\mathcal{S}^{\mathcal{I}}$. We say
  that $T$ is \emph{complete} if the functor $\mathcal{I} \to
  \mathcal{W}(T)^{\el}$ underlying $\tau_{T}\colon T \to
  \mathfrak{M}\mathfrak{P}T$ is an equivalence. We write $\cPolyMnd$
  for the full subcategory of $\PolyMnd$ spanned by the complete
  polynomial monads.
\end{defn}
We will see that the polynomial monad corresponding to an extendable
pattern is \emph{always} complete, so that the functor $\mathfrak{M}$
takes values in $\cPolyMnd$. Moreover, we will show that the
transformation $\tau \colon \id \to \mathfrak{M}\mathfrak{P}$ exhibits
$\cPolyMnd$ as a localization of $\PolyMnd$, and the functors
$\mathfrak{M}$ and $\mathfrak{P}$ restrict to an \emph{equivalence}
\[ \cPolyMnd \simeq \AlgPattSS \]
between complete polynomial monads and saturated patterns.

\begin{remark}
  The term \emph{complete} is inspired by the equivalence of
  \cite{AnalMnd} between dendroidal Segal spaces and \emph{analytic} monads,
  which are the polynomial monads on presheaves over
  $\infty$-groupoids that preserve sifted colimits. Under this
  equivalence, the complete dendroidal Segal spaces (meaning those
  whose underlying Segal spaces are complete in the sense of
  Rezk~\cite{Rezk}) are precisely those analytic monads that are
  complete in our sense.
\end{remark}

We begin by giving some alternative descriptions of the complete
polynomial monads:
\begin{proposition}\label{prop complete}
  Let $T$ be a polynomial monad on $\mathcal{S}^{\mathcal{I}}$. The following are equivalent:
  \begin{enumerate}
    \item $T$ is complete.
    \item The morphism $\tau_{T} \colon T \to \mathfrak{M}\mathfrak{P}T$
      is an equivalence.
    \item The functor $u \colon \mathcal{U}(T) \to \mathcal{W}(T)^{\xint}$ is
      an equivalence.
    \item The functor $j \colon \mathcal{U}(T) \to \mathcal{W}(T)$ is
      a subcategory inclusion, \ie{} it is faithful and induces an
      equivalence $\xU(T)^\simeq \isoto \xW(T)^\simeq$ on underlying
      $\infty$-groupoids.
    \item The functor $j$ is faithful and every equivalence is in its
      image.
  \end{enumerate}
\end{proposition}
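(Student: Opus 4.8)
The plan is to prove the equivalence of the five conditions by establishing a cycle of implications, exploiting the structural results from Sections~\ref{sec:nerve} and \ref{sec:poly1}. The key observation is that condition (3) concerns the functor $u \colon \mathcal{U}(T) \to \mathcal{W}(T)^{\xint}$, which by Corollary~\ref{cor:j!eq} and the nerve-theoretic setup is intimately tied to how the monad $T$ is reconstructed from $\mathcal{W}(T)$, while $\mathfrak{M}\mathfrak{P}T$ is by definition the free Segal $\mathcal{W}(T)$-space monad $T_{\mathcal{W}(T)}$ on $\Fun(\mathcal{W}(T)^{\el},\mathcal{S})$.

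First I would show that (1), (2), and (3) are equivalent. Recall from the proof of Theorem~\ref{thm:functorP} (and Corollary~\ref{cor:morTWT}) that the transformation $\tau_T \colon T \to \mathfrak{M}\mathfrak{P}T$ is realized by the square identifying $\Alg_T(\mathcal{S}^{\mathcal{I}})$ with $\Seg_{\mathcal{W}(T)}(\mathcal{S})$, and its underlying functor of indexing \icats{} is precisely $e \colon \mathcal{I} \to \mathcal{W}(T)^{\el}$. Thus (1) says $e$ is an equivalence, which is the definition of completeness; (2) says the whole morphism of polynomial monads is an equivalence. Since $U_T$ and $U_{\mathcal{W}(T)}$ are conservative and the monad is determined by its underlying endofunctor together with the indexing \icat{}, the morphism $\tau_T$ is an equivalence \IFF{} its underlying functor $e$ is, giving (1)$\Leftrightarrow$(2). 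For (1)$\Leftrightarrow$(3), I would use that $u \colon \mathcal{U}(T) \to \mathcal{W}(T)^{\xint}$ is always essentially surjective (as $\mathcal{W}(T)^{\xint}$ is the Kleisli inert subcategory on free algebras), and fully faithful by construction on the relevant slices; the remaining content is that the elementary objects match up, which is exactly the statement that $e \colon \mathcal{I} \to \mathcal{W}(T)^{\el}$ is an equivalence. More precisely, by Corollary~\ref{cor:j!eq} the functor $u^*$ is already an equivalence onto its image on Segal objects, and $u$ is an equivalence precisely when $\mathcal{I} \simeq \mathcal{U}(T)$ refines to $\mathcal{W}(T)^{\el} \simeq \mathcal{W}(T)^{\xint}$-level data, which reduces to completeness of $e$.

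Next I would handle the equivalence of (3), (4), and (5). The functor $j = j' \circ u$ factors the inclusion of $\mathcal{U}(T)$ into $\mathcal{W}(T)$ through $\mathcal{W}(T)^{\xint}$, where $j' \colon \mathcal{W}(T)^{\xint} \hookrightarrow \mathcal{W}(T)$ is always a (wide) subcategory inclusion by the factorization system of Theorem~\ref{Klfact} and Corollary~\ref{cor intactfact}. Given this, $j$ is a subcategory inclusion \IFF{} $u$ is an equivalence, which is (3)$\Leftrightarrow$(4): indeed, $j'$ is faithful and identity-on-objects-up-to-the-inert-equivalences, so $j$ is faithful with $\xU(T)^\simeq \isoto \xW(T)^\simeq$ exactly when $u$ identifies $\mathcal{U}(T)$ with all of $\mathcal{W}(T)^{\xint}$. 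For (4)$\Leftrightarrow$(5), I note that a faithful functor whose essential image contains all equivalences is precisely a subcategory inclusion in the equivalence-invariant sense used in Lemma~\ref{lem:subcatlim}; the only subtlety is Remark~\ref{rem:W} and Warning~\ref{warn equgen}, namely that not every equivalence in $\mathcal{W}(T)$ need come from $\mathcal{U}(T)$ a priori, so condition (5) records exactly the missing surjectivity on equivalences that upgrades faithfulness to a subcategory inclusion.

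The main obstacle I anticipate is the careful bookkeeping in (3)$\Leftrightarrow$(4), where one must distinguish the genuinely new equivalences in $\mathcal{W}(T)$ (warned about in Warning~\ref{warn equgen}) from those already visible in $\mathcal{U}(T)$, and confirm that $u$ being an equivalence is equivalent to $j$ capturing all of them. The cleanest route is to observe that $j' \colon \mathcal{W}(T)^{\xint} \to \mathcal{W}(T)$ is always a subcategory inclusion containing all equivalences (by Corollary~\ref{cor intactfact}, inert morphisms form one half of a factorization system and hence contain the equivalences), so the question collapses to whether $u$ is an equivalence, and the \igpd{}-level statement in (4) is then a direct translation. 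I would close the argument by assembling the cycle $(1)\Rightarrow(2)\Rightarrow(3)\Rightarrow(4)\Rightarrow(5)\Rightarrow(3)\Rightarrow(1)$, citing Lemma~\ref{lem:subcatlim} for the precise sense of ``subcategory inclusion'' and Corollary~\ref{cor:j!eq} to connect the reconstruction of $T$ with the functor $u$.
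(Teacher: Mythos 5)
Your cycle of implications and your treatment of $(1)\Leftrightarrow(2)$, $(3)\Rightarrow(4)$, and $(4)\Leftrightarrow(5)$ are essentially the paper's argument (for $(5)\Rightarrow(4)$ you should additionally invoke conservativity of $j$, i.e.\ Lemma~\ref{lem Fconservative}, to see that the morphisms of $\mathcal{U}(T)$ hitting equivalences are themselves equivalences). However, there is a genuine gap in the hard direction $(1)\Rightarrow(3)$. You assert that $u$ is ``always essentially surjective and fully faithful by construction on the relevant slices,'' so that the only remaining content is matching up elementary objects. Taken at face value this would make $u$ an equivalence for \emph{every} polynomial monad, i.e.\ every polynomial monad would be complete, which is false (cf.\ Remark~\ref{rem:W} and Warning~\ref{warn equgen}: the failure of completeness lives precisely in morphisms --- equivalences of free algebras not visible in $\mathcal{U}(T)$ --- not in objects). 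So the statement ``the remaining content is that the elementary objects match up'' is not just unproved but misidentifies where the difficulty sits.

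The actual argument needs the following chain, none of which appears in your sketch. First, when $e$ is an equivalence, the cartesian square of Corollary~\ref{cor:elintcartsq} upgrades the full faithfulness of $u^{*}$ from Corollary~\ref{cor:j!eq} to an honest equivalence $u^{*}\colon \Seg_{\mathcal{W}(T)^{\xint}}(\mathcal{S}) \to \Seg_{\mathcal{U}(T)}(\mathcal{S})$ (the latter being the presheaves right Kan extended from $\mathcal{I}$). Second, to descend from this presheaf-level equivalence to $u$ itself one must know that $\mathcal{W}(T)^{\xint,\op}$ sits inside $\Seg_{\mathcal{W}(T)^{\xint}}(\mathcal{S})$ as a full subcategory via the Yoneda embedding --- this is exactly saturation of $\mathcal{W}(T)$ (Lemma~\ref{lem:WTsat} together with Proposition~\ref{propn:Ointsatcond}) --- and likewise that $\mathcal{U}(T)^{\op}$ is a full subcategory of $\Seg_{\mathcal{U}(T)}(\mathcal{S})$ by Proposition~\ref{propn:WTintnerve}. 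Finally one checks that the inverse of $u^{*}$ (left Kan extension followed by localization) restricts to $u$ on these full subcategories, so that $u$ is the restriction of an equivalence to full subcategories and hence an equivalence. Your appeal to Corollary~\ref{cor:j!eq} points in the right direction, but without the Yoneda/saturation step the argument does not close.
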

\begin{proof}
  To see that (1) is equivalent to (2), observe that the morphism $\tau_{T}$ in
  $\PolyMnd$ is given by the morphism $e \colon \mathcal{I} \to
  \mathcal{W}(T)^{\el}$ together with the commutative square
  \[
    \begin{tikzcd}
      \Seg_{\mathcal{W}(T)}(\mathcal{S}) \arrow{r}{\sim} \arrow{d} &
      \Alg_{T}(\mathcal{S}^{\mathcal{I}}) \arrow{d} \\
      \Fun(\mathcal{W}(T)^{\el}, \mathcal{S}) \arrow{r}{e^{*}} &
      \Fun(\mathcal{I}, \mathcal{S}), 
    \end{tikzcd}
  \]
  and so $\tau_{T}$ is an equivalence \IFF{} $e$ is an equivalence.

  It is clear that (3) implies (1), since $e$ is obtained from $u$ by
  restricting to a full subcategory. Conversely, if $e$ is an
  equivalence, then the commutative square of
  Corollary~\ref{cor:elintcartsq} gives a commutative square
  \[
    \begin{tikzcd}
      \Fun(\mathcal{W}(T)^{\el}, \mathcal{S}) \arrow[d, swap, "\wr"] \arrow{r}{\sim}
      & \Seg_{\mathcal{W}(T)^{\xint}}(\mathcal{S}) \arrow{d}{u^{*}} \\
      \Fun(\mathcal{I}, \mathcal{S}) \arrow{r}{\sim} &
     \Seg_{\mathcal{U}(T)}(\mathcal{S}), 
    \end{tikzcd}
  \]
  where $\Seg_{\mathcal{U}(T)}(\mathcal{S})$ denotes the full
  subcategory of $\Fun(\mathcal{U}(T), \mathcal{S})$ of functors
  right Kan extended from $\mathcal{I}$; the functor
  \[ u^{*} \colon \Seg_{\mathcal{W}(T)^{\xint}}(\mathcal{S}) \to
    \Seg_{\mathcal{U}(T)}(\mathcal{S}) \] is therefore an
  equivalence. Here $\mathcal{W}(T)^{\xint,\op}$ is a full subcategory
  of $\Seg_{\mathcal{W}(T)^{\xint}}(\mathcal{S})$ via the Yoneda
  embedding by Proposition~\ref{propn:Ointsatcond}, since
  $\mathcal{W}(T)$ is saturated by Lemma~\ref{lem:WTsat}. Moreover,
  $\mathcal{U}(T)^{\op}$ is a full subcategory of
  $\Seg_{\mathcal{U}(T)}(\mathcal{S})$ by
  Proposition~\ref{propn:WTintnerve}. The inverse of $u^{*}$ is given
  by left Kan extension $u_{!}$ followed by localization from
  $\Fun(\mathcal{W}(T)^{\xint}, \mathcal{S})$ to
  $\Seg_{\mathcal{U}(T)}(\mathcal{S})$, which restricts to just $u$ on
  $\mathcal{U}(T)^{\op}$ since $\mathcal{W}(T)^{\xint,\op}$ is already
  in $\Seg_{\mathcal{W}(T)^{\xint}}(\mathcal{S})$. Hence $u$ is the
  restriction of the equivalence $(u^{*})^{-1}$ to a full subcategory,
  which implies that $u$ is indeed an equivalence.

  Since $\mathcal{W}(T)^{\xint}$ is by definition a subcategory of
  $\mathcal{W}(T)$, (3) immediately implies (4). On the other hand,
  (4) implies (3) since the inert morphisms in $\mathcal{W}(T)$ are
  precisely those that are composites of morphisms in the image of $u$
  and equivalences in $\mathcal{W}(T)$.

  Finally, (4) trivially implies (5), while given (5) we know that
  \[ \Map_{\mathcal{U}(T)}(X,Y) \to \Map_{\mathcal{W}(T)}(jX,jY) \]
  is a monomorphism whose image contains the components that
  correspond to equivalences in $\mathcal{W}(T)$. Since $j$ is
  conservative by Lemma~\ref{lem Fconservative}, the components that
  map to these are precisely those that correspond to equivalences in
  $\mathcal{U}(T)$, so that $j$ restricts to an equivalence
  $\mathcal{U}(T)^{\simeq} \to \mathcal{W}(T)^{\simeq}$. 
\end{proof}

\begin{propn}\label{propn:TOcomplete}
  Suppose $\mathcal{O}$ is a slim extendable pattern. Then
  $T_{\mathcal{O}}$ is a complete polynomial monad.
\end{propn}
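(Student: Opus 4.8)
The plan is to prove completeness of $T_{\mathcal{O}}$ by verifying condition (3) of Proposition~\ref{prop complete}, namely that the functor $u \colon \mathcal{U}(T_{\mathcal{O}}) \to \mathcal{W}(T_{\mathcal{O}})^{\xint}$ is an equivalence; equivalently, I would verify that the essentially surjective functor $e \colon \mathcal{O}^{\el} \to \mathcal{W}(T_{\mathcal{O}})^{\el}$ underlying $\tau_{T_{\mathcal{O}}}$ is an equivalence. The key input is the explicit description of $\olO := \mathcal{W}(T_{\mathcal{O}})$ obtained in Corollary~\ref{cor:overlineOdesc}: since $\mathcal{O}$ is slim, every object of $\mathcal{O}$ is necessary, so by parts (i) and (ii) the functors $\Lambda_{\mathcal{O}}^{\xint}$ and $\Lambda_{\mathcal{O}}$ induce essentially surjective functors $\mathcal{O}^{\xint} \to \mathcal{U}(T_{\mathcal{O}})$ and $\mathcal{O} \to \olO$, and these preserve the active--inert factorization.

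First I would pin down what $\mathcal{W}(T_{\mathcal{O}})^{\el}$ is. By construction the elementary objects of $\olO$ are the free algebras on objects of $\mathcal{U}(T_{\mathcal{O}})$ lying in the image of the base $\infty$-category, which under the identifications of Notation~\ref{not:WTftrs} and Corollary~\ref{cor:overlineOdesc} are exactly the objects $\Lambda_{\mathcal{O}}^{\xint}E$ for $E \in \mathcal{O}^{\el}$. So $e$ is the functor $\mathcal{O}^{\el} \to \mathcal{W}(T_{\mathcal{O}})^{\el}$ given by restricting $\Lambda_{\mathcal{O}}^{\xint}$ (equivalently $\Lambda_{\mathcal{O}}$) to the elementary objects. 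Essential surjectivity is then immediate; the content is full faithfulness. The natural approach is to show directly that $\Lambda_{\mathcal{O}}^{\xint}$ is \emph{fully faithful on $\mathcal{O}^{\el}$}. For elementary $E, E'$ the slice $\mathcal{O}^{\el}_{E/}$ has $\id_E$ as an initial object (an elementary object receiving an inert map from $E$ that factors the identity), so the Segal limit defining $\Lambda_{\mathcal{O}}^{\xint}$ collapses and the mapping space computation reduces to the Yoneda lemma. Concretely, I would use Remark~\ref{rem MapSegOint} to compute $\Map_{\Seg_{\mathcal{O}^{\xint}}(\mathcal{S})}(\Lambda_{\mathcal{O}}^{\xint}E, \Lambda_{\mathcal{O}}^{\xint}E') \simeq \Lambda_{\mathcal{O}}^{\xint}E'(E)$ and then identify this last space with $\Map_{\mathcal{O}^{\xint}}(E, E') \simeq \Map_{\mathcal{O}^{\el}}(E,E')$ using that $E$ is elementary, so that no localization correction intervenes.

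The step I expect to be the main obstacle is verifying that $\Lambda_{\mathcal{O}}^{\xint}E'(E)$ really recovers the \emph{elementary} mapping space $\Map_{\mathcal{O}^{\el}}(E,E')$ with no extra identifications, \ie{} that the localization $\Fun(\mathcal{O}^{\xint},\mathcal{S}) \to \Seg_{\mathcal{O}^{\xint}}(\mathcal{S})$ does not alter the value of the representable presheaf $y_{\mathcal{O}}^{\xint}E'$ at an elementary object $E$. This is where slimness is not quite enough on its own and one needs the structure of the inert pattern $\mathcal{O}^{\xint}$, where only equivalences are active and $\mathcal{O}^{\el} \subseteq \mathcal{O}^{\xint}$ is full: the right Kan extension $\Lambda_{\mathcal{O}}^{\xint}E' = i_{\mathcal{O},*}(\ldots)$ has value at $E$ given by a limit over $\mathcal{O}^{\el}_{E/}$, which is contractible (initial object $\id_E$) for elementary $E$, so $\Lambda_{\mathcal{O}}^{\xint}E'(E) \simeq y_{\mathcal{O}^{\el}}E'(E) = \Map_{\mathcal{O}^{\el}}(E, E')$. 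Once this identification is established, naturality of the Yoneda pairing shows $e$ is fully faithful, hence an equivalence, giving condition (2)/(3) of Proposition~\ref{prop complete} and thus completeness of $T_{\mathcal{O}}$.
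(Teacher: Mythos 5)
Your reduction to showing that $e \colon \mathcal{O}^{\el} \to \mathcal{W}(T_{\mathcal{O}})^{\el}$ is an equivalence is legitimate, and essential surjectivity is indeed automatic, but the full faithfulness step has a genuine gap. The mapping space you compute, $\Map_{\Seg_{\mathcal{O}^{\xint}}(\mathcal{S})}(\LOi E, \LOi E') \simeq (\LOi E')(E) \simeq \Map_{\mathcal{O}^{\xint}}(E',E)$, is the mapping space in $\mathcal{U}(T_{\mathcal{O}})$, i.e.\ in $\Fun(\mathcal{O}^{\el},\mathcal{S})^{\op}$, where full faithfulness on elementaries is essentially the Yoneda lemma; the step you flag as the ``main obstacle'' is immediate from the equivalence $\Seg_{\mathcal{O}^{\xint}}(\mathcal{S})\simeq\Fun(\mathcal{O}^{\el},\mathcal{S})$ and uses neither slimness nor extendability. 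But $e$ lands in $\mathcal{W}(T_{\mathcal{O}})^{\el}$, a full subcategory of $\mathcal{W}(T_{\mathcal{O}})^{\xint}$, whose mapping spaces are the spaces of \emph{inert} morphisms of the Kleisli \icat{}, i.e.\ morphisms of free algebras whose active part in the canonical factorization is an equivalence. By Remark~\ref{rem:W} and Warning~\ref{warn equgen}, $\mathcal{U}(T)\to\mathcal{W}(T)^{\xint}$ need not be fully faithful: a priori there could be equivalences $\LO E \simeq \LO E'$ in $\Seg_{\mathcal{O}}(\mathcal{S})$ that do not come from $\Fun(\mathcal{O}^{\el},\mathcal{S})$, and these would contribute extra inert morphisms between $eE$ and $eE'$. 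Identifying the inert mapping space with $\Map_{\mathcal{O}^{\xint}}(E',E)$ is exactly the content that needs to be supplied, and your argument silently assumes it.

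This is precisely what the paper's proof establishes, via condition (5) of Proposition~\ref{prop complete}: first, $j$ is faithful, because under Lemma~\ref{lem:Lambdadesc} and Remark~\ref{rem MapSegOint} the map $\Map_{\mathcal{U}(T_{\mathcal{O}})}(X,Y)\to\Map_{\mathcal{W}(T_{\mathcal{O}})}(jX,jY)$ is the inclusion of the $\id_{Y}$-component of $\colim_{O\in\Act_{\mathcal{O}}(Y)}(\LOi X)(O)$, which splits off as a disjoint summand (slimness enters here, to know every object of $\mathcal{U}(T_{\mathcal{O}})$ is of the form $\LOi X$); second, every equivalence of free algebras is in the image of $j$, proved by factoring it using Proposition~\ref{propn:Oactgeneric} and invoking Lemma~\ref{lem:LOacteq} to conclude that the active part of the factorization comes from an equivalence in $\mathcal{O}$. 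The fact that your proposal never uses slimness and never needs the statement that $\LO$ reflects equivalences of active morphisms is a reliable sign that the hard point has been bypassed rather than proved. To repair the argument you would need to prove, at minimum, that every inert morphism $eE\to eE'$ lies in the image of $\mathcal{U}(T_{\mathcal{O}})$, which amounts to reproducing these two steps.
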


For the proof we need the following observation:
\begin{lemma}\label{lem:LOacteq}
  Suppose $\phi \colon X \to Y$ is an active morphism such that $\LO
  \phi$ is an equivalence in $\Seg_{\mathcal{O}}(\mathcal{S})$. Then
  $\phi$ is an equivalence in $\mathcal{O}$.
\end{lemma}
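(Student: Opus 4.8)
The plan is to reduce the assertion to a concrete statement about the functoriality of $\Act_{\mathcal{O}}$, and then run an elementary two-sided-inverse argument inside $\mathcal{O}$ itself. The first step is to recall, from the computation in the proof of Lemma~\ref{lem:LambdaisF}, that for any Segal $\mathcal{O}$-space $\Phi$ there is an equivalence $\Map_{\Seg_{\mathcal{O}}(\mathcal{S})}(\LO Z, \Phi) \simeq \Phi(Z)$, natural in $Z \in \mathcal{O}$. Under this naturality, precomposition with $\LO\phi \colon \LO Y \to \LO X$ corresponds to $\Phi(\phi) \colon \Phi(X) \to \Phi(Y)$. Hence if $\LO\phi$ is an equivalence, then $\Phi(\phi)$ is an equivalence for \emph{every} Segal $\mathcal{O}$-space $\Phi$. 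I would then apply this to the single Segal $\mathcal{O}$-space $\Act_{\mathcal{O}}$, which is a Segal $\mathcal{O}$-space precisely because $\mathcal{O}$ is extendable (Definition~\ref{defn:pattext}(1)), to conclude that $\Act_{\mathcal{O}}(\phi) \colon \Act_{\mathcal{O}}(X) \to \Act_{\mathcal{O}}(Y)$ is an equivalence of spaces. Since $\phi$ is active, the functoriality of $\Act_{\mathcal{O}}$ described in Remark~\ref{rmk:actcocart} identifies $\Act_{\mathcal{O}}(\phi)$ with post-composition, sending $(Z \actto X)$ to $(Z \actto X \xto{\phi} Y)$.

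Next I would extract a two-sided inverse of $\phi$ from this equivalence of $\infty$-groupoids. Essential surjectivity applied to $\id_Y \in \Act_{\mathcal{O}}(Y)$ produces an active morphism $a \colon Z \actto X$ together with an equivalence $w \colon Z \isoto Y$ witnessing $\phi a \simeq \id_Y$ in $\Act_{\mathcal{O}}(Y)$. Setting $\psi := a w^{-1} \colon Y \to X$ (which is active, being a composite of an active morphism and an equivalence) then gives $\phi\psi \simeq \id_Y$. For the other composite I observe that $\psi\phi$ and $\id_X$ are two points of $\Act_{\mathcal{O}}(X)$ whose images under $\Act_{\mathcal{O}}(\phi)$ are both canonically identified with $\phi$ (trivially for $\id_X$, and via $\phi\psi \simeq \id_Y$ for $\psi\phi$). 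Since $\Act_{\mathcal{O}}(\phi)$ is an equivalence, the fibre over $\phi$ is contractible, so these two points are connected by a path in $\Act_{\mathcal{O}}(X)$; such a path unwinds to an equivalence $g \colon X \isoto X$ in $\mathcal{O}$ over $X$, i.e. an equivalence $g$ with $\psi\phi \circ g \simeq \id_X$, whence $\psi\phi$ is itself an equivalence. Finally $\phi$ has the right inverse $\psi$ and the left inverse $(\psi\phi)^{-1}\psi$, so $\phi$ is an equivalence in $\mathcal{O}$, as claimed.

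The main obstacle is the bookkeeping in the second paragraph: one must check that a path in the $\infty$-groupoid $\Act_{\mathcal{O}}(X)$ between the points $\id_X$ and $\psi\phi$ really corresponds to an honest equivalence in $\mathcal{O}$ over the target $X$ (and similarly that the section produced by essential surjectivity is a genuine morphism of $\mathcal{O}$), and that the chosen coherence data $\phi a \simeq \id_Y$ and $\phi(\psi\phi) \simeq \phi$ place both points in the fibre over $\phi$, so that contractibility of the fibre applies. Everything else — the reduction to $\Act_{\mathcal{O}}$ and the identification of $\Act_{\mathcal{O}}(\phi)$ with post-composition by the active map $\phi$ — is formal given the earlier results. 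I note that this argument uses only extendability of $\mathcal{O}$, not slimness.
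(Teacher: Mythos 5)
Your proof is correct, and it takes a genuinely different route from the paper's. The paper works inside $\overline{\mathcal{O}} = \mathcal{W}(T_{\mathcal{O}})$: it takes the inverse $\alpha$ of $\LO\phi$, factors it as active followed by inert, invokes Proposition~\ref{propn:Oactgeneric} to recognize the active part as $\LO\psi$ for an active morphism $\psi$ of $\mathcal{O}$, reads off from the same proposition that $\phi\psi$ lies in the component of $\id_Y$ in $\Act_{\mathcal{O}}(Y)$ (hence is an equivalence), and then obtains the left inverse by repeating the argument for $\psi$. You instead stay entirely inside $\mathcal{O}$ and its Segal spaces: by the Yoneda lemma for the localized embedding, $\LO\phi$ being an equivalence forces $\Phi(\phi)$ to be an equivalence for \emph{every} Segal $\mathcal{O}$-space $\Phi$, and you apply this to the single Segal space $\Act_{\mathcal{O}}$ (whose Segality is precisely condition (1) of Definition~\ref{defn:pattext}), using that for active $\phi$ the structure map $\Act_{\mathcal{O}}(\phi)$ of Remark~\ref{rmk:actcocart} is post-composition; essential surjectivity hands you the right inverse, and injectivity on components handles the left inverse. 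Both arguments bottom out in the same observation --- a path in $\Act_{\mathcal{O}}(Y)$ between active morphisms to $Y$ is an equivalence over $Y$, since $\Act_{\mathcal{O}}(Y)$ is the groupoid core of $\mathcal{O}^{\act}_{/Y}$ --- and the bookkeeping you flag does go through for exactly that reason. What your version buys is independence from the generic-morphism machinery: it uses nothing beyond \S\ref{sec:pattern}, \S\ref{sec:free}, and the corepresentability computation in Lemma~\ref{lem:LambdaisF}, whereas the paper's proof leans on Proposition~\ref{propn:Oactgeneric} and hence on the factorization system of \S\ref{sec:Kleislifact}; your observation that only extendability (not slimness) is used is likewise accurate, though the same is true of the paper's proof.
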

\begin{proof}
  Suppose $\alpha \colon \LO X \to \LO Y$ is the inverse of
  $\LO \phi$. By Proposition~\ref{propn:Oactgeneric} we can factor
  $\alpha$ as $\LO X \xto{\LO \psi} \LO Y' \xto{\alpha'} \LO Y$ where
  $\alpha'$ is free and $\psi$ is an active morphism determined up to equivalence in
  $\mathcal{O}$ (and both $\LO \psi$ and $\alpha'$ are equivalences since this is an
  active--inert factorization). Now the composite $\alpha \LO \phi$ is
  the identity, so by Proposition~\ref{propn:Oactgeneric} the
  composite $\phi \psi$ lies in the same component of 
  $\Act_{\mathcal{O}}(Y)$ as
  $\id_{Y}$, \ie{} $\phi \psi$ must be an equivalence. Applying the
  same argument to $\psi$, we see that $\psi$ has inverses on both
  sides in $\mathcal{O}$ and so is an equivalence, hence $\phi$ is
  also an equivalence.
\end{proof}

\begin{proof}[Proof of Proposition~\ref{propn:TOcomplete}]
  By Proposition~\ref{prop complete} the polynomial monad $T_\xxO$ is complete if and only if $j \colon \mathcal{U}(T_{\mathcal{O}}) \to
  \mathcal{W}(T_{\mathcal{O}})$ is faithful and all equivalences are
  in its image.

  Since $\mathcal{O}$ is slim, the objects of
  $\mathcal{U}(T_{\mathcal{O}})$ are precisely the objects $\LOi X$
  for $X \in \mathcal{O}^{\xint}$, by
  Corollary~\ref{cor:overlineOdesc}. To show that $j$ is faithful, we
  must check that for all $X, Y \in \mathcal{O}^{\xint}$, the map
  \[ \Map_{\Seg_{\mathcal{O}^{\xint}}(\mathcal{S})}(\LOi X, \LOi Y) \to
    \Map_{\Seg_{\mathcal{O}}(\mathcal{S})}(\LO X, \LO Y) \]
  is a monomorphism. Lemma~\ref{lem:Lambdadesc} and Remark~\ref{rem
    MapSegOint} imply that this map can be identified with the map
\[ (\LOi X)(Y) \to \colim_{O\in
    \Act_{\mathcal{O}}(Y)} (\LOi X)(O), \]
 given by
  taking $(\LOi X)(Y)$ to the component in the colimit corresponding
  to $\id_{Y} \in \Act_{\mathcal{O}}(Y)$. This component is of the
  form $(\mathcal{O}^{\simeq})_{/Y}$ and so is contractible, which
  means that the colimit decomposes as a disjoint union of $(\LOi
  X)(Y)$ and the colimit over the other components of
  $\Act_{\mathcal{O}}(Y)$. This means $j$ is indeed faithful.

  Now suppose $\alpha \colon \LO X \to \LO X'$ is an equivalence in
  $\mathcal{W}(T_{\mathcal{O}})$. Then by
  Proposition~\ref{propn:Oactgeneric} we can factor $\alpha$ as
  \[ \LO X \xto{\LO \phi} \LO Y \xto{j \psi} \LO X', \]
  where $\phi$ is active and both $\LO \phi$ and $j \psi$ are equivalences (since this is
  in particular an active--inert factorization). Then
  Lemma~\ref{lem:LOacteq} implies that $\phi$ is an equivalence in
  $\mathcal{O}$; but then $\phi$ is also inert, and so the commutative
  square in Lemma~\ref{lem:LambdaisF} implies that $\LO \phi$ is
  $j(\LOi \phi)$. Thus $\alpha$ is in the image of $j$, as required.
\end{proof}

\begin{remark}
  It follows from Proposition~\ref{prop complete} that for any slim
  extendable pattern $\mathcal{O}$, the morphism
  $\tau_{T_{\mathcal{O}}} \colon T_{\mathcal{O}} \to T_{\olO}$ is an
  equivalence, \ie{} the extendable patterns $\mathcal{O}$ and $\olO$
  correspond to the same monad. The saturated pattern $\olO$ is thus a
  canonical pattern associated to the free Segal $\mathcal{O}$-space
  monad $T_{\mathcal{O}}$.
\end{remark}

\begin{cor}
  The natural transformation
  $\tau \colon \id \to \mathfrak{M}\mathfrak{P}$ exhibits the full
  subcategory $\cPolyMnd$ as a localization of $\PolyMnd$.
\end{cor}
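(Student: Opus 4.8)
The plan is to apply the same localization criterion, \cite[Proposition 5.2.7.4]{ht}, that was used to prove that $\sigma$ exhibits $\AlgPattSS$ as a localization of $\AlgPattSES$. Set $L := \mathfrak{M}\mathfrak{P} \colon \PolyMnd \to \PolyMnd$. The first step is to identify the essential image of $L$ as precisely $\cPolyMnd$. For any polynomial monad $T$, the monad $L(T) = \mathfrak{M}(\mathcal{W}(T)) = T_{\mathcal{W}(T)}$ is the free Segal $\mathcal{W}(T)$-space monad for a slim extendable pattern (slim by Remark~\ref{rmk WTslim}, extendable by Corollary~\ref{cor WTextendable}), so it is complete by Proposition~\ref{propn:TOcomplete}. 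Conversely, if $T$ is complete then by definition $\tau_T$ is an equivalence, so $T$ lies in the image of $L$; thus the image of $L$ is exactly $\cPolyMnd$.

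Next I would verify condition (3) of \cite[Proposition 5.2.7.4]{ht}: for every polynomial monad $T$, both morphisms
\[ \tau_{LT},\ L(\tau_{T}) \colon LT \to LLT \]
are equivalences. For $\tau_{LT}$ this is immediate from the fact that $LT = T_{\mathcal{W}(T)}$ is complete, since completeness is by definition the statement that $\tau$ is an equivalence on that object (using the equivalence of (1) and (2) in Proposition~\ref{prop complete}). For $L(\tau_T)$, the key point is that $\tau_T \colon T \to \mathfrak{M}\mathfrak{P}T$ is a morphism of polynomial monads that becomes an equivalence after applying $\mathfrak{P}$, i.e.\ $\mathfrak{P}(\tau_T) \colon \mathcal{W}(T) \to \mathcal{W}(LT)$ is an equivalence of patterns; applying $\mathfrak{M}$ then yields an equivalence $L(\tau_T)$.

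The main obstacle is therefore establishing that $\mathfrak{P}(\tau_T)$ is an equivalence. Here I would argue that the Segal morphism $\mathfrak{P}(\tau_T) \colon \mathcal{W}(T) \to \mathcal{W}(T_{\mathcal{W}(T)}) = \overline{\mathcal{W}(T)}$ is precisely the saturation map $\sigma_{\mathcal{W}(T)}$ of Proposition~\ref{propn:nattrPM}, up to the natural identifications built into the proof of Theorem~\ref{thm:functorP}. Since $\mathcal{W}(T)$ is saturated by Lemma~\ref{lem:WTsat}, Corollary~\ref{cor:sigmasateq} shows that $\sigma_{\mathcal{W}(T)}$ is an equivalence, giving the desired conclusion. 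The one piece requiring care is the compatibility of the two natural transformations $\tau$ and $\sigma$ on the overlapping objects $\mathcal{W}(T)$: one must check that the morphism of patterns underlying $\mathfrak{P}(\tau_T)$ agrees with $\sigma_{\mathcal{W}(T)}$, which follows by unwinding the construction of $\tau$ in Theorem~\ref{thm:functorP} (where the naturality square is built from the commutative cube of Corollary~\ref{cor morTS}) together with the construction of $\sigma$ via the localized Yoneda embedding $\Lambda_{\mathcal{W}(T)}$. Once this identification is in place, both required equivalences follow and \cite[Proposition 5.2.7.4]{ht} yields the claim.
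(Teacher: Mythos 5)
Your proposal is correct and follows the same overall skeleton as the paper's proof: identify the essential image of $L = \mathfrak{M}\mathfrak{P}$ as exactly $\cPolyMnd$ (using Proposition~\ref{propn:TOcomplete} for one inclusion and Proposition~\ref{prop complete} for the other), then verify condition (3) of \cite[Proposition 5.2.7.4]{ht} by checking that $\tau_{LT}$ and $L(\tau_T)$ are equivalences, with $\tau_{LT}$ handled by completeness of $LT$. The only divergence is in the justification that $L(\tau_T)$ is an equivalence. The paper argues directly that $\mathfrak{P}(\tau_T)$ is an equivalence because it is obtained by restricting the equivalence $\Alg_T(\mathcal{S}^{\mathcal{I}}) \isoto \Seg_{\mathcal{W}(T)}(\mathcal{S})$ coming from the Nerve Theorem~\ref{thm:nervecartsq} to the relevant full subcategories. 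You instead identify $\mathfrak{P}(\tau_T)$ with $\sigma_{\mathcal{W}(T)}$ and invoke Lemma~\ref{lem:WTsat} together with Corollary~\ref{cor:sigmasateq}. This works: the compatibility you flag as the delicate point does hold, since $\mathfrak{P}(\tau_T)$ is the restriction of $\nu_{\mathcal{W}(T)}$ to $\mathcal{W}(T)^{\op}$, i.e.\ the Yoneda embedding of $\mathcal{W}(T)$, which agrees with the localized Yoneda embedding $\Lambda_{\mathcal{W}(T)}$ defining $\sigma_{\mathcal{W}(T)}$ precisely because $\mathcal{W}(T)$ is saturated. It is, however, a somewhat longer detour through the $\sigma$-machinery for a fact that the Nerve Theorem yields in one line; both routes ultimately rest on the same input, namely that $\nu_{\mathcal{W}(T)}$ is an equivalence onto Segal spaces.
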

\begin{proof}
  Let $L := \mathfrak{M}\mathfrak{P}$; then the essential image of $L$
  is precisely $\cPolyMnd$: by Proposition~\ref{prop complete} the
  image of $L$ contains all complete polynomial monads, while all
  monads in the image of $L$ are complete by
  Proposition~\ref{propn:TOcomplete}.

  To see that $L$ and $\tau$ exhibit $\cPolyMnd$ as a localization,
  we again apply the criterion of \cite[Proposition
  5.2.7.4]{ht}(3). We must thus show that the two morphisms
  \[ \tau_{LT}, L(\tau_{T}) \colon LT \to LLT \]
   are both equivalences for all $T$ in $\PolyMnd$. For
   $\tau_{LT}$ this holds by Proposition~\ref{prop complete}, since
   $LT$ is complete, while for $L(\tau_{T})$ it holds since
   $\mathfrak{P}(\tau_{T})$ is an equivalence (given by restricting
   the equivalence $\Alg_{T}(\mathcal{S}^{\mathcal{I}}) \isoto
   \Seg_{\mathcal{W}(T)}(\mathcal{S})$ to a full subcategory).
\end{proof}

\begin{thm}
  The functors $\mathfrak{M}$ and $\mathfrak{P}$ restrict to give an
  equivalence
  \[ \cPolyMnd \simeq \AlgPattSS. \]
\end{thm}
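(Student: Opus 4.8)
The plan is to assemble the final equivalence from the two localization results already established, using the general principle that if two adjoint localizations restrict to inverse equivalences on their respective local subcategories, then these subcategories are equivalent. Concretely, I have functors $\mathfrak{M} \colon \AlgPattSES \to \PolyMnd$ and $\mathfrak{P} \colon \PolyMnd \to \AlgPattSES$ (the latter landing in $\AlgPattSES$ by Remark~\ref{rmk WTslim}), together with the two natural transformations $\sigma \colon \id \to \mathfrak{P}\mathfrak{M}$ and $\tau \colon \id \to \mathfrak{M}\mathfrak{P}$. I have already shown that $\sigma$ exhibits $\AlgPattSS$ as a localization of $\AlgPattSES$ (with local objects the saturated patterns), and that $\tau$ exhibits $\cPolyMnd$ as a localization of $\PolyMnd$ (with local objects the complete monads). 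The statement to prove is that $\mathfrak{M}$ and $\mathfrak{P}$ restrict to mutually inverse equivalences between $\cPolyMnd$ and $\AlgPattSS$.

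First I would check that the two functors do restrict to the claimed subcategories. The functor $\mathfrak{P}$ sends every polynomial monad $T$ to the saturated pattern $\mathcal{W}(T)$ by Lemma~\ref{lem:WTsat}, so in particular it sends $\cPolyMnd$ into $\AlgPattSS$. Dually, $\mathfrak{M}$ sends every slim extendable pattern $\mathcal{O}$ to the complete polynomial monad $T_{\mathcal{O}}$ by Proposition~\ref{propn:TOcomplete}, so it sends $\AlgPattSS$ into $\cPolyMnd$. Thus both restrictions are well-defined functors between the two subcategories in question.

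Next I would verify that on these subcategories the unit transformations become natural equivalences, which immediately gives that the restricted functors are inverse to one another. For a saturated pattern $\mathcal{O}$, Corollary~\ref{cor:sigmasateq} tells us that $\sigma_{\mathcal{O}} \colon \mathcal{O} \to \olO = \mathfrak{P}\mathfrak{M}(\mathcal{O})$ is an equivalence, so $\sigma$ restricts to a natural equivalence $\id_{\AlgPattSS} \isoto (\mathfrak{P}\mathfrak{M})|_{\AlgPattSS}$. For a complete polynomial monad $T$, the definition of completeness together with Proposition~\ref{prop complete} gives that $\tau_{T} \colon T \to \mathfrak{M}\mathfrak{P}(T)$ is an equivalence, so $\tau$ restricts to a natural equivalence $\id_{\cPolyMnd} \isoto (\mathfrak{M}\mathfrak{P})|_{\cPolyMnd}$. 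These two natural equivalences exhibit $\mathfrak{P}$ and $\mathfrak{M}$ as mutually inverse adjoint equivalences between $\cPolyMnd$ and $\AlgPattSS$, which is exactly the claim.

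I do not expect a serious obstacle here, as the hard analytic content has already been absorbed into the two localization theorems and into Corollary~\ref{cor:sigmasateq} and Proposition~\ref{prop complete}; the only point requiring a little care is bookkeeping on the adjunction. One should confirm that $\sigma$ and $\tau$ genuinely form the unit and counit of an adjunction $\mathfrak{M} \dashv \mathfrak{P}$ (or identify which is left adjoint to which) so that invoking ``unit and counit are equivalences implies adjoint equivalence'' is legitimate; this follows formally from the construction of $\sigma$ and $\tau$ via the respective localizations, since a localization functor is left adjoint to the inclusion and the transformation realizing it is the unit. With that in place, the standard fact that an adjunction whose unit and counit are both equivalences is an adjoint equivalence completes the argument.
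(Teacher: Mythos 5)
Your proposal is correct and follows essentially the same route as the paper: check that $\mathfrak{M}$ and $\mathfrak{P}$ restrict via Lemma~\ref{lem:WTsat} and Proposition~\ref{propn:TOcomplete}, then observe that $\sigma$ and $\tau$ restrict to natural equivalences on the saturated and complete subcategories, exhibiting the restricted functors as mutually inverse. The only difference is your extra care about the adjunction structure, which is harmless but not needed — two natural equivalences $\id \simeq \mathfrak{P}\mathfrak{M}$ and $\id \simeq \mathfrak{M}\mathfrak{P}$ already suffice to conclude that both functors are equivalences.
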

\begin{proof}
  We have shown that $\mathfrak{M}\mathcal{O}$ is always complete and
  $\mathfrak{P}T$ is always saturated, so the functors do restrict to
  these full subcategories. Moreover, we know that
  $\sigma_{\mathcal{O}}$ is an equivalence \IFF{} $\mathcal{O}$ is
  saturated, and $\tau_{T}$ is an equivalence \IFF{} $T$ is
  complete. These natural transformations therefore restrict to
  natural equivalences on the full subcategories of saturated patterns
  and complete polynomial monads, and hence exhibit the restrictions
  of $\mathfrak{P}$ and $\mathfrak{M}$ as inverse equivalences.
\end{proof}

\bibliographystyle{hamsalpha}
\providecommand{\bysame}{\leavevmode\hbox to3em{\hrulefill}\thinspace}
\providecommand{\MR}{\relax\ifhmode\unskip\space\fi MR }
% \MRhref is called by the amsart/book/proc definition of \MR.
\providecommand{\MRhref}[2]{%
  \href{http://www.ams.org/mathscinet-getitem?mr=#1}{#2}
}
\providecommand{\href}[2]{#2}

\enlargethispage{1cm}

\end{document}